\documentclass[11pt,reqno]{amsart}

\usepackage{amsmath, amsthm, amssymb}
\usepackage{amsfonts}
\usepackage{mathtools}

\usepackage[ansinew]{inputenc}
\usepackage[dvips]{epsfig}
\usepackage{graphicx}
\usepackage[english]{babel}

\usepackage{aliascnt}

\theoremstyle{plain}
\newtheorem{theorem}{Theorem}[section]

\newaliascnt{lemma}{theorem}
\newtheorem{lemma}[lemma]{Lemma}
\aliascntresetthe{lemma}

\newaliascnt{proposition}{theorem}
\newtheorem{proposition}[proposition]{Proposition}
\aliascntresetthe{proposition}

\newaliascnt{corollary}{theorem}

\aliascntresetthe{corollary}

\theoremstyle{definition}
\newaliascnt{definition}{theorem}

\aliascntresetthe{definition}

\newaliascnt{remark}{theorem}
\newtheorem{remark}[remark]{Remark}
\aliascntresetthe{remark}

\newaliascnt{example}{theorem}
\newtheorem{example}[example]{Example}
\aliascntresetthe{example}

\theoremstyle{definition}
\newtheorem*{remark*}{Remark}
\newtheorem*{assumption*}{Assumption}

\numberwithin{equation}{section}

\usepackage[backgroundcolor=white, bordercolor=blue,
linecolor=blue]{todonotes}

\usepackage{hyperref}
\usepackage{cleveref}

\usepackage{dsfont}
\usepackage{bbm}

\newcommand{\N}{\mathbb{N}}
\newcommand{\R}{\mathbb{R}}

\newcommand{\cP}{\mathcal{P}}

\newcommand{\eps}{\epsilon}

\newcommand{\ddiv}{\mathrm{div}_v}

\newcommand{\bbR}{\mathbb{R}}

\newcommand{\average}{{\mathchoice {\kern1ex\vcenter{\hrule height.4pt
width 6pt depth0pt} \kern-9.7pt} {\kern1ex\vcenter{\hrule
height.4pt width 4.3pt depth0pt} \kern-7pt} {} {} }}
\newcommand{\dashint}{\average\int}

\begin{document}
\allowdisplaybreaks
\title{Boundary Harnack estimates of optimal order for kinetic Fokker-Planck equations}

\author{Kyeongbae Kim}
\address{Institute for Applied Mathematics, University of Bonn, Endenicher Allee 60, 53115, Bonn, Germany}
\email{kkim@uni-bonn.de}
\urladdr{https://sites.google.com/view/kyeongbaekim/}

\author{Marvin Weidner}
\address{Institute for Applied Mathematics, University of Bonn, Endenicher Allee 60, 53115, Bonn, Germany}
\email{mweidner@uni-bonn.de}
\urladdr{https://sites.google.com/view/marvinweidner/}

\keywords{kinetic, Fokker-Planck, Kolmogorov, boundary, Harnack}

\subjclass[2020]{35Q84, 35B65, 82C40}

\allowdisplaybreaks

\begin{abstract}
We establish higher order boundary Harnack estimates for solutions to kinetic Fokker-Planck equations with absorbing incoming boundaries. Unlike classical elliptic and parabolic equations with Dirichlet data, we show that the quotient of two solutions for kinetic equations is not $C^{\infty}$ up to the boundary. Instead, we develop a general theory showing that, near the grazing set, the quotient of two solutions is $C^{3/2}$ if the domain and data are sufficiently smooth, and $C^{1,1}$ in the absence of source terms. These exponents are optimal.
\end{abstract}

\maketitle

\section{Introduction}  

The goal of this article is to establish higher order boundary Harnack-type estimates for solutions to kinetic Fokker-Planck equations 
    \begin{align}
    \label{eq:PDE}
        \partial_tf+v\cdot\nabla_xf-\ddiv(A\nabla_vf)=B\cdot\nabla_vf+F \quad\text{for } (t,v,x) \in (-1,0] \times \Omega \times \R^n,
    \end{align}
subject to absorbing incoming boundaries, namely
\begin{align}
\label{eq:bdry}
    f = 0\quad\text{in } (-1,0] \times \gamma_-, 
\end{align}
where $\gamma_{\pm} := (-1,0] \times \{ (x,v) \in \partial \Omega \times \R^n : \pm v \cdot n_x > 0 \}$.
This class of equations plays a central role in kinetic theory where it arises naturally as the linearization of the Landau equation from plasma physics \cite{CSS18,GHJO20,HeSn20,ImMo21,DGY22,GuSi25,FRW25}. Moreover, it models Langevin dynamics \cite{Kra40,McK63,IsWa94,GJW99,Ram22} and is connected to the pricing of Asian options in mathematical finance \cite{FNPP10,AnRe24,Bow25}.

From a purely mathematical perspective, \eqref{eq:PDE} is of high interest due to the hypoellipticity of the governing degenerate operator $f \mapsto (v \cdot \nabla_x  - \Delta_v) f$, an effect that was first described by Kolmogorov \cite{Kol34}. 
Due to its relevance in all of these fields, the regularity theory for kinetic Fokker-Planck equations has been an intensely active research area over the past two decades \cite{PaPo04,PaPo06,GIMV19,AAMN24,KLN25,ImSi20,ImMo21,ImSi21,GuMo22,Loh23,WaZh24,RoWe25}. 

The starting point of this article is the striking phenomenon that the hypoelliptic smoothing of \eqref{eq:PDE} deteriorates at the boundary of the solution domain. Specifically, near the grazing set $\gamma_0$, where velocities are tangent to the boundary,
\begin{align*}
    \gamma_{0} := (-1,0] \times \{ (x,v) \in \partial \Omega \times \R^n : v \cdot n_x = 0 \}
\end{align*}
the kinetic transport operator degenerates and therefore, the structure of the equation no longer generates full kinetic regularity in all variables. For sufficiently smooth coefficients and domains, solutions of \eqref{eq:PDE}--\eqref{eq:bdry} are known to be $C^{1/2}$ near $\gamma_0$ (see \cite{HJV14,Zhu25,KiWe26}), however they generally fail to be more regular as is indicated by explicit counterexamples $\phi_0 \in C^{1/2}$ (see \eqref{eq.phi0}).

This loss of regularity raises the natural question whether the $C^{1/2}$ regularity is a particular feature of certain special solutions to \eqref{eq:PDE}--\eqref{eq:bdry} or if the interplay between the geometry at $\gamma_0$ and the kinetic equation affects all solutions similarly. More generally, it is natural to ask:
\begin{align*}
    \textit{Do all solutions to \eqref{eq:PDE}-\eqref{eq:bdry} exhibit the same boundary behavior near the grazing set $\gamma_0$?}
\end{align*}

The goal of this article is precisely to answer this question. To measure the similarity of the boundary behavior of two solutions $f_1,f_2$ to \eqref{eq:PDE}-\eqref{eq:bdry} we establish a higher order boundary Harnack estimate which amounts to analyzing the boundary regularity of the quotient $f_1/f_2$. This is a natural approach, since if both solutions exhibit the same leading-order singular boundary behavior near $\gamma_0$, then that singularity should cancel, making $f_1/f_2$ more regular than $C^{1/2}$. 

\subsection{Main result}

In \autoref{thm.mainhar.intro} we establish a \textit{complete characterization of the uniform boundary behavior} of solutions to \eqref{eq:PDE}-\eqref{eq:bdry} by proving optimal regularity estimates of their quotient $f_1/f_2$. Our result seems to be the first kinetic boundary Harnack estimate at spatial boundaries. We show that the regularity of $f_1/f_2$  depends heavily on the direction from which $\gamma_0$ is approached. To understand this crucial aspect of our result, we decompose the interior of the solutions domain into three regions:
\begin{align*}
   \mathcal{R}_+\coloneqq \{d_{\Omega}(x) \leq (n_x\cdot v)^3\},  \quad \mathcal{R}_-\coloneqq \{d_{\Omega}(x) \leq -(n_x\cdot v)^3\}, \quad \mathcal{R}_0\coloneqq \{d_{\Omega}(x) \ge |n_x\cdot v|^3\}.
\end{align*}
Here, $d_{\Omega}(x) := \mathrm{dist}(x,\partial \Omega)$ and $n_x$ stands for the outer unit normal vector at the projection $\bar{x} \in \partial \Omega$ of $x$ to $\partial \Omega$. Note that near the boundary $\partial \Omega$ the regions $\mathcal{R}_{+}$ and $\mathcal{R}_-$ naturally extend the outgoing and incoming boundaries $\gamma_{\pm}$ into the domain and can hence be seen as the outgoing and incoming boundary layers, respectively. In contrast, $\mathcal{R}_0$ only contains velocities that are nearly tangent to $\partial \Omega$ and therefore represents the grazing region\footnote{Geometrically, projecting a phase-space point $(x,v)$ to the kinetic boundary $\gamma = \partial \Omega \times \R^n$ via a suitably scaled kinetic distance $d_{\Omega}(x) + |v \cdot n_x|^3$ maps $\mathcal{R}_{+}$, $\mathcal{R}_{-}$, and $\mathcal{R}_0$ exactly to $\gamma_{+}$, $\gamma_-$, and $\gamma_0$, respectively.}.

\begin{theorem}\label{thm.mainhar.intro}
   Let $\Omega$ be a smooth domain with $0 \in \partial \Omega$. Let $f_1,f_2$ be solutions to
    \begin{equation*}
\left\{
\begin{alignedat}{3}
\partial_tf_i+v\cdot\nabla_{x}f_i-\ddiv(A\nabla_vf_i)&=B\cdot\nabla_vf_i+F_i&&\qquad \mbox{in  $Q_2 \cap \big((-4,0] \times \Omega \times \R^n \big) =: H_2$},  \\
f_i&=0&&\qquad  \mbox{in $\gamma_-\cap Q_2$}.
\end{alignedat} \right.
\end{equation*} 
with $f_2,F_2\geq0$ and $A \ge \Lambda^{-1}I$. Then, there is $\rho_1 = \rho_1(\Lambda,\Omega) > 0$ such that if $f_2(-1,-\rho_1^3 n_0,0)=c_0>0$  for some $c_0 > 0$, then the following hold true:

\begin{itemize}
    \item[(i)] There is $\mathcal{M}=\mathcal{M}(n,\Lambda,\Omega)$ such that if for some $\eps > 0$,  $A \in C^{\frac{1}{2} + \eps}(H_2)$, $B \in L^{\infty}(H_2)$, and $F_1, F_2 \in C^{\eps}(H_2)$ with
    \begin{align}
    \label{eq:F-small}
    \|F_2\|_{L^\infty(H_2)}\leq \frac{c_0}{\mathcal{M}},
\end{align}
then
\begin{align*}
    \left[\frac{f_1}{f_2}\right]_{C^{3/2}(H_{\frac{\rho_1}8}\setminus \mathcal{R}_-)}\leq c \sum_{i=1}^2 \big( \|f_i\|_{L^\infty(H_2)} + \|F_i\|_{C^{\eps}(H_2)} \big),
\end{align*}
where $c=c(n,\Lambda,\eps,c_0,\|A\|_{C^{\frac{1}{2} + \eps}(H_2)},\|B\|_{L^{\infty}(H_2)},\Omega)$.

\item If $F_1 \equiv F_2 \equiv 0$ and for some $\eps > 0$, $A \in C^{1+\eps}(H_2)$, $B \in C^{\eps}(H_2)$, then 
\begin{align*}
     \left[\frac{f_1}{f_2}\right]_{C^{1,1}(H_{\frac{\rho_1}8}\setminus\mathcal{R}_-)}\leq c\sum_{i=1}^2 \|f_i\|_{L^\infty(H_2)} 
\end{align*}
for some constant $c=c(n,\Lambda,\eps,c_0,\|A\|_{C^{1+\eps}(H_2)},\|B\|_{C^{\eps}(H_2)},\Omega)$.
\end{itemize}
\end{theorem}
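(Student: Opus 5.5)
The plan is to reduce the quotient estimate to a higher-order expansion of each solution at every boundary point of $\gamma_0\cup\gamma_+$ near $0$, in terms of a universal profile. Concretely, I flatten $\partial\Omega$ near $0$ by a kinetic-compatible change of variables; this is one that preserves the transport structure up to lower-order terms, at the price of modifying $A$ and $B$ and introducing terms controlled by the curvature. After that, $\Omega$ becomes a half-space $\{x_n>0\}$ up to smooth perturbations.

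\textbf{Step 1: expansions at grazing points.} At each grazing point $z_0\in\gamma_0$, I expect to prove
\begin{align*}
f_i(z) = a_i(z_0)\,\phi_0(z_0\circ z) + b_i(z_0)\cdot \Psi(z_0\circ z) + O\big(\|z_0^{-1}\circ z\|^{\frac12+\frac32}\big).
\end{align*}
Here $\phi_0$ is the explicit homogeneous $C^{1/2}$ solution of degree $1/2$ (see \eqref{eq.phi0}). $\Psi$ collects the next homogeneous solutions of the frozen half-space problem with zero incoming data, whose kinetic degrees lie between $1/2$ and $2$. The remainder is measured in the kinetic distance. I would prove this by a compactness/blow-up (Liouville) argument:
\begin{itemize}
\item first classify all solutions of the constant-coefficient half-space problem with absorbing incoming data and polynomial growth of order $<2$;
\item then run a Campanato-type iteration, using $C^{\frac12+\eps}$ regularity of $A$ in (i) and $C^{1+\eps}$ in (ii) to absorb the freezing errors.
\end{itemize}
The source term $F$ generates, at the level of degree $2$ (the kinetic scaling of $\partial_t$), a particular solution of order $\|z\|^{2}$. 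This is the source of the optimal $C^{3/2}$ barrier for the quotient in (i): dividing by $\phi_0\sim\|z\|^{1/2}$ leaves exactly $\|z\|^{3/2}$. In (ii), with $F\equiv 0$, the next obstruction is one degree higher, which yields $C^{1,1}$. Away from $\mathcal R_-$, the points of $\gamma_+$ are handled by standard parabolic-type boundary regularity, since the outgoing boundary is not a boundary condition. There the expansions become smooth.

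\textbf{Step 2: nondegeneracy of $f_2$.} I need $a_2(z_0)\ge c>0$ uniformly. This uses $f_2\ge0$, the interior positivity $f_2(-1,-\rho_1^3n_0,0)=c_0$, a kinetic Harnack chain, and a boundary Hopf-type lemma along $\gamma_0$ obtained by comparison with the barrier $\phi_0$. This is where \eqref{eq:F-small} enters: the negative contribution of $F_2$ must not destroy positivity. The result is the lower bound $f_2\gtrsim c_0\,\phi_0$ on $H_{\rho_1/8}\setminus\mathcal R_-$.

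\textbf{Step 3: the quotient.} With the expansions and the lower bound, I write
\begin{align*}
\frac{f_1}{f_2} = \frac{a_1\phi_0 + b_1\Psi + O(r^{2})}{a_2\phi_0 + b_2\Psi + O(r^{2})}.
\end{align*}
The main point is that the ratios $\Psi/\phi_0$ are smooth (polynomial-like) in the kinetic sense outside $\mathcal R_-$. This gives a polynomial approximation of $f_1/f_2$ at each boundary point with error $O(r^{3/2})$ (respectively $O(r^{2})$). I then combine these pointwise expansions with interior estimates for the equation satisfied by $f_1/f_2$, namely
\begin{align*}
\partial_t w + v\cdot\nabla_x w - \ddiv(A\nabla_v w) = \tilde B\cdot\nabla_v w + \tilde F,\qquad \tilde B = B + 2A\nabla_v \log f_2,
\end{align*}
rescaled to kinetic cylinders of size comparable to the distance to $\gamma_0$. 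A standard covering argument then upgrades these to the stated Hölder seminorms on $H_{\rho_1/8}\setminus\mathcal R_-$.

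\textbf{Main obstacle.} I expect the hardest step to be the Liouville classification in Step 1 together with the regularity of $\Psi/\phi_0$ away from $\mathcal R_-$. The homogeneous solutions are only semi-explicit (Airy/McKean-type functions), and $\phi_0$ vanishes on all of $\gamma_-$. So the ratio bound must use the precise asymptotics of $\phi_0$ in $\mathcal R_0$ and $\mathcal R_+$, which is why $\mathcal R_-$ is excluded. I would first try to obtain these asymptotics from the self-similar ODE satisfied by the profile in the variable $x_n/|v_n|^3$.
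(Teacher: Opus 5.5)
Your Step 1 does not hold under the hypotheses of the theorem, and this is the decisive gap. Expanding each $f_i$ separately around frozen-coefficient profiles cannot reach order $2$ (needed for (i)) or order $\frac52$ (needed for (ii)) when $A$ is only $C^{\frac12+\eps}$, resp.\ $C^{1+\eps}$.

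Freezing at $z_0$ leaves the error $\ddiv((A-A(z_0))\nabla_v f_i)$. Since $|\nabla_v f_i|\sim r^{-1/2}$ (coming from the $\phi_0$-part) and $|A-A(z_0)|\lesssim r^{\frac12+\eps}$, this source generates a correction of size $r\cdot r^{\eps}=r^{1+\eps}$ in $H_r$. That cannot be absorbed into an $O(r^2)$ remainder. Likewise, with $A\in C^{1+\eps}$ you only reach order $\frac32+\eps$. This is exactly the threshold in \autoref{lem.hol12}: an expansion of order $\lambda+\eps$ requires $A\in C^{\lambda-\frac12+\eps}$.

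Dividing two such expansions gives only $f_1/f_2\in C^{\frac12+\eps}$, resp.\ $C^{1+\eps}$. Your scheme would therefore need roughly $A\in C^{\frac32+\eps}$, resp.\ $C^{2+\eps}$, together with a more regular $B$.

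The missing idea is the \emph{relative} expansion of \autoref{lem.exp.bdry.diff}. There the blow-up/Liouville argument is run on $f_1-af_2-\sum_i p_i\varphi_i$, for both solutions simultaneously. It yields $f_1-pf_2-q\varphi_1-b_0\varphi_2-b_1\varphi_3=O(r^{\frac32+k+\eps})$, with no $\varphi_0$ term. Because $f_1-af_2$ solves the full variable-coefficient equation exactly, the singular $r^{1/2}$ part is carried by $f_2$ and produces no freezing error. Only the corrections $p_i\varphi_i$ produce one, and they are already of kinetic degree at least $\frac32$; this gains one full order.

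Your Step 3 also rests on a false claim: the ratios of the profiles to $\phi_0$ are not polynomial-like.
\begin{itemize}
\item The ratio $\varphi_1/\varphi_0=v_n\partial_{v_n}\phi_0/\phi_0$ is a nonconstant $0$-homogeneous function of $v_n^3/x_n$. It vanishes on $\{v_n=0\}$ and tends to $\frac12$ towards $\gamma_+$, so it is discontinuous at the grazing point. Any leftover term $(\mathbf c\cdot v)\varphi_1$ relative to $f_2$ with $\mathbf c\neq 0$ therefore prevents $f_1/f_2\in C^{1+\delta}$.
\item The ratios $\varphi_2/\varphi_0$ and $\varphi_3/\varphi_0$ are homogeneous of degree $\frac32$ and $2$ but are not polynomials. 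They are precisely the obstructions in \autoref{ex:sharpness}.
\end{itemize}
What must be proved is that the $\varphi_1$-coefficient satisfies $q(0)=\nabla_v q(0)=0$, and that $(A(0))_n\cdot\nabla_v p(0)=0$; the paper preserves the latter along the blow-up sequence via \autoref{lem.aux.direction}. Only then, using $f_2\gtrsim\phi_0$, are $q\varphi_1/f_2$ and $\varphi_3/f_2$ of order $|z|^2$ and $\varphi_2/f_2$ of order $|z|^{3/2}$.

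Finally, for variable $A$ the Hopf bound in your Step 2 is not a direct comparison with $\phi_0$, which is then no longer a subsolution. It requires the perturbation argument of \autoref{lem.hop.gen0}.
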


Our result indicates that solutions exhibit a universal leading-order boundary profile near the grazing set $\gamma_0$ within $\mathcal{R}_0 \cup \mathcal{R}_+$. Consequently, taking the quotient of two solutions factors out their dominant singularities, yielding regularity up to order $C^{3/2}$ (or $C^{1,1}$ if $F_i \equiv 0$). We show in \autoref{ex:sharpness} that the exponents $C^{3/2}$ in (i) and $C^{1,1}$ in (ii) are optimal, even for the Kolmogorov equation in the half-space. This proves that the universal boundary behavior does not extend to higher-order asymptotic terms, which remain singular and are highly solution-dependent.

We emphasize that \autoref{thm.mainhar.intro} does not hold for the incoming boundary layer  $\mathcal{R}_-$, where the quotient $f_1/f_2$ is generally \textit{unbounded}, even away from $\gamma_0$. In \autoref{ex.r-} we provide explicit examples of nonnegative solutions to \eqref{eq:PDE}--\eqref{eq:bdry} that remain smooth up to $\gamma_-$, but whose quotient is unbounded\footnote{We can show higher regularity of $f_1/f_2$ as in \autoref{thm.mainhar.intro} outside the smaller set $\mathcal{R}_-^M = \{ M d_{\Omega}(x) \le -  (n_x \cdot v)^3)\}$ for any $M > 1$. In that case, the constants depend on $M$ and explode as $M \to \infty$. See also the discussion in \autoref{rmk.bdryhar}. }. This failure of the boundary Harnack principle can be attributed to the infinite order decay of solutions near $\gamma_-$ (see \cite{Sil22,Zhu25,KiWe26}).

Inside the outgoing boundary layer $\mathcal{R}_+$, but away from $\gamma_0$, we can prove that the quotient $f_1/f_2$ is actually smooth (see \autoref{lem.bdry+}). This is due to the fact that solutions are smooth up to $\gamma_+$ (see \cite{RoWe25}) but do not vanish at $\gamma_+$, as no boundary condition can be prescribed there.

Ultimately, we emphasize that \autoref{thm.mainhar.intro} stands in stark contrast to classical higher order boundary Harnack estimates for elliptic and parabolic equations (see \cite{DeSa15,BaGa16,Kuk22,TTV24,Zha24,JeVi24}), where quotients of solutions always remain $C^{\infty}$. Remarkably, boundary Harnack estimates of any order even hold for nonlocal problems (see \cite{AbRo20}), where solutions are in general only H\"older continuous up to the boundary. In this light, our result reveals a distinguishing feature of kinetic equations: the degeneracy of the hypoelliptic effect at the grazing set governs not only the absolute behavior of solutions, but their relative behavior as well. To our knowledge, \autoref{thm.mainhar.intro} is the first result to capture this phenomenon.

We make several further comments on our result.

\begin{remark}
    \begin{itemize}
    \item[(i)] In light of higher order boundary Harnack estimates for elliptic equations (see \cite{TTV24}), our assumptions on the coefficients $A,B$ seem to be sharp within the class of H\"older spaces.
        \item[(ii)] In \autoref{thm.mainhar.gen} we prove boundary Harnack estimates of order $\alpha \in (0,3/2]$ under weaker assumptions on $A,B,F_i,\Omega$, and of order $\alpha \in (3/2,2]$ in case $F_i = 0$. This is the most general version of our main result and it can be seen as a Schauder-type theory for $f_1/f_2$.

        \item[(iii)] The smallness of $F_2$ in \eqref{eq:F-small} is a natural assumption in the literature (see for instance {\cite{ Tor24}}) and is required in order to propagate the positivity of $f_2$ from $(-1,-\rho_1^3 n_0,0)$ to $H_{\rho_1/8}$ via Harnack chains (see \autoref{lem.exp.posright}). It is possible to drop the assumption \eqref{eq:F-small} if we impose that 
        \begin{align*}
            \inf_{H_{c_0}(-1,-\rho_1^3 n_0,0)} f_2 \ge c_0 ,
        \end{align*}
        which allows to propagate positivity of $f_2$ as in \autoref{lem.exp.posright}  using the weak Harnack inequality.
        \item[(iv)] Note that \autoref{thm.mainhar.intro} yields regularity only in a $\rho_1$-neighborhood of the boundary. It seems that this neighborhood can in general not be enlarged since the propagation of positivity is distorted by the kinetic transport operator (see the discussion in {Section \ref{sec:propagation}}).
        \item[(v)] We emphasize that our main result holds true for solutions in kinetic cylinders $Q_1$ that are one-sided in time (see Section \ref{sec:prelim} for the precise definition). 
    \end{itemize}
\end{remark}

\begin{remark}
    In \cite[Theorem 1.7]{KiWe26}, we have established that $f/\Phi \in C^{0,1}(\mathcal{R}_0 \cup \mathcal{R}_+)$ for an explicitly constructed profile $\Phi \in C^{1/2}$. While this result characterizes the boundary behavior of an individual solution $f$, the fact that $\Phi$ is not a solution \textit{limits the regularity of the quotient $f/\Phi$}. Indeed, the $C^{0,1}$ regularity in \cite[Theorem 1.7]{KiWe26} is optimal (see \autoref{ex:Phi}). In contrast, the higher order estimates in $C^{3/2}$ and $C^{1,1}$ obtained in \autoref{thm.mainhar.intro} demonstrate that boundary Harnack estimates are essential for capturing the higher order solution behavior. Moreover, note that, as we explain below, the proof of \autoref{thm.mainhar.intro} differs fundamentally from the one of \cite[Theorem 1.7]{KiWe26}. Most importantly, the proof of \autoref{thm.mainhar.intro} not only hinges on fine boundary expansions, but also on lower solutions bounds such as in \autoref{lem.hopf.curve}. 
\end{remark}

\subsection{Discussion of the literature}

Boundary Harnack principles have a rich history due to their various applications in potential theory \cite{Kem72,Dah77}, free boundary problems \cite{CFMS81,ACS96}, \cite{AlSh19,DeSa15,DeSa20,RoTo21,Zha24}, and probability theory \cite{BBB91,BaBu94}. Classical results establish that quotients of elliptic or parabolic solutions are H\"older continuous up to the boundary, even in rough domain (Lipschitz or worse). Our main result belongs to the distinct class of higher order boundary Harnack estimates, which provide a  Schauder-type regularity theory for quotients of two solutions and have been established in \cite{DeSa15,BaGa16,Kuk22,TTV24,Zha24,JeVi24} for elliptic and parabolic equations. 

For kinetic equations, \autoref{thm.mainhar.intro} provides the first regularity estimate for quotients of solutions near spatial boundaries. Previous results {\cite{CNP12,CNP13}} have established Carleson-type estimates near Lipschitz boundaries, but even the H\"older regularity of $f_1/f_2$ has remained open. On the other hand, let us mention the seminal series of articles \cite{NyPo16,Nys17,LiNy21,LiNy21b,LiNy22}, where the authors establish fine properties of the kinetic harmonic measure, as well as comparison principles, and boundary Harnack estimates of class $C^\alpha$. Crucially, however, their analysis is restricted to domains of the form
\begin{align}
\label{eq:Nystrom-domain}
    \{(t,x,v) \in \R^{1+2n} : v_n > \psi(t,x,v')\}, \quad \text{where } \psi \text{ is Lipschitz continuous}.
\end{align}
The geometry of \eqref{eq:Nystrom-domain} is fundamentally different from the one of spatial domains as considered in \autoref{thm.mainhar.intro}. In fact, the boundary of \eqref{eq:Nystrom-domain} is everywhere \textit{non-characteristic} in the sense that it is ``seen'' by the diffusion in $v$ and therefore a boundary condition needs to be prescribed everywhere. This leads to a solution behavior resembling much more the one of elliptic and parabolic equations. Spatial boundaries, in contrast, are ``invisible'' to the diffusion in $v$ and all interaction with the boundary is governed by the transport operator. The analysis of spatial domains requires different tools which we develop in this article. 

\begin{remark}
In \cite[Remark 1.5]{NyPo16}, the authors ask whether boundary Harnack estimates can also hold in domains different from \eqref{eq:Nystrom-domain}. In this paper, we provide a partial answer to this question for the class of spatial domains. It is an interesting research topic to investigate boundary Harnack estimates in spatial Lipschitz domains or in domains with boundaries jointly in $(t,x,v)$. The latter case could be relevant for the study of the free boundary in kinetic obstacle-type problems (see \cite{Bow25}).
\end{remark}

\subsection{Strategy of the proof}

The key difficulty in the proof of our main result arises from the singular solution behavior near the grazing set $\gamma_0$, which has recently been investigated in \cite{KiWe26} (see also \cite{Ram22,HLW24,Zhu25}). The main work in this article consists in establishing a Hopf-type lemma and novel fine expansions of solutions to \eqref{eq:PDE}--\eqref{eq:bdry} near the grazing set, as we explain below. 

The central object in this analysis is the explicit solution $\phi_0$
\begin{equation}\label{eq.phi0}
\left\{
\begin{alignedat}{3}
v\partial_x\phi_0-\partial_{vv}\phi_0&=0&&\qquad \mbox{in  $\{x>0\} \times \R$}, \\
\phi_0&= 0&&\qquad\mbox{in $\{x=0\}\times \{v>0\}$},
\end{alignedat} \right.
\end{equation} 
which was computed in \cite{Gol71, GJW99} (see also \cite{KiWe26})
\begin{align*}
\phi_0(x,v) = 
\begin{cases}
    x^{1/6}U\left(-\frac16,\frac23,-\frac{v^3}{9x}\right)&\quad\text{if }v<0,\\
             \frac{\Gamma(7/6)}{\Gamma(1/6)} x^{1/6}e^{-\frac{v^3}{9x}}U\left(\frac56,\frac23,\frac{v^3}{9x}\right)&\quad\text{if }v\geq0.
             \end{cases}
\end{align*}
The function $\phi_0$ has the following asymptotic boundary behavior (see \cite{Gol71,GJW99,HLW24,KiWe26})
\begin{align}
\label{eq:phi_0-behavior}
\phi_0(x,v)\eqsim\begin{cases}
       x^{\frac16}\left(\frac{v^3}x\right)^{-\frac56}e^{-\frac{v^3}{9x}}\quad&\text{in } \{x < v^3\} = \mathcal{R}_-,\\
        x^{\frac16}\quad&\text{in } \{x \ge |v|^3\} = \mathcal{R}_0,\\
        |v|^{\frac12}\quad&\text{in } \{x < - v^3\} = \mathcal{R}_+,
        \end{cases}
\end{align}
and it is easy to see that $\phi_0 \in C^{\frac{1}{2}} \setminus C^{\frac{1}{2} + \eps}$ and is homogeneous of degree $\frac{1}{2}$, i.e., $\phi_0(\lambda^3 x , \lambda v) = \lambda^{\frac{1}{2}} \phi_0(x,v)$.

The first main ingredient in the proof of \autoref{thm.mainhar.intro} is the following lower bound for solutions to \eqref{eq:PDE}--\eqref{eq:bdry}. This lemma plays the same role as the classical Hopf lemma for elliptic or parabolic equations. To our knowledge, this is the first result of its kind for kinetic equations with spatial boundaries, and we believe it is of independent interest.

\begin{theorem}\label{lem.hopf.curve}
   Let $\Omega$ be a bounded $C^{2,\eps}$ domain for some $\eps\in(0,1)$. Let $f\geq0$ be a weak solution to 
    \begin{equation*}
\left\{
\begin{alignedat}{3}
\partial_tf+v\cdot\nabla_{x}f-\ddiv(A\nabla_vf)&=B\cdot\nabla_vf+F&&\qquad \mbox{in $H_2$}, \\
f&=0&&\qquad  \mbox{in $\gamma_-\cap H_2$},
\end{alignedat} \right.
\end{equation*} 
where $A\in C^{\eps}(H_2)$ with $A \ge \Lambda^{-1}I$, $B\in L^\infty(H_2)$, and $0\leq F\in L^{\infty}(H_2)$.

Then, there are $\rho_1=\rho_1(\Lambda,\Omega) > 0$ and  $\mathcal{M}=\mathcal{M}(n,\Lambda,\Omega)$ such that if
\begin{align*}
    f(-1,-\rho_1^3 n_0,0)=c_0>0, \qquad \|F\|_{L^\infty(H_2)}\leq \frac{c_0}{\mathcal{M}},
\end{align*}
for some $c_0 > 0$, then 
\begin{align*}
    \frac{c_0}{c}\leq \inf_{H_{\frac{\rho_1}8}\setminus \mathcal{R}_-}\frac{f}{\Phi}
\end{align*}
for some constant $c=c(n,\Lambda,\eps,\|A\|_{C^\eps(H_2)},\|B\|_{L^\infty(H_2)},\Omega)$, where ${\Phi(x,v)\coloneqq \phi_0(d_\Omega(x),v\cdot n_x)}$.
\end{theorem}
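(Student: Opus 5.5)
The plan is a Hopf-type barrier argument in two stages. First I propagate positivity from the point $(-1,-\rho_1^3 n_0,0)$ to a set lying at fixed kinetic distance from the boundary. Then I compare $f$ from below with an explicit subsolution built from $\Phi$ in a boundary layer.

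\emph{Stage 1: propagation of positivity.} I will use Harnack chains along admissible kinetic trajectories: the interior Harnack inequality for \eqref{eq:PDE}, together with the smallness $\|F\|_{L^\infty}\le c_0/\mathcal{M}$, which keeps the source from destroying positivity. This should give $f\ge c_0/c$ on a set $K$ of points with $t\in(-1/2,0]$, $d_\Omega(x)\ge \rho_1^3/C$, and a range of velocities, both incoming and grazing.

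The ordering is constrained by transport. The starting point has velocity $-\rho_1^3 n_0$, pointing into $\Omega$. Trajectories issued from it move away from the boundary, and diffusion in $v$ then spreads positivity to all velocities in a compact set. Positivity can only be propagated forward in time, which is why a one-sided cylinder appears and why the region is $H_{\rho_1/8}$. I choose $\rho_1$ small, depending on the $C^{2,\eps}$ character of $\Omega$, so that the flattened geometry is a small perturbation of the half-space.

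\emph{Stage 2: barrier.} In the flattened coordinates, $\phi_0(d_\Omega(x),v\cdot n_x)$ solves the model equation only up to error terms. These come from the curvature of $\partial\Omega$ (which needs $C^2$), from $A(z)\ne A(z_0)$ (which needs $A\in C^\eps$), and from $B$.

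I will build a strict subsolution $\underline\Phi = \Phi - \Phi^{1+\delta}$, or more concretely $\phi_0$ composed with a slight rescaling, plus a correction. The point is to gain a negative term of size $\Phi\cdot(\text{kinetic distance})^{-2+\delta'}$ that dominates the error terms. These errors behave like $\Phi\, r^{\eps-2}$ by the homogeneity $\phi_0(\lambda^3x,\lambda v)=\lambda^{1/2}\phi_0$ and the derivative bounds $|\partial_v\phi_0|\lesssim \phi_0/|v|$ on $\mathcal{R}_0\cup\mathcal{R}_+$.

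A cleaner alternative is to use the fine boundary expansion $f/\Phi\in C^{0,1}$ from \cite[Theorem 1.7]{KiWe26} for the solution $h$ of the Dirichlet-type problem in a layer $D$ with data $h=1$ on $K$ and $0$ on $\gamma_-$. The Hopf bound then reduces to showing that $h\ge c\Phi$, which in turn reduces to the limit of $h/\Phi$ at $\gamma_0$ being nonzero.

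I expect to argue this by compactness and blow-up. If $h/\Phi\to0$ at some grazing point, rescaling by the homogeneity produces a nonnegative half-space solution of \eqref{eq.phi0} that vanishes faster than $\phi_0$. A Liouville theorem, stating that nonnegative half-space solutions are multiples of $\phi_0$, combined with the strong maximum principle, forces this solution to be zero. That contradicts the normalization coming from the positive set $K$ via Stage 1.

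\emph{Conclusion and main obstacle.} Finally I apply the comparison principle on $D\setminus\mathcal{R}_-$ (absorbing boundary on $\gamma_-$, with $F\ge0$) to get $f\ge (c_0/c)\,\underline\Phi\gtrsim c_0\Phi$ on $H_{\rho_1/8}\setminus\mathcal{R}_-$.

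The main obstacle is that the lower bound must hold \emph{uniformly up to $\gamma_0$ in $\mathcal{R}_+$}, where $\Phi\sim|v\cdot n|^{1/2}$ and $f$ does not vanish. The subsolution must stay positive, and must stay below $f$, on the lateral parts of the layer that trajectories enter from. I would first try the explicit perturbed barrier, restricting to a kinetic cone where the error terms are controlled, and fall back on the blow-up/Liouville argument if the error terms cannot be absorbed.
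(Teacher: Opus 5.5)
\textbf{Verdict: genuine gap, in Stage 2.} Your Stage 1 is essentially the paper's Harnack-chain argument (\autoref{lem.exp.posright}). Two corrections, though. First, in $(-1,-\rho_1^3n_0,0)$ it is the \emph{position} that equals $-\rho_1^3 n_0$; the velocity is $0$. Second, if your barrier is positive on the incoming velocity face $\{v_n=-b,\ 0\le x_n\le b^3\}$ of the comparison cylinder, you need $f\gtrsim c_0$ there all the way down to $x_n=0$. Interior Harnack chains do not reach that face; the paper obtains it by extending $f$ across $\gamma_+$ (\autoref{lem.hargamma+}).

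The gap itself is in the barrier. The explicit barrier cannot work under these hypotheses, for three reasons.
\begin{itemize}
\item The operator is in divergence form with $A$ only $C^{\eps}$. So $\ddiv(A\nabla_v\underline\Phi)$ is a distribution, not a function, and $\ddiv((A-A(z_0))\nabla_v\Phi)$ contains derivatives of $A$. There is no pointwise error ``of size $\Phi r^{\eps-2}$'' to absorb.
\item Your candidate has the wrong sign. For $L=v_n\partial_{x_n}-\partial_{v_nv_n}$ one gets $L(\Phi-\Phi^{1+\delta})=\delta(1+\delta)\Phi^{\delta-1}|\partial_{v_n}\Phi|^2\ge 0$, so it is a supersolution, not a subsolution.
\item The comparison cannot be localized to $D\setminus\mathcal{R}_-$ or to a kinetic cone. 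The surface $\{x_n=v_n^3\}$ is seen by the $v$-diffusion, and you know nothing about $f/\Phi$ there. On all of $D$, no correction beats the coefficient errors inside $\mathcal{R}_-$. There $\Phi$ decays like $e^{-v_n^3/(9x_n)}$ and each $v_n$-derivative costs roughly a factor $v_n^2/x_n$.
\end{itemize}

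The blow-up/Liouville fallback does not close either.
\begin{itemize}
\item If $h=C\varphi_0+O(|z|^{1/2+\eps})$ with $C=0$, then $h(S_rz)/r^{1/2}\to0$. Normalizing instead by $\|h\|_{L^\infty(H_r)}$ would require a doubling (Carleson) estimate that you do not have.
\item The positive set $K$ escapes to infinity under the blow-up, so the limit carries no positivity.
\item The limit solves the time-dependent problem in $(-\infty,0]\times\{x_n>0\}\times\R^n$. There, nonnegative solutions need not be multiples of $\phi_0$ without growth control: for $n\ge2$, $e^{\lambda t}\cosh(\sqrt{\lambda}\,v_1)\phi_0(x_n,v_n)$ is a counterexample.
\end{itemize}

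What is missing is a \emph{quantitative} transfer of positivity to the leading coefficient at $\gamma_0$. The paper does this in two steps.
\begin{itemize}
\item First, a Hopf lemma for \emph{constant} coefficients. It uses the explicit subsolution $\psi_1=(\xi_1(t)+\xi_2(x',v'))\varphi_0+\varphi_0(v_n-1)^2+(1-v_n)\varphi_0$ and the maximum principle on the whole cylinder, $\mathcal{R}_-$ included (\autoref{lem.sub.nd}, \autoref{lem.hopf}).
\item Second, a perturbation at the level of the expansion coefficients. Let $h_1$ solve the variable-coefficient problem with data $1$ on $\Gamma_{r,r/N}$. Let $h_2$ solve the frozen-coefficient problem with boundary values $h_1$ (up to a constant) on a smaller cylinder. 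By \autoref{lem.hol12}, $h_1=C_1\varphi_0+O((\varrho/r)^{1/2+\eps})$ and $h_2=C_2\varphi_0+O((\varrho/r)^{1/2+\eps})$. Energy estimates for the difference, the gradient bound of \autoref{lem.bdry.gamma-}, and the singular-source expansion of \autoref{lem.point.sol} then give $|C_1-C_2|\le\delta r^{-1/2}$ for small $r$.
\end{itemize}
The constant-coefficient lemma gives $C_2\gtrsim r^{-1/2}$. Hence $h_1\gtrsim\varphi_0$ wherever $\varphi_0\gtrsim|z|^{1/2}$ absorbs the error, i.e.\ outside $\mathcal{R}_-$; this is exactly why $\mathcal{R}_-$ is excluded. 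Finally $f\ge (c_0/c)\,h_1$ follows from the maximum principle using $F\ge0$.
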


The proof of the Hopf lemma proceeds in three main steps. First, we establish a propagation of positivity result for solutions to kinetic equations (see \autoref{lem.exp.posright}). To do so, we employ suitable Harnack chains to propagate the pointwise information  at $(-1,-\rho_1^3 n_0,0)$ to points inside $H_{\rho_1/8} \cap {\{ d_{\Omega} \ge (\rho_1/32)^3\}}$. This is quite delicate in the kinetic setting due to the specific geometry induced by the transport term and the fact that the waiting time in the kinetic Harnack inequality cannot be chosen arbitrarily small (see \cite{CNP10}). 
In the second step we prove the Hopf lemma for  equations with constant coefficients in the half-space by constructing suitable barrier functions (see \autoref{lem.hopf}) and using the propagation of positivity result. 
Finally, we develop a novel perturbation argument to generalize the Hopf lemma from constant to H\"older continuous coefficients (see Subsection \ref{subsec:Hopf-general}). This step is technically demanding since it relies on fine pointwise expansions of solutions at $\gamma_0$ in terms of the highly anisotropic function $\phi_0$.

The second main ingredient in the proof of \autoref{thm.mainhar.intro} is to establish new expansions of solutions at $\gamma_0$ in the half-space, i.e., when $\Omega = \{ x_n > 0 \}$. In \cite[Proposition 5.1]{KiWe26}, we have already shown that any solution $f_1$ to \eqref{eq:PDE}--\eqref{eq:bdry} satisfies for any $r \in (0,\frac{1}{2}]$
\begin{align}
\label{eq:expansion}
    |f_1(z) - p_{0}(z) \phi_0(z) - p_1(z) \varphi_1(x_n,v_n) - b_2 \varphi_2(x_n,v_n) - b_3 \varphi_3(x_n,v_n)| \lesssim r^{3 - \eps} ~~ \forall z  \in H_r,
\end{align}
where $p_0,p_1 \in \cP_2$, $b_2,b_3 \in \R$, and the functions $\varphi_1,\varphi_2,\varphi_3$ are explicit solutions to
\begin{equation*}
\left\{
\begin{alignedat}{4}
v\partial_x\varphi_1 - \partial_{vv}\varphi_1 &= \partial_{vv} \phi_0, &\quad 
v\partial_x\varphi_2 - \partial_{vv}\varphi_2 &= 1, &\quad 
v\partial_x\varphi_3 - \partial_{vv}\varphi_3 &= \phi_0 &\qquad &\text{in } \{x>0\} \times \R, \\
\varphi_1 &= 0, &\quad 
\varphi_2 &= 0, &\quad 
\varphi_3 &= 0 &\qquad &\text{in } \{x=0\} \times \{v>0\}
\end{alignedat}
\right.
\end{equation*}
which satisfy $\varphi_1(\lambda^3 x, \lambda v) = \lambda^{\frac{1}{2}} \varphi_1(x,v)$, $\varphi_2(\lambda^3 x, \lambda v) = \lambda^{2} \varphi_2(x,v)$, $\varphi_3(\lambda^3 x, \lambda v) = \lambda^{2 + \frac{1}{2}} \varphi_3(x,v)$ (see also Subsection \ref{subsec:regularity} and Subsection \ref{subsec:aux} for a more detailed discussion).
 
 The expansion \eqref{eq:expansion} reveals the fine asymptotic behavior of solutions to \eqref{eq:PDE}--\eqref{eq:bdry} up to order three. 
 To prove a higher order boundary Harnack estimate, we need to significantly improve \eqref{eq:expansion}. The main idea is to transform \eqref{eq:expansion} into an expansion for $f_1$ in terms of another solution $f_2$, as follows:
\begin{align}
\label{eq:expansion-bh}
    |f_1(z) - q_0(z) f_2(z) - q_1(z) \varphi_1(x_n,v_n) - b'_2 \varphi_2(x_n,v_n) - b'_3 \varphi_3(x_n,v_n)| \lesssim r^{3 - \eps} ~~ \forall z \in H_r,
\end{align}
where $q_0,q_1 \in \cP_2$ and $b'_2, b'_3 \in \R$ (see \autoref{lem.exp.bdry.diff}). We establish \eqref{eq:expansion-bh} via a blow-up argument that is conceptually motivated by \cite{AbRo20}. In the kinetic setting, however, the analysis is substantially more delicate due to the complexity of the error terms. Hence, the proof requires several new elements (see for instance the role of \autoref{lem.aux.direction}). Another crucial feature of \eqref{eq:expansion-bh} is that we actually have to prove that $q_1 \in \cP_2$ satisfies $q_1(0) = \nabla_v q_1(0) = 0$ which requires a very careful analysis.

Subsequently \autoref{thm.mainhar.intro} is deduced upon dividing \eqref{eq:expansion-bh} by $f_2$ and transferring the regularity of the individual quotients
\begin{align*}
    q_0, \qquad q_1 \frac{\varphi_1}{f_2}, \qquad \frac{\varphi_2}{f_2}, \qquad \frac{\varphi_3}{f_2}
\end{align*}
to $f_1/f_2$. The key observation here is that the most singular function $\phi_0$ from \eqref{eq:expansion} has been eliminated in \eqref{eq:expansion-bh} while the singular behavior of $\varphi_1$ near the origin is compensated by the conditions $q_1(0) = \nabla q_1(0) = 0$. Precisely these two features are responsible for the higher regularity of the quotient $f_1/f_2$.

In light of the Hopf lemma in \autoref{lem.hopf.curve} and the homogeneity of the functions $\varphi_i$, it becomes apparent that $q_1 \frac{\varphi_1}{f_2} \asymp |z|^{2}$, $\frac{\varphi_2}{f_2} \asymp |z|^{\frac{3}{2}}$, and $\frac{\varphi_3}{f_2} \asymp |z|^2$ as the kinetic distance $|z| = |x|^{\frac{1}{3}} + |v| \to 0$. This asymptotic behavior yields the $C^{3/2}$ regularity of $f_1/f_2$ established in \autoref{thm.mainhar.intro}(i). Moreover, we show that $\tilde{b}_2 = 0$ in case $F_i \equiv 0$, which allows to improve the regularity of $f_1/f_2$ to $C^{1,1}$ in \autoref{thm.mainhar.intro}(ii).

\subsection{Outline}
This paper is structured as follows. In Section \ref{sec:prelim} we introduce the kinetic H\"older spaces used in the article and recall several regularity results established in previous works. In Section \ref{sec:propagation} we establish a general propagation of positivity result for kinetic equations near spatial boundaries (see \autoref{lem.exp.posright}). The Hopf lemma (see \autoref{lem.hopf.curve}) is proved in Section \ref{sec:Hopf}. Finally, Section \ref{sec:bdry-Harnack} is dedicated to the proof of the higher order boundary Harnack estimate and establishes our main result \autoref{thm.mainhar.intro}. Moreover, we provide several counterexamples indicating the sharpness of our results in this section, as well as in the Appendix. 

\subsection{Acknowledgments}
Kyeongbae Kim and Marvin Weidner were supported by the Deutsche Forschungsgemeinschaft (DFG,
German Research Foundation) under Germany's Excellence Strategy - EXC-2047/1 - 390685813 and through the CRC 1720 ``Analysis of criticality: from complex phenomena to models and estimates'', 53930965.

\pagebreak

\section{Preliminaries}
\label{sec:prelim}
Throughout the entire paper we always assume that for some constant $\Lambda\geq1$, it holds
\begin{align*}
    \Lambda^{-1}I \leq A\leq \Lambda I\quad\text{and}\quad |B|\leq \Lambda.
\end{align*}

Next, let us collect some notation.
\begin{itemize}
\item Let us write $X=(x,v)$. Given $X\in \bbR^n\times \bbR^n$, we write $X'=(x',v')\in\bbR^{n-1}\times \bbR^{n-1}$, where $X=(x',x_n,v',v_n)$.
\item Define a stationary cylinder by 
\begin{align}\label{stationary cylinder}
    \mathrm{Q}_r\coloneqq B_{r^3}\times B_r\quad\text{and}\quad \mathrm{H}_r\coloneqq B_{r^3}\times B_r\cap \{x_n>0\}.
\end{align}
In particular, we write $\mathrm{Q}'_{r}\coloneqq B'_{r^3}\times B'_r$, where $B'_{r}$ is the $(n-1)$-dimensional ball with radius $r$.
\item Let us write $z=(t,x,v)$. Given $z\in \bbR\times\bbR^n\times \bbR^n$, we write $z'=(t,x',v')\in\bbR\times\bbR^{n-1}\times \bbR^{n-1}$, where $z=(t,x',x_n,v',v_n)$.
    \item We set $I_{r^2} = (-r^2,0]$. Define a kinetic cylinder by
    \begin{align*}
        Q_r\coloneqq  I_{r^2}\times B_{r^3}\times B_r\quad\text{and}\quad H_r\coloneqq Q_r\cap\big(\R \times \{x_n>0\} \times \R^n).
    \end{align*}
Furthermore, we write $Q'_{r}\coloneqq I_{r^2}\times B'_{r^3}\times B'_r$, where $B'_{r}$ is the $(n-1)$-dimensional ball with radius $r$.
\item For any $R>0$ and $z \in \R^{2n+1}$, we define $S_Rz\coloneqq (R^2t,R^3x,Rv)$. For $z_0 \in \R^{2n+1}$, we set
\begin{align*}
    z_0 \circ z = (t_0 + t, x_0 + x + t v_0 , v_0 + v)
\end{align*}
\item Given a domain $(a,b)\times \Omega\times V\subset \bbR\times \bbR^n\times \bbR^n$ with $\Omega $ and $V$ being bounded sets, we define the kinetic boundary of $(a,b)\times \Omega\times V$ by 
    \begin{align*}
        \partial_{\mathrm{kin}}((a,b)\times \Omega\times V)=(\{t=a\}\times \Omega\times V) \cup((a,b)\times \Omega\times\{v\in\partial V\})\cup \gamma_-.
    \end{align*}
\item For a given vector $v\in\bbR^n$, we write $v^{\mathrm{t}}$ as a transpose of $v$, hence $v\cdot w=v^{\mathrm{t}}w$.
\end{itemize}

\subsection{Kinetic H\"older spaces}

We recall the definition of the kinetic H\"older space and its properties. 

First, we introduce a space of kinetic polynomials. For any $k\in\N\cup\{0\}$, we define $\cP_k$ by
\begin{align*}
    \cP_k\coloneqq \mathrm{span}\left\{\sum_{|\beta|\leq k}t^{\beta_t}x^{\beta_{x_1}}\cdots x_{n}^{\beta_n}v_1^{\beta_{v_1}}\cdots v_n^{\beta_{v_n}}\Bigg|\begin{array}{l} \beta=(\beta_t,\beta_{x_1},\ldots,\beta_{x_n},\beta_{v_1},\ldots,\beta_{v_n})\\
    |\beta|=2|\beta_t|+\sum\limits_{i=1}^{n}(3|\beta_{x_i}|+|\beta_{v_i}|)
    \end{array}\right\}.
\end{align*}
Moreover, we write $\cP_{-1} = \{ 0 \}$.

Now, we define the kinetic H\"older semi-norm by
\begin{align*}
    [f]_{C^{k,\delta}(D)}\coloneqq\sup_{z_0\in D}\inf_{p_{z_0}\in\cP_k}\sup_{z\in D}\frac{|(f-p_{z_0})(z)|}{\mathrm{dist}_{\mathrm{kin}}(z,z_0)^{k+\delta}},
\end{align*}
for any $k\in\N\cup\{0\}$ and $\delta\in(0,1]$, where 
\begin{align*}
    \mathrm{dist}_{\mathrm{kin}}(z,z_0)\coloneqq \max\{|t-t_0|^{\frac12},|x-(x_0+(t-t_0)v_0)|^{\frac13},|v-v_0|\}.
\end{align*}
In particular, we write for any $\delta\leq0$, $[f]_{C^{\delta}(H_R)}\coloneqq\|f\|_{L^\infty(H_R)}$.

Furthermore, we denote by $D^k$ a $k$-th order differential operator. For example, $D$ may represent $\partial_{v_i}$, $D^{2}$ may denote operators such as $\partial_{v_i,v_j}$ or $\partial_t+v\cdot\nabla_x$, and $D^3$ can be $\partial_{v_i,v_j,v_k}$, $\partial_{v_i}(\partial_t+v\cdot\nabla_v)$ or $\partial_{x_i}$.

We now provide several auxiliary lemmas for kinetic H\"older spaces.

\begin{lemma}\label{lem.interpolhol}\cite[Lemma 2.10]{RoWe25} Let $r > 0$ and $z_0 \in \R^{2n+1}$. Let $k,l\in\N\cup\{0\}$ and $\eps,\delta\in(0,1]$. Suppose $k+\eps<l+\delta$. Then we have
\begin{align*}
    [f]_{C^{k,\eps}(Q_r(z_0))}\leq c\left(r^{l+\delta-(k+\eps)}[f]_{C^{l,\delta}(Q_r(z_0))}+r^{-(k+\eps)}\|f\|_{L^\infty(Q_r(z_0))}\right)
\end{align*}
    for some constant $c=c(n,k,l,\eps,\delta)$.
\end{lemma}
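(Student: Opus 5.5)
The plan is to reduce to a unit cylinder by scaling, to use the degree-$l$ polynomial from the $C^{l,\delta}$ seminorm as the competitor for the $C^{k,\eps}$ seminorm, and to control that polynomial's coefficients by a norm-equivalence argument on the finite-dimensional space $\cP_l$.

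\textbf{Reduction to $r=1$, $z_0=0$.} The maps $z\mapsto z_0\circ z$ and $z\mapsto S_r z$ send $Q_1$ onto $Q_r(z_0)$. They preserve $\cP_m$ for every $m$, because Galilean translations do not raise kinetic degree and dilations act diagonally on monomials. They also transform $\mathrm{dist}_{\mathrm{kin}}$ by translation invariance and by the factor $r$, respectively. Setting $g(z):=f(z_0\circ S_r z)$, we get $[g]_{C^{m,\theta}(Q_1)}=r^{m+\theta}[f]_{C^{m,\theta}(Q_r(z_0))}$ and $\|g\|_{L^\infty(Q_1)}=\|f\|_{L^\infty(Q_r(z_0))}$. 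It therefore suffices to prove
\begin{align*}
[g]_{C^{k,\eps}(Q_1)}\le c\big([g]_{C^{l,\delta}(Q_1)}+\|g\|_{L^\infty(Q_1)}\big).
\end{align*}
Since $k+\eps<l+\delta$, we have $k\le l$. If $k=l$ then $\eps<\delta$, and the statement follows directly: use the same polynomial, note that $\mathrm{dist}_{\mathrm{kin}}^{l+\delta}\le \mathrm{dist}_{\mathrm{kin}}^{l+\eps}$ when the distance is at most $1$, and use the $L^\infty$ bound otherwise. So assume $k<l$.

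\textbf{Main step.} Fix $z_1\in Q_1$ and let $p\in\cP_l$ (nearly) realize the infimum in $[g]_{C^{l,\delta}(Q_1)}$ at $z_1$. Then
\begin{align*}
|g-p|(z)\le 2[g]_{C^{l,\delta}}\,\mathrm{dist}_{\mathrm{kin}}(z,z_1)^{l+\delta}\qquad\text{for } z\in Q_1.
\end{align*}
Write $p(z_1\circ y)=\sum_{j\le l}P_j(y)$ with $P_j$ kinetically homogeneous of degree $j$, and take as competitor $p_k(z):=\sum_{j\le k}P_j(z_1^{-1}\circ z)$. This function lies in $\cP_k$, because the inverse Galilean change of variables preserves $\cP_k$.

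The key claim is a coefficient bound:
\begin{align*}
\sum_j\|P_j\|_{L^\infty(Q_1)}\le c\big([g]_{C^{l,\delta}}+\|g\|_{L^\infty}\big).
\end{align*}
To prove it, first observe that $\sup|p|\le\|g\|_\infty+c[g]_{C^{l,\delta}}$ on $Q_1$. Next, for every $z_1\in Q_1$ the set $z_1^{-1}\circ Q_1$ contains a fixed-size kinetic cylinder $D_{z_1}$ of radius $c_n>0$, for instance $z_1^{-1}\circ\big(z_1\circ \tilde Q\big)$ with a small cylinder $\tilde Q$ lying inside $Q_1$ relative to $z_1$. Finally, on the finite-dimensional space $\cP_l$, all norms $\|q\|_{L^\infty(D)}$ are equivalent, uniformly over such cylinders $D$: a compactness argument applies, since $D$ ranges over translates of bounded size in a bounded family. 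This gives control of each $P_j$ on $Q_1$.

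\textbf{Conclusion.} Let $\varrho=\mathrm{dist}_{\mathrm{kin}}(z,z_1)$.

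If $\varrho\le1$, then
\begin{align*}
|g-p_k|(z)\le|g-p|(z)+\sum_{k<j\le l}|P_j(z_1^{-1}\circ z)|\lesssim [g]\varrho^{l+\delta}+\sum_{k<j\le l}\varrho^{j}\|P_j\|.
\end{align*}
Here we used homogeneity, and the fact that $|z_1^{-1}\circ z|$ is comparable to $\varrho$. Since $j\ge k+1>k+\eps$ and $l+\delta>k+\eps$, every term is bounded by $\varrho^{k+\eps}$.

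If $\varrho>1$ (note that $\varrho$ is bounded on $Q_1$), we simply bound $|g|+|p_k|$ by $\|g\|_\infty$ plus the coefficient bound.

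Taking the supremum over $z$ and $z_1$ completes the proof.

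\textbf{Expected obstacle.} The main difficulty is the uniform coefficient bound near the edges of the one-sided cylinder $Q_1$. The set $z_1^{-1}\circ Q_1$ is not symmetric, and the Galilean shear distorts it, so one has to check that it always contains a cylinder of fixed kinetic size. This holds because $|v_1|\le1$ and the time interval has length $1$, which bounds the shear. The rest of the argument is bookkeeping.
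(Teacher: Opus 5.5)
The paper gives no proof of this lemma; it quotes it from \cite[Lemma 2.10]{RoWe25}, so there is no in-paper argument to compare against. Your proof is correct and follows the standard route. You rescale to $Q_1$ and truncate the degree-$l$ expansion at $z_1$ to its homogeneous components of kinetic degree $\le k$, working in the coordinates $y=z_1^{-1}\circ z$, where $|y|$ equals $\mathrm{dist}_{\mathrm{kin}}(z,z_1)$ exactly. You then bound all components by $\|g\|_{L^\infty(Q_1)}+[g]_{C^{l,\delta}(Q_1)}$ using the finite-dimensionality of $\cP_l$.

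A few details should be tidied.
\begin{itemize}
\item Since $\eps=1$ is allowed, the inequality you need is $j\ge k+1\ge k+\eps$, not a strict one. This suffices because $\varrho\le 1$ in that case.
\item The time component $t-t_1$ of $y$ lies in $(-1,1)$ and can be positive, whereas $Q_1$ is one-sided in $t$. So the homogeneity estimate $|P_j(y)|\le \varrho^j\sup|P_j|$ needs the supremum over the two-sided box $[-1,1]\times\overline{B_1}\times\overline{B_1}$, not over $Q_1$. Norm equivalence on $\cP_l$ supplies this for free.
\item The step you flag as the main obstacle is easier than your treatment suggests. $\|p\|_{L^\infty(Q_1)}$ is one fixed norm on $\cP_l$, so it controls the monomial coefficients of $p$. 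The map $p\mapsto p(z_1\circ\cdot)$ is linear on $\cP_l$, with matrix entries that are polynomials in $z_1$ and hence uniformly bounded for $z_1\in\overline{Q_1}$. This controls the coefficients of every $P_j$ directly, with no need for shifted cylinders or a compactness argument over a family of domains.
\item As written, $z_1^{-1}\circ(z_1\circ\tilde Q)$ is just $\tilde Q$, so that formula is circular. The cylinder you want is, e.g., $z_1^{-1}\circ Q_{1/2}$, which has radius $1/2$, is centered at $z_1^{-1}$, and lies inside $z_1^{-1}\circ Q_1$.
\item For $k=l$ no $L^\infty$ term is needed. Since $\varrho\le 2$ on $Q_1$, you have $\varrho^{l+\delta}\le 2^{\delta-\eps}\varrho^{l+\eps}$ directly.
\end{itemize}
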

\begin{lemma}\label{lem.grabdd.hol}\cite[Lemma 2.7]{ImSi21} Let $r > 0$ and $z_0 \in \R^{2n+1}$. Let $k,l\in\N\cup\{0\}$ and $\eps\in(0,1]$. Then 
\begin{align*}
    [D^lf]_{C^{k,\eps}(Q_r(z_0))}\leq c[f]_{C^{k+l,\eps}(Q_r(z_0))}
\end{align*}
for some constant $c=c(n,k,l,\eps)$.
\end{lemma}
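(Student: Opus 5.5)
This is a cited lemma from \cite[Lemma 2.7]{ImSi21}, so it suffices to reduce it to the definition of the kinetic H\"older seminorm. The plan is to argue one derivative at a time and then iterate. The reason this works is the following observation. If $p\in\cP_{k+l}$, then $D^l p\in\cP_k$, because a differential operator of kinetic order $l$ lowers kinetic degree by exactly $l$. For instance, $\partial_t+v\cdot\nabla_x$ maps $t^a x^\beta v^\gamma$ to terms of degree reduced by $2$, since $v_i\partial_{x_i}$ trades degree $3$ for degree $1$. Similarly, $\partial_{x_i}$ lowers degree by $3$ and $\partial_{v_i}$ lowers it by $1$.

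The steps are as follows.

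\begin{enumerate}
\item Fix $z_0\in Q_r(z_0)$ and let $p_{z_0}\in\cP_{k+l}$ be an almost-minimizer for $[f]_{C^{k+l,\eps}}$ at $z_0$. Then
\[
|f-p_{z_0}|\le [f]\,\mathrm{dist}_{\mathrm{kin}}(\cdot,z_0)^{k+l+\eps}
\]
on $Q_r(z_0)$. By the observation above, $D^lp_{z_0}$ is the natural competitor in $\cP_k$ for $D^lf$ at $z_0$.

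\item To estimate $D^l(f-p_{z_0})$ at a point $z$ with $\rho=\mathrm{dist}_{\mathrm{kin}}(z,z_0)$, I would use the left-invariant structure, namely the translation $z\circ$ and the scaling $S_\rho$. Setting $g=f-p_{z_0}$, it suffices to bound $|D^l g(z)|\lesssim [f]\rho^{k+\eps}$. Pointwise smallness of $g$ alone does not control derivatives. However, $g$ has the same $C^{k+l,\eps}$ seminorm as $f$, because polynomials of degree $\le k+l$ are absorbed into the infimum. So I would interpolate on the cylinder $Q_{\rho}(z)$, rescaled to unit size. On the rescaled cylinder, $\|g\|_{L^\infty}\lesssim [f]\rho^{k+l+\eps}$ and the $C^{k+l,\eps}$ seminorm is $[f]\rho^{k+l+\eps}$. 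A unit-scale estimate $|D^l h(0)|\lesssim \|h\|_{L^\infty(Q_1)}+[h]_{C^{k+l,\eps}(Q_1)}$, valid whenever $l\le k+l+\eps$, then gives the bound after scaling back by $\rho^{-l}$. This unit-scale estimate follows by writing $h$ as its best polynomial at $0$ plus a remainder, and using equivalence of norms on the finite-dimensional space $\cP_{k+l}$.

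\item To handle the case when $z$ lies near the boundary of $Q_r(z_0)$, I would use a covering by smaller cylinders inside the domain, or restrict to points where $Q_{c\rho}(z)\subset Q_r(z_0)$ and treat the rest directly by comparison.
\end{enumerate}

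The main obstacle is making step 2 rigorous. Pointwise control of $D^lg$ via the seminorm requires showing that $D^l$ of the local best polynomials is consistent across nearby base points. In the Campanato-type definition this means that $D^lg$ exists and equals the limit of $D^l p_{z}$, the best polynomials at $z$. This needs the standard telescoping argument: the polynomials $p_{z}$ at scales $2^{-j}$ differ by $O(2^{-j(k+l+\eps)})$ in $L^\infty$ on cylinders of size $2^{-j}$, hence their coefficients converge. I would follow the proof in \cite[Lemma 2.7]{ImSi21} for this part.
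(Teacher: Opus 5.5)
The paper gives no argument here beyond the citation, so the question is whether your sketch closes. It does not, at exactly the point you wave through in Step 3.

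Your Steps 1--2 are the standard argument: $D^l$ maps $\cP_{k+l}$ into $\cP_k$, and $D^l(p_z-p_{z_0})(z)$ is bounded by norm equivalence on $\cP_{k+l}$ at scale $\rho=\mathrm{dist}_{\mathrm{kin}}(z,z_0)$. This works whenever a set that is fat at scale $\rho$ lies in the domain within distance $O(\rho)$ of $z$. But the statement has the \emph{same} cylinder on both sides, so all the content sits at boundary points. A covering argument only gives the bound with a smaller cylinder such as $Q_{r/2}(z_0)$ on the left. The points where no $Q_{c\rho}$ fits cannot be handled ``directly by comparison''.

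Concretely, take $z_0=0$, $r=1$, and $z\in Q_1$ on the top slice $\{t=0\}$ with $1-|x|\le\rho^3$ and $v\cdot x\le-\frac12|x|$. Backward characteristics from $z$ leave $Q_1$ within time $O(\rho^3)$. Hence for every $C$ the set $Q_1\cap\{\mathrm{dist}_{\mathrm{kin}}(\cdot,z)<C\rho\}$ has time-extent $O(C^3\rho^3)$ rather than $\rho^2$. Sup-norms on it do not control the $t$-coefficient of a polynomial at scale $\rho$, and that is exactly the coefficient $D^l$ sees when it contains $\partial_t+v\cdot\nabla_x$. If $D^l$ is built only from $\partial_{v_i}$ and $\partial_{x_i}$, you can repair the argument by working in the slice $\{t=t_z\}$. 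Its cross-section $B_{r^3}\times B_r$ is fat at every scale.

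For transport-type $D$ the gap cannot be closed, because the same-domain inequality is false. Work in $n=1$; the same function of $(t,x_1,v_1)$ works in any dimension.

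\begin{itemize}
\item Let $\sigma=(1-x)/(-v)$ be the backward exit time through $\{x=1\}$.
\item Take $b\in C^\infty(\R)$ with $b=0$ on $(-\infty,-2]$ and $b=1$ on $[-1,\infty)$.
\item Take $\chi\in C^\infty_c((-\frac78,-\frac18))$ with $\chi(-\frac12)=1$.
\item Set $f_\delta=\sigma\, b\big((t-\sigma)/\delta\big)\chi(v)$.
\end{itemize}

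Since $(\partial_t+v\partial_x)\sigma=1$ and $(\partial_t+v\partial_x)(t-\sigma)=0$, we get $(\partial_t+v\partial_x)f_\delta=b\big((t-\sigma)/\delta\big)\chi(v)$. This equals $1$ at $(0,1-\frac\delta2,-\frac12)$ and $0$ at $(0,1-\delta,-\frac12)$. These two points of $Q_1$ are at kinetic distance $(\delta/2)^{1/3}$, so $[(\partial_t+v\partial_x)f_\delta]_{C^{0,\eps}(Q_1)}\ge(\delta/2)^{-\eps/3}$.

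On the other hand, $f_\delta$ has the following properties in $Q_1$:
\begin{itemize}
\item It is supported in $\{t>-2\delta,\ \sigma<2\delta\}$.
\item Its $v$-derivatives up to order two are $O(\delta)$.
\item $t-\sigma$ is constant along characteristics, with $|\partial_v(t-\sigma)|\lesssim\delta$ on the support.
\item Points of $Q_1$ within kinetic distance $d$ of a point $z_0$ of the support satisfy $|t-t_0|\lesssim\delta+d^3$.
\end{itemize}
A direct Taylor expansion then gives $|f_\delta-p_{z_0}|\lesssim\mathrm{dist}_{\mathrm{kin}}(\cdot,z_0)^{3}$ on $Q_1$ uniformly in $\delta$, where $p_{z_0}$ is the kinetic Taylor polynomial. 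Hence $[f_\delta]_{C^{2,\eps}(Q_1)}\lesssim 1$ while the left-hand side blows up as $\delta\to0$.

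So your method can prove the estimate with $Q_{r/2}(z_0)$ on the left, or the same-domain estimate when $D^l$ contains no transport derivative. Either way you still owe the identification $D^lf(z)=D^lp_z(z)$, which you deferred to the reference.
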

Next, we give the converse version of \autoref{lem.grabdd.hol}. 
\begin{lemma}\label{lem.holspace}
    Let $r > 0$ and $z_0 \in \R^{2n+1}$. Let $k\in\{1,2\}$ and $\eps\in(0,1]$. Then we have 
    \begin{align*}
     [f]_{C^{k+\eps}(Q_{r/2}(z_0))}\leq c\left( [\nabla_v^kf]_{C^\eps(Q_r(z_0))}+\left[(\partial_t+v\cdot\nabla_x)^{\left[\frac{k+\eps}{2}\right]}f\right]_{C^{{k+\eps-2[(k+\eps)/2]}}_{\mathcal{T}}(Q_r(z_0))}+[f]_{C^{{k+\eps}}_x(Q_r(z_0))}\right)
    \end{align*}
    where we write for any $\alpha\in(0,2]$ and $\beta\in(0,3]$,
    \begin{equation}\label{defn.hol.xt}
    \begin{aligned}
         &[f]_{C^{{\alpha}}_{\mathcal{T}}(Q_r(z_0))}\coloneqq\sup_{z_1,z=(t_2,x_1+(t_2-t_1)v_1,v_1)\in Q_r(z_0)}\frac{|f(z_1)-f(z)|}{\mathrm{dist}(z_1,z)^{\alpha}},\\
         &[f]_{C^{\alpha}_x(Q_r(z_0))}\coloneqq \sup_{z_1,z=(t_1,x_2,v_1)\in Q_r(z_0)}\frac{|f(z_1)-f(z)|}{\mathrm{dist}(z_1,z)^{\alpha}}.
    \end{aligned}
    \end{equation}
\end{lemma}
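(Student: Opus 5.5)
We prove \autoref{lem.holspace} by building, at each base point $z_1 \in Q_{r/2}(z_0)$, an explicit kinetic Taylor polynomial $p_{z_1} \in \cP_k$ and bounding $|f(z)-p_{z_1}(z)|$ by a path decomposition.

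\textbf{Reduction.} By translation invariance ($z_1\circ\cdot$) and the scaling $S_r$ we may assume $z_1=0$ and $r=1$. All three seminorms on the right-hand side, as well as $[f]_{C^{k+\eps}}$, scale with the same homogeneity $k+\eps$. For $z\in Q_{1/2}$ we write $d=\mathrm{dist}_{\mathrm{kin}}(z,0)$.

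\textbf{Choice of polynomial.} We take $p_0$ to be the kinetic Taylor polynomial of $f$ at $0$, using only the derivatives controlled by the hypotheses:
\begin{itemize}
\item $v$-derivatives up to order $k$;
\item if $k+\eps>2$, i.e.\ $k=2$, also the term $t\,(\partial_t+v\cdot\nabla_x)f(0)$.
\end{itemize}
No $x$-derivatives are needed, since $x$ has weight $3>k+\eps$ unless $k+\eps=3$. In the endpoint case $\eps=1,\ k=2$, the bracket $[\cdot]_{C^3_x}$ is the Lipschitz quotient in $x$ measured with exponent $3$, and $x$ again enters without a polynomial term.

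\textbf{Path from $0$ to $z=(t,x,v)$.} We connect the two points in three legs.
\begin{enumerate}
\item Move along the transport trajectory from $0$ to $(t,tv_0,0)=(t,0,0)$; this lies on the characteristic through $v_0=0$.
\item Change $x$ from $0$ to $x$ at fixed $(t,v)=(t,0)$.
\item Change $v$ from $0$ to $v$ at fixed $(t,x)$.
\end{enumerate}
Each leg has kinetic length $\lesssim d$.

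\textbf{Estimating the legs.}
\begin{itemize}
\item Leg 3: Taylor expansion in $v$ up to order $k$ gives an error $\lesssim [\nabla_v^kf]_{C^\eps}\,|v|^{k+\eps}$. This leaves the lower-order $v$-derivatives evaluated at $(t,x,0)$ rather than at $0$.
\item Leg 2: the $C^{k+\eps}_x$ bound gives an error $\lesssim |x|^{(k+\eps)/3}\le d^{k+\eps}$.
\item Leg 1: for $k=1$ (and $k=2$ with $\eps\le 0$ after subtracting the $t\mathcal{T}f$ term) the $C^{\cdot}_{\mathcal T}$ seminorm gives an error $\lesssim |t|^{(k+\eps)/2}$. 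For $k=2$ we write $f(t,0,0)-f(0)=\int_0^t \mathcal{T}f(s,0,0)\,ds$ with $\mathcal T=\partial_t+v\cdot\nabla_x$, and use $[\mathcal Tf]_{C^{\eps}_{\mathcal T}}$ to compare with $t\,\mathcal Tf(0)$.
\end{itemize}

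\textbf{Main obstacle.} The difficulty is the mismatch of lower-order $v$-derivatives: in leg 3 the expansion uses $\nabla_v^j f(t,x,0)$ for $j<k$, whereas $p_0$ uses $\nabla_v^j f(0)$. The cross terms $(\nabla_v^j f(t,x,0)-\nabla_v^jf(0))\,v^j$ must be $\lesssim d^{k+\eps}$, so we need
\[
|\nabla_v^j f(t,x,0)-\nabla_v^jf(0)|\lesssim d^{k+\eps-j}.
\]
This would be \autoref{lem.grabdd.hol}, but we cannot assume it since $[f]_{C^{k+\eps}}$ is exactly what we are proving. Instead we derive it by interpolation at scale $d$. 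The case of interest is $k=2$, $j=1$. We write $\nabla_v f$ as a difference quotient, $h^{-1}\big(f(\cdot+he)-f\big)$ minus a second-order $v$-Taylor remainder, with $h=d$. Applying the leg 1 and leg 2 estimates to $f$ at both endpoints of the quotient, we get that the transport/$x$ increments of the quotient are $\lesssim d^{-1}\,d^{2+\eps}$, and the remainder is $\lesssim [\nabla_v^2 f]_{C^\eps}\,d^{1+\eps}$. This yields the bound $\lesssim d^{1+\eps}$.

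A further subtlety is that a leg with $v\neq0$ shifts $x$ by $tv$. Choosing the ordering above, with transport at $v=0$ first and the $v$-move last, avoids this. The shrinkage from $Q_r$ to $Q_{r/2}$ guarantees that all intermediate points and the step $h=d$ stay inside $Q_r$. Taking the supremum over $z_1$ concludes the proof.
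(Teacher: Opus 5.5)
Your argument for $k=1$ is correct. There the only cross term is $(\nabla_v f(t,x,0)-\nabla_v f(0))\cdot v$, and $[\nabla_v f]_{C^\eps}$ controls it directly since $\mathrm{dist}_{\mathrm{kin}}((t,x,0),0)\le d$. Your three legs are the paper's decomposition in a different order (the paper does the $v$-leg at $(t_0,x_0)$, then transports with velocity $v_1$, then the $x$-leg).

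The gap is in the case $k=2$, in your resolution of the ``main obstacle''. The second-order remainder in your difference quotient is indeed $O(d^{1+\eps})$. What fails is the claim that the transport/$x$ increments at the two endpoints are $O(d^{2+\eps})$: for $k=2$, leg 1 only determines these increments up to their first-order parts. Write $\mathcal T f=(\partial_t+v\cdot\nabla_x)f$. With $h=d$,
\[
f(t,x,he)-f(0,0,he)=t\,\mathcal T f(0,0,he)+O(d^{2+\eps}),\qquad f(t,x,0)-f(0)=t\,\mathcal T f(0)+O(d^{2+\eps}).
\]
For the first identity, transport from $(0,0,he)$ to $(t,the,he)$ and then move $x$ by $x-the$, where $|x-the|\le 2d^3$. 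After dividing by $h$ you are left with $\frac{t}{h}\big(\mathcal T f(0,0,he)-\mathcal T f(0)\big)$, where $|t/h|\le d$. Bounding this by $d^{1+\eps}$ requires $|\mathcal T f(0,0,he)-\mathcal T f(0)|\lesssim h^\eps$, that is, H\"older continuity of $\mathcal T f$ in $v$. None of the three seminorms supplies this, because $[\cdot]_{C^\eps_{\mathcal T}}$ only compares points with the same velocity.

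This cannot be patched, because the $k=2$ inequality as stated is false. Take $f=t\,v_1$.
\begin{itemize}
\item $\nabla_v^2 f=0$.
\item $\mathcal T f=v_1$ is constant along every transport trajectory.
\item $f$ is independent of $x$.
\end{itemize}
So the right-hand side vanishes. But $f\notin\cP_2$ (it has kinetic degree $3$), so $[f]_{C^{2+\eps}(Q_{r/2}(z_0))}>0$. To see this, compare $f$ with any $p\in\cP_2$ at the four points $(-\lambda^2,0,\pm\lambda e_1)$ and $(0,0,\pm\lambda e_1)$ for a fixed small $\lambda$. For this $f$, your needed bound fails visibly: $|\nabla_v f(t,x,0)-\nabla_v f(0)|=|t|$, which is not controlled by a right-hand side equal to zero.

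The paper's proof has the same hole, only relocated. Its estimate of $J_2$ compares $\mathcal T f$ along the trajectory through $\bar z_1=(t_0,x_0,v_1)$ with $\mathcal T f(z_0)$, which is a comparison across different velocities $v_1\neq v_0$. Reordering the legs only moves this missing estimate around.

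A correct $k=2$ statement needs an extra term controlling the $v$-H\"older quotient of $\mathcal T f$, for instance the full kinetic seminorm $[\mathcal T f]_{C^\eps(Q_r(z_0))}$. With that term, your difference-quotient step closes exactly as you planned, and so does the paper's $J_2$.
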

A similar result can also be established for $k\geq3$. Since it is not needed in this paper, we omit its statement and proof.
\begin{proof}
We assume $r=2$ and $z_0=0$. We observe from the definition of the H\"older space that  
\begin{align*}
    \sup_{z_0\neq z_1\in Q_1}\frac{|f(z_1)-p_{z_0,k}(z_1)|}{\mathrm{dist}(z_0,z_1)^{k+\eps}}=[f]_{C^{k+\eps}(H_1)},
\end{align*}
where $p_{z_0,k}$ is the $k$-th order Taylor polynomial of $f$ at $z_0$. In particular, $p_{z_0,2}$ is given by
\begin{align*}
    p_{z_0,2}(z)\coloneqq f(z_0)+\nabla_vf(z_0)\cdot (v-v_0)+\frac12(v-v_0)^{\mathrm{t}}\nabla_v^2f(z_0) (v-v_0)+(\partial_t+v_0\cdot\nabla_x)f(z_0)(t-t_0).
\end{align*}
Let us write
 \begin{align*}
     z_1=(t_1,x_1,v_1), \quad \overline{z}_1\coloneqq(t_0,x_0,v_1),\quad\text{and}\quad \widetilde{z}_1\coloneqq (t_1,x_0+(t_1-t_0)v_1,v_1)
 \end{align*}
 to see that $\overline{z}_1,\widetilde{z}_1\in Q_2$ and 
 \begin{align}\label{dist.ineq}
     \min\big\{ \mathrm{dist}(\overline{z}_1,\widetilde{z}_1),|v_1-v_0|,\mathrm{dist}(\widetilde{z}_1,z_1) \big\}\leq \mathrm{dist}(z_0,z_1).
 \end{align}
 Thus, we have 
 \begin{align*}
     |f(z_1)-p_{z_0,k}(z_1)|&\leq |f(\overline{z}_1)-p_{z_0,k}(\overline{z}_1)|+|f(\overline{z}_1)-f(\widetilde{z}_1)+(p_{z_0,k}(\overline{z}_1)-p_{z_0,k}({z}_1))|+|f(z_1)-f(\widetilde{z}_1)|\\
     &\eqqcolon J_1+J_2+J_3.
 \end{align*}
 For the two terms $J_1$ and $J_2$, we use the fundamental theorem of calculus to see that
 \begin{align*}
     J_1\leq c[\nabla^k_vf]_{C^{\eps}(Q_2)}|v_1-v_0|^{k+\eps},\quad J_2\leq c\left[(\partial_t+v\cdot\nabla_x)^{\left[\frac{k+\eps}{2}\right]}f\right]_{C^{{k+\eps-2[(k+\eps)/2]}}_{\mathcal{T}}(Q_2)}\mathrm{dist}(\overline{z}_1,\widetilde{z}_1)^{{k+\eps-2[(k+\eps)/2]}}.
 \end{align*}
 Next, by the definition of the H\"older space given in \eqref{defn.hol.xt}, we have 
 \begin{align*}
     J_3\leq c\mathrm{dist}(\tilde{z}_1,z_1)^{k+\eps}[f]_{C_x^{k+\eps}(Q_2)}.
 \end{align*}
 Thus, using the estimates $J_1,J_2$ and $J_3$ together with \eqref{dist.ineq}, we derive 
 \begin{align*}
     \frac{|f(z_1)-p_{z_0,k}(z_1)|}{\mathrm{dist}(z_0,z_1)^{k+\eps}}\leq c\left([\nabla_v^kf]_{C^\eps(Q_2)}+\left[(\partial_t+v\cdot\nabla_x)^{\left[\frac{k+\eps}{2}\right]}f\right]_{C^{{k+\eps-2[(k+\eps)/2]}}_{\mathcal{T}}(Q_2)}+[f]_{C^{{k+\eps}}_x(Q_2)}\right),
 \end{align*}
 which gives 
 \begin{align*}
     [f]_{C^{k+\eps}(Q_1)}\leq c\left([\nabla_v^kf]_{C^\eps(Q_2)}+\left[(\partial_t+v\cdot\nabla_x)^{\left[\frac{k+\eps}{2}\right]}f\right]_{C^{{k+\eps-2[(k+\eps)/2]}}_{\mathcal{T}}(Q_2)}+[f]_{C^{{k+\eps}}_x(Q_2)}\right).
 \end{align*}
By this with $f$ replaced by $f_0(z)\coloneqq f(z_0\circ S_{r/2}z)$, we have the desired estimate.
\end{proof}

\subsection{Regularity estimates}
\label{subsec:regularity}

In this subsection, we collect several regularity estimates for kinetic equations (in the interior and up to the boundary) that have been established in previous works. In addition, we provide three explicit 1D solutions to Kolmogorov's equation in the half space which appear in expansion estimates at the grazing set $\gamma_0$.

First, we recall interior Schauder type estimates of solutions to kinetic equations (see \cite[Lemma 2.23]{RoWe25} or \cite[Lemma 2.5]{KiWe26}). 
\begin{lemma}\label{lem.intsch}
    Let $r > 0$ and $z_0 \in \R^{2n+1}$. Let $k\in \N\cup\{0\}$ and $\eps\in(0,1)$. Let $A\in C^{k-1,\eps}(Q_r(z_0))$, $B\in C^{k-2,\eps}(Q_r(z_0))$, $F\in C^{k-2,\eps}(Q_r(z_0))$ and $G\in C^{k-1,\eps}(Q_r(z_0))$. For any weak solution $f$ to
    \begin{align*}
        \partial_tf+v\cdot\nabla_xf-\ddiv(A\nabla_vf)=B\cdot\nabla_vf+F-\ddiv(G)\quad\text{in }Q_{r}(z_0),
    \end{align*}
    we have 
    \begin{align*}
        [f]_{C^{k,\eps}(Q_{r/2}(z_0))}\leq cr^{-(k+\eps)}\left(\|f\|_{L^\infty(Q_{r}(z_0))}+r^{k+\eps}[F]_{C^{k-2,\eps}(Q_r(z_0))}+r^{k+\eps}[G]_{C^{k-1,\eps}(Q_r(z_0))}\right)
    \end{align*}
    for some constant $c=c(n,\Lambda,\eps,\|A\|_{C^{k-1,\eps}(Q_r(z_0))},\|B\|_{C^{k-2,\eps}(Q_r(z_0))})$.
\end{lemma}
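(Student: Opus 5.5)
We prove the statement by reducing it to the interior Schauder theory for constant-coefficient kinetic equations through a freezing-of-coefficients and Campanato-type iteration. First we normalize: replacing $f$ by $f_0(z) := f(z_0\circ S_r z)$, the rescaled function solves an equation of the same type in $Q_1$. Here the coefficients are $A(z_0\circ S_r\cdot)$, $rB(z_0\circ S_r\cdot)$, and the data are $r^2F$, $rG$. The Hölder seminorms of the coefficients only improve under this scaling. It therefore suffices to treat $r=1$, $z_0=0$, and to track how $r^{k+\eps}$ enters through the homogeneity of the seminorms.

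For constant $A$ with $B=0$, we use the known smoothness of solutions of $\partial_t f+v\cdot\nabla_x f-\ddiv(A\nabla_v f)=0$. By hypoellipticity, and after a linear change of $v$ variables reducing $A$ to the identity, every $C^{m}$ kinetic seminorm on $Q_{1/2}$ is bounded by $\|f\|_{L^\infty(Q_1)}$.

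For variable coefficients we run the standard perturbation scheme. At each point and scale $\rho$, we compare $f$ with the solution $h$ of the frozen equation with coefficient $A(z_1)$ on $Q_\rho(z_1)$. The difference solves an equation whose right-hand side is
\begin{align*}
\ddiv((A-A(z_1))\nabla_v f)+B\cdot\nabla_v f+F-\ddiv G,
\end{align*}
and each of these terms is small in the appropriate norm at scale $\rho$. This is because $A\in C^{k-1,\eps}$ differs from its $(k-1)$-th order kinetic Taylor polynomial by $O(\rho^{k-1+\eps})$. For $k\ge 2$ one instead freezes the Taylor polynomial of $A$, or equivalently differentiates the equation in $v$ and applies induction on $k$, using \autoref{lem.grabdd.hol} and \autoref{lem.holspace}. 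Energy and $L^\infty$ estimates for the difference then yield the decay
\begin{align*}
\inf_{p\in\cP_k}\|f-p\|_{L^\infty(Q_\rho)}\le c\rho^{k+\eps}.
\end{align*}
By the definition of the kinetic Hölder seminorm, this gives $[f]_{C^{k,\eps}}$.

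Lower-order terms in the resulting estimate are absorbed via the interpolation inequality \autoref{lem.interpolhol} together with a covering argument, in the usual Simon-type absorption lemma. The main obstacle is the iteration at higher $k$. There the error terms must be expanded consistently in kinetic polynomials, and the transport derivative $\partial_t+v\cdot\nabla_x$ counts as order two. Since this lemma is already established in \cite[Lemma 2.23]{RoWe25} and \cite[Lemma 2.5]{KiWe26}, we would cite those results and verify only the scaling reduction above.
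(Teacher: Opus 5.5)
Your proposal ends up doing exactly what the paper does: the paper gives no argument of its own and simply recalls this estimate from \cite[Lemma 2.23]{RoWe25} and \cite[Lemma 2.5]{KiWe26}, so your freezing/Campanato outline is only heuristic and the actual proof is the citation plus your scaling reduction. One point to tighten in that reduction: the rescaled coefficients $A(z_0\circ S_r\cdot)$, $rB(z_0\circ S_r\cdot)$ and data $r^2F$, $rG$ are bounded by the original norms with the correct powers of $r$ only when $r\le 1$. For large $r$ the estimate in fact fails: $f=\sin((v_1+t)/r)\,e^{-t/r^2}$ solves the equation with $A=I$, $B=e_1$, $F=G=0$ and $\|f\|_{L^\infty(Q_r)}\le e$, yet $[f]_{C^{1,\eps}(Q_{r/2})}\gtrsim r^{-(1+\eps)/2}$. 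So both your reduction and the lemma itself should be read with $r\le 1$, which is the only regime in which the paper uses it.
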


Next, we provide the H\"older regularity of the solution at $\gamma_+$.
\begin{lemma}\label{lem.bdrysch}
 Let $r > 0$ and $z_0 \in \R^{2n+1}$. Let $k\in\{1,2\}$ and $\eps\in(0,1)$. Let $A\in C^{k-1,\eps}(H_r(z_0))$, $B\in C^{k-2,\eps}(H_r(z_0))$, $F\in C^{k-2,\eps}(H_r(z_0))$ and $G\in C^{k-1,\eps}(H_r(z_0))$ with $x_{0,n}=0$, $v_{0,n}<0$ and $H_{r}(z_0)\cap\gamma_0=\emptyset$. Assume that $f$ is a weak solution to 
 \begin{align*}
     \partial_tf+v\cdot\nabla_xf-\ddiv(A\nabla_vf)=B\cdot\nabla_vf+F-\ddiv(G)\quad\text{in }H_{r}(z_0).
 \end{align*}
 Then we have 
    \begin{align*}
        [f]_{C^{k,\eps}(H_{r/2}(z_0))}\leq cr^{-(k+\eps)}\left(\|f\|_{L^\infty(H_{r}(z_0))}+r^{k+\eps}[F]_{C^{k-2,\eps}(H_r(z_0))}+r^{k+\eps}[G]_{C^{k-1,\eps}(H_r(z_0))}\right)
    \end{align*}
    for some constant $c=c(n,\Lambda,\eps,\|A\|_{C^{k-1,\eps}(H_r(z_0))},\|B\|_{C^{k-2,\eps}(H_r(z_0))})$.
\end{lemma}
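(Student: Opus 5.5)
We reduce to the half-space situation $H_r(z_0)$ with $x_{0,n}=0$, $v_{0,n}<0$, and $H_r(z_0)\cap\gamma_0=\emptyset$. Here the whole spatial boundary piece $\{x_n=0\}\cap H_r(z_0)$ is outgoing, and no boundary condition is imposed. By scaling $f_0(z):=f(z_0\circ S_r z)$, it is enough to treat $r=1$; the factor $r^{-(k+\eps)}$ and the weights $r^{k+\eps}$ on $F,G$ then come from homogeneity of the seminorms. The guiding idea is that characteristics near an outgoing boundary point enter the domain from the interior in the past. So the boundary $\{x_n=0\}$ behaves like a ``final-time'' boundary in the transport direction, and the solution should be as regular there as in the interior.

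\textbf{Step 1 (Hölder continuity and higher regularity up to $\gamma_+$).} First we establish $C^\alpha$ regularity up to $\{x_n=0\}$ near $z_0$. One option is the known results for outgoing boundaries, as in \cite{RoWe25}, which we may cite directly since the lemma is quoted from there. If we argue ourselves, we use that for $|v-v_0|$ small the quantity $v_n\le v_{0,n}/2<0$. Hence any point $(t,x,v)$ near the boundary satisfies $(t-s,x-sv,v)\in\{x_n>0\}$ for small $s>0$, at distance $\gtrsim s|v_{0,n}|$ from the boundary. This gives a barrier- or chain-type argument for De Giorgi oscillation decay in cylinders that are one-sided in time and slanted along transport.

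For higher regularity we use a Campanato iteration. Near each boundary point we compare $f$ with the solution of the frozen-coefficient problem. Solutions of constant-coefficient Kolmogorov equations in $\{x_n>0\}$ with outgoing boundary can be extended, or rather the equation can be solved on a domain whose boundary is never hit by backward characteristics. The idea is to extend $f$ across $\{x_n=0\}$: the region $\{x_n<0\}$ near $z_0$ is only reached by forward characteristics, so we solve the equation with extended coefficients in a full cylinder $Q_{1/2}(z_0)$. The solution is then determined by data on the past part, which lies inside $\{x_n>0\}$ up to a small set.

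\textbf{Step 2 (Schauder estimate and main obstacle).} With the local representation from Step 1, we apply the interior Schauder estimate \autoref{lem.intsch} to obtain $C^{k,\eps}$ for $k\in\{1,2\}$. The polynomial approximation in $\mathcal{P}_k$ is controlled via \autoref{lem.holspace} and \autoref{lem.interpolhol}. The main obstacle is justifying the extension/uniqueness argument rigorously. Concretely, we must show that a weak solution in $H_1(z_0)$ coincides with a solution in a one-sided cylinder tilted along $v_0$ whose kinetic boundary avoids $\{x_n=0\}$. The cylinder $z_0\circ S_\delta Q_1$, for $\delta$ small relative to $|v_{0,n}|$, has this property after shifting the center slightly into the domain along the backward characteristic. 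Covering $H_{1/2}(z_0)$ by such tilted cylinders and summing the local estimates then yields the claim.
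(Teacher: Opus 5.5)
Your route is different from the paper's, and as written it has a genuine gap at the point where Schauder theory enters. The paper does not argue from scratch. It invokes the blow-up argument of \cite[Lemma 4.1]{RoWe25} for non-divergence equations and notes that it carries over to divergence form and to the term $\ddiv G$ via \cite[Lemma 6.1, Lemma 2.5]{KiWe26}, and to one-sided cylinders. Citing \cite{RoWe25} alone, as you suggest, would therefore not cover $k=1$, where $A\in C^{\eps}$ and the divergence structure cannot be removed.

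\textbf{The coefficient extension.} Applying \autoref{lem.intsch} on a cylinder $Q$ that crosses $\{x_n=0\}$ requires the extended $A,G$ to lie in $C^{k-1,\eps}(Q)$ and $B,F$ in $C^{k-2,\eps}(Q)$, with norms controlled by those on $H_r(z_0)$. You never say how this is achieved. The step you single out as the main obstacle, extension and uniqueness of the solution, is routine; it is done in the proof of \autoref{lem.hargamma+}. There, however, the coefficients are extended by $A_0\equiv I$, $B_0\equiv 0$, $F_0\equiv 0$. That is harmless for Harnack but useless for Schauder: with coefficients jumping across $\{x_n=0\}$, \autoref{lem.intsch} does not apply on cylinders crossing that hyperplane, so you get no estimate near the boundary.

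What is missing is a regularity-preserving extension in kinetic H\"older spaces. One possible repair:
\begin{itemize}
\item For $k=1$, extend $A,G$ by a McShane-type extension with respect to a metric equivalent to the kinetic distance, and $B,F$ by any bounded extension.
\item For $k=2$, rewrite the equation in non-divergence form with drift $B+\ddiv A$ and source $F-\ddiv G$. These are $C^{\eps}$ by \autoref{lem.grabdd.hol}, so only $C^{\eps}$ extensions plus a non-divergence interior Schauder estimate are needed.
\end{itemize}
Alternatively, your frozen-coefficient Campanato comparison avoids the issue, since constant-coefficient equations extend trivially. But that scheme would have to be carried out, and you do not do so.

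\textbf{The tilted cylinders.} Your description of these is not correct as stated, and it hides why the extension is unavoidable.
\begin{itemize}
\item If $Q_\delta(z_1)\subset\{x_n>0\}$, its top-time slice forces $x_{1,n}\ge\delta^3$. Since $v_n<0$, earlier slices lie even deeper, so $Q_{\delta/2}(z_1)$ contains only points with $x_n\ge\frac78\delta^3$.
\item Shifting the center along backward characteristics does not help; it only lowers the top time.
\item Hence cylinders whose whole kinetic boundary avoids $\{x_n\le0\}$ cannot cover $H_{1/2}(z_0)$ at a uniform scale. Using scale $\delta\sim x_n^{1/3}$ gives only the degenerate bound $x_n^{-(k+\eps)/3}$.
\end{itemize}
What is true, and what you actually need, is that $\partial_{\mathrm{kin}}(Q\cap\{x_n>0\})$ does not meet $\{x_n=0\}$. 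This already holds for $Q=Q_r(z_0)$, or a slightly smaller concentric cylinder so the data are continuous, because $v_n<0$ on $B_r(v_0)$; no shifting is needed. The cylinders on which you apply interior estimates must then cross $\{x_n=0\}$, which brings you back to the coefficient-extension issue above.
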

\begin{proof}
     First note that the regularity at $\gamma_+$ of kinetic equations in  non-divergence form was established in \cite[Lemma 4.1]{RoWe25} by using a blow-up argument. The exact same argument also works for kinetic equations in divergence form as was explained in \cite[Lemma 6.1]{KiWe26} and also the term $\ddiv G$ can treated by the same approach (see \cite[Lemma 2.5]{KiWe26}). Note that all of these arguments also work for one-sided cylinders in $t$ without any major modifications.
\end{proof}

Next, we recall the expansion estimates at the grazing set $\gamma_0$ which were established in \cite{KiWe26}. Here we argue that they remain true in one-sided cylinders. 

Before stating the expansion estimates, we give three explicit solutions $\phi_0$, $\psi_0$ and $\phi_1$ to Kolmogorov's equation in the half-space in 1D, which were introduced and analyzed in \cite[Appendix C]{KiWe26}. As they appear in the expansion and they will also play an important role in Section \ref{sec:bdry-Harnack}, we collect some of their basic properties below. 

The functions $\phi_0 = \phi_0(x_n,v_n)$, $\psi_0=\psi_0(x_n,v_n)$ and $\phi_1 = \phi_1(x_n,v_n)$ are defined as follows:
    \begin{align*}
        \phi_0(x_n,v_n) &\coloneqq \begin{cases}
            x_n^{1/6}U\left(-\frac16,\frac23,-\frac{v_n^3}{9x_n}\right)&\quad\text{if }v_n<0,\\
            \frac{\Gamma(7/6)}{\Gamma(1/6)}x_n^{1/6}e^{-\frac{v_n^3}{9x_n}}U\left(\frac56,\frac23,\frac{v_n^3}{9x_n}\right)&\quad\text{if }v_n\geq0,
        \end{cases}
        \\ 
                \phi_1(x_n,v_n)&\coloneqq \begin{cases}
            -x_n^{7/6}U\left(-\frac76,\frac23,-\frac{v_n^3}{9x_n}\right)&\quad\text{if }v_n<0,\\
            -\frac{\Gamma(13/6)}{\Gamma(-5/6)}x^{7/6}e^{-\frac{v_n^3}{9x_n}}U\left(\frac{11}{6},\frac23,\frac{v_n^3}{9x_n}\right)&\quad\text{if }v_n\geq0,
        \end{cases}
        \\
         \psi_0(x_n,v_n) &\coloneqq x_n^{\frac{2}{3}}\left( M\left( - \frac{2}{3} , \frac{2}{3} , -\frac{v_n^3}{9x_n} \right) +\frac{\Gamma\left(\frac23\right)}{\Gamma\left(-\frac{2}3\right)}\frac{\Gamma\left(-\frac{1}3\right)}{\Gamma\left(\frac43\right)} \frac{v_n}{(9x_n)^{1/3}} M\left( - \frac{1}{3} , \frac{4}{3} ,-\frac{v_n^3}{9x_n} \right)\right) - a v_n^2,
    \end{align*}
for a suitable $a \in \R$ (determined in \cite[Lemma C.6]{KiWe26}), where $U$ denotes the Tricomi confluent hypergeometric function and $M$ denotes a  hypergeometric function of the first kind.

\begin{itemize}
\item The function $\psi_0=\psi_0(x_n,v_n)$ solves 
 \begin{equation}\label{eq.psi0}
\left\{
\begin{alignedat}{3}
v_n\partial_{x_n}\psi_0-\partial_{v_n,v_n}\psi_0&=1&&\qquad \mbox{in  $\{x_n>0\} \times \R^n$}, \\
\psi_0&=0&&\qquad  \mbox{in $\{x_n=0\}\times \{v_n>0\}$}.
\end{alignedat} \right.
\end{equation} 
\item The function $\partial_{v}\phi_1=\partial_{v_n}\phi_1(x_n,v_n)$ solves 
\begin{equation}\label{eq.dphi1}
\left\{
\begin{alignedat}{3}
v_n\partial_{x_n}(\partial_{v_n}\phi_1)-\partial_{v_n,v_n}(\partial_{v_n}\phi_1)&=m_0\phi_0&&\qquad \mbox{in  $\{x_n>0\} \times \R^n$}, \\
\partial_{v}\phi_1&=0&&\qquad  \mbox{in $\{x_n=0\}\times \{v_n>0\}$}
\end{alignedat} \right.
\end{equation} 
for some constant $m_0\neq0$. 
    \item The function $v_n\partial_{v_n}\phi_0$ solves 
   \begin{equation}\label{eq.vdphi0}
\left\{
\begin{alignedat}{3}
v_n\partial_{x_n}(v_n\partial_{v_n}\phi_0)-\partial_{v_n,v_n}(v_n\partial_{v_n}\phi_0)&=-3\partial_{v_n,v_n}\phi_0&&\qquad \mbox{in  $\{x_n>0\} \times \R^n$}, \\
v_n\partial_{v_n}\phi_0&=0&&\qquad  \mbox{in $\{x_n=0\}\times \{v_n>0\}$}.
\end{alignedat} \right.
\end{equation} 
\item We observe the homogeneity of the functions.
\begin{equation}\label{homogen}
    \begin{aligned}
       \phi_0(S_{r}z)=r^{\frac12}\phi_0(z),\quad\psi_0(S_rz)=r^2\psi_0(z),\quad \phi_1(S_rz)=r^{3+\frac12}\phi_1(z).
    \end{aligned}
\end{equation}
\item Combining the regularity results given in \autoref{lem.intsch} and \autoref{lem.bdrysch} and the homogeneity as in \eqref{homogen}, we derive for any  $k\in\{0,1,2\}$ and $\delta\in[0,1]$,
\begin{align}\label{psi0.dd}
    [\phi_0]_{C^{k,\delta}(H_{\rho_0}(z_0))}\leq c\rho_0^{\frac12-(k+\delta)},\quad[\psi_0]_{C^{k,\delta}(H_{\rho_0}(z_0))}\leq c\rho_0^{2-(k+\delta)},\quad [\partial_{v_n}\phi_1]_{C^{k,\delta}(H_{\rho_0}(z_0))}\leq c\rho_0^{\frac52-(k+\delta)}
\end{align}
with $\rho_0\coloneqq \frac{\max\{(x_{0,n})^{\frac13},|v_{0,n}|\}}4$, provided that $x_{0,n}\geq (v_{0,n})^3$.
\end{itemize}

From now on, for the convenience of notation, we always write
\begin{align}\label{notationvarphi}
    \varphi_0(z)\coloneq \phi_0(x_n,v_n),\ \varphi_1(z)\coloneq v_n\partial_{v_n}\phi_0(x_n,v_n),\, \varphi_2(z)\coloneq \psi_0(x_n,v_n),\, \varphi_3(z)\coloneq \partial_{v_n}\phi_1(x_n,v_n).
\end{align}
Lastly, we recall the vector space $\mathcal{T}_{\lambda,\eps}$ defined by
\begin{align*}
    \mathcal{T}_{\lambda,\eps}\coloneqq \left\{p_0{\varphi_0}+p_1{\varphi_1}+p_2{\varphi_2}+p_3{\varphi_3}\,\bigg|\begin{array}{l} p_0,p_1\in \cP_{[\lambda+\eps-\frac12]}\text{ with }p_1(0)=0,\\p_2\in P_{[\lambda+\eps-2]},\,p_3\in P_{[\lambda+\eps-\frac52]}
    \end{array}\right\}.
\end{align*}
We are now ready to give the lemma for the expansion estimates at $\gamma_0$ in one-sided cylinders.
\begin{lemma}\label{lem.hol12}
 Let $\lambda\in\N\cup\{0\}$ and $\eps\in(0,1)$ with $\lambda+\eps<3$, and $\lambda+\eps-\frac12\notin \N\cup\{0\}$. Let $A\in C^{\lambda-\frac12+\eps}(H_r)$, $B\in C^{\lambda-\frac32+\eps}(H_r)$ and $F\in C^{\lambda-2+\eps}(H_r)$ with $r\leq1$ and let $f$ be a weak solution to 
    \begin{equation*}
\left\{
\begin{alignedat}{3}
\partial_tf+v\cdot\nabla_xf-\ddiv({A}\nabla_ v{f})&=B\cdot\nabla_vf+F&&\qquad \mbox{in  $H_r$}, \\
{f}&=0&&\qquad  \mbox{in $\big(\{x_n=0\}\times \{v_n>0\}\big) \cap Q_r$}
\end{alignedat} \right.
\end{equation*} 
with $(A(0))_{n,n}=1$. Then there exists $P\in \mathcal{T}_{\lambda,\eps}$ such that for any $\rho\leq \frac{r}{2}$, 
\begin{align*}
    \|f-P\|_{L^\infty(H_\rho)}\leq c\left( \frac{\rho}{r}\right)^{\lambda+\eps}(\|f\|_{L^\infty(H_r)}+r^{\lambda+\eps}[F]_{C^{\lambda-2+\eps}(H_r)})
\end{align*}
for some constant $c=c(n,\Lambda,\eps,\|A\|_{C^{\lambda-\frac12+\eps}(H_r)},\|B\|_{C^{\lambda-\frac32+\eps}(H_r)})$.
\end{lemma}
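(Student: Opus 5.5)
The statement is the expansion estimate from \cite[Proposition 5.1]{KiWe26}, now claimed on one-sided cylinders $H_r$ (times in $(-r^2,0]$ only). My plan is to rerun the argument of \cite{KiWe26} and check that each ingredient survives the loss of the future half of the cylinder. By the scaling $z\mapsto S_r z$ (which preserves $\{x_n>0\}$ and the boundary condition on $\{x_n=0,v_n>0\}$) and normalisation, it suffices to take $r=1$ and $\|f\|_{L^\infty(H_1)}+[F]_{C^{\lambda-2+\eps}(H_1)}\le 1$.

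\textbf{Step 1: Campanato-type decay at one scale.} I would show that there is $\rho_*\in(0,\frac12)$ such that for every $P\in\mathcal{T}_{\lambda,\eps}$ fitted at scale $\rho$, one can find $\tilde P\in\mathcal{T}_{\lambda,\eps}$ with
\[
\|f-\tilde P\|_{L^\infty(H_{\rho_*\rho})}\le \rho_*^{\lambda+\eps}\,\|f-P\|_{L^\infty(H_\rho)} + C\rho^{\lambda+\eps}.
\]
I would argue by compactness and contradiction, in the usual blow-up way. The Liouville-type classification of solutions of the constant-coefficient problem $\partial_t f+v\cdot\nabla_x f-\ddiv(A(0)\nabla_v f)=0$ in the half-space, with polynomial growth of order below $3$, identifies the limit. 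After the normalisation $A_{nn}(0)=1$ and a linear change of variables in $(x',v')$, such solutions lie in $\mathcal{T}_{\lambda,\eps}$. The functions $\varphi_1,\varphi_2,\varphi_3$ enter because they absorb the first-order corrections produced by freezing $A$, $B$ and $F$. This is exactly where the identities \eqref{eq.psi0}, \eqref{eq.dphi1} and \eqref{eq.vdphi0} are used: each lower-order error term is matched by one of the explicit profiles, and the homogeneities \eqref{homogen} show that each profile sits at the right order.

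\textbf{Step 2: the one-sided issue.} For the compactness in Step 1, I need uniform H\"older bounds up to $\gamma_0$, together with interior and $\gamma_+$ estimates. These are \autoref{lem.intsch} and \autoref{lem.bdrysch}, both stated for cylinders $Q_r(z_0)$ and $H_r(z_0)$, which are already one-sided in time. The $C^{1/2}$ bound at $\gamma_0$ of \cite{KiWe26} is proved via barriers and comparison on domains whose kinetic boundary contains only the initial time slice, so it also only uses the past. The Liouville theorem is applied to entire solutions on $(-\infty,0]\times\{x_n>0\}\times\R^n$. I would therefore need the ancient-solution version: one-sided in time, with growth $|z|^{\lambda+\eps}$. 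Its proof in \cite{KiWe26} goes by iterating the $C^{1/2}$ estimate on difference quotients in $t$, $x'$ and $v'$, and along the transport direction, over cylinders $H_R$ with $R\to\infty$. All of these cylinders are one-sided, so I expect it to carry over verbatim.

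\textbf{Step 3: iteration.} I would iterate Step 1 at the scales $\rho_k=\rho_*^k/2$. The coefficients of the successive $P_k$ form Cauchy sequences, because the norms of the basis elements $p\varphi_i$ on $H_\rho$ scale like $\rho^{\deg}$ and all these degrees lie strictly below $\lambda+\eps$. The non-integrality assumption $\lambda+\eps-\frac12\notin\N\cup\{0\}$ keeps the degrees away from $\lambda+\eps$. Passing to the limit produces a single $P$ with the stated decay, and undoing the scaling gives the bound with $(\rho/r)^{\lambda+\eps}$. The main obstacle I expect is the one-sided Liouville theorem, because in Step 1 the blow-up limits may only be defined for $t\le0$.
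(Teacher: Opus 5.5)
Your proposal is correct and follows the paper's proof. The paper likewise reruns \cite[Proposition 5.1]{KiWe26} unchanged on one-sided cylinders, noting that the only change is that the blow-up limit lives on $(-\infty,0]\times\{x_n>0\}\times\R^n$, and that the Liouville theorems of \cite[Section 4]{KiWe26} still hold there because they use only global $C^\alpha$ estimates and the stationary Liouville theorem of \cite[Section 3]{KiWe26}. One small fix in your own Step 1: with contraction factor exactly $\rho_*^{\lambda+\eps}$ plus an additive $C\rho^{\lambda+\eps}$, the iteration in Step 3 loses a factor $\log(1/\rho)$, so you should state the one-step estimate with $\frac12\rho_*^{\lambda+\eps}$; this is available because $\lambda+\eps$ is not one of the homogeneities (all in $\frac12\mathbb{Z}$) of the basis functions $p\varphi_i$.
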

\begin{proof}
    Note that in \cite[Proposition 5.1]{KiWe26}, we have proved the same estimate when the domain is given by a two-sided kinetic cylinder $\mathcal{H}_r=(-r^2,r^2)\times (B_{r^3}\cap\{x_n>0\})\times B_r$. Every argument used in \cite[Section 5]{KiWe26} also works for the one-sided kinetic cylinder $H_r$, and the only difference is that the domain of the limiting equation in \cite[(5.38)]{KiWe26} is replaced by $(-\infty,0]\times \{x_n>0\}\times \bbR^n$. Moreover, all the Liouville theorems given in \cite[Section 4]{KiWe26} hold for the domain $(-\infty,0]\times \{x_n>0\}\times \bbR^n$, as we only used global $C^\alpha$ estimates for kinetic equations and Liouville's theorem for stationary solutions given in \cite[Section 3]{KiWe26}. Therefore, without changing any lines of the proof of \cite[Proposition 5.1]{KiWe26}, we can derive expansion estimates when the domain is given by the one-sided cylinder $H_r$.
\end{proof}

We end this section by providing pointwise estimates of the gradient in $v$ of the solution up to the grazing set. This result is essentially a corollary of \autoref{lem.hol12} and the regularity estimates of solutions up to $\gamma_{\pm}$ (see \cite{Sil22,Zhu25} and \cite{RoWe25,KiWe26}).

\begin{lemma}\label{lem.bdry.gamma-}
   Let $r > 0$ and $\eps \in (0,1)$. Let $A\in C^\eps(H_r)$ and $B\in L^\infty(H_r)$. Let $f$ be a weak solution to 
    \begin{equation*}
\left\{
\begin{alignedat}{3}
\partial_tf+v\cdot\nabla_xf-\ddiv({A}\nabla_ v{f})&=B\cdot\nabla_vf&&\qquad \mbox{in  $H_r$}, \\
{f}&=0&&\qquad  \mbox{in $\big(\{x_n=0\}\times \{v_n>0\}\big) \cap H_r$}.
\end{alignedat} \right.
\end{equation*} 
Then for any $z_1\in H_{r/4}$, 
\begin{align}\label{goal.gra}
    |\nabla_vf(z_1)|\leq c\max\{(x_{1,n})^{\frac13},|v_{1,n}|\}^{-\frac12}r^{-\frac12}\|f\|_{L^\infty(H_r)}
\end{align}
for some constant $c=c(n,\Lambda,\eps,\|A\|_{C^\eps(H_r)},\|B\|_{L^\infty(H_r)})$.
\end{lemma}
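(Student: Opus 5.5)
We prove \eqref{goal.gra} by splitting according to the position of $z_1$ relative to the grazing set and using a scaling argument around $z_1$. Set $d_1\coloneqq \max\{(x_{1,n})^{\frac13},|v_{1,n}|\}$ and normalize $\|f\|_{L^\infty(H_r)}=1$. By scaling $f(S_r z)$ we may assume $r=1$. The scaled coefficients have H\"older norms bounded by the original ones, since $r\le 1$ is not needed after rescaling the statement; otherwise we adjust the constant. So the goal is $|\nabla_v f(z_1)|\le c\, d_1^{-1/2}$ for $z_1\in H_{1/4}$.

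\textbf{Step 1 (decay near $\gamma_0$).} Apply \autoref{lem.hol12} with $\lambda=0$ at the boundary point $\bar z_1\coloneqq (t_1,x_1',0,v_1',0)$, after a translation $\bar z_1\circ\cdot$ and a linear change of variables making $(A(\bar z_1))_{nn}=1$. Both preserve the half-space structure and the boundary condition, since $\bar v_{1,n}=0$. The admissible exponent with $\lambda+\eps-\frac12\notin\N$ and $\lambda=0$ requires $\eps<\frac12$. We take $\eps$ replaced by some $\eps'\in(\frac12,1)$ with $\lambda+\eps'<3$, i.e. $\lambda=0$, $\eps'>1/2$. Then $\mathcal{T}_{0,\eps'}$ consists of $p_0\varphi_0$ with $p_0$ constant; $\varphi_1$ vanishes since $p_1\in\cP_0$ and $p_1(0)=0$. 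The requirement $A\in C^{-\frac12+\eps'}$ is weaker than $A\in C^{\eps}$ when $\eps'-\frac12\le \eps$. This gives a constant $a$ with $|a|\le c$ and
\[
\|f-a\varphi_0(\cdot-\bar z_1)\|_{L^\infty(H_\rho(\bar z_1))}\le c\rho^{\eps'} \quad\text{for } \rho\le \tfrac14 .
\]
In particular $\|f\|_{L^\infty(H_{\rho}(\bar z_1))}\le c\rho^{1/2}$ by the homogeneity \eqref{homogen}. If the coefficient regularity only allows $\eps'\le\frac12$, the cruder bound $c\rho^{\eps'}$ still suffices for the $L^\infty$ estimate after running the argument with $f-a\varphi_0$.

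\textbf{Step 2 (local gradient bound at scale $d_1$).} Let $\rho\coloneqq d_1/8$, so that $z_1\in H_{4d_1}(\bar z_1)$. We distinguish three cases.

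\emph{Case 1: $x_{1,n}\ge (\rho/2)^3$.} Here $Q_{c\rho}(z_1)$ lies in the interior. Interior Schauder (\autoref{lem.intsch} with $k=1$) gives $|\nabla_v (f-a\varphi_0)(z_1)|\le c\rho^{-1}\rho^{\eps'}$. Together with $|\nabla_v\varphi_0(z_1)|\le c\rho^{-1/2}$ from \eqref{psi0.dd}, this yields the bound $c\rho^{-1/2}$. Alternatively, and more simply, apply Schauder directly to $f$ using $\|f\|_{L^\infty(Q_{c\rho}(z_1))}\le c\rho^{1/2}$, which gives $|\nabla_vf(z_1)|\le c\rho^{-1}\rho^{1/2}$.

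\emph{Case 2: $x_{1,n}$ small and $v_{1,n}\le -\rho/2$.} The cylinder $H_{c\rho}(z_1^\ast)$, centered at the boundary point $z_1^\ast=(t_1,x_1',0,v_1)$, stays away from $\gamma_0$. We use \autoref{lem.bdrysch} with $k=1$ and scale $c\rho$, giving the same bound $c\rho^{-1}\rho^{1/2}$.

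\emph{Case 3: $v_{1,n}\ge\rho/2$ ($\gamma_-$ side).} Near $\gamma_-$ we use the known boundary regularity up to the incoming boundary with zero data \cite{Sil22,Zhu25}, or equivalently the $\gamma_-$ analogue of \autoref{lem.bdrysch} proven in \cite{KiWe26}. This gives a $C^{1,\alpha}$ (indeed $C^{2,\alpha}$) estimate on $H_{c\rho}(z_1^\ast)$ with the same scaling, hence $|\nabla_vf(z_1)|\le c\rho^{-1}\|f\|_{L^\infty(H_{c\rho}(z_1^\ast))}\le c\rho^{-1/2}$.

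\textbf{Main obstacle.} The delicate point is Step 1 under only $A\in C^\eps$ with possibly small $\eps$. It requires checking that \autoref{lem.hol12} with $\lambda=0$ yields decay $\rho^{1/2}$, not merely $\rho^{\eps}$, around every boundary point of $\gamma_0\cap H_{1/2}$, uniformly. The required decay is attained by the leading term $a\varphi_0$ itself, so any $\eps'>0$ error is fine once $|a|\le c$. The case $\eps'<\frac12$ (when $\mathcal T_{0,\eps'}=\{0\}$ formally) is handled by instead taking the largest admissible $\eps'\in(\frac12,\frac12+\eps)$; this is permitted since $A\in C^{\eps}=C^{-\frac12+(\frac12+\eps)}$. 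A second technical point is ensuring that the cylinders in Cases 1–3 fit inside $H_{4\rho}(\bar z_1)$ with comparable radii, which follows from the kinetic geometry and the choice $\rho=d_1/8$.
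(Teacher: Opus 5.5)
Your Step 1 is fine once you take $\eps'\in(\frac12,\min\{1,\frac12+\eps\})$. Your Cases 1 and 2 are also sound and agree with the paper; in the outgoing case, applying \autoref{lem.bdrysch} directly at $z_1^\ast$ is a bit more direct than the paper's detour through an interior estimate for $f-f(\bar z_1)$.

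The gap is Case 3. It is the only genuinely delicate part of the lemma, and you settle it by citing a scale-invariant $C^{1,\alpha}$ estimate up to $\gamma_-$. The paper has no such estimate. \autoref{lem.bdrysch} is stated only for $v_{0,n}<0$, and what \cite{Sil22} provides at $\gamma_-$ is a \emph{decay} estimate for $f$, not a gradient bound. A $C^{2,\alpha}$ estimate is in any case out of reach for $A\in C^\eps$, $B\in L^\infty$.

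The difficulty is real. Suppose $v_{1,n}\ge 4x_{1,n}^{1/3}$. Then $x_n$ decreases by $s^2v_{1,n}$ along the backward characteristic. So a kinetic cylinder $Q_s(z_1)$ stays in $\{x_n>0\}$ only if $s\lesssim (x_{1,n}/v_{1,n})^{1/2}$, which is much smaller than $x_{1,n}^{1/3}$ when $v_{1,n}\gg x_{1,n}^{1/3}$. Interior Schauder therefore only yields $|\nabla_v f(z_1)|\lesssim (v_{1,n}/x_{1,n})^{1/2}\|f\|_{L^\infty(Q_s(z_1))}$. Combined with the bound $\|f\|\lesssim v_{1,n}^{1/2}$ from Step 1 alone, this blows up as $x_{1,n}\to 0$. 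Your ``Main obstacle'' paragraph therefore points at the wrong place: Step 1 is routine, and this is the real obstacle.

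The missing idea is to quantify the vanishing of $f$ at $\gamma_-$. Take $\bar z_1=(t_1,x_1',0,v_1)\in\gamma_-$ and rescale by $S_{v_{1,n}/2}$, so that the normal velocity is normalized and the constants are uniform. Then \cite[Theorem 1.3]{Sil22} with $q=\frac32$ gives
\[
\|f\|_{L^\infty(H_{2x_{1,n}^{1/3}}(\bar z_1))}\le c\,\big(x_{1,n}^{1/3}/v_{1,n}\big)^{3/2}\|f\|_{L^\infty(H_{v_{1,n}/2}(\bar z_1))}.
\]
For $s=\frac14(x_{1,n}/v_{1,n})^{1/2}$ one has $Q_s(z_1)\subset H_{2x_{1,n}^{1/3}}(\bar z_1)$, and Step 1 gives $\|f\|_{L^\infty(H_{v_{1,n}/2}(\bar z_1))}\lesssim v_{1,n}^{1/2}$. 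Together these yield
\[
|\nabla_v f(z_1)|\lesssim \Big(\frac{v_{1,n}}{x_{1,n}}\Big)^{\frac12}\,\frac{x_{1,n}^{1/2}}{v_{1,n}^{3/2}}\,v_{1,n}^{\frac12}=v_{1,n}^{-\frac12}.
\]
The decay order $\frac32$ is exactly what is needed to cancel the factor $x_{1,n}^{-1/2}$ coming from the degenerate interior radius. This chain is the paper's first case. The scale-invariant gradient bound near $\gamma_-$ that you invoke is true, but this argument is its proof, and without it your Case 3 is unsupported.
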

\begin{proof}
We fix a point $z_1\in H_{r/4}$ and divide the proof into three cases.
\begin{itemize}
    \item Assume $64x_{1,n}\leq (v_{1,n})^3$. Then we have 
    \begin{align*}
        Q_{r_1}(z_1)\subset \R \times \{x_n>0\} \times \R^n,\quad Q_{r}(z_1)\subset H_{\overline{r}_1}(\overline{z}_1),\quad\text{and}\quad H_{\overline{r}_1}(\overline{z}_1)\subset H_{\rho_1}(\overline{z}_1)
    \end{align*}
   where $r_1\coloneqq \frac14\left(\frac{x_{1,n}}{v_{1,n}}\right)^{\frac12}$, $\overline{r}_1\coloneqq2(x_{1,n})^{\frac13}$,  $\overline{z}_1\coloneqq (t_1,x_{1}',0,v_{1})$ and $\rho_1\coloneqq \frac{v_{1,n}}{2}$. Note that the point $\overline{z}_1$ is a natural projection of the point $z_1$ onto $\gamma_-$.
    By a combination of \autoref{lem.intsch} and \autoref{lem.interpolhol}, we deduce
    \begin{align}\label{ineq1.gragamma-}
        |\nabla_vf(z_1)|\leq \|\nabla_vf\|_{L^\infty(Q_{r_1/2}(z_1))}\leq c r_1^{-1}\|f\|_{L^\infty(Q_{r_1}(z_1))}\leq cr_1^{-1}\|f\|_{L^\infty(H_{\overline{r}_1}(\overline{z}_1))}
    \end{align}
    for some $c=c(n,\Lambda,\eps,\|A\|_{C^\eps(H_r(z_0))},\|B\|_{L^\infty(H_r(z_0))})$. Next, we define
    \begin{align*}
        \widetilde{f}(z)\coloneqq f(S_{\rho_1}z),\quad \widetilde{A}(z)\coloneqq A(S_{\rho_1}z),\quad \widetilde{B}(z)\coloneqq \rho_1B(S_{\rho_1}z)
    \end{align*} to see that
    \begin{equation*}
\left\{
\begin{alignedat}{3}
\partial_t\widetilde{f}+v\cdot\nabla_xf-\ddiv(\widetilde{A}\nabla_ v\widetilde{f})&=\widetilde{B}\cdot\nabla_v\widetilde{f}&&\qquad \mbox{in  $H_1(\widetilde{z}_1)$}, \\
\widetilde{f}&=0&&\qquad  \mbox{in $\{x_n=0\}\times \{v_n>0\}$},
\end{alignedat} \right.
\end{equation*} 
where $\widetilde{z}_1\coloneqq (t_1/\rho_1^2,x_1'/\rho_1^3,0,v_1/\rho_1)$. Since 
$\{x_n>0\}$ is a convex domain and $v_{1}/\rho_1\cdot n_{x}=v_{1,n}/\rho_1=2$, by \cite[Theorem 1.3]{Sil22} with $q=\frac32$, we derive
\begin{align}\label{ineq2.gragamma-}
    \|\widetilde{f}\|_{L^\infty(H_{\overline{r}_1/\rho_1}(\widetilde{z}_1))}\leq c(\overline{r}_1/\rho_1)^{\frac32}\|\widetilde{f}\|_{L^\infty(H_{1}(\widetilde{z}_1))}=c(\overline{r}_1/\rho_1)^{\frac32}\|{f}\|_{L^\infty(H_{\rho_1}(\overline{z}_1))}
\end{align}
for some constant $c=c(n,\Lambda)$. Furthermore, by \autoref{lem.hol12}, there is a constant $c_0$ such that for any $\rho\leq r/4$,
\begin{align*}
    \|f-c_0{\varphi_0}\|_{L^\infty(H_{\rho}(z_0))}\leq c\left(\frac{\rho}{r}\right)^{\frac12}\|f\|_{L^\infty(H_{r/2}(z_0))}
\end{align*}
for some constant $c=c(n,\Lambda,\eps,\|A\|_{C^\eps(H_r)},\|B\|_{L^\infty(H_r)})$, where we write $z_0\coloneqq (t_1,x_1',0,v_1',0)$ that is a natural projection of the point $\overline{z}_1$ onto $\gamma_0$. In particular, we deduce that $|c_0|\leq r^{-\frac12}{\|f\|_{L^\infty(H_{r/2}(z_0))}}$. Using this and the fact that $H_{\rho_1}(\overline{z}_1)\subset H_{4\rho_1}(z_0)$, we deduce
\begin{equation}\label{ineq3.gragamma-}
\begin{aligned}
    \|f\|_{L^\infty(H_{\rho_1}(\overline{z}_1))}\leq \|f\|_{L^\infty(H_{4\rho_1}({z}_0))}&\leq  \|f-c_0{\varphi_0}\|_{L^\infty(H_{3\rho_1}({z}_0))}+c_0\|{\varphi_0}\|_{L^\infty(H_{4\rho_1}({z}_0))}\\
    &\leq c({\rho_1}/{r})^{\frac12}\|f\|_{L^\infty(H_{r/2}(z_0))},
\end{aligned}
\end{equation}
where $c=c(n,\Lambda,\eps,\|A\|_{C^\eps(H_r)},\|B\|_{L^\infty(H_r)})$ and we have used the fact that $\|{\varphi_0}\|_{L^\infty(H_{4\rho_1}({z}_0))}\leq c\rho_1^{\frac12}$ by \eqref{psi0.dd}.
A combination of \eqref{ineq1.gragamma-}, \eqref{ineq2.gragamma-} and \eqref{ineq3.gragamma-} leads to 
\begin{align*}
    |\nabla_vf(z_1)|\leq c(rv_{1,n} )^{-\frac12}\|f\|_{L^\infty(H_r)},
\end{align*}
which verifies \eqref{goal.gra}.
\item Assume $64x_{1,n}\geq |v_{1,n}|^3$. Then we have 
\begin{align*}
    Q_{r_1}(z_1)\subset \{x_n>0\},\quad Q_{r_1}(z_1)\subset H_{32r_1}(z_0)\subset H_{r/2}({z}_0),
\end{align*}
where $r_1\coloneqq\frac{(x_{1,n})^{\frac13}}{16}$ and ${z}_0\coloneqq (t_1,x_1',0,v_1',0)$. Therefore, using interior estimates as in \eqref{ineq1.gragamma-} and boundary estimates at $\gamma_0$ as in \eqref{ineq3.gragamma-}, we deduce
\begin{align*}
    |\nabla_vf(z_1)|\leq cr_1^{-1}\|f\|_{L^\infty(Q_{r_1}(z_1))}\leq cr_1^{-1}\|f\|_{L^\infty(Q_{32r_1}(z_0))}\leq c(r_1 r)^{-\frac12}\|f\|_{L^\infty(H_{r/2}(z_0))},
\end{align*}
for some constant $c=c(n,\Lambda,\eps,\|A\|_{C^\eps(H_r)},\|B\|_{L^\infty(H_r)})$, which implies \eqref{goal.gra}.
\item Assume $64x_{1,n}\leq (-v_{1,n})^3$. Then we have 
    \begin{align*}
        Q_{r_1}(z_1)\subset \{x_n>0\},\quad Q_{r}(z_1)\subset H_{\overline{r}_1}(\overline{z}_1),\quad H_{\overline{r}_1}(\overline{z}_1)\subset H_{\rho_1}(\overline{z}_1)\subset H_{4\rho_1}(z_0)\subset H_{r/2}(z_0)
    \end{align*}
   where $r_1\coloneqq \frac{(x_{1,n})^{\frac13}}{16}$, $\overline{r}_1\coloneqq2(x_{1,n})^{\frac13}$,  $\overline{z}_1\coloneqq (t_1,x_{1}',0,v_{1})$, $\rho_1\coloneqq \frac{v_{1,n}}{2}$ and $z_0=(t_1,x_1',0,v_1',0)$. Similarly, by combining interior estimates as in \eqref{ineq1.gragamma-}, boundary estimates at $\gamma_+$ as in \eqref{ineq2.gragamma-} together with \autoref{lem.bdrysch} and boundary estimates at $\gamma_0$ as in \eqref{ineq3.gragamma-}, we have 
   \begin{align*}
       |\nabla_vf(z_1)|\leq cr_1^{-1}\|f-f(\overline{z}_1)\|_{L^\infty(Q_{r_1}(z_1))}\leq c\rho_1^{-1}\|f\|_{L^\infty(H_{\rho_1}(\overline{z}_1))}\leq c\rho_1^{-\frac12}r^{-\frac12}\|f\|_{L^\infty(H_{r/2}(z_0))},
   \end{align*}
   where $c=c(n,\Lambda,\eps,\|A\|_{C^\eps(H_r)},\|B\|_{L^\infty(H_r)})$. Indeed, we have applied interior estimates to the function $f(z)-f(\overline{z}_1)$ and we have used the fact that $\|f-f(\overline{z}_1)\|_{L^\infty(Q_{r_1}(z_1))}\leq cr_1[f]_{C^{0,1}(H_{\overline{r}_1}(\overline{z}_1))}$.
\end{itemize}
Since we have proved \eqref{goal.gra} for each case, this completes the proof.
\end{proof}

\section{Propagation of positivity in kinetic geometry}
\label{sec:propagation}
In this section, we prove the propagation of positivity for kinetic equations via Harnack chains. In particular, we will show that the positivity of the solution $f$ in a kinetic cylinder at time $t=0$ follows from its positivity at a suitable past time (see \autoref{lem.exp.posright}). It turns out that this past time cannot be chosen arbitrarily. This stands in stark contrast with the classical heat equation, where positivity at some time level $t=t_0$ implies positivity of the solution for all later times $t>t_0$ for all points in space.

Such a property fails in the kinetic setting. For example, there is a solution $f$ in $Q_{1}$ such that $f(-1/2,1/2,0)>0$, but $f(-1/4,0,0)=0$ (see \cite[Proposition 4.5]{CNP10}). Moreover, due to the kinetic geometry, we note in particular that propagation in the $x$-direction, from a point $z$ depends on its velocity variable $v$. 
Therefore, proving \autoref{lem.exp.posright} requires a careful investigation. First, we determine how positivity propagates through Harnack inequalities (see \autoref{lem.auxihar}), and then, in a second step, we inductively construct paths from a given point to the kinetic cylinder where we aim to establish the positivity of the solution.

First, we recall Harnack's inequality which was established in \cite[Theorem 5]{GuMo22}. Here, we state it in an appropriately rescaled form.

\begin{lemma}\label{lem.har}
Let $f\geq0$ be a weak solution to
\begin{align*}
    \partial_tf+v\cdot\nabla_xf-\ddiv(A\nabla_vf)=B\cdot\nabla_vf+F\quad\text{in }Q_1.
\end{align*}
Then there are universal small constants $r_0$ and $\eps_0$ such that for any $z \in \R^{2n+1}$ and $\rho>0$,
    \begin{align*}
        \sup_{Q_{r_0}}f(z_1\circ S_\rho z)\leq c\left(\inf_{Q_{r_0}(\eps_0,0,0)}f(z_1\circ S_\rho z)+\rho^2\|F(z_1\circ S_{\rho}z)\|_{L^\infty(Q_{1}(\eps_0,0,0))}\right)
    \end{align*}
    for some constant $c=c(n,\Lambda)$, whenever $Q_{\rho}(z_1\circ(\rho^2\eps_0,0,0))\subset Q_1$. The two constants $r_0, \eps_0$ can be determined explicitly as follows: $r_0=\frac{1}{80}$ and $\eps_0=\frac{19}{8}\frac{1}{400}$.
\end{lemma}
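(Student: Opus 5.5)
This is the rescaled form of the Harnack inequality of \cite[Theorem 5]{GuMo22}, so the plan is to deduce it by a change of variables rather than prove anything new. First I will recall the unscaled statement from \cite{GuMo22}. For nonnegative weak solutions of $\partial_tf+v\cdot\nabla_xf-\ddiv(A\nabla_vf)=B\cdot\nabla_vf+F$ in $Q_1$ it gives
\begin{align*}
\sup_{Q_{r_0}} f\le c\Big(\inf_{Q_{r_0}(\eps_0,0,0)} f+\|F\|_{L^\infty(Q_1)}\Big).
\end{align*}
Here $\sup_{Q_{r_0}}$ is taken over the earlier cylinder ending at time $0$, $\inf$ is over the later cylinder centered at time $\eps_0$, $r_0=\frac1{80}$, $\eps_0=\frac{19}{8}\frac1{400}$ are the explicit constants there, and $c=c(n,\Lambda)$. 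I will read this as holding for solutions in $Q_1(\eps_0,0,0)$ after a time shift, since the later cylinder must lie inside the domain. The scale-invariant structure we need is that $c$ depends only on $n$ and the ellipticity and drift bounds $\Lambda$.

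Next, given $z_1$ and $\rho>0$ with $Q_\rho(z_1\circ(\rho^2\eps_0,0,0))\subset Q_1$, I will set
\begin{align*}
\tilde f(z)=f(z_1\circ S_\rho z),\qquad \tilde A(z)=A(z_1\circ S_\rho z),\qquad \tilde B(z)=\rho B(z_1\circ S_\rho z),\qquad \tilde F(z)=\rho^2F(z_1\circ S_\rho z).
\end{align*}
The operator $\partial_t+v\cdot\nabla_x$ is invariant under the Galilean translation $z\mapsto z_1\circ z$ and scales by $\rho^2$ under $S_\rho$. Likewise $\ddiv(A\nabla_v\cdot)$ scales by $\rho^2$, and $\nabla_v$ by $\rho$. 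So $\tilde f\ge0$ solves the same type of equation with coefficients $\tilde A,\tilde B,\tilde F$. Moreover $\Lambda^{-1}I\le\tilde A\le\Lambda I$ and $|\tilde B|\le\rho\Lambda\le\Lambda$, because the inclusion hypothesis forces $\rho\le1$.

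The equation for $\tilde f$ holds on the preimage of $Q_1$, and this preimage contains $Q_1(\eps_0,0,0)$. The reason is that $z_1\circ S_\rho(Q_1(\eps_0,0,0))=Q_\rho(z_1\circ(\rho^2\eps_0,0,0))$, using $S_\rho(w\circ z)=S_\rho w\circ S_\rho z$. Applying the unscaled inequality to $\tilde f$ on this cylinder gives the claim, with the source term $\rho^2\|F(z_1\circ S_\rho\cdot)\|_{L^\infty(Q_1(\eps_0,0,0))}$.

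The only step needing care is the bookkeeping of the composition law $\circ$ with the dilation, together with checking the constants of \cite{GuMo22} against their cylinder convention. Neither is a genuine obstacle.
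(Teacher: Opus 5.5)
Your proposal is correct and follows the paper's route: the paper gives no separate argument, it simply quotes the Harnack inequality of \cite[Theorem 5]{GuMo22}, time-shifted so that the later cylinder $Q_{r_0}(\eps_0,0,0)$ lies in the domain, and transfers it through the change of variables $z\mapsto z_1\circ S_\rho z$. Your checks fill in exactly the details the paper leaves implicit: the identity $S_\rho(w\circ z)=S_\rho w\circ S_\rho z$, the rescaled coefficients $\tilde B=\rho B$ and $\tilde F=\rho^2F$, and the observation that the inclusion hypothesis forces $\rho\le 1$, so the drift bound is preserved.
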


We next introduce an auxiliary lemma, which will be used in the construction of a Harnack chain. Given a set $D$, we describe the propagation set associated with each point $z\in D$ via Harnack's inequality. 
\begin{lemma}\label{lem.auxihar}
    Let $r_0$ and $\eps_0$ be the universal constants determined in \autoref{lem.har}. Let $\rho\in(0,1)$, $r_1,r_2,r_3\geq0$, $a\leq b$, $0<d_1\leq d_2$ and a set  $\widetilde{\mathcal{I}}\times B'_{r_1}\times [d_1,d_2]\times B'_{r_2}\times [-r_3,r_3]$ with $\widetilde{\mathcal{I}}\coloneqq [a,b]$ be given. Suppose
    \begin{align*}
        &(t,x',x_n,v',v_n)\in \mathcal{I}\times \mathcal{B} \times B'_{r_2+\rho r_0}\times [-r_3-\rho r_0,r_3+\rho r_0],\\
        &\mathcal{I}\coloneqq [a+\rho^2(\eps_0-r_0^2),b+\rho^2\eps_0],\\
        &\mathcal{B}\coloneq B'_{r_1-\eps_0\rho^2r_2+(\rho r_0)^3}\times [d_1+\eps_0\rho^2 r_3-(\rho r_0)^3,d_2-\eps_0\rho^2r_3+(\rho r_0)^3],
    \end{align*}
    where we further assume that $r_1-\eps_0\rho^2r_2+(\rho r_0)^3>0$ and $d_1<d_2-2\eps_0\rho^2r_3+2(\rho r_0)^3$, if $r_2>0$ and $r_3>0$, respectively.
    Then there is a point 
    \begin{align*}
        &(t_1,x_{1}',x_{1,n},v_{1}',v_{1,n})\in \widetilde{\mathcal{I}}\times B'_{r_1}\times [d_1,d_2]\times B'_{r_2}\times [-r_3,r_3]
    \end{align*}
    such that 
    \begin{align*}
        (t,x,v)\in Q_{\rho r_0}((t_1,x_1,v_1)\circ (\rho^2\eps_0,0,0)).
    \end{align*}
\end{lemma}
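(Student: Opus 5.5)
The statement is purely geometric, so the plan is to construct $(t_1,x_1,v_1)$ explicitly by inverting the group law. We need $(t,x,v)=(t_1,x_1,v_1)\circ(\rho^2\eps_0,0,0)\circ w$ for some $w=(s,y,u)\in Q_{\rho r_0}$. Since $(t_1,x_1,v_1)\circ(\rho^2\eps_0,0,0)=(t_1+\rho^2\eps_0,\,x_1+\rho^2\eps_0 v_1,\,v_1)$, composing with $w$ gives
\begin{align*}
t&=t_1+\rho^2\eps_0+s,\qquad v=v_1+u,\\
x&=x_1+\rho^2\eps_0v_1+y+s v_1 .
\end{align*}
We therefore choose $s\in(-(\rho r_0)^2,0]$, $|y|<(\rho r_0)^3$, $|u|<\rho r_0$ and solve for $t_1=t-\rho^2\eps_0-s$, $v_1=v-u$, $x_1=x-(\rho^2\eps_0+s)v_1-y$. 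We treat each coordinate separately, using the product structure of the target box. (The exact form of the group law may carry a sign convention; we adopt the one stated in Section~\ref{sec:prelim}, and with the opposite convention the signs below flip symmetrically.)

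\emph{Time and velocity.} Given $t\in[a+\rho^2(\eps_0-r_0^2),\,b+\rho^2\eps_0]$, we pick $s\in(-(\rho r_0)^2,0]$ so that $t_1=t-\rho^2\eps_0-s\in[a,b]$. This is possible because the admissible interval for $t$ is exactly $[a,b]$ shifted by $\rho^2\eps_0$ and enlarged downward by $(\rho r_0)^2$; closed/half-open endpoint issues are handled by a slight shrinking or by a limiting argument. For the velocity, $|v'|<r_2+\rho r_0$ and $|v_n|\le r_3+\rho r_0$, so we take $u$ to be the appropriate shift toward the origin, of size less than $\rho r_0$ in each block, giving $|v_1'|<r_2$ and $|v_{1,n}|\le r_3$. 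When $r_2=0$ or $r_3=0$ this simply means $u'=v'$ or $u_n=v_n$. Strictly, $|u|$ must be below $\rho r_0$ in the full norm, so we may need to use slightly smaller constants; this loses nothing essential, since $r_0$ is universal.

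\emph{Position.} Set $\tau:=\rho^2\eps_0+s$. Because $r_0^2<\eps_0$ ($1/6400<19/3200$), we have $\tau\in(0,\rho^2\eps_0]$, so $|\tau v_1'|\le\eps_0\rho^2 r_2$ and $|\tau v_{1,n}|\le\eps_0\rho^2 r_3$. Then $x_1'=x'-\tau v_1'-y'$ satisfies
\begin{align*}
|x_1'|\le |x'|+\eps_0\rho^2 r_2-|y'|\text{-correction},
\end{align*}
and since $|x'|<r_1-\eps_0\rho^2r_2+(\rho r_0)^3$, a choice of $y'$ pointing toward the origin with $|y'|<(\rho r_0)^3$ yields $|x_1'|<r_1$. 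The same reasoning applies to $x_{1,n}=x_n-\tau v_{1,n}-y_n$. Here $x_n$ lies in $[d_1+\eps_0\rho^2r_3-(\rho r_0)^3,\,d_2-\eps_0\rho^2r_3+(\rho r_0)^3]$ and $\tau v_{1,n}\in[-\eps_0\rho^2r_3,\eps_0\rho^2r_3]$, so $x_n-\tau v_{1,n}$ lies within $(\rho r_0)^3$ of $[d_1,d_2]$, and a suitable $y_n$ projects it into $[d_1,d_2]$. The nondegeneracy assumptions ensure these intervals and balls are nonempty.

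The main obstacle is bookkeeping rather than substance. The choices are coupled: $\tau$ depends on $s$, which is fixed by $t$, and $v_1$ depends on $u$. So we must fix $s$ first, then $u$, and only then $y$, checking that the combined norm constraints on $w=(s,y,u)$ hold simultaneously. Apart from this ordering, the argument reduces to elementary interval arithmetic.
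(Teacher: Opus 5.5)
Your construction is the paper's own: the paper also fixes $t_1$ from $t$ (taking $t_1=a$ or $t_1=t-\rho^2\eps_0$), moves $v'$ and $v_n$ separately toward the origin by $\rho r_0$, and then moves $x-(t-t_1)v_1$ blockwise into $B'_{r_1}\times[d_1,d_2]$ by $(\rho r_0)^3$, using $|t-t_1|\,|v_1'|\le\eps_0\rho^2r_2$ and $|t-t_1|\,|v_{1,n}|\le\eps_0\rho^2r_3$. The caveats you raise are genuine rather than bookkeeping, and the paper passes over them silently: if both velocity blocks (or both position blocks) need an almost full shift, or at closed endpoints such as $t=a+\rho^2(\eps_0-r_0^2)$ or $v_n=r_3+\rho r_0$, no admissible point exists for the Euclidean, half-open cylinder $Q_{\rho r_0}$, so the literal statement only holds with slightly smaller enlargements (e.g.\ scaled by $1/\sqrt2$, with those endpoints excluded)---but this is harmless, because in each application in \autoref{lem.exp.posright} at most one block of $x$ and at most one block of $v$ actually move, and the endpoints are absorbed by continuity of $f$ when Harnack's inequality is applied.
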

\begin{proof}
   Let us choose $(t_1,x'_1,x_{1,n},v'_1,v_{1,n})$ by
    \begin{align*}
        t_1\coloneqq \begin{cases}
            a&\quad\text{if }t<a+\rho^2\eps_0,\\
            t-\rho^2\eps_0&\quad\text{if } t\geq a+\rho^2\eps_0,
        \end{cases}
    \end{align*}
    \begin{align*}
        v_1'\coloneqq \begin{cases}
            0&\quad\text{if }|v'|< \rho r_0,\\
            v'-\rho r_0(\frac{v'}{\|v'\|})&\quad\text{if }|v'|\geq\rho r_0,
        \end{cases}
    \end{align*}
    \begin{align*}
        v_{1,n}\coloneqq \begin{cases}
            0&\quad\text{if }|v_n|< \rho r_0,\\
            v_n-\rho r_0(\frac{v_n}{\|v_n\|})&\quad\text{if }|v_n|\geq\rho r_0,
        \end{cases}
    \end{align*}
    \begin{align*}
        x'_1\coloneqq \begin{cases}
        0&\quad\text{if }|x'-(t-t_1)v_1|< (\rho r_0)^3,\\
            x'-(t-t_1)v'_1-(\rho r_0)^3\frac{x'-(t-t_1)v'_1}{|x'-(t-t_1)v'_1|}&\quad\text{if } |x'-(t-t_1)v_1|\geq (\rho r_0)^3,
        \end{cases} 
    \end{align*}
    and 
    \begin{align*}
       x_{1,n}\coloneqq \begin{cases}
        d_1&\quad\text{if }x_n\leq d_1+(t-t_1)v_{1,n}+(\rho r_0)^3,\\
            x_n-(t-t_1)v_{1,n}-(\rho r_0)^3\frac{x_n-(t-t_1)v_{1,n}}{|x_n-(t-t_1)v_{1,n}|}&\quad\text{if } x_n> d_1+(t-t_1)v_{1,n}+(\rho r_0)^3.
        \end{cases} 
    \end{align*}
    Then we have $z\in Q_{\rho r_0}(z_1\circ (\rho^2\eps_0,0,0))$ and $z_1\in[a,b]\times B'_{r_1}\times [d_1,d_2]\times B'_{r_2}\times [-r_3,r_3]$, where we have used the fact that $|t-t_1||v'|\leq \eps_0\rho^2r_2$ and $|t-t_1||v_{1,n}|\leq \eps_0\rho^2r_3$. This completes the proof.
\end{proof}

Before stating the main lemma, given $\varsigma\in(0,1/2)$ and $r>0$, we introduce a kinetic cylinder $H_{r,\varsigma}^+$ that stays away from $\{x_n=0\}$ by
    \begin{align}\label{defn.cylinder+}
    H_{r,\varsigma}^+&\coloneqq  \{(t,x',x_n,v',v_n)\in H_r\,:\, (\varsigma r)^3\leq x_n\leq r^3 \} .
\end{align}
In particular, we write for any $t_0\in\bbR$,
\begin{align*}
    H_{r,\varsigma}^+(t_0,0,0)\coloneqq \{(t_0,0,0)\circ z\,:\,z\in H^+_{r,\varsigma}\}.
\end{align*}
Note that such cylinders will also be used to prove the Hopf lemma given in the next section.

Now, we are ready to prove the main result in this section.  More precisely, when $f$ is positive at some point, then its positivity propagates after a certain time gap. First we prove propagation only in the $x$-variables, while fixing the $v$-variables at $v=0$, which allows us to ignore the kinetic geometry. We then investigate propagation in the $v$-direction, which necessarily affects the propagation set in the $x$-direction because of the kinetic transport. However, \autoref{lem.auxihar} shows that when the cylinder on which we will apply Harnack's inequality has a sufficiently small radius $\rho$, the displacement induced by the $v$-direction also stays small (see \textbf{Step 2} and \textbf{Step 4} in the proof of \autoref{lem.exp.posright} for more details).

\begin{lemma}\label{lem.exp.posright}
Let $r\in(0,1]$. For any $\delta\in(0,1]$, there is a constant $M=M(1/\delta)\geq1$ such that for any weak solution $f\geq0$ to
    \begin{align}\label{eq.har}
        \partial_tf+v\cdot\nabla_xf-\ddiv(A\nabla_vf)=B\cdot \nabla_vf+F\quad\text{in }I_{2rM}\times\mathrm{H}_{r(1+\delta)} 
    \end{align}
    with $f(t_0,x_0',x_{0,n},v_0)=f(-(rM)^2,0,r^3,0)=1$ and for any $\varsigma\in(0,\frac14)$, if 
    \begin{align}\label{F.ass.pos.away}
        \|r^2F\|_{L^\infty(I_{2rM}\times\mathrm{H}_{r(1+\delta)} )}\leq 1/\mathcal{M}
    \end{align}
    for some constant $\mathcal{M}=\mathcal{M}(n,\Lambda,1/\delta,\varsigma)\geq1$, then for any $a\in[-\frac{r^2}{16},0]$,
    \begin{align}\label{pos.away}
        \inf_{H^+_{\frac{r}2,{\varsigma}}(a,0,0)}f\geq \frac1c,
    \end{align}
    where $c=c(n,\Lambda,\varsigma,1/\delta)$. 
\end{lemma}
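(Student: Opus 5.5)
By scaling $z\mapsto S_r z$ we may assume $r=1$; since $\mathrm{H}_{r(1+\delta)}$ is a stationary cylinder, the equation holds on $I_{2M}\times B_{(1+\delta)^3}\times B_{1+\delta}\cap\{x_n>0\}$. The point $(-M^2,0,1,0)$ then lies at spatial distance $\sim 1$ from $\{x_n=0\}$, with a $\delta$-margin in $x'$ and $v$. The argument is a Harnack chain built from \autoref{lem.har} and the covering device \autoref{lem.auxihar}. We fix a small radius $\rho=\rho(\delta,\varsigma)$ and choose the time gap $M$ afterwards. The feature to exploit is that one application of \autoref{lem.har} with radius $\rho$ transfers a lower bound from any point $z_1$ to the whole cylinder $Q_{\rho r_0}(z_1\circ(\rho^2\eps_0,0,0))$. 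The constant loses a factor $c(n,\Lambda)$ and an additive error $\rho^2\|F\|_\infty\le\rho^2/\mathcal{M}$. \autoref{lem.auxihar} then says that the union of these forward cylinders over a box $\widetilde{\mathcal{I}}\times B'_{r_1}\times[d_1,d_2]\times B'_{r_2}\times[-r_3,r_3]$ covers an enlarged box. Time advances by about $\rho^2\eps_0$, the $v$-radii grow by $\rho r_0$, and the $x$-radii grow by $(\rho r_0)^3-\eps_0\rho^2 r_{2,3}$. Because of the transport drift, the $x$-gain is only positive while the current velocity radius is $\lesssim \rho r_0^3/\eps_0$.

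\textbf{Step 1 (spatial spreading at small velocity).} Start from the single point, where $r_1=r_2=r_3=0$, $d_1=d_2=1$ and $\widetilde{\mathcal{I}}=\{-M^2\}$. Iterate \autoref{lem.auxihar} while keeping the velocity window small. At each step I shrink back to $B'_{r_2}\times[-r_3,r_3]$ with $r_2,r_3\le \rho r_0$, so that the $x$-box grows by at least $\tfrac12(\rho r_0)^3$ per step; this requires $\rho r_0\le r_0^3/(2\eps_0)$, which holds after fixing $\rho$ small. After $N_1\sim(\rho r_0)^{-3}$ steps the $x$-box contains $B'_{2}\times[(\varsigma/4)^3,1]$. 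The $x$-box must remain inside the domain, and here $d_1$ is the constraint: the lower face decreases toward $(\varsigma/4)^3$ but never reaches $0$. The time interval grows in length by $\rho^2r_0^2$ per step, so it becomes long, which is harmless.

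\textbf{Step 2 (spreading in velocity).} Fix the $x$-box. Iterate \autoref{lem.auxihar} with the $v$-radii growing by $\rho r_0$ per step until they reach $1/2$, which takes about $N_2\sim(\rho r_0)^{-1}$ steps. Each step costs at most $\eps_0\rho^2\cdot\frac12$ in $x$-shrinkage. The total shrinkage is $\lesssim\rho\to0$, so choosing $\rho$ small keeps the $x$-box containing $B'_{1}\times[(\varsigma/2)^3,1/8]$. This set contains the $x$-section of $H^+_{1/2,\varsigma}$, which is $B_{1/8}$ with $x_n\ge(\varsigma/2)^3$. The drift $|t-t_1||v|\le\eps_0\rho^2$ per step also keeps the region inside $B_{(1+\delta)^3}$.

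\textbf{Step 3 (time bookkeeping and the choice of $M$).} After $N=N_1+N_2$ steps the covered time interval is $[-M^2+N\rho^2\eps_0-\dots,\,-M^2+N\rho^2\eps_0+\dots]$. Choose $M$, depending on $\delta,\varsigma$ through $\rho$, so that $M^2\approx N\rho^2\eps_0$ and the final interval contains $[-1/16,0]$. If needed, insert idle steps with unchanged $x$- and $v$-windows to reach exactly the right times. The window length of order $N\rho^2r_0^2$ exceeds $1/16$, so every $a\in[-1/16,0]$ together with the time span $[a-1/4,a]$ of $H^+_{1/2,\varsigma}(a,0,0)$ is covered.

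\textbf{Conclusion and main obstacle.} Inductively, $\inf f\ge c^{-k}-\sum_j c^{-j}\rho^2/\mathcal{M}$ on the $k$-th box. Taking $\mathcal{M}\ge 2\rho^2\sum_{j\le N}c^{N-j}$ gives the lower bound $\frac12c^{-N}$ in \eqref{pos.away}, with $N=N(n,\Lambda,\delta,\varsigma)$. The main obstacle is checking the geometric constraints of \autoref{lem.auxihar} at every step. The transported cylinders must stay inside $\{x_n>0\}$ and within the $\delta$-enlarged cylinder. Within the $x$-box, $x_n$ must not drop below $(\varsigma/4)^3$ while $|v_n|$ reaches $1/2$ over time spans of order $\rho^2$. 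This is why the $v$-spreading has to come last and be done in steps of size $\rho\ll\varsigma$.
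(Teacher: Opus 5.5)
Your argument proves a weaker statement than the lemma. In Step 3 you let $M$ depend on $\varsigma$ through $\rho$. The lemma instead asserts $M=M(1/\delta)$, chosen before $\varsigma$ is quantified; by contrast, $\mathcal{M}$ and $c$ are allowed to depend on $\varsigma$.

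This independence is used essentially later. In \autoref{lem.hop.gen0} the paper fixes the universal radius $\rho=1/M$ (with $\delta=1$), so that the reference point $(-1,0,\rho^3,0)$ of the Hopf lemma and of the main theorem is universal. Only afterwards does it apply the lemma with $\varsigma=\frac{2r}{N\rho}$ and $\varsigma=\frac1C$, where $r$ and $C$ depend on $\|A\|_{C^\eps}$ and $\|B\|_{L^\infty}$. With $M=M(\delta,\varsigma)$ this would be circular.

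Moreover, your scheme cannot produce a uniform $M$. At $v=0$, each Harnack step moves $x_n$ by at most $(\rho r_0)^3$ and advances the earliest reachable time by at least $\rho^2(\eps_0-r_0^2)$. So descending from $x_n=1$ to $x_n\approx(\varsigma/4)^3$ with a single radius $\rho=\rho(\delta,\varsigma)$, which must tend to $0$ as $\varsigma\to0$, consumes time $\gtrsim\rho^{-1}$. This forces $M^2\gtrsim\rho^{-1}\to\infty$.

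The missing idea is to make the $\varsigma$-dependent part of the chain cost a uniformly bounded amount of time. The paper first works at radius $\delta$ away from the boundary. It then descends toward $\{x_n=0\}$ with Whitney-type radii comparable to $x_n^{1/3}$, namely $\rho_{\mathbf n}^3=\frac34(1-r_0^3)^{\mathbf n}$. The number of steps grows like $\log(1/\varsigma)$, but the elapsed time $\sum_{\mathbf i}\rho_{\mathbf i}^2$ is a convergent geometric series bounded independently of $\varsigma$. A $\varsigma$-small radius $\varrho$ is used only for the final spreading in $v_n$, where $j\sim1/\varrho$ steps cost time $\lesssim j\varrho^2\lesssim 1$; your Step 2 already has this property. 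The total elapsed time is then a $\delta$-dependent amount plus a uniformly bounded $\varsigma$-dependent part. The terminal time window has length about $r_0^2$ times the elapsed time and can be enlarged by extra $\delta$-scale steps, so a single $M(\delta)$ works for every $\varsigma$.

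Two smaller points in your Step 1 also need fixing.
\begin{itemize}
\item Restricting the velocity window to radius $\rho r_0$ does not give a positive $x$-gain. The gain provided by \autoref{lem.auxihar} is $(\rho r_0)^3-\eps_0\rho^2\cdot\rho r_0=\rho^3 r_0(r_0^2-\eps_0)<0$ for every $\rho$, since $\eps_0=\frac{19}{3200}>r_0^2=\frac1{6400}$. Your condition $\rho r_0\le r_0^3/(2\eps_0)$ has lost a factor $\rho$. You must restrict to the slice $v=0$, as in the paper's Step 1.
\item The box $B_2'\times[(\varsigma/4)^3,1]$ leaves $\mathrm{H}_{1+\delta}$ when $\delta$ is small. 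The $x$-box must be kept inside $B_{(1+\delta)^3}$, for example by descending in $x_n$ before widening in $x'$.
\end{itemize}
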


Note that when we apply this lemma later in \autoref{lem.hop.gen0} and \autoref{lem.hop.gen}, we will choose $\delta=1$ or $\delta=\frac1c$ for some constant $c=c(n,\Lambda)$, which has a uniform lower bound.

\begin{proof}
First, note that for any $r\in(0,1]$, the rescaled function $\widetilde{f}(z)\coloneqq f(S_rz)$ solves
\begin{align*}
    \partial_t\widetilde{f}+v\cdot\nabla_x\widetilde{f}-\ddiv(\widetilde{A}\nabla_v\widetilde{f})=\widetilde{B}\cdot\nabla_v\widetilde{f}+\widetilde{F}\quad\text{in }I_{2M}\times H_{1+\delta},
\end{align*}
where $\widetilde{A}(z)\coloneqq A(S_rz)$, $\widetilde{B}(z)\coloneqq rB(S_rz)$, and $\widetilde{F}(z)\coloneqq r^2F(S_rz)$. Therefore, it suffices to prove the result in case $r=1$.

Fix two constants $M\geq1$ and $\mathcal{M}\geq1$, which will be determined at the end of the proof (see \eqref{choi.M} and \eqref{choi.mathcalM}). Note from \autoref{lem.har} that there are universal small constants $r_0$ and $\eps_0$ such that for any $\rho>0$,
    \begin{align}\label{harnack.pre}
        \sup_{Q_{\rho r_0}(z_1)}f(z)\leq C_0\left(\inf_{Q_{\rho r_0}(z_1\circ(\rho^2\eps_0,0,0))}f(z)+\rho^2\|F\|_{L^\infty(Q_\rho(z_1\circ(\rho^2\eps_0,0,0)))}\right)
    \end{align}
    for some constant $C_0=C_0(n,\Lambda)$, whenever $Q_{\rho}(z_1\circ(\rho^2\eps_0,0,0))\subset I_{2M}\times \mathrm{H}_{1+\delta}$. 
    
    We may assume $\delta\leq r_0$, as $f$ is also a solution to \eqref{eq.har} in $I_{2M}\times \mathrm{H}_{1+r_0}$, when $\delta>r_0$. As we will see, this is property would be enough to conclude the proof.

To prove the result, we will apply \eqref{harnack.pre} in sequences of cylinders that are shifted consecutively in $x'$, $v'$, $x_n$, and $v_n$. Now we split the proof into four steps.

    \textbf{Step 1.}  First, we are going to propagate positivity in $x'$ by moving forward in time. We prove that for any $k\in\N$ and $(t,x')\in \mathcal{A}_{k}$,
    \begin{align}\label{ind.x'}
        \frac1{C_0^k}\leq f(t,x',1,0)+\sum_{i=0}^{k-1}\frac{1}{C_0^i\mathcal{M}},
    \end{align}
    where $c=c(n,\Lambda)$ and
    \begin{align*}
        \mathcal{A}_{k}\coloneqq [-M^2+k\delta^2(\eps_0-r_0^2),-M^2+k\delta^2\eps_0]\times B'_{k(\delta r_0)^3}.
    \end{align*}
    The case $k=1$ follows directly from \eqref{harnack.pre} with $\rho=\delta$ and $z_1=(-M^2,0,1,0)\in \bbR\times \bbR^{n-1}\times \bbR\times \bbR^n$. Due to the choice of $\rho$, we have $Q_{\rho}(-M^2,0,1,0)\subset I_{2M}\times \mathrm{H}_{1+\delta}$. When \eqref{ind.x'} with $k=l$ holds, we use \eqref{harnack.pre} with $\rho=\delta$ and $z_1=(t_1,x_1',1,0)$, where  $(t_1,x_1')\in \mathcal{A}_{l}$ to see that \eqref{ind.x'} with $k=l+1$, as for any $(t,x')\in \mathcal{A}_{l+1}$, there is a point $(t_1,x_1')\in \mathcal{A}_{l}$ by \autoref{lem.auxihar} with $a=-M^2+l\delta^2(\eps_0-r_0^2)$, $b=-M^2+l\delta^2\eps_0$, $\rho=\delta$, $r_1=l(\delta r_0)^3$ $r_2=r_3=0 $, $d_1=d_2=1$. Thus, we have inductively proved \eqref{ind.x'} for any $k \in \N$.

Note that at the end of the proof (see \eqref{k.choi}) we will choose the positive integer $k=k(1/\delta)$ to be sufficiently large.

\textbf{Step 2.} Next, using \eqref{ind.x'} and \eqref{harnack.pre}, we will show for any $m\in\N$ and $(t,x',v')\in \mathcal{A}_{k,m}'$,
    \begin{align}\label{ind.v'}
        \frac{1}{C_0^{k+m}}\leq f(t,x',1,v',0)+\sum_{i=0}^{k-1+m}\frac{1}{C_0^i\mathcal{M}},
    \end{align}
    where
    \begin{align*}
        \mathcal{A}'_{k,m}\coloneqq [-M^2+(k+m)\delta^2(\eps_0-r_0^2),-M^2+(k+m)\delta^2\eps_0]\times B'_{r_0\delta^3(kr_0^2-\eps_0\frac{(m-1)m}{2})}\times B_{m\delta r_0}',
    \end{align*}
    i.e. we spread positivity further in the $v'$ direction.
    First, using \eqref{harnack.pre} with $\rho=\delta$ and $z_1=(t,x',1,0)$, where $(t,x')\in \mathcal{A}_{k}$, we have \eqref{ind.v'} for $m=1$. Then we assume \eqref{ind.v'} with $m=l$. Before applying \eqref{harnack.pre}, we note that for any $(t,x',v')\in\mathcal{A}'_{k,l+1}$, there is a point $(t_1,x_1',v_1')\in \mathcal{A}'_{k,l}$ such that 
    \begin{align}\label{inc.cylinder}
        (t,x',1,v',0)\in Q_{\delta r_0}((t_1,x_1',1,v_1',0)\circ (\delta^2\eps_0,0,0)),
    \end{align}
    by \autoref{lem.auxihar} with $\rho=\delta$, $a=-M^2+(k+l)\delta^2(\eps_0-r_0^2)$, $b=-M^2+(k+l)\delta^2\eps_0$, $r_1=r_0\delta^3(kr_0^2-\eps_0\frac{(l-1)l}{2})$, $d_1=d_2=1$, $r_2=l\delta r_0$. 
    Therefore, for any $(t,x',v')\in\mathcal{A}'_{k,l+1}$, we employ \eqref{harnack.pre} with  $\rho=\delta$ and $z_1=(t_1,x_1',1,v_1',0)$, where the point $(t_1,x_1',v_1')\in \mathcal{A}'_{k,l}$ is determined in \eqref{inc.cylinder} so that
    \eqref{ind.v'} is true for $m=l+1$. This proves \eqref{ind.v'}.
    
    To end this step, we finally choose the integer $m=m(1/\delta)$ large enough so that 
    \begin{equation}\label{m.choi}
        1\leq m\delta r_0<2.
    \end{equation}
    As $r_0=\frac1{80}$ by \autoref{lem.har}, $m$ only depends on $1/\delta$.
    This gives that $\eps_0m^2\delta^3r_0\leq \eps_0(m\delta r_0)^2\leq 4\eps_0\leq 1$, as we assumed $\delta\leq r_0$. Thus, we have
    \begin{align}\label{last.v'}
        \frac{1}{C_0^{k+m}}\leq f(t,x',1,v',0)+\sum_{i=0}^{k-1+m}\frac{1}{C_0^i\mathcal{M}},
    \end{align}
    where
    \begin{align}\label{set.akm}
        \mathcal{A}_{k,m}\coloneqq [-M^2+(k+m)\delta^2(\eps_0-r_0^2),-M^2+(k+m)\delta^2\eps_0]\times B'_{k(\delta r_0)^3-1}\times B_{1}'.
    \end{align}

\textbf{Step 3.}
In this step we propagate positivity in the $x_n$-variable from $\{x_n = 1\}$ towards $\{ x_n = 0 \}$. We will do so in two steps. First, we have to apply the Harnack inequality to kinetic cylinders of radius $\delta$, in order to move away from the boundary of the solution domain $\{x_n=(1+\delta)^3\}$. Once we have left the boundary scale, in as second step, we move further by taking cylinders of radius $|x_{n}|^{\frac13}$. This choice guarantees that we stay away from the boundary $\{x_n=0\}$ when $x_n$ is close to $0$. We will divide the proof into two cases. 

\textbf{Step 3-1.} First, we prove the lower bound when $x_n$ is close to $x_n=(1+\delta)^3$.
We will prove for any $\mathbf{n}\in\N$ and $(t,x',x_n,v')\in \mathcal{A}^1_{k,m,\mathbf{n}}$
\begin{align}\label{ind.x_n}
    \frac{1}{C_0^{k+m+\mathbf{n}}}\leq f(t,x',x_n,v',0)+\sum_{i=0}^{k-1+m+\mathbf{n}}\frac{1}{C_0^i\mathcal{M}},
\end{align}
where 
\begin{align*}
    \mathcal{A}^1_{k,m,\mathbf{n}}&\coloneqq \mathcal{I}^1_{k,m,\mathbf{n}}\times B'_{k(\delta r_0)^3-1-\mathbf{n}\delta^2\eps_0}\times [1-\mathbf{n}(\delta r_0)^3,1]\times B_{1}',\\
    \mathcal{I}^1_{k,m,\mathbf{n}}&\coloneqq [-M^2+((k+m)\delta^2+\mathbf{n}\delta^2)(\eps_0-r_0^2),-M^2+((k+m)\delta^2+\mathbf{n}\delta^2)\eps_0].
\end{align*}
First, we observe from \autoref{lem.auxihar} with $\rho=\delta$, $\widetilde{\mathcal{I}}=\mathcal{I}^1_{k,m,\mathbf{n}}$, $r_1=k(\delta r_0)^3-1-\mathbf{n}\delta^2\eps_0$, $d_1=1-\mathbf{n}(\delta r_0)^3$, $d_2=1$, $r_2=1$, and $r_3=0$ that for any $(t,x',x_n,v')\in \mathcal{A}^1_{k,m,\mathbf{n}}$, there is a point $(t_1,x_1',x_{1,n},v')\in \mathcal{A}^1_{k,m,\mathbf{n}-1}$ such that 
\begin{align*}
    (t,x',x_{n},v',0)\in Q_{\delta r_0}((t_1,x_1',x_{1,n},v',0)\circ (\delta^2 \eps_0,0,0)).
\end{align*}
In addition, we note $Q_{\delta}((t_1,x_{1}',x_{1,n},v',0)\circ (\delta^2\eps_0,0,0))\subset I_{2M}\times \mathrm{H}_{1+\delta}$.
Based on this observation, the estimate \eqref{ind.x_n} with $\mathbf{n}=1$ is obtained by a combination of \eqref{ind.v'} and \eqref{harnack.pre} with $\rho=\delta$ and $z_1=(t_1,x_1',1,v_1',0)$ for any $(t_1,x_1',v_1')\in \mathcal{A}_{k,m}$.  Now, we assume \eqref{ind.x_n} with $\mathbf{n}=l$ for some positive integer $l$. Then we do the same procedure with $z_1=(t_1,x_1',x_{1,n},v_1',0)$ for any $(t_1,x_1',x_{1,n},v_1')\in \mathcal{A}^1_{k,m,l}$, so we get \eqref{ind.x_n} when $\mathbf{n}=l+1$. This proves  \eqref{ind.x_n}.

To conclude this step, we let $\mathbf{n}_1$ be the positive integer $\mathbf{n}_1=\mathbf{n}_1(1/\delta)$ such that
\begin{align}\label{choi.d1}
    \frac14<\mathbf{n}_1(\delta r_0)^3\leq\frac12.
\end{align} 
Thus, we have proved 
\begin{align*}
    \frac{1}{C_0^{k+m+\mathbf{n}_1}}\leq f(t,x',x_n,v',0)+\sum_{i=0}^{k-1+m+\mathbf{n}_1}\frac{1}{C_0^i\mathcal{M}} \quad\text{on }\mathcal{I}^1_{k,m,\mathbf{n}_1}\times B'_{k(\delta r_0)^3-1-\mathbf{n}_1}\times \left[\frac{3}{4},\frac78\right]\times B_{1}'.
\end{align*}

\textbf{Step 3-2.} Next, we show that for any $\mathbf{n}\in\N$
\begin{align}\label{ind.x_n2}
    \frac{1}{C_0^{k+m+\mathbf{n}_1+\mathbf{n}}}\leq f(t,x',x_n,v',0)+\sum_{i=0}^{k-1+m+\mathbf{n}_1+\mathbf{n}}\frac{1}{C_0^i\mathcal{M}}\quad\text{in }\mathcal{A}^0_{k,m,\mathbf{n}},
\end{align}
where 
\begin{align*}
    \mathcal{A}^0_{k,m,\mathbf{n}}&\coloneqq \mathcal{I}^0_{k,m,\mathbf{n}}\times B'_{k(\delta r_0)^3-1-\mathbf{n}_1-\eps_0\widetilde{\rho}_{\mathbf{n}}}\times \left[(\rho_\mathbf{n})^3,7/8\right]\times B_{1}',\\
    \mathcal{I}^0_{k,m,\mathbf{n}}&\coloneqq\left [-M^2+(\eps_0-r_0^2)\left((k+m+\mathbf{n}_1)\delta^2+\widetilde{\rho}_\mathbf{n}\right),-M^2+\eps_0\left((k+m+\mathbf{n}_1)\delta^2+\widetilde{\rho}_\mathbf{n}\right)\right]
\end{align*}
with
\begin{align*}
    \rho_\mathbf{n}\coloneqq \left(\frac3{4}(1-r_0^3)^\mathbf{n}\right)^{\frac13}\quad\text{and}\quad\widetilde{\rho}_\mathbf{n}\coloneqq \sum\limits_{\mathbf{i}=0}^{\mathbf{n}-1}\rho_{\mathbf{i}}^2.
\end{align*}
First, note that for any $(t,x',x_n,v')\in \mathcal{A}^0_{k,m,\mathbf{n}}$, there is  $(t_1,x_1',x_{1,n},v_1')\in \mathcal{A}^0_{k,m,\mathbf{n}-1}$ such that
\begin{align*}
    (t,x',x_n,v',0)\in Q_{\rho_{\mathbf{n}-1} r_0}((t_1,x_1',x_{1,n},v_1',0)\circ ((\rho_{\mathbf{n}-1})^2\eps_0,0,0))
\end{align*}and $ Q_{\rho_{\mathbf{n}-1}}((t_1,x_1',x_{1,n},v_1',0)\circ ((\rho_{\mathbf{n}-1})^2\eps_0,0,0))\subset I_{2M}\times \mathrm{H}_{1+\delta}$, which follows from \autoref{lem.auxihar} with $\rho=\rho_{\mathbf{n}-1}$, $\widetilde{\mathcal{I}}=\mathcal{I}^0_{k,m,\mathbf{n}}$, $r_1=k(\delta r_0)^3-2-\mathbf{n}_1-\mathbf{n}$, $d_1=(\rho_{\mathbf{n}-1})^3$, $d_2=7/8$, $r_2=1$, and $r_3=0$. Therefore, as in \eqref{ind.x_n},
we deduce \eqref{ind.x_n2} with $\mathbf{n}=l+1$ from a combination of \eqref{harnack.pre} with $\rho=\rho_l$ and \eqref{ind.x_n2} with $\mathbf{n}=l$. This proves \eqref{ind.x_n2}.

To conclude this step, we apply \eqref{ind.x_n2} with $\mathbf{n}$ being the positive integer $\mathbf{n}=\mathbf{n}(\varsigma)$ such that
\begin{align}\label{choi.d}
    (\rho_\mathbf{n})^3=\frac3{4}(1-r_0^3)^\mathbf{n}\leq \left(\frac{\varsigma}{4}\right)^3.
\end{align}
Therefore, we have proved
\begin{align}\label{last.tx'xnv'}
    \frac{1}{C_0^k}\frac{1}{c_0}\leq f(t,x',x_n,v',0)+\sum_{i=0}^{k-1+m+\mathbf{n}_1+\mathbf{n}}\frac{1}{C_0^i\mathcal{M}}\quad\text{in }\mathcal{A},
\end{align}
with
\begin{align}\label{const,R0R1}
    \mathcal{A}=[-M^2+(\eps_0-r_0^2)(k\delta^2+R_0),-M^2+\eps_0 (k\delta^2+R_0)]\times B'_{k(\delta r_0)^3-R_1}\times [(\varsigma/4)^3,7/8]\times B_1'
\end{align}
for some constants $R_0,R_1$ depending only on $1/\delta$ and some constant   $c_0=c_0(n,\Lambda,\varsigma,1/\delta)$, where we have used \eqref{m.choi}, \eqref{choi.d1} and the fact that $\delta\leq r_0$. To be more precise, we have chosen
\begin{align*}
    R_0 = (m + \mathbf{n}_1) \delta^2 + \widetilde{\rho}_\mathbf{n}, \quad R_1 = 1 + \mathbf{n}_1, \quad c_0 = C_0^{m + \mathbf{n}_1 + \mathbf{n}}.
\end{align*}
The integer $\mathbf{n}$ depends on $\varsigma$ and 
\begin{align*}
    \widetilde{\rho}_\mathbf{n}=\sum_{\mathbf{i}=0}^{\mathbf{n}-1}(\rho_\mathbf{i})^2\leq \left(\frac{3}4\right)^{\frac23}\frac{1}{1-(1-r_0^3)^{\frac23}},
\end{align*}
which has a universal upper bound independent of $\mathbf{n}$.

\textbf{Step 4.}
We now apply the Harnack inequality once more in order to propagate positivity in the $v_n$-variable. Note that we choose cylinders of radius $\varrho\leq (\frac{\varsigma}{4})^3$, which allows us to control the influence of the velocity variables along the Harnack chain. As a consequence, the positivity of $f$ will be preserved in the region $x_n\in [(\varsigma/2)^3,3/4]$.

Now, using \eqref{last.tx'xnv'}, we will prove for any $j\in\N$,
\begin{align}\label{ind.vn}
    \frac{1}{C_0^{k+j}}\frac1{c_0}\leq f(z)+\sum_{i=0}^{k-1+m+\mathbf{n}_1+\mathbf{n}+j}\frac{1}{C_0^i\mathcal{M}},
\end{align}
where 
\begin{align*}
    \mathcal{B}_j&\coloneqq \mathcal{I}_j\times B'_{k(\delta r_0)^3-R_1-j(\varrho/4)^2\eps_0}\times \left[(\varsigma/4)^3+\widetilde{r}_j,7/8-\widetilde{r}_j\right]\times B_1'\times [-j(\varrho r_0/4),j(\varrho r_0/4)],\\
    \mathcal{I}_j&\coloneqq[-M^2+(\eps_0-r_0^2)(k\delta^2+R_0+j(\varrho/4)^2),-M^2+\eps_0(k\delta^2+R_0+j(\varrho/4)^2)],\\
    \widetilde{r}_j&\coloneqq \eps_0\frac{j(j+1)}2(\varrho /4)^3r_0
\end{align*}
with $\varrho\coloneqq\min\{r_0(\varsigma/4)^3,1/2^{10}\}$.
To do so, we observe that for any $z\in \mathcal{B}_j$, there is a point $z_1\in \mathcal{B}_{j-1}$ such that
\begin{align*}
    z\in {Q}_{\varrho r_0/4}(z_1\circ ((\varrho/4)^2\eps,0,0))\quad\text{and}\quad {Q}_{\varrho /4}(z_1\circ ((\varrho/4)^2\eps,0,0))\subset I_{2M}\times \mathrm{H}_{1+\delta},
\end{align*}
by \autoref{lem.auxihar} with $\rho=\varrho/4$,
$\widetilde{\mathcal{I}}=\mathcal{I}_{j-1}$, $r_1=k(\delta r_0)^3-R_1-(j-1)(\varrho/4)^2(\eps_0-r_0^2)$, $d_1=(\varsigma/4)^3+\widetilde{r}_{j-1}$ $d_2=7/8-\widetilde{r}_{j-1}$, $r_2=1$ and $r_3=(j-1)(\varrho r_0/4)$.
Therefore, by following the same procedure as in \textbf{Step 3}, we deduce \eqref{ind.vn}. 

Now we let $j$ be the positive integer $j=j(\varsigma)$ such that
\begin{align}\label{choi.j}
    \frac12\leq j(\varrho r_0/4) <\frac12+\frac{1}{2^{10}}.
\end{align}
Thus, this together with the fact that $\varrho\leq r_0 (\varsigma/4)^3$ implies 
\begin{equation}\label{j.universal}
\begin{aligned}
    \frac{j(j+1)}2(\varrho /4)^3r_0\leq {j^2}(\varrho r_0/4)^2(\varsigma/4)^3\leq (\varsigma/4)^3\quad\text{and}\quad j(\varrho/4)^2\leq j\varrho r_0/4\leq 1.
\end{aligned}
\end{equation}
Therefore, we have 
\begin{align*}
    \frac{1}{C_0^{k+j}}\frac1{c_0}\leq f(z)+\sum_{i=0}^{k-1+m+\mathbf{n}_1+\mathbf{n}}\frac{1}{C_0^i\mathcal{M}}\quad\text{in }\mathcal{I}_j\times B'_{k(\delta r_0)^3-R_1-1}\times [(\varsigma/2)^3,3/4]\times B_{1/2}
\end{align*}
for some constant $c_1=c_1(n,\Lambda,\varsigma,k,\delta)$.

\textbf{Step 5.} Now, we are in a position to conclude the proof. We let $k$ be the positive integer $k=k(1/\delta)$  such that
\begin{align}\label{k.choi}
    \frac9{8}+R_1\leq k(\delta r_0)^3<2+R_1,
\end{align} 
which gives
\begin{align*}
    k(\delta r_0)^2\geq 1,
\end{align*}
where the constant $R_1=R_1(1/\delta)$ is given in \eqref{const,R0R1}. We are now going to select $M$ and $\mathcal{M}$. First, choosing $M$ by
\begin{equation}\label{choi.M}
    M\coloneqq \sqrt{\eps_0(k\delta^2+R_0+j(\varrho/4)^2)}^{\frac12},
\end{equation}
which depends only on $1/\delta$ by \eqref{k.choi}, \eqref{const,R0R1}, and \eqref{j.universal}, we deduce $  (-1,0]\subset \mathcal{I}_j$,
as $k(\delta r_0)^2\geq1$. Next, we select $\mathcal{M}=\mathcal{M}(n,\Lambda,1/\delta,\varsigma)$ such that
\begin{align}\label{choi.mathcalM}
    \sum_{i=0}^{\infty}\frac{1}{C_0^i\mathcal{M}}=\frac{C_0}{C_0-1}\frac{1}{\mathcal{M}}\leq \frac{1}{2c_1},
\end{align}
where $c_1\coloneqq C_0^{k+j}c_0$, which implies
\begin{align}\label{ineq.last.hopf}
    \inf_{a\in[0,\frac1{16}]}\inf_{H^+_{\frac12,\varsigma}(-a,0,0)}f\geq \frac1c
\end{align}
for some constant $c=c(n,\Lambda,1/\delta,\varsigma)$. Thus, we have proved \eqref{pos.away}, which completes the proof.
\end{proof}

We conclude this section by presenting an auxiliary lemma that will be used in the next section. It shows that positivity propagates even up to the boundary of the solution domain at outgoing boundary points $\gamma_+$. The main idea of the proof is to suitably extend the solution across $\gamma_+$.

\begin{lemma}\label{lem.hargamma+}
  Let $r\in(0,1]$ and let $f\geq0$ be a weak solution to 
    \begin{align*}
        \partial_tf+v\cdot\nabla_xf-\ddiv(A\nabla_v f)=B\cdot\nabla_vf+F\quad\text{in }H_{4r}.
    \end{align*}
    Let $\varsigma\in(0,\frac12]$ and assume
    \begin{align*}
        \inf_{H^+_{2r,\varsigma}(-a,0,0)}f\geq c_0
    \end{align*}
    for some $c_0>0$ and $a\in[0,r^2]$. Then there is a constant $\mathcal{M}=\mathcal{M}(n,\Lambda,\frac{a}{\varsigma^2})\geq1$ such that if
    \begin{align*}
        \|r^2F\|_{L^\infty(H_{4r})}\leq \frac{c_0}{\mathcal{M}},
    \end{align*}
    then 
    \begin{align*}
        f\geq \frac{c_0}{c} \quad\text{in }\mathcal{A}_{r,2\varsigma},
    \end{align*}
    where $c=c(n,\Lambda,\varsigma)$ and we write
    \begin{align}\label{set.mathcalar}
        \mathcal{A}_{r,2\varsigma}\coloneqq {H}_{r}\cap\left[\Big(\{0\leq x_n\leq (2r\varsigma)^3\}\times \{-r\leq v_n\leq-2r\varsigma\}\Big)\cup \Big(\{x_n=(2r\varsigma)^3\}\times \{-r\leq v_n\leq0\}\Big)\right].
    \end{align}
\end{lemma}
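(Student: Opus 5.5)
The plan is to push the lower bound on $H^+_{2r,\varsigma}(-a,0,0)$ down to $\{x_n\le (2r\varsigma)^3\}$, in the outgoing velocity range $v_n\le -2r\varsigma$, by following characteristics backwards. By the scaling $\widetilde f(z)=f(S_rz)$, which is used exactly as at the start of the proof of \autoref{lem.exp.posright}, we may assume $r=1$.

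The key observation is geometric. Take a point $z=(t,x',x_n,v',v_n)\in\mathcal{A}_{1,2\varsigma}$ with $v_n\le-2\varsigma$. Moving backwards in time along the free transport $x\mapsto x-sv$, the coordinate $x_n$ increases at rate $|v_n|\ge 2\varsigma$. After a time $s\sim\varsigma^2$ the point has entered the region $\{x_n\ge(2\varsigma)^3\}$, and it has done so while staying inside $H_4$. The boundary $\{x_n=0\}$ never obstructs this, because the backward trajectory moves away from it. This is the reason $\gamma_+$ is harmless.

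To make this rigorous with Harnack inequalities, I will extend $f$ across $\gamma_+$. The main idea, as stated before the lemma, is to extend the solution. Concretely, I will use the interior-type Harnack inequality \autoref{lem.har} applied on cylinders $Q_\rho(z_1\circ(\rho^2\eps_0,0,0))$ with $\rho\sim\varsigma$. I will choose the base point $z_1$ in the positivity set (or in a previously reached set) with $v_{1,n}\le -\varsigma$.

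The cylinder may stick out into $\{x_n<0\}$. To handle that part, I will extend $f$ there by solving the equation in the region swept by backward characteristics with $v_n<0$, which is the natural extension of a sub/supersolution. The simplest route is this:
\begin{itemize}
\item define $\bar f$ on $\{x_n<0,\ v_n<-\varsigma/2\}$ near the cylinder as the solution of the Cauchy–Dirichlet problem whose incoming data come from $f$ on $\{x_n=0,\ v_n<0\}$, viewed as an inflow boundary for the domain $\{x_n<0\}$;
\item glue $f$ and $\bar f$. Since $\{x_n=0\}$ with $v_n<0$ is outgoing for the original domain and incoming for the extension, the glued function is a weak solution across it, with no boundary flux term.
\end{itemize}
Nonnegativity of $\bar f$ follows from the maximum principle, given suitable nonnegative data on the $v$-boundary. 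Alternatively, I will avoid the extension and use that only forward-in-time positivity is needed. In that version the Harnack infimum is taken on $Q_{\rho r_0}(z_1\circ(\rho^2\eps_0,0,0))$, which lies in $\{x_n>0\}$ as soon as $x_{1,n}\ge(2\varsigma)^3$ and the shift $\rho^2\eps_0|v_{1,n}|$ is controlled. In this version only the supremum cylinder has to be a genuine domain.

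I then chain this along \autoref{lem.auxihar}, with $r_3$ replaced by a velocity window $[-1,-2\varsigma]$. Each Harnack step moves forward in time by about $\rho^2\eps_0$ and decreases $x_n$ by at least $2\varsigma\rho^2\eps_0-(\rho r_0)^3>0$ once $\rho\ll\varsigma$. The number of steps $N$ needed to go from $x_n\ge(2\varsigma)^3$ down to $x_n=0$ is therefore bounded by a constant depending on $\varsigma$ and on the time budget $a/\varsigma^2$. The time used must match the given shift $a\in[0,1]$; this is why $\mathcal M$ depends on $a/\varsigma^2$. The source term accumulates as $\sum_i C_0^{-i}\|F\|/\mathcal M$, as in \eqref{choi.mathcalM}. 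Choosing $\mathcal M$ large absorbs it, leaving $f\ge c_0/C_0^N$.

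The second piece of $\mathcal{A}_{1,2\varsigma}$ consists of the points with $x_n=(2\varsigma)^3$ and $v_n\in[-1,0]$. These are interior points at a fixed distance from $\{x_n=0\}$. They are handled by a short chain of cylinders of radius $\sim\varsigma$, of the same type as in Step 4 of \autoref{lem.exp.posright}, starting from $H^+_{2,\varsigma}(-a,0,0)$.

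The main obstacle I expect is making the Harnack chain reach all the way to $x_n=0$ inside a one-sided domain. The cylinders shrink nowhere, because $|v_n|\ge2\varsigma$ gives uniform room, but the supremum cylinder must remain in the solution domain. I will try first the extension across $\gamma_+$, which I expect to require some care to be justified as a weak solution. If that fails, I will fall back on \autoref{lem.bdrysch} continuity up to $\gamma_+$ to pass to the limit $x_n\downarrow0$ from interior lower bounds.
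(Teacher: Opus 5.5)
Your main route is the paper's proof. You rescale to $r=1$ and extend $f$ across $\gamma_+$ by solving an auxiliary Cauchy--Dirichlet problem with nonnegative data (with $A=I$, $B=0$, $F=0$ off $H_4$). You then run \autoref{lem.har} backwards along characteristics into $H^+_{2,\varsigma}(-a,0,0)$, with a number of steps depending on $\varsigma$ and $a/\varsigma^2$, and absorb $F$ by taking $\mathcal M$ large. There are two small differences. The paper solves on a box $Q$ that straddles $\{x_n=0\}$ and contains only outgoing velocities $v_n\in[-3-\varsigma,-\varsigma]$, and it identifies that solution with $f$ on $Q\cap\{x_n>0\}$ by uniqueness, since $\{x_n=0\}\cap\partial_{\mathrm{kin}}(Q\cap\{x_n>0\})=\emptyset$; this replaces your gluing and trace-matching check. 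It also follows the single characteristic $(t_2-k\eps_0\rho^2,\,x_2-kv_2\eps_0\rho^2,\,v_2)$ with $\rho=\varsigma/4$ instead of using \autoref{lem.auxihar}.

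Drop your two fallbacks, because neither works. Harnack's inequality needs the equation on the whole cylinder $Q_\rho(z_1\circ(\rho^2\eps_0,0,0))$, and this cylinder enters $\{x_n<0\}$ when the target point is near $\{x_n=0\}$. Continuity estimates up to $\gamma_+$ scale with $\|f\|_{L^\infty}$, which the hypotheses do not control relative to $c_0$.
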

\begin{proof}
By scaling, we may assume $r=1$. First, fix a constant $\mathcal{M}\geq1$, which will be determined at the end of the proof. Next note from \cite{Sil22} that $f$ is H\"older continuous up to the boundary, so that we can assume that $f\in C^{\alpha}(\overline{Q})$, where \begin{align*}
        Q\coloneqq \{(t,x',x_n,v',v_n)\,: (t,x',v')\in Q'_3,\, v_n\in[-3-\varsigma,-\varsigma],\,x_n\in[-3,3]\}
    \end{align*} and $f\geq0$.
    Thus, as in the proof of \cite[Cor 2.9]{Zhu24} together with $\nabla_vf\in L^2(Q)$ and $f\in C^\alpha(\overline{Q})$, we have a unique weak solution $f_0\geq0$ to 
\begin{equation}\label{f00.eq}
\left\{
\begin{alignedat}{3}
\partial_tf_0+v\cdot\nabla_xf_0-\ddiv({A}_0\nabla_ v{f_0})&=B_0\cdot\nabla_vf_0+F_0&&\qquad \mbox{in  $Q$}, \\
{f_0}&=f&&\qquad  \mbox{in $\partial_{\mathrm{kin}}Q$},
\end{alignedat} \right.
\end{equation} 
where $A_0=A$, $B_0=B$, $F_0=F$ on $H_4$ and $A_0\equiv I$, $B_0\equiv0$, $F_0\equiv 0$ on $(H_4)^c$.
On the other hand, since $\{x_n=0\}\cap\partial_{\mathrm{kin}}(Q\cap \{x_n>0\})=\emptyset$, we have
\begin{equation*}
\left\{
\begin{alignedat}{3}
\partial_t(f-f_0)+v\cdot\nabla_x(f-f_0)-\ddiv({A}\nabla_ v(f-{f_0}))&=B\cdot\nabla_v(f-f_0)&&\qquad \mbox{in  $Q\cap\{x_n>0\}$}, \\
f-f_0&=0&&\qquad  \mbox{in $\partial_{\mathrm{kin}}(Q\cap \{x_n>0\})$}.
\end{alignedat} \right.
\end{equation*} 
Hence by the uniqueness of solutions to this problem, we get $f=f_0$ in $Q\cap\{x_n\geq0\}$.

Thus, we deduce that $f$ is also a solution to \eqref{f00.eq}, and we will apply Harnack's inequality iteratively to the equation \eqref{f00.eq}. This yields for any $k\in\N$,
     \begin{equation}\label{harnack.ineq10f}
     \begin{aligned}
       f_0(z_1)&\leq c\big(f_0(t_1+\eps_0\rho^2,x_1+v_1\eps_0\rho^2,v_1)+c_0/\mathcal{M}\big)\\
       &\leq \cdots\leq c^kf_0(t_1+k\eps_0\rho^2,x_1+kv_1\eps_0\rho^2,v_1)+c_0\sum_{i=1}^kc^i/\mathcal{M}
    \end{aligned}
    \end{equation}
    for some constants $\eps_0$ and $c=c(n,\Lambda)$,
    whenever $Q_{\rho}(t_1+k\eps_0\rho^2,x_1+kv_1\eps_0\rho^2,v_1)\subset Q\cup H_4$. Next, we choose $\rho\coloneqq\frac{\varsigma}{4}$ so that $Q_{{\rho}}(t_2,x_2,v_2)\subset Q\cup H_4$ for every $(t_2,x_2,v_2)\in \mathcal{A}_{1,2\varsigma}$. Moreover, there is a positive integer $k=k(\frac{a}{\varsigma^2})$ such that $\frac{64}{\eps_0}+\frac{a}{\eps_0}(\frac{4}{\varsigma})^2\leq k\leq \frac{128}{\eps_0}+\frac{a}{\eps_0}(\frac{4}{\varsigma})^2$. Thus, for any $(t_2,x_2,v_2)\in \mathcal{A}_{1,2\varsigma}$, we have
    \begin{align*}
        (t_2-k\eps_0\rho^2,x_2-kv_2\eps_0\rho^2,v_2)\in H_{2,\varsigma}^+(-a,0,0).
    \end{align*}
    In particular, when $a=0$, the positive integer $k$ is a universal constant.
    Thus, by \eqref{harnack.ineq10f} for any $(t_2,x_2,v_2)\in\mathcal{A}_{1,2\varsigma}$, we get
    \begin{align}\label{harnack.ineq1f}
        c_0\leq f_0 (t_2-k\eps_0\rho^2,x_2-kv_2\eps_0\rho^2,v_2)\leq c\big(f_0(t_2,x_2,v_2)+c_0/\mathcal{M}\big)
    \end{align}
    for some constant $c=c(n,\Lambda,\frac{a}{\varsigma^2})$. Now by choosing $\mathcal{M}=\mathcal{M}(n,\Lambda,\frac{a}{\varsigma^2})$ large enough, we get the desired result from the fact that $f_0(t_2,x_2,v_2) = f(t_2,x_2,v_2)$ since $(t_2,x_2,v_2) \in \mathcal{A}_{1,2\varsigma} \subset Q \cap \{ x_n > 0\}$. This completes the proof.
\end{proof}

\section{Hopf lemma} 
\label{sec:Hopf}
In this section, we prove a Hopf lemma for a general class of kinetic equations. We proceed in several steps. First, we show the result for solutions to Kolmogorov's equation (with constant coefficients) and then, in a second step, we develop a perturbation argument which allows us to prove the result for kinetic Fokker-Planck equations with H\"older continuous coefficients and bounded source terms (see \autoref{lem.hop.gen}).

Recall the 1D function ${\varphi_0}$, which is introduced in \eqref{notationvarphi} and solves \eqref{eq.phi0}. In addition, from \cite[Lemma C.1 and Lemma C.5]{KiWe26}, we have
\begin{equation}\label{property.phi_0}
\begin{aligned}
&{\varphi_0}\geq0,\quad\partial_{v}{\varphi_0}\leq 0,\quad {\varphi_0}(x,v)\leq c\max\{|x|^{\frac13},|v|\}^{\frac12}, \\
&{\varphi_0}(x,v)\geq \frac1c\max\{|x|^{\frac13},|v|\}^{\frac12}\quad\text{whenever }v\leq x^3.
\end{aligned}
\end{equation}

\subsection{Hopf lemma for Kolmogorov's equation with constant coefficients}
First, we define a time-dependent kinetic cylinder $\mathcal{H}_{a,b}$ with $a\geq b$, which is has radius $a$ in $t,x',v'$ and $b$ in $x_n,v_n$, and is given by
\begin{equation}\label{defn.sethab}
\begin{aligned}
    &\mathcal{H}_{a,b}\coloneqq \{z\in H_a\,:\,x_n\in[0,b^3],\, v_n\in[-b,b]\} = (-a^2,0] \times (B_{a^3}' \times [0,b^3]) \times (B_a' \times [-b,b]),\\
    &\Gamma_{a,b}\coloneqq \partial_{\mathrm{kin}}\mathcal{H}_{a,b}\cap\Big( \left[\{x_n=b^3\}\times \{-b^3\leq v_n\leq 0\}\right]\cup \left[\{0\leq x_n\leq b^3\}\times \{v_n=-b\}\right]\Big).
\end{aligned}
\end{equation}
In \autoref{lem.exp.posright}, we have proved that for positivity to be propagated up to time level $t=t_0$, we need to assume positivity at time $t=t_0-M^2$ for some constant $M$, which cannot be arbitrarily small. This motivates to introduce the aforementioned cylinders $\mathcal{H}_{a,b}$.

With this cylinder, we construct a barrier that serves as a subsolution.
\begin{lemma}\label{lem.sub.nd}
    There exists a large constant $M=M(n,\Lambda)\geq8$ such that there is a solution $\psi_1$ to
    \begin{equation}\label{eq.subsol0n}
\left\{
\begin{alignedat}{3}
\partial_t\psi_1+v\cdot\nabla_{x}\psi_1-a^{i,j}\partial_{v_i,v_j}\psi_1&\leq 0&&\qquad \mbox{in  $\mathcal{H}_{M,1}$}, \\
\psi_1&\leq 0&&\qquad  \mbox{on $\partial_{\mathrm{kin}}\mathcal{H}_{M,1}\setminus \Gamma_{M,1}$},\\
\psi_1&\leq c&&\qquad\mbox{on $\Gamma_{M,1}$},\\
\psi_1(t,x,v)&\geq {\varphi_0}(x_n,v_n)/2&&\qquad\mbox{in $(t,x,v)\in {H}_{1/2}$}
\end{alignedat} \right.
\end{equation} 
for some constant $c=c(n,\Lambda)$, where $a^{n,n}=1$.
\end{lemma}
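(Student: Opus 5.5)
We construct $\psi_1$ as a product of the one-dimensional profile ${\varphi_0}(x_n,v_n)$ with a cutoff in the tangential and time variables, and then correct the error terms. Since $a^{n,n}=1$ and ${\varphi_0}$ depends only on $(x_n,v_n)$, we have
\[
(\partial_t+v\cdot\nabla_x-a^{i,j}\partial_{v_i v_j}){\varphi_0}=v_n\partial_{x_n}{\varphi_0}-\partial_{v_nv_n}{\varphi_0}=0 .
\]
So ${\varphi_0}$ itself is an exact solution in $\{x_n>0\}$ that vanishes on $\gamma_-$. We first remove the possible anisotropy $a^{n,j}$ for $j\neq n$: after a linear change of variables $v'\mapsto v'-a^{n,'}v_n$ (together with the corresponding change in $x'$), we may assume $a^{n,j}=0$ for $j\neq n$. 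This change preserves the half-space and the variables $(x_n,v_n)$, and it only distorts the cylinders by constants depending on $\Lambda$.

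The ansatz is
\[
\psi_1(t,x,v)=\eta(t,x',v')\,{\varphi_0}(x_n,v_n)-K\,\chi(x_n,v_n),
\]
built from the following pieces.
\begin{itemize}
\item[(a)] We take
\[
\eta(t,x',v')=\Big(1-\frac{|v'|^2}{M^2/4}-\frac{|x'|^2}{M^6/16}\Big)_+\,\frac{t+M^2}{M^2}.
\]
Alternatively, $\eta$ can be taken as a smooth subsolution of the tangential Kolmogorov operator that vanishes on the tangential and initial parts of $\partial_{\mathrm{kin}}\mathcal{H}_{M,1}$ and satisfies $\eta\ge 3/4$ on $H_{1/2}$. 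Because $M$ is large, the derivatives are small in $H_1$: $\partial_t\eta=O(M^{-2})$, $v'\cdot\nabla_{x'}\eta=O(M^{-5})$, and $a^{ij}\partial_{v_iv_j}\eta=O(M^{-2})$.
\item[(b)] The defect $(\partial_t+v\cdot\nabla_x-a\nabla_v^2)(\eta{\varphi_0})=-{\varphi_0}\,\mathcal{L}'\eta$ is therefore of size $\le CM^{-2}{\varphi_0}$. Here the cross terms vanish by the reduction $a^{n,j}=0$.
\item[(c)] We absorb this defect with a strict supersolution $\chi$ of the one-dimensional problem. The natural choice is $\chi=\psi_0$ from \eqref{eq.psi0}, with $\varphi_2=\psi_0$, which satisfies $v_n\partial_{x_n}\psi_0-\partial_{v_nv_n}\psi_0=1$ and vanishes on $\gamma_-$. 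Since ${\varphi_0}\le C$ on $x_n\le1,|v_n|\le1$, it suffices to take $K=CM^{-2}$; then $\mathcal{L}\psi_1\le CM^{-2}-K\le 0$.
\end{itemize}

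We then check the boundary conditions. On $\gamma_-$ both ${\varphi_0}$ and $\psi_0$ vanish. On $t=-M^2$, and where $|v'|=M/2$ or $|x'|$ is of order $M^3$, we have $\eta=0$, so $\psi_1=-K\psi_0$; this is $\le 0$ provided $\psi_0\ge0$. If $\psi_0$ is not nonnegative, we replace it by a nonnegative supersolution, for instance $\psi_0+C\varphi_0$ or $\psi_0+Cx_n+C$ adjusted, and use the comparison principle for the one-dimensional problem. On $\Gamma_{M,1}$, where $x_n=1$ or $v_n=-1$, we only need $\psi_1\le c$, which follows from ${\varphi_0}\le C$. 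Finally, on $H_{1/2}$ we have $\eta\ge 3/4$, so $\psi_1\ge \tfrac34{\varphi_0}-CM^{-2}|\psi_0|$.

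The main obstacle is this last lower bound: near $\gamma_-$ (and near $\gamma_0$) the functions ${\varphi_0}$ and $\psi_0$ have different homogeneities, $1/2$ versus $2$. In $\mathcal{R}_0\cup\mathcal{R}_+$ we have ${\varphi_0}\gtrsim |z|^{1/2}\gtrsim |\psi_0|$, so choosing $M$ large gives $\psi_1\ge{\varphi_0}/2$ there. In the incoming layer, however, ${\varphi_0}$ decays like $e^{-v^3/9x}$ while $\psi_0$ decays only polynomially. There I would replace $\psi_0$ by a corrector that is itself comparable to ${\varphi_0}$, namely $\chi=\varphi_3=\partial_{v_n}\phi_1$, normalized so that $m_0>0$ (flip its sign otherwise). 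It solves $\mathcal{L}\chi=m_0{\varphi_0}$ by \eqref{eq.dphi1} and inherits the incoming-boundary decay of ${\varphi_0}$, so that $|\chi|\le C{\varphi_0}$ in $x_n,|v_n|\le1$. Then $\mathcal{L}\psi_1\le (CM^{-2}-Km_0){\varphi_0}\le0$ with $K=CM^{-2}/m_0$, and $\psi_1\ge(\tfrac34-CK){\varphi_0}\ge{\varphi_0}/2$ for $M$ large. Verifying the comparison $|\varphi_3|\lesssim{\varphi_0}$ on $\{x_n\le1,|v_n|\le1\}$ from the hypergeometric asymptotics is the step I expect to require the most care.
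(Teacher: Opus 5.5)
\textbf{There is a genuine gap: the face $\{v_n=1\}$ of the kinetic boundary.} By definition, $\partial_{\mathrm{kin}}\mathcal{H}_{M,1}$ contains $(-M^2,0)\times\Omega\times\partial V$ with $V=B'_M\times[-1,1]$. Hence it contains the whole face $\{v_n=1,\ 0\le x_n\le 1\}$. This face is not part of $\Gamma_{M,1}$, which only contains $\{x_n=1,\,v_n\le 0\}$ and $\{v_n=-1\}$. So the statement requires $\psi_1\le 0$ there.

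Your ansatz $\psi_1=\eta\varphi_0-K\chi$ cannot achieve this:
\begin{itemize}
\item $\eta$ does not depend on $v_n$ and is close to $1$ for small $(t,x',v')$;
\item $\varphi_0(x_n,1)>0$ for $x_n>0$;
\item in your final version $|K\chi|\le CK\varphi_0$ with $K=O(M^{-2})$, and you need $K$ small for the lower bound on $H_{1/2}$.
\end{itemize}
So $\psi_1\ge(\eta-CK)\varphi_0>0$ on most of that face. With $\chi=\psi_0$ the same failure occurs for $x_n$ of order one. The face also matters downstream. In \autoref{lem.hopf}, the comparison $\frac{m_0}{c_3}\varphi\le h$ uses only $h\ge 0$ there, because \autoref{lem.hargamma+} gives positive lower bounds only where $v_n\le 0$.

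\textbf{The missing ingredient} is a weight in $v_n$ that vanishes at $v_n=1$ and whose commutator with $L_n=v_n\partial_{x_n}-\partial_{v_nv_n}$ has a favorable sign. The paper takes $w(v_n)=(v_n-1)^2+(1-v_n)$ and uses the \emph{additive} ansatz $\psi_1=(\xi_1(t)+\xi_2(x',v'))\varphi_0+w\varphi_0$. The cutoffs satisfy $\xi_1,\xi_2\in[-8,0]$, with $\xi_1=0$ on $[-1,0]$, $\xi_1(-M^2)=-8$, $\xi_2=0$ on $\mathrm{Q}_1'$, and $\xi_2=-8$ on $\partial\mathrm{Q}_M'$. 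Since $L_n\varphi_0=0$ and $\partial_{v_n}\varphi_0\le 0$,
\[
L_n(w\varphi_0)=-2\varphi_0+2\partial_{v_n}\varphi_0+4(1-v_n)\partial_{v_n}\varphi_0\le -2\varphi_0+2\partial_{v_n}\varphi_0 .
\]
This margin absorbs both kinds of error:
\begin{itemize}
\item the $O(M^{-2})\varphi_0$ errors coming from the cutoffs;
\item the cross terms $a^{i,n}\partial_{v_i}\xi_2\,\partial_{v_n}\varphi_0=O(M^{-1})|\partial_{v_n}\varphi_0|$.
\end{itemize}
So you need no change of variables to kill $a^{n,j}$, no corrector, and no hypergeometric comparison. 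The boundary conditions are then immediate:
\begin{itemize}
\item $w=0$ at $v_n=1$;
\item $w\le 6<8$ on the initial and lateral faces;
\item $\psi_1\le 6\varphi_0\le c$ on $\Gamma_{M,1}$;
\item $w\ge\frac34$ on $H_{1/2}$.
\end{itemize}
Additivity is essential. A product $\eta w\varphi_0$ would lose the margin where $\eta\to 0$, while $\partial_t\eta$ and $\partial^2_{v'}\eta$ stay of size $M^{-2}$.

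\textbf{A secondary, repairable point.} Where $\eta=0$ you need $\chi\ge 0$, but $\varphi_3$ changes sign, so neither sign normalization works. Using $\frac{d}{dz}\big(e^{-z}U(a,b,z)\big)=-e^{-z}U(a,b+1,z)$ in the explicit formula, $\varphi_3<0$ in $\{x_n>0,\,v_n>0\}$, while $\varphi_3>0$ near $\{x_n=0,\,v_n<0\}$. Replacing $\chi$ by $\chi+C\varphi_0$, using $|\varphi_3|\lesssim\varphi_0$, fixes this. The face $\{v_n=1\}$, however, cannot be handled within your ansatz.
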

\begin{proof}
    We fix $M\geq1$, which will be determined later in \eqref{choi.M0}. Next, we consider cut-off functions $\xi_1=\xi_1(t)$ and $\xi_2=\xi_2(x',v')$ such that
    \begin{align}\label{xi1.hopfn}
        \xi_1(t)=0\text{ for }t\in[-1,0],\quad \xi_1(-M^2)=-8,\quad \xi_1(t)\in[-8,0],\quad|\partial_t\xi_1|\leq \frac{c}{M^2}
    \end{align}
    and 
    \begin{align}\label{xi2.hopfn}
        \xi_2\equiv 0\text{ on }\mathrm{Q}_1',\quad \xi_2\equiv -8\text{ on }\partial\mathrm{Q}'_M,\quad \xi_2\in[-8,0],\quad|D^k\xi_2|\leq \frac{c}{M^k}
    \end{align}
    for some constant $c=c(n)$, where $k\in\{1,2,3\}$ and $\mathrm{Q}'_M\subset \bbR^{n-1}\times \bbR^{n-1}$ is a stationary cylinder defined in \eqref{stationary cylinder}. Then we set 
    \begin{align*}
        \psi_1\coloneqq (\xi_1(t)+\xi_2(x',v')){\varphi_0}+{\varphi_0}(v_n-1)^2+(1-v_n){\varphi_0},
    \end{align*}
    where ${\varphi_0}\equiv{\varphi_0}(x_n,v_n)$, to see that
    \begin{align*}
        &\partial_t\psi_1=(\partial_t\xi_1){\varphi_0},\\
        &v_i\partial_{x_i}\psi_1=v_i(\partial_{x_i}\xi_2){\varphi_0},\quad -a^{i,j}\partial_{v_i,v_j}\psi_1=-a^{i,j}(\partial_{v_i,v_j}\xi_2){\varphi_0}\quad\text{for any }i,j\neq n,\\
        &v_n\partial_{x_n}\psi_1-a^{n,n}\partial_{v_n,v_n}\psi_1=-2{\varphi_0}-4\partial_{v_n}{\varphi_0}(v_n-1)+2\partial_{v_n}{\varphi_0},\\
        &-a^{i,n}\partial_{v_i,v_n}\psi_1=-a^{i,n}(\partial_{v_i}\xi_2)(\partial_{v_n}{\varphi_0}),
    \end{align*}    
    where we have used \eqref{eq.phi0}. Now using this, together with \eqref{xi1.hopfn} and \eqref{xi2.hopfn}, we derive
    \begin{align*}
       \partial_t\psi_1+v\cdot\nabla_{x}\psi_1-a^{i,j}\partial_{v_i,vj}\psi_1\leq -2{\varphi_0}+2\partial_{v_n}{\varphi_0}+c\left(\frac{{\varphi_0}}{M^2}+\frac{|\partial_{v_n}{\varphi_0}|}{M}\right) \quad \text{ in } \mathcal{H}_{M,1}
    \end{align*}
    for some constant $c=c(n,\Lambda)$. Since ${\varphi_0}\geq0$, $\partial_{v_n}{\varphi_0}\leq0$, and using also that $a^{n,n} = 1$, there is a constant $M=M(n,\Lambda)\geq8$ such that 
    \begin{align}\label{choi.M0}
    -2{\varphi_0}+2\partial_{v_n}{\varphi_0}+c\left(\frac{{\varphi_0}}{M^2}+\frac{|\partial_{v_n}{\varphi_0}|}{M}\right)\leq -{\varphi_0}+\partial_{v_n}{\varphi_0}\leq 0,
    \end{align}
     which gives that $\psi_1$ satisfies the first condition given in \eqref{eq.subsol0n}.
     
    Now we will check the remaining three conditions in \eqref{eq.subsol0n}. We observe 
    \begin{align*}
        \psi_1(-M^2,x,v)\leq -8{\varphi_0}+{\varphi_0}(v_n-1)^2+(1-v_n){\varphi_0}\leq -2{\varphi_0} &\le 0, \quad \text{for any } (-M^2,x,v) \in \mathcal{H}_{M,1},\\
        \psi_1(t,x,v)\leq -8{\varphi_0}+{\varphi_0}(v_n-1)^2+(1-v_n){\varphi_0}\leq -2{\varphi_0} &\le 0 \quad\text{in }\{z\in \overline{\mathcal{H}_{M,1}}\,:\,(x',v')\in \partial \mathrm{Q}'_M\},\\
        \psi_1(t,x',0,v',v_n)&=0 \quad\text{in }\{(t,x',0,v',v_n)\in\mathcal{H}_{M,1}\,:\, v_n>0\}\\
        \psi_1(t,x',x_n,v',1) &\leq 0 \quad\text{in }\{(t,x',x_n,v',1)\in\mathcal{H}_{M,1}\},\\
        |\psi_1(z)|\leq 24|{\varphi_0}(x_n,v_n)|&\leq c \quad \text{ in }\mathcal{H}_{M,1},\\
        \psi_1(t,x,v)\geq {\varphi_0}(v_n-1)^2+(1-v_n){\varphi_0} &\geq \frac{{\varphi_0}}{2}\quad\text{in }{H}_{\frac12}
    \end{align*}
    where we have used $\xi_1{\varphi_0},\xi_2{\varphi_0}\leq 0$. This completes the proof.
\end{proof}

With \autoref{lem.sub.nd} at hand, we are now ready to prove the main result in this subsection. This result was already established for stationary equations in 1D in \cite[Lemma 3.7]{KiWe26}. Here, we are using the new barrier function from \autoref{lem.sub.nd} and the propagation of positivity result from \autoref{lem.hargamma+} to extend it to the non-stationary equation in higher dimension.

\begin{lemma}\label{lem.hopf}
    Let $h\geq0$ be a weak solution to 
    \begin{equation}\label{eq.hopf}
\left\{
\begin{alignedat}{3}
\partial_th+v\cdot\nabla_xh-a^{i,j}\partial_{v_i,v_j}{h}&=0&&\qquad \mbox{in  $H_{4r}$}, \\
h&=0&&\qquad  \mbox{in $\{x_n=0\}\times \{v_n>0\}$}
\end{alignedat} \right.
\end{equation} 
with $a^{n,n}=1$. There are constants $c=c(n,\Lambda)$ and $\sigma=\sigma(n,\Lambda)\leq \frac14$  such that 
\begin{align*}
     \inf_{H_{2r,\sigma}^+(-r^2,0,0)}\frac{h}{{\varphi_0}}\leq c\inf_{H_{\sigma r}}\frac{h}{{\varphi_0}}.
\end{align*}
\end{lemma}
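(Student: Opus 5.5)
By scaling $\tilde h(z)=h(S_r z)$, which preserves the equation, the boundary condition and the homogeneity $\varphi_0(S_r z)=r^{1/2}\varphi_0(z)$, we may assume $r=1$. The ratio $h/\varphi_0$ is then invariant up to the same factor on both sides. Set
\begin{align*}
m\coloneqq \inf_{H^+_{2,\sigma}(-1,0,0)} \frac{h}{\varphi_0}.
\end{align*}
On $H^+_{2,\sigma}(-1,0,0)$ we have $x_n\ge (2\sigma)^3$ and $|v|\le 2$, so $\varphi_0$ is bounded above there by \eqref{property.phi_0}. Consequently $h\ge m\,c^{-1}$ on $H^+_{2,\sigma}(-1,0,0)$. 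The goal is to transfer this lower bound to a multiple of $\varphi_0$ on $H_{\sigma}$ via the barrier of \autoref{lem.sub.nd}.

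\textbf{Step 1 (positivity on $\Gamma$).} Apply \autoref{lem.hargamma+} with $F\equiv 0$, $a=1$ and $c_0=m/c$. This gives $h\ge m/c$ on the set $\mathcal{A}_{1,2\sigma}$, i.e.\ near $\gamma_+$ for $v_n\le -2\sigma$ and on the face $\{x_n=(2\sigma)^3,\,-1\le v_n\le 0\}$. These are exactly the pieces of $\Gamma_{a,b}$ for a suitably scaled cylinder $\mathcal{H}_{a,b}$ with $b\approx 2\sigma$. The constants depend only on $n,\Lambda,\sigma$.

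\textbf{Step 2 (barrier and comparison).} Rescale the barrier $\psi_1$ of \autoref{lem.sub.nd}, which lives on $\mathcal{H}_{M,1}$, by setting
\begin{align*}
\psi(z)\coloneqq \lambda^{1/2}\psi_1(S_{\lambda^{-1}}z),\qquad \lambda=2\sigma .
\end{align*}
Then $\psi$ is a subsolution on $\mathcal{H}_{M\lambda,\lambda}$. We have $\psi\le 0$ on $\partial_{\rm kin}\mathcal{H}_{M\lambda,\lambda}\setminus\Gamma_{M\lambda,\lambda}$, $\psi\le c\lambda^{1/2}$ on $\Gamma_{M\lambda,\lambda}$, and $\psi\ge\varphi_0/2$ on $H_{\lambda/2}$. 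We choose $\sigma$ small, depending on the $M=M(n,\Lambda)$ from \autoref{lem.sub.nd}, so that $M\lambda\le 1$ and $\mathcal{H}_{M\lambda,\lambda}\subset H_1$.

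There is a mismatch to handle: the time-base of $\mathcal{H}_{M\lambda,\lambda}$ starts at $-(M\lambda)^2$, whereas positivity is known only at times around $-1$. This is addressed by choosing $a=1$ in \autoref{lem.hargamma+}, which propagates positivity over the full time interval $[-1,0]$ on $\mathcal{A}$. The estimate is still governed by a constant depending only on $a/\sigma^2$.

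With this in place, $w\coloneqq \frac{m}{c}\,\psi - c\lambda^{1/2} h$ (suitably normalized) satisfies $w\le 0$ on the kinetic boundary. Note that on $\gamma_-$ both functions vanish or $\psi\le 0$. The comparison principle for the Kolmogorov operator on $\mathcal{H}_{M\lambda,\lambda}$, which is the kinetic maximum principle using $\partial_{\rm kin}$, then yields $h\ge c^{-1} m\,\psi\ge c^{-1} m\,\varphi_0$ on $H_{\lambda/2}=H_\sigma$. This is the claim.

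The main obstacle is the geometric matching in Step 1. One must verify that $\Gamma_{M\lambda,\lambda}$, including the $x',v'$ range $B'_{(M\lambda)^3}\times B'_{M\lambda}$ and the times $(-(M\lambda)^2,0]$, is contained in the set where \autoref{lem.hargamma+} provides positivity. In addition, the face $\{x_n=\lambda^3, v_n\in[-\lambda,0]\}$ includes $v_n$ near $0$, which is covered by the second component of $\mathcal{A}_{1,2\sigma}$. If the cylinders do not line up, I would first reapply \autoref{lem.hargamma+} at a slightly smaller scale $r$ and adjust $\sigma$ versus $M$ accordingly.
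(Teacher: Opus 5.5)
Your proposal is correct and is essentially the paper's proof. The paper takes $\sigma=\frac{1}{2M}$, which is your $\lambda=1/M$; the rescaled barrier then lives on $\mathcal{H}_{1,2\sigma}$ and $\Gamma_{1,2\sigma}\subset\mathcal{A}_{1,2\sigma}$, which settles the matching you flag. It likewise applies \autoref{lem.hargamma+} with $a=1$ and $\varsigma=\sigma$, and concludes by comparing $h$ with a multiple of the rescaled barrier from \autoref{lem.sub.nd} via the maximum principle.

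One slip to fix: to go from $h\ge m\varphi_0$ to $h\ge m/c$ on $H^+_{2,\sigma}(-1,0,0)$ you need $\varphi_0$ bounded \emph{below} there, not above. This lower bound holds, with a constant depending on $\sigma$, because $\varphi_0$ is positive and continuous on the compact set $\{(2\sigma)^3\le x_n\le 8,\ |v_n|\le 2\}$.
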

\begin{proof}
We may assume $r=1$. First, fix the positive integer $M=M(n,\Lambda)\geq8$ and the function $\psi_1$, which are determined in \autoref{lem.sub.nd}. Then we define $\varphi(z)\coloneqq \psi_1(S_{1/M}z)$ to see that
    \begin{equation}\label{h0.eq}
\left\{
\begin{alignedat}{3}
\partial_t\varphi+v\cdot\nabla_x\varphi-a^{i,j}\partial_{v_i,v_j}{\varphi}&\leq0&&\qquad \mbox{in  $\mathcal{H}_{1,1/M}$}, \\
{\varphi}&\leq0&&\qquad  \mbox{on $\partial_{\mathrm{kin}}\mathcal{H}_{1,1/M}\setminus\Gamma_{1,1/M}$},\\
\varphi&\leq c&&\qquad \mbox{in $\Gamma_{1,1/M}$},\\
\varphi&\geq \frac{{\varphi_0}}{c}&&\qquad\text{in } {H}_{1/(2M)}
\end{alignedat} \right.
\end{equation} 
for some constant $c=c(n,\Lambda)$, as $M$ depends only on $n,\Lambda$, where the sets $\mathcal{H}_{1,1/M}$ and $\Gamma_{1,1/M}$ are defined in \eqref{defn.sethab}. 

Now we fix $\sigma\coloneqq\frac{1}{2M}$. We will prove the lower bound of the function $h$ on $\Gamma_{1,2\sigma}$. Next, we may set $m_0\coloneqq  \inf\limits_{H_{2,\sigma}^{+}(-1,0,0)}\frac{h}{{\varphi_0}}\neq0$. This gives 
    \begin{align}\label{ass.harnack}
        h\geq \frac{m_0}{c_0}\quad\text{in }H_{2,\sigma}^+(-1,0,0)
    \end{align}
    for some constant $c_0=c_0(n,\Lambda)$, as ${\varphi_0}(x,v)\geq \frac1{c_0}$ when $x\geq (2\sigma)^3$ by \eqref{property.phi_0}. By applying \autoref{lem.hargamma+} with $r=1$, $\varsigma=\sigma(=\frac1{2M})$, and $a=1$, we have
    \begin{align}\label{step1.ineq.hopf}
        h \geq \frac{m_0}{c}\quad\text{in } \Gamma_{1,2\sigma}\subset \mathcal{A}_{1,2\sigma}.
    \end{align}
    for some constant $c=c(n,\Lambda)$, as the constant $\sigma$ depends only on $n,\Lambda$.
    
Now, we are ready to apply the maximum principle to get the desired lower bound of $h$ in ${H}_\sigma\cap \{t=0\}$.  
By \eqref{h0.eq} and \eqref{step1.ineq.hopf}, we have that there is a constant $c_3=c_3(n,\Lambda)$ such that $\widetilde{h}\coloneqq\frac{m_0\varphi}{c_3}-h$ is a solution to
\begin{equation*}
\left\{
\begin{alignedat}{3}
\partial_t\widetilde{h}+v\cdot\nabla_{x}\widetilde{h}-\ddiv(A\nabla_v \widetilde{h})&\leq0&&\qquad \mbox{in  $\mathcal{H}_{1,1/M}$}, \\
\widetilde{h}&\leq 0&&\qquad  \mbox{in $\partial_{\mathrm{kin}}\mathcal{H}_{1,1/M}$}.
\end{alignedat} \right.
\end{equation*} 
Thus, by the maximum principle, we have $\frac{m_0}{c_3}\varphi\leq h$ in $\mathcal{H}_{1,1/M}$. In particular, we deduce
\begin{align}\label{last.harnack}
    \left(\inf_{H^+_{2,\sigma}(-1,0,0)}\frac{h}{{\varphi_0}}\right){\varphi_0}(x_n,v_n)\leq ch(t,x,v)\quad\text{for any }(t,x,v)\in H_{\sigma}
\end{align}
for some constant $c=c(n,\Lambda)$, where we have used that $\frac{m_0\varphi}{c_3}\geq \frac{m_0}{c}{\varphi_0}$ in $H_{\sigma}$ for some constant $c=c(n,\Lambda)$, which follows from the last condition of \eqref{h0.eq} and $\sigma=\frac1{2M}$. This completes the proof.
\end{proof}

\subsection{Hopf lemma for a general class of kinetic Fokker-Planck equations}
\label{subsec:Hopf-general}

In this subsection, we consider more general kinetic equations and prove a Hopf lemma of such equations via a perturbation argument. 

The main result of this subsection reads as follows.

\begin{theorem}\label{lem.hop.gen}
    Let $f\geq0$ be a weak solution to 
    \begin{equation*}
\left\{
\begin{alignedat}{3}
\partial_tf+v\cdot\nabla_{x}f-\ddiv(A\nabla_vf)&=B\cdot\nabla_vf+F&&\qquad \mbox{in  ${H}_2$}, \\
f&=0&&\qquad  \mbox{in $\gamma_-\cap H_2$},
\end{alignedat} \right.
\end{equation*}
where $A\in C^{\eps}(H_2)$, $B\in L^\infty(H_2)$, $0\leq F\in L^\infty(H_2)$ for some $\eps>0$. We assume  $(A(0))_{n,n}=1$ and
\begin{align*}
    \|f\|_{L^\infty(H_{2})}\leq 1.
\end{align*}
Then there exist a universal constant $\rho$ and a large constant $\mathcal{M}=\mathcal{M}(n,\Lambda)$ such that if  $f(z_0)=c_0>0$ for some $c_0 > 0$ with $z_0=(t_0,x_0',x_{0,n},v_0)=(-1,0,\rho^3,0)$ and 
\begin{align*}
    \|F\|_{L^\infty(H_2)}\leq \frac{c_0}{\mathcal{M}},
\end{align*} then
\begin{align*}
    \frac{c_0}c\leq \inf_{H_{\frac\rho8}\setminus \mathcal{R}_-}\frac{f}{{\varphi_0}},
\end{align*}
 where $c=c(n,\eps,\Lambda,\|A\|_{C^{\eps}(H_2)},\|B\|_{L^\infty(H_2)})$.
\end{theorem}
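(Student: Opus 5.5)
We argue by perturbation from the constant coefficient Hopf lemma, \autoref{lem.hopf}, using \autoref{lem.exp.posright} to generate positivity away from the boundary and the expansion of \autoref{lem.hol12} to control the error near $\gamma_0$.

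\textbf{Step 1: positivity in the interior.} Apply \autoref{lem.exp.posright} with $r=\rho$ small, $\delta=1$, and with $M$ taken from that lemma (so the datum at time $-1$ sits at the correct past time after a harmless time shift; $\rho$ is chosen so that $(\rho M)^2$ matches the available time gap). Using $\|F\|_{L^\infty}\le c_0/\mathcal{M}$, this gives $f\ge c_0/c$ on $H^+_{\rho/2,\varsigma}(a,0,0)$ for all $a\in[-\rho^2/16,0]$ and every fixed $\varsigma$. Translating in $(t,x',v')$, the same holds on $H^+_{2r,\sigma}(z_1\circ(-r^2,0,0))$ for every $z_1\in\gamma_0\cap H_{\rho/8}$ and every $r\lesssim\rho$.

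\textbf{Step 2: comparison at one scale.} Fix $z_1\in\gamma_0\cap H_{\rho/8}$ and a scale $r$. Freeze the coefficients: let $h$ solve the constant coefficient problem with $a^{ij}=A(z_1)$ in $H_{4r}(z_1)$, with zero data on $\gamma_-$ and $h=f$ on the rest of the kinetic boundary. Then $h\ge0$, and \autoref{lem.hopf} (after an affine change of variables normalizing $a^{nn}=1$; note $A_{nn}(z_1)\approx1$ since $A(0)_{nn}=1$) gives $h\ge c^{-1}(\inf_{H^+}f/\varphi_0)\varphi_0 \ge (c_0/c)\varphi_0$ on $H_{\sigma r}(z_1)$. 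Next we estimate $w=f-h$, which solves an equation with right-hand side $\ddiv((A-A(z_1))\nabla_v f)+B\cdot\nabla_v f+F$ and zero boundary data. By \autoref{lem.bdry.gamma-}, $|\nabla_v f|\lesssim \max\{x_n^{1/3},|v_n|\}^{-1/2}$. Combined with $|A-A(z_1)|\lesssim r^\eps$, the comparison principle against a multiple of $\varphi_0$-type supersolutions, or the expansion of \autoref{lem.hol12} applied to $w$, should give $|w|\lesssim r^{\eps}\,r^{1/2}$ on $H_{\sigma r}(z_1)$. On the part where $\varphi_0\gtrsim r^{1/2}$ this error is absorbed once $r$ is small.

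\textbf{Step 3: iteration down to $\gamma_0$.} This is the main obstacle. The error $r^{1/2+\eps}$ is only small relative to $\varphi_0$ at scale $r$, not at smaller distances, so a single comparison is insufficient. We iterate over dyadic scales $r_k=\sigma^k r_0$. At each step, Step 2 yields $f\ge m_k\varphi_0$ on $H_{\sigma r_k}$, with
\[
m_{k+1}\ge m_k - Cr_k^{\eps}.
\]
Here positivity at the next scale is supplied by the previous lower bound: on $H^+_{2r_{k+1},\sigma}$ we have $\varphi_0\asymp r_{k+1}^{1/2}$, so $\inf f/\varphi_0\ge m_k$. Since $\sum r_k^{\eps}<\infty$ and is small once $r_0=\rho$ is small, we get $\inf_k m_k\ge c_0/(2c)$. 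To keep errors relative, the error bound should be $Cr_k^\eps\|f\|_{L^\infty(H_{4r_k})}r_k^{-1/2}$; this is finite because \autoref{lem.hol12} gives $\|f\|_{L^\infty(H_{r})}\lesssim r^{1/2}$. Equivalently, the coefficient $p_0(z_1)$ in the expansion at $z_1$ is bounded below by $c_0/c$, and the remainder $O(|z|^{1/2+\eps})$ is dominated by $\varphi_0$ in the region $\{x_n\ge -v_n^3\cdot M'\}$.

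\textbf{Step 4: covering.} Finally, points of $H_{\rho/8}\setminus\mathcal{R}_-$ far from $\gamma_0$ relative to their distance are handled as follows. In $\mathcal{R}_0$ they lie in some $H^+$ region of Step 1, or at the matching scale around their projection $z_1$. In $\mathcal{R}_+$, \autoref{lem.hargamma+} gives $f\gtrsim c_0$-type bounds at scale $|v_n|$, and there $\varphi_0\asymp|v_n|^{1/2}$. Combining these cases gives the uniform bound $f/\varphi_0\ge c_0/c$.
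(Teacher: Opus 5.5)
There is a genuine gap in Step 3. The recursion $m_{k+1}\ge m_k-Cr_k^{\eps}$ does not follow from your Step 2. \autoref{lem.hopf} is only a comparability statement, $\inf_{H^+_{2r,\sigma}(-r^2,0,0)}h/\varphi_0\le c\inf_{H_{\sigma r}}h/\varphi_0$, with a fixed $c=c(n,\Lambda)\ge 1$. This constant is not close to $1$ in general, because the time gap between the two sets already lets nonnegative solutions decay by a fixed factor. What Step 2 actually gives is therefore $m_{k+1}\ge c^{-1}m_k-Cr_k^{\eps}$. Iterated over infinitely many dyadic scales, this yields at best $m_k\gtrsim c^{-k}m_0$, and the additive errors are then no longer small relative to $m_k$. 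The bound degenerates to something like $f\gtrsim |z|^{1/2+\beta}\ $, which is strictly weaker than the Hopf lemma. You also assert that the conclusion is equivalent to a lower bound $p_0(z_1)\ge c_0/c$ on the leading coefficient of the expansion from \autoref{lem.hol12}. That is the right target, but it is not equivalent to your recursion, and you give no argument for it.

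The missing idea is to apply \autoref{lem.hopf} only once, at a single small scale, and transfer that bound to the leading coefficient. The paper does this in the proof of \autoref{lem.hop.gen0}, in four moves:
\begin{enumerate}
\item It fixes $r=R_0$ small, depending on the data.
\item It compares the normalized variable-coefficient solution $h_1$, which has data $1$ on $\Gamma_{r,r/N}$ so that $f\ge\frac{c_0}{c}h_1$ by the maximum principle, with the frozen-coefficient solution $h_2$ whose boundary data is $c_1h_1$.
\item It expands both as $C_i\varphi_0+O((|z|/r)^{1/2+\eps})$ via \autoref{lem.hol12} and gets $C_2\gtrsim r^{-1/2}$ from \autoref{lem.hopf}.
\item It proves $|C_1-C_2|\le\delta r^{-1/2}$ (Claim 3.1) by evaluating $h_1-c_1^{-1}h_2$ at $(0,0,(\tilde\varrho r)^3,0)$, choosing $\tilde\varrho$ small first and then $r$ small.
\end{enumerate}
The remainder $O(|z|^{1/2+\eps})$ is then dominated by $\varphi_0$ outside $\mathcal R_-$. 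This gives the lower bound all the way down to $\gamma_0$ in one shot, with no compounding. Tangential translations and the positivity lemmas handle the remaining points, essentially as in your Step 4.

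Your bound $|w|\lesssim r^{1/2+\eps}$ is also not routine. The source $\ddiv((A-A(z_1))\nabla_v f)$ blows up like $\max\{x_n^{1/3},|v_n|\}^{-1/2}$ at $\gamma_0$. The paper obtains the needed smallness only pointwise at the points $(0,0,\varrho^3,0)$, through energy estimates, \autoref{lem.bdry.gamma-} and \autoref{lem.point.sol}. You would need a bound of this kind even to replace $f$ by $h$ on $H^+$ in your Step 2, since $h\neq f$ there.
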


We recall the notation $\mathcal{R}_-\coloneqq\{x_n\leq v_n^3\}$. Note that we do not establish the comparability of $f$ and ${\varphi_0}$ inside $\mathcal{R}_-$. This is due to the fact that ${\varphi_0}$ exhibits exponential decay at the boundary inside this region, which is in stark contrast to the polynomial growth away from that set (see \eqref{eq.phi0}). Due to the exponential decay of ${\varphi_0}$, the perturbation argument that our proof is based on no longer works. We do not know, whether the result of \autoref{lem.hop.gen} holds in $\mathcal{R}_-$, when $A$ is non-constant and $B,F \not= 0$.

\begin{lemma}\label{lem.hop.gen0}
    Let $f\geq0$ be a weak solution to 
    \begin{equation*}
\left\{
\begin{alignedat}{3}
\partial_tf+v\cdot\nabla_{x}f-\ddiv(A\nabla_vf)&=B\cdot\nabla_vf&&\qquad \mbox{in  ${H}_2$}, \\
f&=0&&\qquad  \mbox{in $\gamma_-\cap H_2$},
\end{alignedat} \right.
\end{equation*}
where $A\in C^{\eps}(H_2)$ and $B\in L^\infty(H_2)$ with $(A(0))_{n,n}=1$. We assume 
\begin{align*}
    \|f\|_{L^\infty(H_{2})}\leq 1.
\end{align*}
Then there is a universal constant $\rho\leq1$ such that if  $f(z_0)=c_0$ for some constant $c_0>0$ with $z_0=(t_0,x_0',x_{0,n},v_0)=(-1,0,\rho^3,0)$, then
\begin{align*}
    \frac{c_0}c\leq \inf_{H_{\frac\rho8}\setminus \mathcal{R}_-}\frac{f}{{\varphi_0}},
\end{align*}
 where $c=c(n,\eps,\Lambda,\|A\|_{C^{\eps}(H_2)},\|B\|_{L^\infty(H_2)})$.
\end{lemma}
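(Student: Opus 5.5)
We prove \autoref{lem.hop.gen0} by a perturbation argument around the constant coefficient Hopf lemma \autoref{lem.hopf}, combined with the expansion at $\gamma_0$ from \autoref{lem.hol12}. The plan has four steps.

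\textbf{Step 1: positivity away from the boundary.} First we apply \autoref{lem.exp.posright} with $r$ comparable to $\rho$, $\delta=1$ and $F=0$. Starting from $f(z_0)=c_0$ with $z_0=(-1,0,\rho^3,0)$, this propagates positivity to the cylinders $H^+_{r',\varsigma}(a,0,0)$ at the relevant scales $r'\lesssim\rho$ and times $a$ near $0$. We choose $\rho$ universal so that the waiting time $(rM)^2$ of that lemma matches the time gap $1$. Since $\varphi_0\eqsim 1$ on such sets by \eqref{property.phi_0}, this gives
\begin{align*}
f\ge \frac{c_0}{c}\varphi_0 \quad\text{on } H^+_{2r,\sigma}(-r^2,0,0)
\end{align*}
for all $r\lesssim\rho$. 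The same holds when the base point is translated along $\gamma_0\cap H_{\rho/8}$. We apply this at every boundary point $\bar z=(t,x',0,v',0)$, using the kinetic translation $\bar z\circ\cdot$.

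\textbf{Step 2: iteration across scales.} For a point $z\in H_{\rho/8}\setminus\mathcal R_-$, we project it to $\bar z\in\gamma_0$ and set $d=\max\{x_n^{1/3},|v_n|\}$. The goal is to show
\begin{align*}
\inf_{H_{\sigma r}(\bar z)}\frac{f}{\varphi_0}\ \ge\ (1-Cr^{\eps'})\inf_{H^+_{2r,\sigma}(\bar z\circ(-r^2,0,0))}\frac{f}{\varphi_0}\ -\ \text{error}
\end{align*}
at dyadic scales $r=\sigma^k\rho$. The comparison is made with the solution $h$ of the frozen equation $\partial_t h+v\cdot\nabla_x h-a^{ij}(\bar z)\partial_{v_iv_j}h=0$ in $H_{4r}(\bar z)$, with $h=f$ on the kinetic boundary and $a^{nn}=1$ after a linear change of variables. \autoref{lem.hopf} applies to $h$.

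The difference $w=f-h$ solves an equation with right-hand side $\mathrm{div}_v((A-A(\bar z))\nabla_v f)+B\cdot\nabla_v f$ and zero boundary data. \autoref{lem.bdry.gamma-} gives $|\nabla_v f|\lesssim d^{-1/2}r^{-1/2}\|f\|_{L^\infty}$, and $|A-A(\bar z)|\lesssim r^\eps$. The kinetic maximum principle with $\varphi_0$-type barriers, namely $\varphi_0$, the function $\varphi_1$ and the supersolutions from \autoref{lem.sub.nd}, should then yield
\begin{align*}
|w|\lesssim r^{\eps}\,\|f\|_{L^\infty(H_{4r})}\,r^{-1/2}\,\varphi_0 .
\end{align*}
We also need $\|f\|_{L^\infty(H_{4r}(\bar z))}\lesssim r^{1/2}$, which follows from the expansion in \autoref{lem.hol12}. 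Together these make the error relative to $\varphi_0$ of size $r^{\eps}$ times the $\varphi_0$-coefficient.

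\textbf{Step 3: summing the errors.} Iterating the inequality over dyadic scales multiplies the lower bounds by $\prod_k(1-C\sigma^{k\eps}\rho^\eps)$. This product is bounded below because the exponents are summable, which gives $f\ge \frac{c_0}{c}\varphi_0$ down to the scale $d$ of $z$. At that final scale, $z$ lies either in $\mathcal R_0$, where interior Harnack applies with $\varphi_0\eqsim d^{1/2}$, or in $\mathcal R_+$. In $\mathcal R_+$ we use \autoref{lem.hargamma+} together with $\varphi_0\eqsim|v_n|^{1/2}$.

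\textbf{Main obstacle.} The hard step is the error bound in Step 2. It requires that the lower bound on $f/\varphi_0$ at scale $r$ dominates $r^{\eps}$ times $\|f\|_{L^\infty}r^{-1/2}$. The natural first attempt is to use the expansion of \autoref{lem.hol12} with $\lambda=0$: write $f=p_0\varphi_0+O(r^{1/2+\eps})$ and note that $p_0(\bar z)$ is bounded below by Step 1. The difficulty is that $\varphi_0$ is not comparable to $r^{1/2}$ near $\mathcal R_-$. This is exactly why the statement excludes $\mathcal R_-$: in $H_{\rho/8}\setminus\mathcal R_-$ we have $\varphi_0\gtrsim d^{1/2}$, so an additive error of size $d^{1/2+\eps}$, obtained by applying the expansion at the scale $d$ itself, is absorbed. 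I would therefore prove directly that the coefficient $p_0(\bar z)\ge c_0/c$, using Step 1 at scale $r\sim\rho$ together with the Hölder continuity of $p_0$ in $\bar z$. Then $f\ge p_0(\bar z)\varphi_0-Cd^{1/2+\eps}\ge\frac{c_0}{2c}\varphi_0$ once $d\le\rho$ is small. The remaining range $d\sim\rho$ is handled by Harnack chains.
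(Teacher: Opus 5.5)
There is a genuine gap. It sits exactly at the step you call the main obstacle, and your proposed fix does not work.

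You want to show $p_0(\bar z)\ge c_0/c$ for the $\varphi_0$-coefficient in \autoref{lem.hol12}, using the positivity from Step~1 at scale $\sim\rho$ plus H\"older continuity of $p_0$.
\begin{itemize}
\item The expansion error at a point of height $\varrho^3$ is $C\varrho^{\frac12+\eps}\|f\|_{L^\infty}$ with $\|f\|_{L^\infty}\le 1$. This is not controlled by $c_0$: Harnack bounds $f$ by $f(z_0)$ only in the past of $z_0$, not near $t=0$.
\item So you only get $p_0(\bar z)\varrho^{1/2}\ge c_0/c(\varrho)-C\varrho^{\frac12+\eps}$, where $c(\varrho)$ is the constant of \autoref{lem.exp.posright} at height $\varrho^3$.
\item That constant comes from Harnack chains whose length blows up as $\varrho\to0$. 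Nothing in that argument yields the sharp rate $c(\varrho)\lesssim\varrho^{-1/2}$; producing that rate is precisely the content of the Hopf lemma. Hence no scale makes the right-hand side positive.
\item H\"older continuity of $p_0$ in $\bar z$ can only transport a lower bound you do not yet have.
\end{itemize}
The same circularity is in Step~1. A bound $f\ge\frac{c_0}{c}\varphi_0$ on $H^+_{2r,\sigma}(-r^2,0,0)$, uniform in $r\lesssim\rho$, is already the conclusion along the normal direction. Even granting the fix, bridging $d\in[(c_0/C)^{1/\eps},\rho]$ by Harnack chains would make $c$ depend on $c_0$, contrary to the linear dependence in the statement.

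Your Step~2 route fails as well. With $A\in C^{\eps}$ only, $\ddiv((A-A(\bar z))\nabla_v f)$ has no pointwise bound, so barriers cannot yield $|w|\lesssim r^{\eps}\|f\|_{L^\infty}r^{-1/2}\varphi_0$; near $\mathcal{R}_-$ such a bound would even force exponential decay of $w$. Also, \autoref{lem.hopf} gives comparability only up to a fixed factor $c$, not a factor $(1-Cr^{\eps'})$.

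The paper's proof supplies two ideas you are missing.
\begin{itemize}
\item \textbf{Normalization.} Let $h_1$ solve the full equation in $\mathcal{H}_{r,r/N}$ with data $1$ on $\Gamma_{r,r/N}$ and $0$ on the rest of the kinetic boundary. By the maximum principle $f\ge\frac{c_0}{c}h_1$, while $0\le h_1\le 1$ and $h_1$ has universal interior lower bounds. This decouples $c_0$ from every error term.
\item \textbf{Perturbing the coefficient of $\varphi_0$, not the function.} Let $h_2$ be the frozen-coefficient solution with data $c_1h_1$. Then \autoref{lem.hopf}, built on the explicit subsolution of \autoref{lem.sub.nd}, gives $h_2\ge\frac{1}{c\,r^{1/2}}\varphi_0$ near $0$. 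Claim~3.1 shows that the $\varphi_0$-coefficients $C_1,C_2$ of $h_1$ and $c_1^{-1}h_2$ satisfy $|C_1-C_2|\le\delta r^{-1/2}$ once $r\le R_0(\delta)$. It combines an energy bound $\dashint|h_1-c_1^{-1}h_2|^2\lesssim r^{2\eps}$ with the Campanato-type estimate of \autoref{lem.point.sol} for sources of size $\max\{x_n^{1/3},|v_n|\}^{-1/2}$ (supplied by \autoref{lem.bdry.gamma-}). This estimate is only evaluated at the points $(0,0,\varrho^3,0)$.
\end{itemize}
Only after this can the additive $|z|^{\frac12+\eps}$ error be absorbed outside $\mathcal{R}_-$, as you intended. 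A final positivity and time-translation step then extends the bound from $H_{1/C}$ to $H_{\rho/8}$.
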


The main idea of the proof is to compare $f$ with a solution $h$ to Kolmogorov's equation with constant coefficients and to apply \autoref{lem.hopf}.

\begin{proof}
Let $M$ be the constant from \autoref{lem.exp.posright} applied with $\delta=1$ and fix $\rho=\frac{1}{M}$. Then by \autoref{lem.exp.posright} applied with $r=\rho$, we have for any
$\varsigma\in(0,\frac1{4})$, 
\begin{align*}
    \inf_{H^+_{\frac\rho2,\varsigma}}{f}\geq \frac{c_0}{c}
\end{align*}
for some constant $c=c(n,\Lambda,\varsigma)$. In addition, by \autoref{lem.hargamma+} applied with $F\equiv0$,  $a=0$, and $r=\frac\rho4$,
\begin{align}\label{fc0c.zero}
    f\geq \frac{c_0}{c}\quad\text{on }H^+_{\frac\rho2,\varsigma}\cup \mathcal{A}_{\frac{\rho}4,2\varsigma}
\end{align}
for some constant $c=c(n,\Lambda,\varsigma)$.

Next, we consider the weak solution $h_1$ to
    \begin{equation}\label{eq.h1.hop.gen0}
\left\{
\begin{alignedat}{3}
\partial_th_1+v\cdot\nabla_{x}h_1-\ddiv(A\nabla_vh_1)&=B\cdot\nabla_vh_1&&\qquad \mbox{in  $\mathcal{H}_{r,\frac{r}N}$}, \\
h_1&=1&&\qquad  \mbox{on $\Gamma_{r,\frac{r}N}$},\\
h_1&=0&&\qquad  \mbox{on $\partial_{\mathrm{kin}}\mathcal{H}_{r,\frac{r}N}\setminus \Gamma_{r,\frac{r}N}$}
\end{alignedat} \right.
\end{equation}
for some small constant $r\in(0,1)$ and large constant $N\geq1$, which will be determined later (see \eqref{choi.r.hopf.gen} and \eqref{choi.N.zero}, respectively). Note that the sets $\mathcal{H}_{r,\frac{r}N}$ and $\Gamma_{r,\frac{r}N}$ are defined in \eqref{defn.sethab}. In particular, we will always choose $N$, $r\leq \frac{\rho}{4}$, and then $\varsigma=\frac{2r}{N\rho}$, so that $\Gamma_{r,\frac{r}N}\subset H^+_{\frac\rho2,\varsigma}\cup \mathcal{A}_{\frac{\rho}4,{2\varsigma}}$. Due to \eqref{fc0c.zero} and the construction of $h_1$, we can apply the maximum principle, which yields
\begin{align}\label{fh1.hop.gen.zero}
    f\geq \frac{c_0}{c}h_1\quad\text{in }\mathcal{H}_{r,\frac{r}{N}}.
\end{align}
The idea of this construction is to reduce the proof of the Hopf lemma for $f$ to the proof of a Hopf lemma for $h_1$, for which the values at $\Gamma_{r,\frac{r}{N}}$ are normalized to one.

\textbf{Step 1.} We will derive a positive lower bound for $h_1$ away from the boundary $\{ x_n = 0 \}$ for each time level $t\in[-(r/4)^2,0]$, which is needed for \textbf{Step 2} (see \eqref{choi.N.zero} below). To do so, we observe from the maximum principle that $h_1\leq 1$ and by \cite{Sil22}, for any $z_1=(t_1,x_{1}',x_{1,n},v_1)=(t_1,0,(\frac{r}{N})^3,0)$ with $t_1\in[-(\frac{r}{2})^2,0]$,
\begin{equation*}
   (r/N)^\beta[h_1]_{C^{\beta}(H_{{r/(2N)}}(z_1))}\leq c,
\end{equation*}
where $c=c(n,\Lambda)$ and $\beta=\beta(n,\Lambda)\in(0,1)$. By choosing a sufficiently small constant $a_0=a_0(n,\Lambda)\in(0,\frac12)$, we derive for any $t_1\in[-(\frac{r}{2})^2,0]$,
\begin{align}\label{h1.12.gen.hopf.zero}
    {h}_1(t_1,0,(r/N)^3(1-a_0),0)\geq {h}_1(z_1)-| {h}(z_1)-{h}_1(t_1,0,(r/N)^3(1-a_0),0)|\geq 1-ca_0^{\frac\beta3}\geq \frac12.
\end{align}
Next, note that $h_1$ solves 
\begin{align}\label{eq.widetildeh1}
    \partial_t{h}_1+v\cdot\nabla_x{h}_1-\ddiv({A}\nabla_v{h}_1)={B}\cdot\nabla_v{h}_1\quad\text{in }I_{\frac{r}{2}}\times \mathrm{H}_{\frac{r}{N}(1-a_0)^{\frac13}(1+\delta_0)}.
\end{align}
Now let $M_0=M_0(1/\delta_0)$ be the large constant determined in \autoref{lem.exp.posright} with $F\equiv 0$, $\delta=\delta_0$ and $\varsigma=\frac12$. By assuming $N\geq 4M_0$ and \eqref{h1.12.gen.hopf.zero}, we have for $\widetilde{t}_1=-(\frac{rM_0}{N})^2(1-a_0)^{\frac23}$
\begin{align*}
    {h}_1(-\widetilde{t}_1,0,(r/N)^3(1-a_0),0))\geq\frac12.
\end{align*}
Thus, by \autoref{lem.exp.posright} with $r$ replaced by $\frac{r}{N}(1-a_0)^{\frac13}$, we have for $\widetilde{\rho}_0\coloneqq \frac{r}{N}(1-a_0)^{\frac13}$,
\begin{align*}
    \frac1{c_1}\leq \inf_{H^+_{\frac{\widetilde{\rho}_0}2,\frac12}}{h}_1,
\end{align*}
where $c_1=c_1(n,\Lambda)$, as the constant $\delta_0$ depends only on $n,\Lambda$. Since we want to get the lower bound for each time level, we now apply the same argument to the function $\widetilde{h}_1(z)\coloneqq h_1(t_0+t,x,v)$ for any $t_1\in[-(\frac{r}4)^2,0)$. More precisely, we observe that \eqref{h1.12.gen.hopf.zero} also holds with $h_1$ replaced by $\widetilde{h}_1$ and that\eqref{eq.widetildeh1} holds with $h_1$, $A$ and $B$ replaced by $\widetilde{h}_1$, $A(t_0+t,x,v)$ and $B(t_0+t,x,v)$. Thus, we deduce 
\begin{align}\label{h_1.hop.gen.zero}
    \frac1{c_1}\leq \inf_{H^+_{\frac{\widetilde{\rho}_0}2,\frac12}(t_0,0,0)}{h}_1
\end{align}
for some constant $c_1=c_1(n,\Lambda)$ and any $t_0\in[-(\frac{r}{4})^2,0]$.

\textbf{Step 2.} In this step, we consider the weak solution $h_2$ of the corresponding equation with frozen coefficients
\begin{equation*}
\left\{
\begin{alignedat}{3}
\partial_th_2+v\cdot\nabla_{x}h_2-\ddiv(A(0)\nabla_vh_2)&=0&&\qquad \mbox{in  $\mathcal{H}_{\frac{r}{4},\frac{r}{4N}}$}, \\
h_2&=c_1h_1&&\qquad  \mbox{in $\partial_{\mathrm{kin}}\mathcal{H}_{\frac{r}4,\frac{r}{4N}}$},
\end{alignedat} \right.
\end{equation*}
where the constant $c_1=c_1(n,\Lambda)$ is given in \eqref{h_1.hop.gen.zero}. The goal of this step is to derive a suitable lower bound for $h_2$ (see \eqref{h2.lower.zero}). Since $h_2$ solves a translation-invariant equation, we can apply the Hopf-type lemma from the previous subsection (see \autoref{lem.hopf}).

First, we note $(t_0,0,(\frac{r}{4N})^3,0)\in H^+_{\frac{\widetilde{\rho}_0}2,\frac12}(t_0,0,0)$ for any $t_0\in [-(\frac{r}8)^2,0]$ as $\widetilde{\rho}_0=\frac{r}{N}(1-a_0)^{\frac13}\in [\frac{r}{2N},\frac{r}N]$ by $a_0\in(0,\frac12)$. Thus, we see 
\begin{align*}
    h_2(t_0,0,(\frac{r}{4N})^3,0)=c_1h_1(t_0,0,(\frac{r}{4N})^3,0)\geq 1
\end{align*}
by \eqref{h_1.hop.gen.zero}. Since $|h_1|\leq 1$, and the constant $c_1$ depends only on $n,\Lambda$, we have $|h_2|\leq c$, hence by \cite{Sil22},
\begin{equation}
   (r/N)^\beta[h_2]_{C^{\beta}(H_{{r/(8N)}}(z_1))}\leq c
\end{equation}
for some constant $c=c(n,\Lambda)$, for any $z_1=(t_1,0,(\frac{r}{4N})^3,0)$ with $t_1\in[-(\frac{r}8)^2,0]$. As in \eqref{h1.12.gen.hopf.zero}, for any $t_1\in[-(r/8)^2,0]$, we have 
\begin{align}\label{upper.h2.zero}
    h_2(t_1,0,(r/(4N))^3(1-a_1),0)\geq \frac12
\end{align}
for some constant $a_1=a_1(n,\Lambda)$. Let $M_1=M_1(1/\delta_1)$ be the constant determined in \autoref{lem.exp.posright} with $F\equiv0$, $\delta=\delta_1$ and $\varsigma=\sigma$, where $\delta_1\coloneqq (1-a_1)^{-\frac13}-1$ and the constant $\sigma=\sigma(n,\Lambda)\leq\frac14$ is given in \autoref{lem.hopf}. By choosing $N\coloneqq4M_0+2M_1$, which depends only on $n,\Lambda$ by the fact that constants $\delta_0$ and $\delta_1$ depend only on $n,\Lambda$, and using \eqref{upper.h2.zero}, we have 
\begin{align}\label{choi.N.zero}
    {h}_2(-(M_1\widetilde{\rho}_1)^2,0,(\widetilde{\rho}_1)^3,0)\geq\frac12,
\end{align}
where $\widetilde{\rho}_1\coloneqq \frac{r}{4N}(1-a_1)^{\frac13}$, as $-(M_1\widetilde{\rho}_1)^2\in [-(\frac{r}8)^2,0]$. Note that $-(M_1\widetilde{\rho}_1)^2$ can be close to $(-r/8)^2$. This is the reason why we had to show the lower bound given in \eqref{h_1.hop.gen.zero} for each time level $t_0\in[-(\frac{r}{4})^2,0]$.

Therefore, by \autoref{lem.exp.posright} with $r=\widetilde{\rho}_1$, $\delta=\delta_1$ and $\varsigma=\sigma$, we have 
\begin{align*}
    \frac1c\leq\inf_{H^+_{\frac{\widetilde{\rho}_1}2,\sigma}(-({\widetilde{\rho}_1}/{4})^2,0,0)}h_2
\end{align*}
for some constant $c=c(n,\Lambda)$. Thus, using \autoref{lem.hopf}, we have 
\begin{align}\label{h2.lower.zero}
    \inf_{H_{\frac{\sigma\widetilde{\rho}_1}4}}\frac{h_2}{{\varphi_0}}\geq \frac1c\inf_{H^+_{\frac{\widetilde{\rho}_1}2,\sigma}(-({\widetilde{\rho}_1}/{4})^2,0,0)}\frac{h_2}{{\varphi_0}}\geq \frac{1}{cr^{\frac12}}
\end{align}
for some constant $c=c(n,\Lambda)$, by \eqref{property.phi_0} and since the constants $N$, $\sigma$ and $a_1$ depend only on $n,\Lambda$.

\textbf{Step 3.} The goal of this step is to prove a lower bound of $h_1$ via a perturbation argument and using the result from \textbf{Step 2}. In this step, for the convenience of the reader, we will say that a constant depends only on the given data, if it depends only on $n,\eps,\Lambda,\|A\|_{C^\eps(H_2)},\|B\|_{L^\infty(H_2)}$. 

First, we use higher order expansions given in \autoref{lem.hol12} to see that there are two constants $C_1, C_2$ such that for any $\varrho\leq \frac{r}{8N}$,
\begin{align}\label{ineq.hopf.gen}
    \|h_1-C_1{\varphi_0}\|_{L^\infty(H_{\varrho})} + \|c_1^{-1}h_2-C_2{\varphi_0}\|_{L^\infty(H_{\varrho})}\leq c\left(\frac{\varrho}{r}\right)^{\frac12+\eps},
\end{align}
as $|h_1| + |h_2|\leq c(n,\Lambda)$ and the constant $N$ determined in \eqref{choi.N.zero} depends only on $n,\Lambda$. The main idea of the proof is to use the fact that the expansions in \eqref{ineq.hopf.gen} are of higher order than $1/2$ in order to deduce the lower bound for $h_1$ from the one of $h_2$.

\textbf{Claim 3.1.} We are going to prove that for any $\delta>0$, there is a sufficiently small $R_0$ depending only on the given data and $\delta$ such that if $r\leq R_0$, then 
\begin{align}\label{c1-c2.hopf.gen}
    |C_1-C_2|\leq {\delta}{{r}^{-\frac12}}.
\end{align}
This is the most delicate part of the proof. To show it, we observe that $h\coloneqq h_1-c_1^{-1}h_2$  solves 
 \begin{equation}\label{eq.h1-h2}
\left\{
\begin{alignedat}{3}
\partial_t h+v\cdot\nabla_{x}h-\ddiv(A(0)\nabla_vh)&=B\cdot \nabla_vh_1-\ddiv((A(0)-A)\nabla_v h_1)&&\qquad \mbox{in  $\mathcal{H}_{\frac{r}{4},\frac{r}{4N}}$}, \\
h&=0&&\qquad  \mbox{in $\partial_{\mathrm{kin}}\mathcal{H}_{\frac{r}{4},\frac{r}{4N}}$}.
\end{alignedat} \right.
\end{equation}
First, using Sobolev's inequality and using energy inequalities by testing $h$ as in \cite[Lemma 4.6]{KLN25}, we deduce 
\begin{align*}
   \dashint_{\mathcal{H}_{\frac{r}{4},\frac{r}{4N}}}|h|^2\,dz\leq  c{r}^{2}\dashint_{\mathcal{H}_{\frac{r}{4},\frac{r}{4N}}}|\nabla_vh|^2\,dz\leq c{r}^{2\eps+2}\dashint_{\mathcal{H}_{\frac{r}{2},\frac{r}{2N}}}|\nabla_vh_1|^2\,dz,
\end{align*}
where we have also used the fact that $\|A-A(0)\|_{L^\infty(H_{{r}})}\leq c{r}^{\eps}$. On the other hand, by testing the equation \eqref{eq.h1.hop.gen0} for $h_1$ with $h_1\psi_0$, where $\psi_0\equiv 1$ on $\mathcal{H}_{\frac{r}{2},\frac{r}{2N}}$ and  $\psi_0\equiv 1$ on $\mathcal{H}_{\frac{3r}{4},\frac{3r}{4N}}$, we derive
\begin{align*}
   r^2\dashint_{\mathcal{H}_{\frac{r}{2},\frac{r}{2N}}}|\nabla_vh_1|^2\,dz\leq c \dashint_{\mathcal{H}_{\frac{3r}{4},\frac{3r}{4N}}}|h_1|^2\,dz\leq c.
\end{align*}
Here we have also used the fact that $|h_1|\leq1$. Therefore, altogether, we have proved
\begin{align}\label{hl2norm.hop.gen}
     \dashint_{I_{\frac{r}4}\times\mathrm{H}_{\frac{r}{4N}}}|h|^2\,dz\leq cr^{2\eps}.
\end{align}
Next, by applying \autoref{lem.bdry.gamma-} to the function $h_1$ and using that $|h_1|\leq 1$, we have for any $z_1\in H_{\frac{r}{4N}}$,
\begin{align*}
    |\nabla_vh_1(z_1)|\leq c\max\{(x_{1,n})^{\frac13},|v_{1,n}|\}^{-\frac12}r^{-\frac12}.
\end{align*} 
Thus, we are able to apply \autoref{lem.point.sol} with $a=\frac{4(1-\eps)}{1+\eps}<4$, $G_1=B\cdot\nabla_vh_1$ and $G_2=(A-A(0))\cdot\nabla_vh_1$ to \eqref{eq.h1-h2} so that we deduce for any $\varrho\leq \frac{r}{8N}$,
\begin{align*}
    |h(0,0,\varrho^3,0)|&\leq c\left(\frac{\varrho}{r}\right)^{\frac{1-\eps}{2}}\left(\left(\dashint_{H_{r/N}}|h|^2\,dz\right)^{\frac12}+cr^{-\frac12+\eps}r^{\frac{1-\eps}2}\right)
    \leq cr^{\frac{\eps}{2}}\left(\frac{\varrho}{r}\right)^{\frac{1-\eps}{2}}
\end{align*}
where we have used the fact that $\|A-A(0)\|_{L^\infty(H_r)}\leq cr^\eps$ and \eqref{hl2norm.hop.gen}.

Therefore, we have for any $\varrho\leq \frac{r}{8N}$ and $z_1=(t_1,x_1',x_{1,n},v_1)=(0,0,\varrho^3,0)$,
\begin{align*}
    |(C_1-C_2){\varphi_0}(z_1)|&\leq |(h_1-C_1{\varphi_0})(z_1)|+|(c_1^{-1} h_2-C_2{\varphi_0})(z_1)|+|h(z_1)|
    \leq c\left(\left(\frac{\varrho}{r}\right)^{\frac12+\eps}+\left(\frac{\varrho}{r}\right)^{\frac{1-\eps}{2}}r^{\frac\eps2}\right).
\end{align*}
Using the fact that ${\varphi_0}(z_1)\eqsim |\varrho|^{\frac12}$ by \eqref{property.phi_0}, we deduce for any $\varrho\leq \frac{r}{8N}$
\begin{align}\label{ineq.c1-c2}
    |C_1-C_2|\leq c\varrho^{-\frac12}\left(\left(\frac{\varrho}{r}\right)^{\frac12+\eps}+\left(\frac{\varrho}{r}\right)^{\frac{1-\eps}{2}}r^{\frac\eps2}\right) \leq cr^{-\frac12}\left(\left(\frac{\varrho}{r}\right)^{\eps}+r^{\frac\eps2}\left(\frac{r}{\varrho}\right)^{\frac{\eps}2}\right).
\end{align}
By choosing $\varrho = \widetilde{\varrho} r$ for some small $\widetilde{\varrho} > 0$ that depends only on the given data and $\delta$, we get
\begin{align}\label{rho0.hopf.gen}
    c\left(\frac{\varrho}{r}\right)^{\eps}=\frac{\delta}{2}.
\end{align}
Next, we choose a small constant $R_0$ depending only on the given data and $\delta$ to show that for any $r\leq R_0$, we have 
\begin{align}\label{choi.r.hopf.gen0}
    cr^{\frac\eps2}\left(\frac{r}{\varrho}\right)^{\frac{\eps}2}\leq \frac{\delta}{2}.
\end{align}
Plugging \eqref{rho0.hopf.gen} and \eqref{choi.r.hopf.gen0} into \eqref{ineq.c1-c2} yields \eqref{c1-c2.hopf.gen} and therefore we have proved Claim 3.1.

Now we are ready to prove a lower bound of $h_1$. 
By using \eqref{h2.lower.zero} and \eqref{ineq.hopf.gen}, we derive
\begin{align*}
    h_1(z)=c_1^{-1} h_2(z)-(c_1^{-1} h_2-h_1)(z)\geq \frac{{\varphi_0}(z)}{cr^{\frac12}}-c\frac{|z|^{\frac12+\eps}}{r^{\frac12+\eps}}-(C_1-C_2){\varphi_0}(z)\quad\text{in }H_{\frac{\sigma\widetilde{\rho}_1}{4}},
\end{align*}
where the constants $C_i$ are determined in \eqref{ineq.hopf.gen} and $\widetilde{\rho}_1=\frac{r}{4N}(1-a_1)^{\frac13}$ with $a_1=a_1(n,\Lambda)\leq \frac12$. 

There is a small constant $\eps_1$ depending only on the data so that we have 
\begin{align}\label{using.r1-}
    h_1(z)\geq \frac{{\varphi_0}(z)}{cr^{\frac12}}-\frac{{\varphi_0}(z)}{2cr^{\frac12}}-(C_1-C_2){\varphi_0}(z)\quad\text{for any }z\in H_{\eps_1\frac{\sigma\widetilde{\rho}_1}{2}}\setminus \mathcal{R}_-,
\end{align}
where we have used the second property given in \eqref{property.phi_0}. Next, note from \eqref{c1-c2.hopf.gen}, that there is a small constant  $R_0$ depending only on the given data so that if we choose $r\coloneqq R_0$, then
\begin{align}\label{choi.r.hopf.gen}
    h_1(z)\geq \frac{{\varphi_0}(z)}{4cr^{\frac12}}\quad\text{for any }z\in H_{\eps_1\frac{\sigma\widetilde{\rho}_1}{2}}\setminus \mathcal{R}_-.
\end{align}

\textbf{Step 4.} Now, we are in a position to conclude the proof. Using \eqref{choi.r.hopf.gen} and \eqref{fh1.hop.gen.zero}, we obtain 
\begin{align}
\label{eq:lower-prelim}
    f\geq \frac{c_0}{c}{\varphi_0} \quad \text{ in }H_{\frac{1}{C}}\setminus \mathcal{R}_-.
\end{align}
Note that this would already prove the desired estimate, however, at this stage, the radius $1/C$ of the cylinder still depends on the data through $A,B$. 

In order to replace $H_{\frac{1}{C}}$ in  \eqref{eq:lower-prelim} by $\rho$, we first apply all the previous steps of the proof to suitable translations of $f$ in $t$. To be precise, given $t_0\in[-(\rho/8)^2,0]$ we denote ${f}_{z_0}(z)\coloneqq f(z_0\circ z)$ where we write $z_0=(t_0,0,0)$. Then, we observe that by \eqref{fc0c.zero}, we get for any $\varsigma \in (0,\frac{1}{4})$,
\begin{align}\label{last.ineq.zero}
    {f}_{z_0}\geq \frac{c_0}{c}\quad\text{on }H^+_{\frac\rho4,2\varsigma}\cap \mathcal{A}_{\frac{\rho}4,{2\varsigma}}
\end{align}
for some constant $c=c(n,\Lambda,\varsigma)$, where $\rho$ is universally determined at the first line of the proof. 

Therefore, by repeating all the previous arguments for ${f}_{z_0}$ instead of $f$, and using that ${\varphi_0}\geq\frac1c$ in $H_{\frac\rho2,\varsigma}^+\cup\mathcal{A}_{\frac\rho4,2\varsigma}$, we get 
\begin{align}\label{last1.ineq.zero}
    f_{z_0}(z)\geq \frac{c_0}{c}{\varphi_0}(z)\quad\text{in }H_{\frac1{C}}\setminus \mathcal{R}_-
\end{align}
for any $t_0\in[-(\frac\rho8)^2,0,0]$. In particular, we have used \eqref{last.ineq.zero} with $\varsigma=\frac{2r}{N\rho}$ so that the constant $c$ given in \eqref{last.ineq.zero} depends only on the given data. 

In order to complete the proof, it remains to derive a lower bound of $f_{z_0}$ in the set 
\begin{align*}
    S\coloneqq \big((I_{\frac1C}\times \mathrm{H}_{\frac{\rho}8} )\setminus H_{\frac1{C}} \big) \setminus \mathcal{R}_-.
\end{align*}
To this end, we apply \eqref{last.ineq.zero} with $\varsigma=\frac{1}{C}$, which yields
\begin{align*}
    f_{z_0}\geq \frac{c_0}{c} \quad \text{on }H^+_{\frac\rho4,\frac2C}\cap \mathcal{A}_{\frac{\rho}4,{\frac2C}},
\end{align*}
where the constant $c$ depends only on the given data, as the constant $C$ depends on the data. Since $S\subset H^+_{\frac\rho4,\frac2C}\cap \mathcal{A}_{\frac{\rho}4,{\frac2C}}$ and ${\varphi_0}\leq c$ in $S$ for some constant $c$ depending only on the data, we have $f_{z_0}\geq \frac{c_0}{c}{\varphi_0}$ in $S$.  
By combination of this estimate with \eqref{last1.ineq.zero}, we have shown altogether that 
\begin{align*}
    f_{z_0}\geq \frac{c_0}{c}{\varphi_0} \quad \text{in } (I_{\frac1C}\times \mathrm{H}_{\frac{\rho}8})\setminus \mathcal{R}_-.
\end{align*}
Since we have defined $f_{z_0}(z)=f(z_0\circ z)$ with $z_0=(t_0,0,0)$ and $t_0\in [-(\rho/8)^2,0]$,  we conclude 
\begin{align*}
    f\geq \frac{c_0}{c}{\varphi_0}\quad\text{in } {H}_{\frac{\rho}8}\setminus \mathcal{R}_-,
\end{align*}
which completes the proof. 
\end{proof}

Now we give the proof of the main result of this subsection. Due to \autoref{lem.hop.gen0}, it only remains to generalize the result to the case $F \not\equiv 0$. We achieve this by applying the  maximum principle.

\begin{proof}[Proof of \autoref{lem.hop.gen}]
First, fix a small constant $\varsigma_0 < \frac1{4}$, which will be determined later (see \eqref{choi.varsigma0}). We now fix a large universal constant $M$ and a large constant $\mathcal{M}_0=\mathcal{M}_0(n,\Lambda,\varsigma_0)$ determined in \autoref{lem.exp.posright} applied with $\varsigma=\varsigma_0$ and
$\delta=1$. Then we fix $\rho\coloneqq\frac{1}{M}$. Then by \autoref{lem.exp.posright} with $r=\rho$, we have for $\mathcal{M}\geq\mathcal{M}_0$,
\begin{align*}
    \inf_{H^{+}_{\frac{\rho}2,\varsigma_0}}f\geq \frac{c_0}{c},
\end{align*}
where $c=c(n,\Lambda,\varsigma_0)$. Moreover, by assuming $\mathcal{M}\geq \mathcal{M}_1$ for some constant $\mathcal{M}_1=\mathcal{M}_1(n,\Lambda)$, we deduce from \autoref{lem.hargamma+} with $a=0$ and $r=\frac{\rho}{4}$ that $f\geq \frac{c_0}{c}$ on $\mathcal{A}_{\frac\rho4,2\varsigma_0}$, 
where the set $\mathcal{A}_{\frac{\rho}4,2\varsigma_0}$ is given in \eqref{set.mathcalar}.
Now we assume $\mathcal{M}\geq\mathcal{M}_0+\mathcal{M}_1$  to see that 
\begin{align}\label{lower.bdd}
    f\geq\frac{c_0}{c}\quad\text{in }H^+_{\frac\rho2,\varsigma_0}\cup \mathcal{A}_{\frac\rho4,2\varsigma_0}
\end{align}
for some constant $c=c(n,\Lambda,\varsigma_0)$. 

Next, we consider a weak solution $h_1$ of 
\begin{equation}\label{eq.h1.hop.gen}
\left\{
\begin{alignedat}{3}
\partial_th_1+v\cdot\nabla_{x}h_1-\ddiv(A\nabla_vh_1)&=B\cdot\nabla_vh_1&&\qquad \mbox{in  $\mathcal{H}_{\frac\rho2 ,\frac{\varsigma_0\rho}2}$}, \\
h_1&=1&&\qquad  \mbox{in $ \Gamma_{\frac\rho2,\frac{\varsigma_0\rho}2}$},\\
h_1&=0&&\qquad  \mbox{in $\partial_{\mathrm{kin}}\mathcal{H}_{\frac\rho2 ,\frac{\varsigma_0\rho}2}\setminus \Gamma_{\frac\rho2 ,\frac{\varsigma_0\rho}2}$},
\end{alignedat} \right.
\end{equation}
where the sets $\mathcal{H}_{\frac\rho2 ,\frac{\varsigma_0\rho}2}$ and $\Gamma_{\frac\rho2 ,\frac{\varsigma_0\rho}2}$ are defined in \eqref{defn.sethab}.
Since $ \frac{c_0}{c}h_1\leq f$  on  $\Gamma_{\frac\rho2 ,\frac{\varsigma_0\rho}2}\subset H^+_{\frac\rho2,\varsigma_0}\cup \mathcal{A}_{\frac\rho4,2\varsigma_0}$ due to \eqref{lower.bdd}, by the maximum principle, we have
\begin{align}\label{ineq2.hop.gen}
    h_1\leq \frac{c}{c_0}f\quad\text{in }\mathcal{H}_{\frac\rho2 ,\frac{\varsigma_0\rho}2}
\end{align}
for some constant $c=c(n,\Lambda,\varsigma_0)$.

Now we will choose a constant $\varsigma_0$ to apply \autoref{lem.hop.gen0} to an appropriately rescaled version of $h_1$. The argument is close to \textbf{Step 1} in the proof of \autoref{lem.hop.gen0}. As in \eqref{h1.12.gen.hopf.zero}, there is a small constant $a_0=a_0(n,\Lambda)\leq\frac12$ such that
for any $z_1=(t_1,x_{1}',x_{1,n},v_1)=(t_1,0,(\frac{\varsigma_0\rho}{2})^3,0)$ with $t_1\in[-(\rho/4)^2,0]$,
\begin{align}\label{h1.12.gen.hopf}
    h_1(t_1,0,({\varsigma_0\rho}/{2})^3(1-a_0),0)\geq\frac12.
\end{align}
Let $M_1=M_1(1/\delta_0)$ be the constant determined in \autoref{lem.exp.posright}, where $\delta_0\coloneqq (1-a_0)^{-\frac13}-1$. 
Now we choose 
\begin{equation}\label{choi.varsigma0}
    \varsigma_0\coloneqq \frac{(1-a_0)^{-\frac13}}{2M_1},
\end{equation}
which gives ${h}_1(-(\widetilde{\rho}_0M_1)^2,0,(\widetilde{\rho}_0)^3,0)\geq\frac12$ with $\widetilde{\rho}_0\coloneqq \frac{\varsigma_0\rho}{2}(1-a_0)^{\frac13}$, by \eqref{h1.12.gen.hopf}. Note that the constant $\varsigma_0$ depends only on $n,\Lambda$, as $a_0$ and $\delta_0$ depend only on $n,\Lambda$. Therefore, applying \autoref{lem.exp.posright} with $r=\widetilde{\rho}_0$, $\delta=\delta_0$ and $\varsigma=\frac1{4M}$,
we have
\begin{align}\label{choi.M0.hop.gen}
    \frac1{c_1}\leq \inf_{H^+_{\frac{\widetilde{\rho}_0}2,\frac1{4M}}}{h}_1,
\end{align}
where  $M$ is the universal constant determined in \autoref{lem.exp.posright} with $\delta=1$ and $c_1=c_1(n,\Lambda)$. Therefore, we define $\overline{h}_1(z)\coloneqq h_1(S_{\frac{\widetilde{\rho}_0}2}z)$ to see that 
 \begin{equation*}
\left\{
\begin{alignedat}{3}
\partial_t\overline{h}_1+v\cdot\nabla_x\overline{h}_1-\ddiv(\overline{A}\nabla_v\overline{h}_1)&=\overline{B}\cdot\nabla_v\overline{h}_1&&\qquad \mbox{in  ${H}_4$}, \\
\overline{h}_1&=0&&\qquad  \mbox{in $\gamma_-\cap H_4$},
\end{alignedat} \right.
\end{equation*}
where $\overline{A}(z)\coloneqq A(S_{\frac{\widetilde{\rho}_0}2}z)$, $\overline{B}(z)\coloneqq B(S_{\frac{\widetilde{\rho}_0}2}z)$. Note from the first lines in the proof of \autoref{lem.hop.gen0} that the constant $\rho=\frac1M$ is the same one as in \autoref{lem.hop.gen0}. Moreover, as $-(\frac{\widetilde{\rho}_0}{2})^2=-(\frac{\varsigma_0\rho}{8})^2(1-a_0)^{\frac23}\geq (-\frac{\rho}{8})^2$ and $\frac{\rho\widetilde{\rho}_0}{2}\in [\frac{\widetilde{\rho}_0}{8M},\frac{\widetilde{\rho}_0}2]$, we have $\overline{h}_1(-1,0,\rho^3,0)\in[\frac1{c_1},1]$ by \eqref{choi.M0.hop.gen}. Therefore, by \autoref{lem.hop.gen0}, we have 
\begin{align*}
    \frac{1}{c}\leq \inf_{H_{\frac{\rho}8}\setminus \mathcal{R}_-}\frac{\overline{h}_1}{{\varphi_0}}
\end{align*}
for some constant $c=c(n,\Lambda,\eps,\|A\|_{C^\eps(H_1)},\|B\|_{L^\infty(H_1)})$. A combination of this and \eqref{ineq2.hop.gen} proves the following fact: There is $C_0=C_0(n,\Lambda)$, such that whenever $\mathcal{M}\geq C_0$, then it holds
\begin{align}\label{ineq.last0.hop.gen}
    \frac1c\leq \inf_{H_{a\rho}\setminus \mathcal{R}_-}\frac{f}{{\varphi_0}}
\end{align}
for some constants $c=c(n,\Lambda,\eps,\|A\|_{C^\eps(H_1)},\|B\|_{L^\infty(H_1)})$ and $a=a(n,\Lambda)\in(0,1)$. Here, we recall that $\widetilde{\rho}_0=\frac{\varsigma_0\rho}{2}(1-a_0)^{\frac13}$ and that constants $a_0$ and  $\varsigma_0$ depend only on $n,\Lambda$. In particular, the constants $a_0$ and $\varsigma_0$ are determined in \eqref{h1.12.gen.hopf} and \eqref{choi.varsigma0}, respectively.

It remains to obtain the estimate \eqref{ineq.last0.hop.gen} with $\frac{\rho}{8}$ instead of $a\rho$.

To this end, we need to prove a lower bound of $f$ in $ (H_{\frac{\rho}8}\setminus H_{a\rho} )\setminus \mathcal{R}_-$. First note that  the constant $a=a(n,\Lambda)$ is fixed and depends only on $n,\Lambda$. Using this, as in \eqref{lower.bdd}, there is a large constant $\mathcal{M}_2=\mathcal{M}_2(n,\Lambda)$ such that if $\mathcal{M}\geq\mathcal{M}_2$, then 
\begin{align}\label{lower.bdd2}
    f\geq\frac{c_0}{c}\quad\text{in }H^+_{\frac\rho2,a}\cup \mathcal{A}_{\frac\rho4,2a}
\end{align}
for some constant $c=c(n,\Lambda)$. Now we fix $\mathcal{M}\coloneqq C_0+\mathcal{M}_2$, which depends only on $n,\Lambda$. Therefore, using \eqref{ineq.last0.hop.gen} and \eqref{lower.bdd2} together with the fact that ${\varphi_0}\eqsim c(n,\Lambda)$ in $ (H_{\frac{\rho}8}\setminus H_{a\rho} )\setminus \mathcal{R}_-$ by \eqref{property.phi_0}, we derive
\begin{align}\label{last2.hop.gen}
    \frac{c_0}c\leq \inf_{(I_{a\rho}\times \mathrm{H}_{\frac{\rho}8})\setminus \mathcal{R}_-}\frac{f}{{\varphi_0}}.
\end{align}
As in the last part given in \autoref{lem.hop.gen0}, by showing  \eqref{last2.hop.gen} for each time level $t\in[-(\frac{\rho}{8})^2,0]$, we deduce the desired estimate. This completes the proof.

\end{proof}

Next we provide the following auxiliary result which was used in the proof of \autoref{lem.hop.gen0}. It shows expansions of order $\frac{a}{4 + a} \in (0,\frac{1}{2})$ at $\gamma_0$ for solutions to equations with source terms that explode mildly at $\gamma_0$.
The proof follows from \autoref{lem.hol12} by a perturbation argument.

\begin{lemma}\label{lem.point.sol}
  Let $A\in C^\eps(H_r)$ with $\eps\in(0,1)$ and $r\in(0,1]$. Let $f$ be a weak solution to 
      \begin{equation*}
\left\{
\begin{alignedat}{3}
\partial_tf+v\cdot\nabla_xf-\ddiv({A}\nabla_ v{f})&=G_0-\ddiv(G_1)&&\qquad \mbox{in  $H_r$}, \\
{f}&=0&&\qquad  \mbox{in $\{x_n=0\}\times \{v_n>0\}$},
\end{alignedat} \right.
\end{equation*} 
where $G_i$ satisfy
\begin{align}\label{ass.G}
    |G_i(z_1)|\leq c_i\max\{(x_{1,n})^{\frac13},|v_{1,n}|\}^{-\frac12}
\end{align}
for some $c_i \ge 0$. Then for any  $a\in(0,4)$ and
$\rho\in(0,\frac{r}{2}]$, we have
\begin{align*}
    |f(0,0,\rho^3,0)|\leq c\left(\frac{\rho}{r}\right)^{\frac{a}{4+a}}\left(\left(\dashint_{H_{r}}|f|^2\,dz\right)^{\frac12}+c_0r^{\frac{a}{4+a}+1}+c_1r^{\frac{a}{4+a}}\right)
\end{align*}
for some constant $c=c(n,\Lambda,\eps,\|A\|_{C^\eps(H_r)},a)$.
\end{lemma}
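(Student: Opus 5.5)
We prove \autoref{lem.point.sol} by a perturbation argument on dyadic scales around the point $(0,0,\rho^3,0)$, comparing $f$ with solutions of the homogeneous problem. By scaling $z\mapsto S_rz$ we may assume $r=1$. The weight $\max\{x_n^{1/3},|v_n|\}^{-1/2}$ is homogeneous of degree $-\frac12$. So after rescaling to $H_\varrho$, the source terms satisfy the scaled bounds $\varrho^2|G_0(S_\varrho\cdot)|\lesssim c_0\varrho^{3/2}w$ and $\varrho|G_1(S_\varrho\cdot)|\lesssim c_1\varrho^{1/2}w$, where $w=\max\{x_n^{1/3},|v_n|\}^{-1/2}$. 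Because $w$ is only mildly singular, $w\in L^q(H_1)$ for some $q$ large, since the set $\{\max\{x_n^{1/3},|v_n|\}<s\}$ has measure $\sim s^4$ in the $(x_n,v_n)$ variables. This means $w^{p}$ is integrable in these variables for $p<8$. The parameter $a\in(0,4)$ will encode exactly which integrability exponent we use.

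\textbf{Step 1 (energy comparison).} On each $H_\varrho$ we let $h_\varrho$ solve the homogeneous equation $\partial_th+v\cdot\nabla_xh-\ddiv(A\nabla_vh)=0$ in $H_\varrho$, with $h_\varrho=f$ on the kinetic boundary and the absorbing condition on $\gamma_-$. Testing the equation for $f-h_\varrho$ with itself and using Sobolev/Poincar\'e in $v$ as in the proof of Claim 3.1 gives
\begin{align*}
\Big(\dashint_{H_\varrho}|f-h_\varrho|^2\Big)^{1/2}\le c\big(c_0\varrho^{3/2}+c_1\varrho^{1/2}\big)\Big(\dashint_{H_1}w^{2}\Big)^{1/2},
\end{align*}
and the last average is finite.

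\textbf{Step 2 (decay for the homogeneous problem).} For $h_\varrho$ we apply \autoref{lem.hol12} with $\lambda=0$. It gives $|h_\varrho-c\varphi_0|\lesssim (s/\varrho)^{1/2+\eps}\|h_\varrho\|_{L^\infty(H_{\varrho/2})}$, and in particular $\|h_\varrho\|_{L^\infty(H_{s})}\lesssim (s/\varrho)^{1/2}\|h_\varrho\|_{L^\infty(H_{\varrho/2})}$. Using local boundedness \cite{Sil22}, the $L^\infty$ norm is controlled by the $L^2$ average on $H_\varrho$.

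\textbf{Step 3 (iteration).} Combining Steps 1 and 2 yields an $L^2$ decay estimate of Campanato type,
\begin{align*}
\phi(\theta\varrho)\le c\theta^{1/2}\phi(\varrho)+c\big(c_0\varrho^{3/2}+c_1\varrho^{1/2}\big),\qquad \phi(\varrho):=\Big(\dashint_{H_\varrho}|f|^2\Big)^{1/2}.
\end{align*}
Passing from $L^2$ to $L^\infty$ costs a factor depending on the measure of $H_\varrho$. To avoid losing on small scales we only iterate down to a scale comparable to $\rho^{\kappa}$ rather than $\rho$, and then use interior/local boundedness near $(0,0,\rho^3,0)$. The trade-off between $\kappa$ and the error leads to the exponent $\frac{a}{4+a}<\frac12$ instead of $\frac12$. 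Standard iteration gives $\phi(\varrho)\lesssim\varrho^{\beta}(\phi(1)+c_0+c_1)$ for any $\beta<\frac12$. We then pass to the point value $|f(0,0,\rho^3,0)|$ via local boundedness in $Q_{c\rho}(0,0,\rho^3,0)\subset\{x_n>0\}$ with the source contributions. On that cylinder $w\lesssim\rho^{-1/2}$, so the source terms add at most $c_0\rho^{3/2}+c_1\rho^{1/2}$.

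\textbf{Main obstacle.} The delicate point is the nonhomogeneous term $\ddiv G_1$ with $G_1\sim w$. Its singularity is of the same order, $\varrho^{1/2}$, as the decay rate of $\varphi_0$, so a naive iteration loses a logarithm. We plan to absorb this by sacrificing exponent. We use H\"older's inequality with $w\in L^{p}$ for some $p$ linked to $a$; the admissible range $a<4$ corresponds to the integrability $p<8$. This yields the strictly smaller rate $\frac{a}{4+a}$. If the energy estimate is too lossy, the fallback is to represent the correction by a barrier of the form $C\varrho^{\beta}\varphi_0^{2\beta}$, a supersolution away from $\mathcal{R}_-$, combined with the maximum principle.
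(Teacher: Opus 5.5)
Your core argument is correct and is essentially the paper's proof. It consists of the energy comparison of Step 1 (using the exact homogeneity $\dashint_{H_\varrho}w^2=\varrho^{-1}\dashint_{H_1}w^2$), the $\varrho^{1/2}$-decay of $h_\varrho$ from \autoref{lem.hol12}, a Campanato iteration giving every $\beta<\frac12$, and local boundedness in $Q_{c\rho}(0,0,\rho^3,0)\subset H_{2\rho}$. The paper differs only in two places: it reaches the point value through Campanato estimates centred at $(0,0,x_{1,n},0)$ plus a telescoping sum, and it gets the exponent $\frac{a}{4+a}$ from the lossy bound $\big(\dashint_{H_r}|G_i|^{4+a}\,dz\big)^{1/(4+a)}\le c\,c_i r^{-4/(4+a)}$ rather than $c\,c_i r^{-1/2}$.

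The $\rho^\kappa$ trade-off, the link between $a$ and the integrability range $p<8$, and the barrier fallback are all unnecessary. Since $|Q_{c\rho}(0,0,\rho^3,0)|$ is comparable to $|H_{2\rho}|$, passing from $L^2$ to the point value costs only a constant, and $\frac{a}{4+a}\in(0,\frac12)$ is simply one admissible choice of $\beta$.
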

\begin{proof}
First, we observe from \eqref{ass.G} that
\begin{align}\label{G.int}
    \left(\dashint_{H_r}|G_i|^{4+a}\,dz\right)^{\frac{1}{a+4}}\leq cc_0r^{-\frac{4}{a+4}}\left(\int_{\mathrm{H}_1}\max\{(x_{1,n})^{\frac13},|v_{1,n}|\}^{-\frac{a+4}{2}}\,dx_{n}\,dv_n\right)^{\frac{1}{a+4}}\leq  cc_ir^{-\frac{4}{a+4}}
\end{align}
for some constant $c=c(a)$.
    First, we are going to prove that for any $z_1=(0,0,x_{1,n},0)\in H_{r/2}$ and $\rho\leq r/2$,
    \begin{align}\label{des.est.12delta}
       E(f;H_{\rho}(z_1))\leq c\left(\frac{\rho}{r}\right)^{\frac{a}{4+a}}\left(\left(\dashint_{H_{r}}|f|^2\,dz\right)^{\frac12}+(c_0r+c_1)r^{\frac{a}{a+4}}\right),
    \end{align}
    where $c=c(n,\Lambda,\eps,\|A\|_{C^\eps(H_r)},a)$ and we write
    \begin{align*}
        E(f;H_{\rho}(z_1))\coloneqq  \left(\dashint_{H_\rho(z_1)}|f-(f)_{H_\rho(z_1)}|^2\,dz\right)^{\frac12}.
    \end{align*} To this end, we divide the proof into two cases. Within this proof, all constants $c$ depend only on $n,\Lambda,\eps,\|A\|_{C^\eps(H_r)},a$.
    
    \begin{itemize}
        \item Let $z_1=0$. Let $h$ be a weak solution to 
         \begin{equation*}
\left\{
\begin{alignedat}{3}
\partial_th+v\cdot\nabla_xh-\ddiv({A}\nabla_ v{h})&=0&&\qquad \mbox{in  $H_{r/4}$}, \\
{h}&=f&&\qquad  \mbox{on $\{x_n=0\}\times \{v_n>0\}$}.
\end{alignedat} \right.
\end{equation*} 
Then, by using the Poincar\'e inequality and testing the equation 
\begin{align*}
    \partial_t(f-h)+v\cdot\nabla_x(f-h)-\ddiv({A}\nabla_ v{(f-h)})=G_0+\ddiv(G_1)\quad\text{in }H_{\frac{r}4},
\end{align*}
with  $h-f$, we deduce 
\begin{align*}
    J\coloneqq\dashint_{H_{\frac{r}{4}}}|h-f|^2\,dz\leq c\dashint_{H_{\frac{r}4}}r^2|\nabla_v(h-f)|^2\,dz\leq cr^2\left(\dashint_{H_{\frac{r}4}}|r\nabla_v(h-f)|^2+\dashint_{H_{\frac{r}4}}|rG_0|^2+|G_1|^2\,dz\right).
\end{align*}
Moreover, note that by testing the equation for $h$ with $h-f$, we get
\begin{align*}
    \dashint_{H_{\frac{r}4}}r^2|\nabla_v h|^2\,dz \le c\dashint_{H_{\frac{r}4}}r^2|\nabla_v f|^2\,dz.
\end{align*}
 Next, we use energy estimates of $f$, the H\"older inequality and the observation \eqref{G.int} so that we further estimate $J$ as
\begin{align*}
    J&\leq cr^2\left(\dashint_{H_{\frac{r}{2}}}|f|^2\,dz+\dashint_{H_{\frac{r}2}}|rG_0|^2+|G_1|^2\,dz\right)\\
    &\leq cr^2\left(\dashint_{H_{\frac{r}{2}}}|f|^2\,dz+\left(\dashint_{H_{\frac{r}2}}|rG_0|^{4+a}\,dz\right)^{\frac{2}{a+4}}+\left(\dashint_{H_{\frac{r}2}}|G_1|^{4+a}\,dz\right)^{\frac{2}{a+4}}\right)\\
    &\leq cr^2\left(\dashint_{H_{\frac{r}{2}}}|f|^2\,dz+c_1^2r^{2-\frac{8}{a+4}}+c_0^2r^{-\frac{8}{a+4}}\right).
\end{align*}
Next, as in \eqref{ineq3.gragamma-}, from \autoref{lem.hol12}, we deduce for any $\sigma\in(0,\frac1{16}]$,
\begin{align*}
    \dashint_{H_{{\sigma r}}}|h|^2\leq c\sigma\dashint_{H_{\frac{r}{4}}}|h|^2\,dz.
\end{align*}
Thus, we have 
\begin{align*}
    \dashint_{H_{\sigma r}}|f|^2\,dz&\leq  c\left(\dashint_{H_{\sigma r}}|h|^2\,dz+\sigma^{-(4n+2)}J\right)\\
    &\leq c\sigma\left(\dashint_{H_{\frac{r}{2}}}|f|^2\,dz+ (1 + \sigma)\sigma^{-(4n+2)} J \right)\\
    &\leq c\sigma\left(\dashint_{H_{\frac{r}{2}}}|f|^2\,dz+\sigma^{-(4n+2)}r^2\left[\dashint_{H_{\frac{r}{2}}}|f|^2\,dz+c_0^2r^{2-\frac{8}{a+4}}+c_1^2r^{-\frac{8}{a+4}}\right]\right).
\end{align*}
We can rewrite this as
\begin{align*}
     \int_{H_{\sigma r}}|f|^2\,dz\leq c\left((\sigma^{4n+3}+r^2)\int_{H_{\frac{r}{2}}}|f|^2\,dz+(c_0^2r^2+c_1^2)r^{4n+2+\frac{2a}{a+4}}\right).
\end{align*}
Thus, if $r\leq r_0$ for some small $r_0$, then by \cite[Lemma 3.4]{HaLi11}, we deduce for any $\rho\leq r$,
\begin{align*}
    \int_{H_{ \rho}}|f|^2\,dz\leq c\left(\left(\frac{\rho}{r}\right)^{4n+2+\frac{2a}{a+4}}\int_{H_{\frac{r}{2}}}|f|^2\,dz+(c_0^2r^2+c_1^2)\rho^{4n+2+\frac{2a}{a+4}}\right).
\end{align*}
Here, we also used that $4n+2+\frac{2a}{a+4} \le 4n + 3$ since $a < 4$.

Thus we have for any $\rho \leq r\leq r_0$,
\begin{align}\label{first.goal.potsol}
     \dashint_{H_{ \rho}}|f|^2\,dz\leq c\left(\frac{\rho}{r}\right)^{\frac{2a}{a+4}}\dashint_{H_{{r}}}|f|^2\,dz+(c_0^2r^2+c_1^2)\rho^{\frac{2a}{a+4}}.
\end{align}
When $r>r_0$, then $r\in(r_0,1]$. Thus, we have for any $\rho\leq r_0$
\begin{align*}
     \left(\dashint_{H_{ \rho}}|f|^2\,dz\right)^{\frac12}&\leq c\left(\frac{\rho}{r_0}\right)^{\frac{a}{4+a}}\left(\left(\dashint_{H_{r_0}}|f|^2\,dz\right)^{\frac12}+(c_0r_0+c_1)r_0^{\frac{a}{4+a}}\right)\\
     &\leq c\left(\frac{\rho}{r}\right)^{\frac{a}{4+a}}\left(\left(\dashint_{H_{r}}|f|^2\,dz\right)^{\frac12}+(c_0r+c_1)r^{\frac{a}{4+a}}\right),
\end{align*}
as $r\in(r_0,1]$ and the constant $r_0$ depends only on the given data. Moreover, when $\rho\in(r_0,r/2]$, we directly obtain the above estimates, as $(\rho/r)\eqsim 1$. Therefore, \eqref{des.est.12delta} follows from \eqref{first.goal.potsol}.
\item Let $z_1=(0,0,x_{1,n},0)$ with $x_{1,n}\neq0$. Let $r_1\coloneqq \frac{(x_{1,n})^{\frac13}}{4}$ to see that 
\begin{align}\label{set.inc.potsol}
    H_{r_1}(z_1)\cap \{x_{n}>0\}=\emptyset\quad\text{and}\quad H_{r_1}(z_1)\subset H_{8r_1}.
\end{align}
Suppose $\rho\leq r_1/2$. Then by \autoref{lem.intsch}, $L^\infty-L^2$ estimates by \cite[Theorem 1.1]{Sil22} and \eqref{ass.G}, 
\begin{align*}
    r_1^{\frac12}[f]_{C^{\frac12}(H_{r_1/2}(z_1))}&\leq c\left(\|f\|_{L^\infty(H_{3r_1/4}(z_1))}+r_1^2\|G_0\|_{L^\infty(H_{3r_1/4}(z_1))}+r_1\|G_1\|_{L^\infty(H_{3r_1/4}(z_1))}\right)\\
    &\leq c\left(\left(\dashint_{H_{r_1}(z_1)}|f|^2\,dz\right)^{\frac12}+(c_0r_1+c_1)r_1^{\frac12}\right),
\end{align*}
which implies 
\begin{align*}
    E(f;H_{\rho}(z_1))\leq c\left(\frac{\rho}{r_1}\right)^{\frac12}\left(\left(\dashint_{H_{r_1}(z_1)}|f|^2\,dz\right)^{\frac12}+(c_0r_1+c_1)r_1^{\frac12}\right).
\end{align*}
By \eqref{set.inc.potsol} and \eqref{first.goal.potsol}, we have 
\begin{align*}
    E(f;H_{\rho}(z_1))&\leq c\left(\frac{\rho}{r_1}\right)^{\frac12}\left(\left(\dashint_{H_{8r_1}}|f|^2\,dz\right)^{\frac12}+(c_0r_1+c_1)r_1^{\frac12}\right)\\
    &\leq c\left(\frac{\rho}{r}\right)^{\frac{a}{a+4}}\left(\left(\dashint_{H_{r}}|f|^2\,dz\right)^{\frac12}+(c_0r+c_1)r^{\frac{a}{a+4}}\right).
\end{align*}
On the other hand, when $\rho\geq r_1/2$, we can directly use \eqref{first.goal.potsol} to see that by $H_{\rho}(z_1)\subset H_{8\rho}$,
\begin{align*}
    E(f;H_{\rho}(z_1))
    \leq c\left(\frac{\rho}{r}\right)^{\frac{a}{a+4}}\left(\left(\dashint_{H_{r}}|f|^2\,dz\right)^{\frac12}+(c_0r+c_1)r^{\frac12}\right).
\end{align*}
    \end{itemize}
We have verified \eqref{des.est.12delta}. Thus, we have for any $z_1=(0,0,\rho^3,0)$,
\begin{align*}
    |f(z_1)|\leq \sum_{i=1}^\infty|(f)_{H_{2^{-i}\rho}(z_1)}-(f)_{H_{2^{-(i+1)}\rho}(z_1)}|+|(f)_{H_{\rho/2}(z_1)}|&\leq \sum_{i=1}^\infty E(f;H_{2^{-i}\rho}(z_1))+c|(f)_{H_{4\rho}}|.
\end{align*}
where we have used that $H_{\rho/2}(z_1)\subset H_{4\rho}$. Therefore, a combination of this, \eqref{des.est.12delta} and \eqref{first.goal.potsol} leads to 
\begin{align*}
    |f(z_1)| &\le c \left( \sum_{i=1}^\infty 2^{-i \left( \frac{a}{a+4} \right) } + 1 \right) \left(\left(\dashint_{H_{r}}|f|^2\,dz\right)^{\frac12}+(c_0r+c_1)r^{\frac12}\right)
\end{align*}

This completes the proof.
\end{proof}

\subsection{Hopf lemma in non-flat domains}

We end this section with the proof of the Hopf lemma in a general curved domain (see \autoref{lem.hopf.curve}). 

\begin{proof}[Proof of \autoref{lem.hopf.curve}]
      By \cite[Section 8]{KiWe26}, there is a $C^{k+\eps}$ diffeomorphism $\varphi^{-1}$ such that for $\Psi(t,y,w)\coloneqq (t,\varphi^{-1}(y),\nabla\varphi^{-1}(y)w)$, 
   \begin{align*}
       I_2\times \mathrm{H}_{{R_1}}\subset \Psi(I_2\times (\Omega\cap B_{R_0^3})\times B_{R_0} )\subset I_2\times \mathrm{H}_{2R_1}
   \end{align*}
   for some constants $R_0,R_1\leq1$ depending only on $\Omega$.
   In particular, $\widetilde{f}\coloneqq f\circ \Psi^{-1}$ is a weak solution to 
    \begin{equation*}
\left\{
\begin{alignedat}{3}
\partial_t\widetilde{f}+v\cdot\nabla_{x}\widetilde{f}-\ddiv(\widetilde{A}\nabla_v\widetilde{f})&=\widetilde{B}\cdot\nabla_v\widetilde{f}+\widetilde{F}&&\qquad \mbox{in  $I_2\times \mathrm{H}_{2R_1}$}, \\
\widetilde{f}&=0&&\qquad  \mbox{in $\{x_n=0\}\times \{v_n>0\}$},
\end{alignedat} \right.
\end{equation*} 
where $\widetilde{A}$, $\widetilde{B}$, and $\widetilde{F}$ are defined as in \cite[(8.3)]{KiWe26}. In particular, by \cite[Lemma 2.17]{RoWe25}, we have 
\begin{align*}
    \|\widetilde{A}\|_{C^\eps(H_{R_1})}\leq c\|A\|_{C^\eps(\mathbf{H}_2(z_0))},\quad  \|\widetilde{B}\|_{L^\infty(H_{R_1})}\leq c\|B\|_{L^\infty(\mathbf{H}_2(z_0))},\quad \|\widetilde{F}\|_{L^\infty(H_{R_1})}\leq \|F\|_{L^\infty(\mathbf{H}_{2}(z_0))},
\end{align*}
where $c=c(n,\Lambda,\eps,\Omega)$. In addition, using the scaling argument from \cite[Proposition 5.1]{KiWe26}, by considering $\widetilde{f}_a(z)=\widetilde{f}(t,ax,av)$ for $a\coloneqq \sqrt{(\widetilde{A}(0))_{n,n}}$, we can assume $(\widetilde{A}(0))_{n,n}=1$. Moreover, we now choose a constant $\rho_1=\rho_1(\Omega)$ such that $\Psi(-1,x_0-\rho_1^3n_{x_0},0)=(-1,0,(\rho R_1)^3,0)\in \bbR\times \bbR^{n-1}\times \bbR\times \bbR^n$, where $\rho$ is the universal constant determined in \autoref{thm.mainhar}. Next, by choosing a sufficiently small constant $\mathcal{M}=\mathcal{M}(n,\Lambda,\Omega)$ and using the Harnack chain argument as in \eqref{harnack.ineq10f} with $v_1=0$, we obtain $\widetilde{f}(-(R_1)^2,0,(\rho R_1)^3,0)=\frac{c_0}{c}$ for some constant $c=c(n,\Lambda,\Omega)$. Now we apply a rescaled version of \autoref{lem.hop.gen} to the function $\widetilde{f}$ to get 
\begin{align*}
    \frac{c_0}{c}\leq \inf_{H_{\frac{\rho R_1}8}}\frac{\widetilde{f}}{{\varphi_0}}
\end{align*}
for some constant $c=c(n,\Lambda,\eps,\|\widetilde{A}\|_{C^\eps(H_{R_1})},\|\widetilde{B}\|_{L^\infty(H_{R_1})})$. Now, using the fact that $\Psi^{-1}(\{x_n\leq (v_n)^3\})\subset \mathcal{R}_-$, ${\varphi_0}(\Psi(z))={\varphi_0}(d_\Omega(x),n_x\cdot v)$ by \cite[(8.4)]{KiWe26}, scaling back and \eqref{notationvarphi}, we get the desired result. Since we have considered the function $\widetilde{f}_a(z)(=\widetilde{f}(t,ax,av))$ to assume $(\widetilde{A}(0))_{n,n}=1$, the constant $\rho_1$ eventually depends on $\Omega,\Lambda$. This completes the proof.
\end{proof}

\section{Higher order boundary Harnack estimates}
\label{sec:bdry-Harnack}

In this section, we will prove the following boundary Harnack principle.
\begin{theorem}\label{thm.mainhar}
    Let $k\in\{0,1\}$ $\eps\in(0,1)\setminus \{\frac12\}$ with $k+\eps<\frac32$ and  let $A\in C^{k+\eps}(H_2)$, $B\in C^{k+\eps-1}(H_2)$ $F_i\in C^{k+\eps-\frac12}(H_2)$ for each $i\in\{1,2\}$. Let $f_i$ be weak solutions to 
    \begin{equation}\label{mainhar:eq}
\left\{
\begin{alignedat}{3}
\partial_tf_i+v\cdot\nabla_{x}f_i-\ddiv(A\nabla_vf_i)&=B\cdot\nabla_vf_i+F_i&&\qquad \mbox{in  ${H}_2$}, \\
f_i&=0&&\qquad  \mbox{in $\gamma_-\cap H_2$}.
\end{alignedat} \right.
\end{equation} 
with $f_2,F_2\geq0$, and $(A(0))_{n,n}=1$. Suppose $\|f_i\|_{L^\infty(H_2)},\|F_i\|_{C^{\eps-\frac12}(H_2)}\leq 1$. There exists a universal constant $\rho<1$ and a large constant $\mathcal{M}=\mathcal{M}(n,\Lambda)$ such that if $f_2(z_0)=c_0>0$ for some constant $c_0$ with $z_0=(t_0,x_0',x_{0,n},v_0)=(-1,0,\rho^3,0)$ and 
\begin{align*}
    \|F_2\|_{L^\infty(H_2)}\leq \frac{c_0}{\mathcal{M}},
\end{align*}
then
\begin{align*}
    \left[\frac{f_1}{f_2}\right]_{C^{\min\{1+\eps,3/2\}}(H_{\frac\rho8}\setminus\mathcal{R}_-)}\leq c,
\end{align*}
where $c=c(n,\Lambda,\eps,c_0,\|A\|_{C^{\eps}(H_2)},\|B\|_{C^{\eps-1}(H_2)})$. 
Moreover, when $F_i\equiv 0$, we have 
\begin{align}\label{ineq.main2.mainhar}
     \left[\frac{f_1}{f_2}\right]_{C^{1,\max\{\eps,k\}}(H_{\frac\rho8}\setminus\mathcal{R}_-)}\leq c
\end{align}
for some constant $c=c(n,\Lambda,\eps,c_0,\|A\|_{C^{k+\eps}(H_2)},\|B\|_{C^{k+\eps-1}(H_2)})$, where $\mathcal{R}_-=\{x_n\leq v_n^3\}$.
\end{theorem}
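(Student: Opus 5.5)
We combine the Hopf lemma (\autoref{lem.hop.gen}) with an improved expansion of $f_1$ in terms of $f_2$ at each grazing point, as outlined in the introduction. By \autoref{lem.hop.gen} we have $f_2\ge \frac{c_0}{c}\varphi_0$ in $H_{\rho/8}\setminus\mathcal{R}_-$. Translate to an arbitrary grazing point $z_0\in\gamma_0\cap H_{\rho/8}$; the equation is preserved, after freezing $(A(z_0))_{n,n}$ by the scaling used in the proof of \autoref{lem.hopf.curve}.

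\textbf{Step 1: expansions of $f_1$ and $f_2$.} Apply \autoref{lem.hol12} with $\lambda+\eps$ replaced by the target order $\alpha+\frac12$, where $\alpha=\min\{1+\eps,3/2\}$ ($\alpha=1+\max\{\eps,k\}$ when $F_i\equiv0$); take the exponent slightly larger if necessary to avoid $\lambda+\eps-\frac12\in\N$. This gives $P_i=p_0^i\varphi_0+p_1^i\varphi_1+p_2^i\varphi_2+p_3^i\varphi_3\in\mathcal T$ with $|f_i-P_i|\lesssim |z|^{\alpha+1/2}$ in $H_r(z_0)$. Positivity of $f_2$ together with the Hopf bound forces $p_0^2(0)\gtrsim c_0$.

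\textbf{Step 2: rewrite the expansion relative to $f_2$.} Divide the kinetic polynomial $p^1_0$ by $p^2_0$, i.e.\ set $q_0\in\cP_{[\alpha]}$ equal to the truncated Taylor expansion of $p_0^1/p_0^2$. Then $f_1-q_0f_2=q_1\varphi_1+b_2\varphi_2+b_3\varphi_3+O(|z|^{\alpha+1/2})$. The essential point is that the $\varphi_0$ coefficient has been killed to the relevant order. One must also show that $q_1$ vanishes to second order, i.e.\ $q_1(0)=\nabla_vq_1(0)=0$. I would get this by a blow-up and Liouville argument in the spirit of \cite{AbRo20}: if it failed, rescalings of $f_1-q_0f_2$ would converge to a global solution of the constant coefficient problem. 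Since $\varphi_1=v_n\partial_{v_n}\phi_0$ solves an equation with a right-hand side $-3\partial_{vv}\phi_0$ that is not compatible with being a homogeneous solution times a linear term, the Liouville theorems of \cite{KiWe26} would give a contradiction. When $F_i\equiv0$, the same argument applied to the coefficient of $\varphi_2$ (which is generated only by the source $1$) gives $b_2=0$.

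\textbf{Step 3: transfer to the quotient.} Divide by $f_2$ and use $f_2\gtrsim\varphi_0\asymp|z|^{1/2}$ off $\mathcal R_-$. This gives $|f_1/f_2-q_0|\lesssim |z|^{\alpha}$ at grazing points, since $q_1\varphi_1/f_2\lesssim|z|^{2}$, $\varphi_2/f_2\lesssim|z|^{3/2}$ and $\varphi_3/f_2\lesssim|z|^2$. At non-grazing points, combine this pointwise estimate with the interior and $\gamma_+$ Schauder estimates (\autoref{lem.intsch}, \autoref{lem.bdrysch}) applied to the equation satisfied by $f_1-q_0f_2$. These hold on cylinders of radius comparable to the kinetic distance to $\gamma_0$, where $f_2$ is bounded below by $\varphi_0$ and is smooth at the corresponding scale. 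A standard dyadic covering argument, with \autoref{lem.interpolhol}, then yields the $C^\alpha$ seminorm on $H_{\rho/8}\setminus\mathcal R_-$.

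The main obstacle is Step 2: proving that $q_1$ vanishes to second order and that $b_2=0$ when $F_i\equiv0$. This requires careful control of the error terms produced by the variable coefficients $A,B$ acting on $q_0f_2$, and a compactness argument uniform in $z_0$.
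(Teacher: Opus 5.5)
There is a genuine gap, and it is in Step 1, where the content of the theorem lies. \autoref{lem.hol12} gives an expansion of order $\beta$ only when $A\in C^{\beta-\frac12}$, $B\in C^{\beta-\frac32}$ and $F\in C^{\beta-2}$. Your target order $\alpha+\frac12$ therefore needs $A\in C^{\alpha}$ or slightly better. Concretely, that means $A\in C^{1+\eps}$ instead of $C^{\eps}$ for the first claim, and $A\in C^{2+\delta}$ instead of $C^{1+\eps}$ for the $C^{1,1}$ case. The theorem assumes only $A\in C^{k+\eps}$, and the constant in the first claim depends only on $\|A\|_{C^{\eps}}$. Under the actual hypotheses each $f_i$ expands only to order $\frac12+k+\eps$, with a non-explicit remainder $g_i=O(|z|^{\frac12+k+\eps})$. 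Your Steps 2--3 then give only $|f_1/f_2-q_0|\lesssim|z|^{k+\eps}$, roughly one full order short of what is claimed. Dividing truncated individual expansions cannot recover that order, because nothing relates $g_1$ to $g_2$. Taking the exponent slightly larger does not help.

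The missing idea is that the expansion of $f_1$ \emph{relative to} $f_2$ is one order better than either individual expansion, as in higher order boundary Harnack for elliptic equations. This has to be proved directly. The errors generated by the rough coefficients cancel in $f_1-af_2$ only if $f_2$ itself, not its truncated expansion, is used as an approximant.

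This is exactly \autoref{lem.exp.bdry.diff}. It runs a contradiction--compactness argument with approximating class $af_2+\sum_{i=0}^3p_i\varphi_i$, constrained by $p_0(0)=p_1(0)=\nabla_vp_1(0)=0$ and $(A(0))_n\cdot\nabla_vp_0(0)=0$. It reaches order $\frac32+k+\eps$ with $A$ only in $C^{k+\eps}$. Several ingredients enter inside that blow-up:
\begin{itemize}
\item The Hopf bound is already needed, via \eqref{eq.norm2}, to control the scalars $a$ along the sequence.
\item The limit is classified by the Liouville theorem.
\item \autoref{lem.aux.direction} keeps the constraint on $\nabla_vp_0(0)$ stable as $A_m(0)\to A_\infty$.
\end{itemize}
The property $\nabla_vq(0)=0$ then comes out of these constraints, rather than from a separate a posteriori blow-up as in your Step 2.

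A smaller point concerns Step 3. You need $f_2\gtrsim\rho_0^{1/2}$ on all of $H_{\rho_0}(z_0)$, with $\rho_0\sim\mathrm{dist}(z_0,\gamma_0)$, including for $z_0$ near $\partial\mathcal R_-$. For that you need the Hopf bound outside $\mathcal R_-^2=\{2x_n\le v_n^3\}$, not just outside $\mathcal R_-$. The paper obtains this by rerunning \autoref{lem.hop.gen} outside $\mathcal R_-^M$.
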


To obtain this result, the key step is to prove an expansion of $f_1/f_2$ at $\gamma_0$ (see  \autoref{lem.exp.bdry.diff}) under the assumption that ${\varphi_0}\lesssim f_2$ in $H_2\setminus \mathcal{R}_-$. Afterwards, we combine this expansion with regularity estimates away from $\gamma_0\cup\gamma_-$ (i.e. interior estimates and boundary estimates up to $\gamma_+$) to derive uniform regularity estimates in $\mathcal{R}_+ \cup \mathcal{R}_0$  (see \autoref{lem.bdryhar}). Finally, we employ \autoref{lem.hop.gen} to replace the  assumption ${\varphi_0}\lesssim f_2$ in $H_2\setminus \mathcal{R}_-$ by the assumption that $f(z_0)=c_0>0$.

\subsection{Auxiliary results}
\label{subsec:aux}
In this section, we provide several auxiliary lemmas that will be used for the proof of the main result.

We first prove the following fact about the functions $\varphi_0, \varphi_1, \varphi_2, \varphi_3$, which are determined in \eqref{notationvarphi}:

\begin{lemma}
\label{lem.relphi0123}
     Assume $a^{n,n} = 1$. Then, for any function $\varphi\in \mathrm{span}\{{\varphi_0}, v_i\partial_{v_n}{\varphi_0},t\partial_{v_n,v_n}{\varphi_0},v_iv_j\partial_{v_n,v_n}{\varphi_0}\}$, there is a function $\phi\in \mathrm{span}\{{\varphi_3},v_iv_j{\varphi_0},v_iv_j{\varphi_1},t{\varphi_0},t{\varphi_1}\}$ such that
\begin{equation}\label{relphi0123}
\left\{
\begin{alignedat}{3}
    \partial_t\phi+v\cdot\nabla_x\phi-a^{i,j}\partial_{v_i,v_j}\phi &= \varphi&& \quad \text{in } \R \times \{ x_n > 0 \} \times \R^n,\\
    \phi &=0&& ~~ \text{ on } \R \times \{ x_n = 0 \} \times \R^n.
\end{alignedat} \right.
\end{equation}
\end{lemma}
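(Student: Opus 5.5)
By linearity it suffices to treat each of the four generator types separately. For each generator $\varphi$ I will write down an explicit $\phi$ and verify \eqref{relphi0123} by direct computation. The inputs are the equations \eqref{eq.phi0}, \eqref{eq.vdphi0} and \eqref{eq.dphi1} for $\varphi_0,\varphi_1,\varphi_3$, all of which depend only on $(x_n,v_n)$. Set $L:=\partial_t+v\cdot\nabla_x-a^{i,j}\partial_{v_i,v_j}$ with $a^{n,n}=1$.

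On functions $g(x_n,v_n)$ the operator reduces to $Lg=v_n\partial_{x_n}g-\partial_{v_nv_n}g$. Hence $L\varphi_0=0$, $L\varphi_1=-3\partial_{v_nv_n}\varphi_0$ and $L\varphi_3=m_0\varphi_0$ with $m_0\neq0$. The boundary condition is inherited from these, since each of $\varphi_0,\varphi_1,\varphi_3$ vanishes on $\{x_n=0,v_n>0\}$. I read the boundary condition in \eqref{relphi0123} as the absorbing condition on the incoming part, consistent with the rest of the paper. Multiplying by polynomials in $t,v$ preserves this vanishing.

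Next I need the commutator identities. For a polynomial weight $p(t,v)$ and $g=g(x_n,v_n)$,
\[
L(pg)=p\,Lg+g\,(\partial_t p)-g\,a^{i,j}\partial_{v_iv_j}p-2a^{i,n}\partial_{v_i}p\,\partial_{v_n}g.
\]
There is no $x$-derivative falling on $p$. The generators are handled one at a time.
\begin{itemize}
\item[(a)] $\varphi=\varphi_0$: take $\phi=m_0^{-1}\varphi_3$.
\item[(b)] $\varphi=t\partial_{v_nv_n}\varphi_0$: compute $L(t\varphi_1)=\varphi_1-3t\partial_{v_nv_n}\varphi_0$. The extra term $\varphi_1$ has to be cancelled, which is not immediate. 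Since $\varphi_1$ is not itself a generator, I will instead consider the combination $t\varphi_1+c\,t\varphi_0$ together with weights $v_iv_j$.
\item[(c)] $\varphi=v_i\partial_{v_n}\varphi_0$: use $v_iv_j$ weights on $\varphi_0$. Taking $i\neq n$, the identity gives $L(v_iv_n\varphi_0)=-2a^{i,n}\varphi_0-2a^{i,n}v_n\partial_{v_n}\varphi_0-2v_i\partial_{v_n}\varphi_0$ (up to the exact constants). Solving for $v_i\partial_{v_n}\varphi_0$ leaves $\varphi_0$ and $\varphi_1=v_n\partial_{v_n}\varphi_0$ as remainders. Both are then handled: $\varphi_0$ by (a), and $\varphi_1$ via $L(v_n^2\varphi_0)$, which produces $\varphi_1$ and $\varphi_0$ together.
\item[(d)] $\varphi=v_iv_j\partial_{v_nv_n}\varphi_0$: use $L(v_iv_j\varphi_1)$. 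This equals $-3v_iv_j\partial_{v_nv_n}\varphi_0$ plus lower-order terms of the form $\varphi_1$ and $v_k\partial_{v_n}\varphi_1$. Expanding, $\partial_{v_n}\varphi_1=\partial_{v_n}\varphi_0+v_n\partial_{v_nv_n}\varphi_0$. These terms are already in the span covered by (a) and (c), or are generators of lower weight.
\end{itemize}

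Ordering the generators by homogeneity degree in the kinetic scaling gives $\varphi_0$ degree $1/2$, then $v\partial\varphi_0$, then $t\partial^2\varphi_0$ and $vv\partial^2\varphi_0$ at degree $1/2$ again but with higher polynomial weight. In every case $L(\text{ansatz})$ equals the target up to terms with strictly lower polynomial weight. A triangular (induction on weight) argument then closes the construction.

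The main obstacle is bookkeeping the remainders so that they all stay inside the span. In particular I must check that quantities like $v_n\partial_{v_nv_n}\varphi_0$, which arises in (b), are expressible through $L$ of allowed functions. I would try first to use \eqref{eq.vdphi0} together with $L(v_n^2\varphi_0)$ and $L(v_n\varphi_1)$, and match coefficients.
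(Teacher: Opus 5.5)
Your computations are the ones the paper uses. Its proof lists $L(v_n^2\varphi_0)$, $L(v_n^2\varphi_1)$, $L(t\varphi_1)$, $L(v_lv_k\varphi_0)$ and $L(v_lv_k\varphi_1)$, together with $L\varphi_3=m_0\varphi_0$, and then solves the resulting linear system. Your commutator formula produces all of these. Your reading of the boundary condition as vanishing on $\{x_n=0,\,v_n>0\}$ is also correct.

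The gap is in the step that closes the argument. You assert that in every case $L(\text{ansatz})$ equals the target plus terms of strictly lower polynomial weight, so that induction on weight finishes. This fails in (d):
\[
L(v_lv_k\varphi_1)=-3v_lv_k\partial_{v_n,v_n}\varphi_0-2a^{l,k}\varphi_1-2\bigl(a^{l,n}v_k+a^{k,n}v_l\bigr)\bigl(v_n\partial_{v_n,v_n}\varphi_0+\partial_{v_n}\varphi_0\bigr).
\]
The remainders $v_kv_n\partial_{v_n,v_n}\varphi_0$ and $v_lv_n\partial_{v_n,v_n}\varphi_0$ have the same type and weight as the target. Contrary to what you write, they are covered neither by (a) nor by (c). Homogeneity gives no ordering either: all generators have kinetic degree $\frac12$ and all candidates have degree $\frac52$. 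As written, nothing rules out that these same-weight remainders cancel the target, so you have not shown the system is solvable.

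To close the argument, order the quadratic generators by the number of indices equal to $n$, starting with $v_n^2$. Then use $a^{n,n}=1$ to compute the diagonal coefficients:
\[
L(v_n^2\varphi_1)=-7v_n^2\partial_{v_n,v_n}\varphi_0-6\varphi_1 .
\]
For $k\neq n$,
\[
L(v_kv_n\varphi_1)=-5v_kv_n\partial_{v_n,v_n}\varphi_0-2a^{k,n}v_n^2\partial_{v_n,v_n}\varphi_0-4a^{k,n}\varphi_1-2v_k\partial_{v_n}\varphi_0 .
\]
For $k,l\neq n$, the coefficient of $v_kv_l\partial_{v_n,v_n}\varphi_0$ in $L(v_kv_l\varphi_1)$ is $-3$, and every other quadratic term carries a factor $v_n$. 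Since $-7,-5,-3\neq0$, and the leftovers $\varphi_1$ and $v_i\partial_{v_n}\varphi_0$ are handled by (c), the system is triangular and solvable. You already used this kind of ordering correctly in (c), where you treated $\varphi_1$ first via $L(v_n^2\varphi_0)$ and then $i\neq n$. Your general "lower weight" claim is therefore wrong there too, though the computation itself is fine.

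Three smaller points:
\begin{itemize}
\item In (b), $\varphi_1=v_n\partial_{v_n}\varphi_0$ is itself a generator (the case $i=n$), and adding $c\,t\varphi_0$ cannot cancel it. The choice $\phi=-\tfrac13t\varphi_1-\tfrac1{12}v_n^2\varphi_0-\tfrac16t\varphi_0$ works.
\item The term $v_n\partial_{v_n,v_n}\varphi_0$ you worry about in your last paragraph never appears, since every $\partial_{v_n,v_n}\varphi_0$ carries a factor $t$ or $v_iv_j$.
\item $L(v_n\varphi_1)$ could not be used in any case, because $v_n\varphi_1$ is not in the admissible span for $\phi$.
\end{itemize}
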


\begin{proof}
We deduce from \eqref{eq.psi0}, \eqref{eq.vdphi0}, \eqref{notationvarphi}, and since $a^{n,n} = 1$ that \begin{equation*}
\begin{aligned}
v_n\partial_{x_n}(v_n^2{\varphi_0})-\partial_{v_n,v_n}(v_n^2{\varphi_0}) &=-2{\varphi_0}-4{\varphi_1},\\
v_n\partial_{x_n}(v_n^2{\varphi_1})-\partial_{v_n,v_n}(v_n^2{\varphi_1}) &=-6{\varphi_1}-7v_n^2\partial_{v_n,v_n}{\varphi_0},\\
 \partial_t(t{\varphi_1})+v_n\partial_{x_n}(t{\varphi_1})-\partial_{v_n,v_n}(t{\varphi_1})&={\varphi_1}-3t\partial_{v_n,v_n}{\varphi_0},\\
    v\cdot\nabla_x(v_lv_k{\varphi_0})-a^{i,j}\partial_{v_i,v_j}(v_lv_k{\varphi_0})&=-2a^{l,k}{\varphi_0}-2(a^{l,n}v_k+a^{k,n}v_l)\partial_{v_n}{\varphi_0},\\
    v\cdot\nabla_x(v_lv_k{\varphi_1})-a^{i,j}\partial_{v_i,v_j}(v_lv_k{\varphi_1})&=-3v_lv_k\partial_{v_n,v_n}{\varphi_0}-2a^{l,k}{\varphi_1}\\
&\qquad-2(a^{l,n}v_k+a^{k,n}v_l)(v_n\partial_{v_n,v_n}{\varphi_0}+\partial_{v_n}{\varphi_0}).
\end{aligned}
\end{equation*}
By the aforementioned observations and \eqref{eq.dphi1}, we immediately deduce the desired result.
\end{proof}

\begin{lemma}
For $p_0,p_1,p_2,p_3\in \cP_2$,  we have 
\begin{align}\label{eq.norm}
    \left\|\sum_{i=0}^3p_i{\varphi_i}\right\|_{L^\infty( H_1\setminus \mathcal{R}_-)}\eqsim_n \sum_{i=0}^3\|p_i\|_{L^\infty(H_1)}.
\end{align}
In addition, if ${\varphi_0}/c_2\leq f\leq c_2{\varphi_0}$ in $H_1\setminus \mathcal{R}_-$ for some constant $c_2\geq1$ and $p_0(0)=p_1(0)=0$, then for any $r\leq1$ and $a \in \R$,
\begin{align}\label{eq.norm2}
    \left\|af+\sum_{i=0}^3p_i{\varphi_i}\right\|_{L^\infty(H_r\setminus \mathcal{R}_-)}\eqsim_{n,c_2} r^{\frac12}|a|+\sum_{i=0}^3\|r^{\alpha_i}p_i(S_rz)\|_{L^\infty(H_1)},
\end{align}
where we write
\begin{equation}\label{homo.alphai}
    \mbox{$\alpha_0=\alpha_1=\frac12$, $\alpha_2=2$ and $\alpha_3=\frac52$}.
\end{equation}

\end{lemma}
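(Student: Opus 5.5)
The plan is to reduce both norm equivalences to compactness and scaling, using the homogeneity \eqref{homogen} of the $\varphi_i$ and the kinetic scaling of $\cP_2$.

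\textbf{Proof of \eqref{eq.norm}.} The bound ``$\lesssim$'' is immediate, since each $\varphi_i$ is bounded on $H_1$ by \eqref{psi0.dd}. For ``$\gtrsim$'' I would argue by contradiction and compactness. The space $V=\{\sum_{i=0}^3 p_i\varphi_i: p_i\in\cP_2\}$ is finite dimensional, so it suffices to show that the map $(p_0,\dots,p_3)\mapsto \sum p_i\varphi_i$ is injective on $H_1\setminus\mathcal{R}_-$. Both sides are norms on a finite-dimensional space, hence they are equivalent with constants depending only on $n$.

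For injectivity, suppose $\sum p_i\varphi_i\equiv0$ on $H_1\setminus\mathcal{R}_-$. This set has nonempty interior, and all functions involved are real-analytic in the interior of $\{x_n>0\}$. Hence the identity extends to all of $H_1$. I would then expand in the homogeneity degrees. Using $\varphi_i(S_r z)=r^{\alpha_i}\varphi_i(z)$ (with $\varphi_1=v_n\partial_{v_n}\phi_0$ homogeneous of degree $1/2$) and decomposing each $p_i$ into kinetically homogeneous parts, we obtain a finite sum of homogeneous pieces. Pieces of the same total degree must vanish separately. It then remains to show that, for fixed $(t,x',v')$-dependence, the one-dimensional functions $v_n^j\phi_0$, $x_n v_n^j\phi_0$, $v_n^j\varphi_1,\dots,\psi_0,\partial_{v_n}\phi_1$ are linearly independent in $(x_n,v_n)$.

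I would prove this by looking at the asymptotics in $\mathcal{R}_0$ and $\mathcal{R}_+$ from \eqref{eq:phi_0-behavior}. The profile $\phi_0(1,v)$ has a specific non-polynomial behaviour as $v\to-\infty$: it grows like $|v|^{1/2}$ times a series in $|v|^{-3}$. Multiplication by polynomials in $v_n$ and $x_n$ produces distinct exponents modulo $3$, or distinct leading coefficients, compared with $\varphi_1,\varphi_2,\varphi_3$. This identification of the explicit functions is \textbf{the main obstacle}. If the asymptotic argument gets messy, I would instead apply the operator $v_n\partial_{x_n}-\partial_{v_nv_n}$ repeatedly. Its action on each family is recorded in the proof of \autoref{lem.relphi0123}, and this lowers the system to the independence of $\phi_0$, $\partial_{vv}\phi_0$ and $1$, which is clear from homogeneity and the fact that $\phi_0\notin C^{1/2+\eps}$.

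\textbf{Proof of \eqref{eq.norm2}.} First rescale $z\mapsto S_r z$. Set $\tilde p_i(z)=r^{\alpha_i}p_i(S_r z)$, which lies again in $\cP_2$, and $\tilde f(z)=r^{-1/2}f(S_r z)$. Then
\[
\Big(af+\sum p_i\varphi_i\Big)(S_rz)=r^{1/2}\Big(a\tilde f+\sum \tilde p_i\varphi_i\Big)(z),
\]
and $\tilde f$ still satisfies $\varphi_0/c_2\le\tilde f\le c_2\varphi_0$ on $H_1\setminus\mathcal{R}_-$, since $\mathcal{R}_-$ is scale invariant. Note also that $\tilde p_0(0)=\tilde p_1(0)=0$. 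Hence it suffices to treat $r=1$, uniformly over the class of such $f$.

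The upper bound follows from $|f|\le c_2\varphi_0\le c$ together with \eqref{eq.norm}. For the lower bound I would again argue by contradiction. Take normalized sequences $|a_k|+\sum\|p_i^k\|=1$ with $\|a_kf_k+\sum p_i^k\varphi_i\|\to0$ and extract limits $a$, $p_i$. Since $f_k$ need not converge, write $a_kf_k=a_k\theta_k\varphi_0$ with $\theta_k\in[c_2^{-1},c_2]$.

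If $a\ne0$, evaluate near the origin along points $z\to0$ in $\mathcal{R}_0$. There $\varphi_0\eqsim|z|^{1/2}$, while $\sum p_i\varphi_i=o(|z|^{1/2})$, because $p_0(0)=p_1(0)=0$ and $\alpha_2,\alpha_3>1/2$. Dividing by $|z|^{1/2}$ gives $|a|\,c_2^{-1}\lesssim o(1)+\lim_k\|\cdot\|$, a contradiction. Uniformity in $k$ holds since all the $p_i^k$ are bounded in a finite-dimensional space.

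If $a=0$, then $\sum p_i\varphi_i=0$, so all $p_i=0$ by \eqref{eq.norm}. This contradicts the normalization $|a|+\sum\|p_i\|=1$, which completes the argument.
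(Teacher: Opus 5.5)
Your proposal is correct and follows essentially the paper's route. For \eqref{eq.norm} you use finite-dimensionality plus linear independence of the $\varphi_i$; the paper simply asserts that independence, and your splitting by homogeneity degree makes it more transparent. For \eqref{eq.norm2} you rescale to $r=1$ and use that $|af|\gtrsim|a|\varphi_0\gtrsim|a||z|^{1/2}$ dominates $\sum_i p_i\varphi_i=O(|z|^{3/2})$ near the origin. Your compactness-contradiction argument replaces the paper's explicit case split according to whether $M|a|$ is below or above $\|\sum_i p_i\varphi_i\|$.

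One slip to fix. Since $\tilde p_i$ already absorbs the factor $r^{\alpha_i}$, the rescaled identity is $(af+\sum_i p_i\varphi_i)(S_rz)=r^{1/2}a\tilde f(z)+\sum_i\tilde p_i(z)\varphi_i(z)$, not $r^{1/2}(a\tilde f+\sum_i\tilde p_i\varphi_i)(z)$. Applying the $r=1$ case with $a$ replaced by $r^{1/2}a$ to this corrected identity yields exactly the weights in \eqref{eq.norm2}.
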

\begin{proof}
   The first relation \eqref{eq.norm} follows from the fact that 
\begin{align}\label{upperbound}
    \frac1c\leq \| {\varphi_i}\|_{L^\infty( H_1\setminus \mathcal{R}_-)} \leq c
\end{align}
for some constant $c\geq1$, and $\{{\varphi_i}\}_i$ is a linearly independent set. For \eqref{eq.norm2}, first we prove
\begin{align}\label{ineq1.norm2}
    \left\|af+\sum_{i=0}^3p_i{\varphi_i}\right\|_{L^\infty(H_1\setminus \mathcal{R}_-)}\geq \frac1c\left(|a|+ \left\|\sum_{i=0}^3p_i{\varphi_i}\right\|_{L^\infty( H_1\setminus \mathcal{R}_-)}\right)
\end{align}
for some constant $c=c(n,c_2)$. 
First, we assume $ M|a|\leq \left\|\sum\limits_{i=0}^3p_i{\varphi_i}\right\|_{L^\infty( H_1\setminus \mathcal{R}_-)}$
for some constant $M=2c_2\|\varphi_0\|_{L^\infty(H_1)}$.
Then, we have 
\begin{align*}
    \left\|af+\sum_{i=0}^3p_i{\varphi_i}\right\|_{L^\infty(H_1\setminus \mathcal{R}_-)}\geq \left\|\sum_{i=0}^3p_i{\varphi_i}\right\|_{L^\infty(H_1\setminus \mathcal{R}_-)}-\|af\|_{L^\infty(H_1\setminus \mathcal{R}_-)}\geq \frac12\left\|\sum_{i=0}^3p_i{\varphi_i}\right\|_{L^\infty(H_1\setminus \mathcal{R}_-)}.
\end{align*}
Thus, we assume  $  M|a|\geq \left\|\sum\limits_{i=0}^3p_i{\varphi_i}\right\|_{L^\infty( H_1\setminus \mathcal{R}_-)}$, which implies $\|p_i\|_{L^\infty(H_1)}\leq c(n)M|a|$ by \eqref{eq.norm}. Note from \eqref{homogen} and the fact that $p_0(0)=p_1(0)=0$, that there is a small constant $\rho=\rho(n,c_2)$ such that 
\begin{align*}
    \left\|\sum_{i=0}^3p_i{\varphi_i}\right\|_{L^\infty( H_\rho\setminus \mathcal{R}_-)}\leq \sum_{i=0}^3\|p_i\|_{L^\infty(H_\rho)}\|\varphi_i\|_{L^\infty(H_\rho\setminus \mathcal{R}_-)}\leq c(n)M|a|\rho^{\frac32}\leq \frac{|a|}{2c_2}\|\varphi_0\|_{L^\infty(H_\rho\setminus\mathcal{R}_-)},
\end{align*}
as the constant $M$ depends only on $c_2$. Thus, we have
\begin{align*}
    \left\|af+\sum_{i=0}^3p_i{\varphi_i}\right\|_{L^\infty(H_\rho\setminus \mathcal{R}_-)}\geq \|af\|_{L^\infty(H_\rho\setminus \mathcal{R}_-)}-\left\|\sum_{i=0}^3p_i{\varphi_i}\right\|_{L^\infty(H_\rho\setminus \mathcal{R}_-)}&\geq \frac{\|af\|_{L^\infty(H_\rho\setminus \mathcal{R}_-)}}{2}\geq \frac{|a|}{c}
\end{align*}
for some constant $c=c(n,c_2)$. Altogether, we obtain \eqref{ineq1.norm2}. Now, a combination of \eqref{ineq1.norm2} and \eqref{upperbound} leads to \eqref{eq.norm2} with $r=1$. For any $r\in(0,1)$,  we use the scaling argument and \eqref{homogen} to get that
\begin{equation}\label{ineq2.norm2}
\begin{aligned}
\left\|af+\sum_{i=0}^3p_i{\varphi_i}\right\|_{L^\infty(H_r\setminus \mathcal{R}_-)}&=\left\|af(S_rz)+\sum_{i=0}^3p_i(S_rz){\varphi_i}(S_rz)\right\|_{L^\infty(H_1\setminus \mathcal{R}_-)}\\
    &=\left\|r^{\frac12}af_0+\sum_{i=0}^3r^{\alpha_i}p_i(S_rz){\varphi_i}(z)\right\|_{L^\infty(H_1\setminus \mathcal{R}_-)},
\end{aligned}
\end{equation}
where we write $f_0(z)=f(S_rz)/r^{\frac12}$ and it satisfies ${\varphi_0}/c\leq f_0\leq c{\varphi_0}$ in $H_1\setminus \mathcal{R}_-$ for some constant $c=c(c_2)$. Thus, the desired estimate follows from applying \eqref{eq.norm2} with $r=1$ into the last term given in \eqref{ineq2.norm2}. This completes the proof.
\end{proof}

\begin{lemma}\label{lem.aux.direction}
    Let $p\in \cP_2$ and $a_0\in\bbR$. If $p{\varphi_0}+a_0{\varphi_3}$ solves 
    \begin{align}\label{eq.aux.directioin}
        \partial_t(p{\varphi_0}+a_0{\varphi_3})+v\cdot\nabla_x(p{\varphi_0}+a_0{\varphi_3})-\ddiv(A\nabla_v(p{\varphi_0}+a_0{\varphi_3}))=0
    \end{align}
    for some uniformly elliptic constant coefficient matrix $A$ with $(A)_{n,n}=1$, then 
    \begin{align}\label{rel.vector}
        \sum_{j=1}^{n}(A)_{n,j}(\nabla_vp(0))_j=0.
    \end{align}
    In addition, if there is a vector $\mathbf{Q}\in\bbR^n$ and
    a sequence of uniformly elliptic constant coefficient matrices $(A_i)_i$ such that $A_i\to A$ with $(A_{i})_{n,n}=1$ and 
        \begin{align*}
        \sum_{j=1}^{n}(A)_{n,j}\mathbf{Q}_j=0,
    \end{align*}
    then there is a sequence of vectors $(\mathbf{Q}_i)_i$ such that $\mathbf{Q}=\lim\limits_{i\to\infty}\mathbf{Q}_i$ and
    \begin{align*}
        \sum_{j=1}^{n}(A_i)_{n,j}(\mathbf{Q}_i)_j=0.
    \end{align*}
\end{lemma}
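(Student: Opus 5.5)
The first claim comes from computing the operator applied to $p{\varphi_0}+a_0{\varphi_3}$ and separating homogeneous parts with the scaling \eqref{homogen}. The second claim is elementary linear algebra: we correct $\mathbf{Q}$ in the $e_n$ direction.

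Write $L\coloneqq\partial_t+v\cdot\nabla_x-a^{i,j}\partial_{v_i,v_j}$ with $a^{i,j}=(A)_{i,j}$. Since $A$ is constant, $\ddiv(A\nabla_v\cdot)=a^{i,j}\partial_{v_i,v_j}$, so \eqref{eq.aux.directioin} reads $L(p{\varphi_0}+a_0{\varphi_3})=0$.

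\textbf{Step 1 (reducing the equation).} The function ${\varphi_0}={\varphi_0}(x_n,v_n)$ depends only on $(x_n,v_n)$. Together with $a^{n,n}=1$ and \eqref{eq.phi0}, this gives $L{\varphi_0}=v_n\partial_{x_n}{\varphi_0}-\partial_{v_n,v_n}{\varphi_0}=0$. The product rule and symmetry of $A$ then give
\begin{align*}
L(p{\varphi_0})=(Lp)\,{\varphi_0}-2\sum_{j=1}^n a^{n,j}\partial_{v_j}p\;\partial_{v_n}{\varphi_0}.
\end{align*}
By \eqref{eq.dphi1} and \eqref{notationvarphi}, $L{\varphi_3}=m_0{\varphi_0}$. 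Hence the equation becomes
\begin{align*}
\big(Lp+a_0m_0\big){\varphi_0}=2\,q\,\partial_{v_n}{\varphi_0},\qquad q\coloneqq\sum_{j=1}^n a^{n,j}\partial_{v_j}p .
\end{align*}
Since $p\in\cP_2$, the term $Lp$ lowers the kinetic degree by $2$, so $Lp=:c$ is a constant. Similarly $q$ has kinetic degree at most $1$, so $q(z)=\alpha+\beta\cdot v$ with $\alpha=\sum_j a^{n,j}(\nabla_vp(0))_j$. Proving \eqref{rel.vector} therefore amounts to showing $\alpha=0$.

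\textbf{Step 2 (homogeneity).} Restrict to $t=0$, $x'=0$, $v'=0$, so the identity reads $(c+a_0m_0){\varphi_0}(x_n,v_n)=2(\alpha+\beta_nv_n)\partial_{v_n}{\varphi_0}(x_n,v_n)$. Replace $(x_n,v_n)$ by $(\lambda^3x_n,\lambda v_n)$. By \eqref{homogen}, ${\varphi_0}$ is $\frac12$-homogeneous and $\partial_{v_n}{\varphi_0}$ is $(-\frac12)$-homogeneous. After multiplying by $\lambda^{1/2}$ we obtain, for all $\lambda>0$,
\begin{align*}
\lambda(c+a_0m_0){\varphi_0}=2(\alpha+\lambda\beta_nv_n)\partial_{v_n}{\varphi_0}.
\end{align*}
Comparing the $\lambda^0$ coefficients gives $\alpha\,\partial_{v_n}{\varphi_0}\equiv0$ on $\{x_n>0\}\times\R$.

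Now $\partial_{v_n}{\varphi_0}\not\equiv0$. Otherwise ${\varphi_0}$ would be independent of $v_n$, contradicting \eqref{eq:phi_0-behavior}: ${\varphi_0}$ decays exponentially in $\mathcal{R}_-$, where $v_n^3\gg x_n$, while it is $\eqsim|v_n|^{1/2}$ in $\mathcal{R}_+$. Hence $\alpha=0$, which is \eqref{rel.vector}. The only mildly delicate point of the lemma is this separation of homogeneous parts, which relies on \eqref{homogen} and the non-degeneracy of $\partial_{v_n}{\varphi_0}$.

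\textbf{Step 3 (the approximation statement).} Let $r_i\coloneqq\sum_{j}(A_i)_{n,j}\mathbf{Q}_j$ and define $\mathbf{Q}_i\coloneqq\mathbf{Q}-r_i e_n$. Since $(A_i)_{n,n}=1$,
\begin{align*}
\sum_j(A_i)_{n,j}(\mathbf{Q}_i)_j=r_i-r_i(A_i)_{n,n}=0.
\end{align*}
Since $A_i\to A$, we have $r_i\to\sum_j(A)_{n,j}\mathbf{Q}_j=0$, and therefore $\mathbf{Q}_i\to\mathbf{Q}$.
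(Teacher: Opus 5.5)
Your proof is correct and follows the paper's argument. You do the same computation, which leaves only terms in $\varphi_0$, $\partial_{v_n}\varphi_0$ and $v_j\partial_{v_n}\varphi_0$, and you correct $\mathbf{Q}$ along $e_n$ exactly as the paper does (your $\mathbf{Q}_i=\mathbf{Q}-r_ie_n$ is the paper's choice). The only difference is that the paper simply invokes linear independence of $\{\partial_{v_n}\varphi_0, v_j\partial_{v_n}\varphi_0, \varphi_0\}$, whereas you isolate the coefficient of $\partial_{v_n}\varphi_0$ through the scaling \eqref{homogen}; this needs only $\partial_{v_n}\varphi_0\not\equiv 0$, so it actually justifies that step.
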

\begin{proof}
   Let us write $p_2(z)\coloneqq p(z)-(p(0)+\nabla_vp(0)\cdot v)$ to see that $p_2(0)=\nabla_v p_2(0)=0$. Note that
    \begin{align*}
        &\partial_t(p{\varphi_0}+a_0{\varphi_3})+v\cdot\nabla_x(p{\varphi_0}+a_0{\varphi_1})-\ddiv(A\nabla_v(p{\varphi_0}+a_0{\varphi_3}))\\
        &=-2\sum_{j=1}^{n}(A)_{n,j}(\nabla_vp(0))_j\partial_{v_n}{\varphi_0}-2\sum_{j=1}^{n}(A)_{n,j}\partial_{v_j}p_2\partial_{v_n}{\varphi_0}+c_1{\varphi_0}
    \end{align*}
    for some constant $c_1\in\bbR$, where we have used the fact that $v_n\partial_{x_n}{\varphi_0}-\partial_{v_n,v_n}{\varphi_0}=0$. Thus, by \eqref{eq.aux.directioin} and since all elements of $\{\partial_{v_n}{\varphi_0},v_j\partial_{v_n}{\varphi_0},{\varphi_0}\}$ are linearly independent, we obtain
    \begin{align*}
        -2\sum_{j=1}^{n}(A)_{n,j}(\nabla_vp(0))_j=-2\sum_{j=1}^{n}(A)_{n,j}\partial_{v_j}p_2=c_1=0,
    \end{align*}
    which proves \eqref{rel.vector}.
    
    Next, suppose $(A)_n\cdot \mathbf{Q}=0 $. Now choose $\mathbf{Q}_i=\mathbf{Q}+q_ne_n$, where $e_n$ is a $n$-th unit normal vector and $q_n=((A)_n-(A_i)_n)\cdot \mathbf{Q}$. Then, we see that $(A_i)_n\cdot\mathbf{Q}_i=0$ and $\mathbf{Q}_i\to\mathbf{Q}$, as $(A_i)_{n,n}=1$ and $(A_i)_n\to (A)_n$. This completes the proof.
\end{proof}

\begin{lemma}\label{lem.rep.bdry}
   Let $\eps\in(0,1)$ and let $A\in C^{\eps}(H_1)$, $B\in L^\infty(H_1)$. Let $i\in\{0,1,2,3\}$. Let $f$ be a weak solution to 
    \begin{align*}
        (\partial_t+v\cdot\nabla_x)f-\ddiv(A\nabla_vf)=B\cdot\nabla_vf+F-\ddiv(G{\varphi_i})+g\partial_{v_n}{\varphi_i}\quad\text{in }H_1.
    \end{align*}
    Then we have 
    \begin{align*}
        \|F\|_{L^\infty(H_1)}\leq c(\|f\|_{L^\infty(H_1)}+\|G\|_{C^{\eps}(H_1)}+[F]_{C^\eps(H_1)}+\|g\|_{L^\infty(H_1)})
    \end{align*}
    for some constant $c=c(n,\Lambda,\eps,\|A\|_{C^\eps(H_1)},\|B\|_{L^\infty(H_1)})$.
\end{lemma}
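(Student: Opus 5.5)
The plan is a contradiction and compactness argument. The lemma says that once $f$, $G$, $g$ and the oscillation $[F]_{C^\eps}$ are controlled, the constant part of $F$ is controlled as well. The reason is that a source term which is (nearly) a nonzero constant produces a solution behaving like $F(0)\varphi_2$ near $\gamma_0$. Such a profile cannot be generated by the singular terms $-\ddiv(G\varphi_i)+g\partial_{v_n}\varphi_i$, and it is not compatible with bounded $f$ unless the constant is small.

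Suppose the estimate fails. Then there are sequences $f_k,A_k,B_k,F_k,G_k,g_k$, with the coefficient norms uniformly bounded, such that
\[
\|F_k\|_{L^\infty(H_1)}=1 \quad\text{and}\quad \|f_k\|_{L^\infty}+\|G_k\|_{C^\eps}+[F_k]_{C^\eps}+\|g_k\|_{L^\infty}\le 1/k.
\]
Because $[F_k]_{C^\eps}\to0$, we have $F_k-F_k(0)\to0$ uniformly, and after passing to a subsequence $|F_k(0)|\to1$.

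We then split $f_k=w_k+u_k$. The function $w_k$ solves the equation with right-hand side $-\ddiv(G_k\varphi_i)+g_k\partial_{v_n}\varphi_i+(F_k-F_k(0))$, with zero data on $\partial_{\mathrm{kin}}$ of a smaller cylinder, say $H_{1/2}$. Since $|\varphi_i|\lesssim 1$ and $|\partial_{v_n}\varphi_i|\lesssim \max\{x_n^{1/3},|v_n|\}^{-1/2}$, which lies in $L^{4+a}$, energy estimates combined with the $L^2$–$L^\infty$ bounds of \cite{Sil22} (as in \autoref{lem.point.sol}) give $\|w_k\|_{L^\infty(H_{1/4})}\to0$. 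The remainder $u_k=f_k-w_k$ then solves
\[
(\partial_t+v\cdot\nabla_x)u_k-\ddiv(A_k\nabla_v u_k)=B_k\cdot\nabla_v u_k+F_k(0)
\]
with vanishing incoming data, and $\|u_k\|_{L^\infty(H_{1/4})}\to0$.

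To reach a contradiction I would use two routes. The first is \autoref{lem.hol12} with $\lambda=2$, i.e.\ order $2+\eps'$. This requires $B\in C^{1/2+\eps'}$, which we do not have, so I would instead use $\lambda=1$ and keep track of the explicit source. The cleaner route is a barrier argument. Assume $F_k(0)\ge 1/2$, replacing $f$ by $-f$ if necessary. Take $\psi(z):=\varphi_2(z)+C\,\varphi_0$-type corrections, or simply a quadratic barrier $\eta(z)=c(1-|v|^2-\dots)$ localized in a thin slab, chosen as a subsolution of $L\eta\le F_k(0)/2$ that is positive somewhere in $H_{1/8}$ and nonpositive on $\partial_{\mathrm{kin}}$. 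The comparison principle applied to $u_k-\eta$ then gives $u_k\ge \eta>0$ at a fixed interior point, while $\|u_k\|_{L^\infty}\to0$. This contradicts the construction.

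Concretely, I would take $\eta=\tfrac18\big(t+\tfrac1{64}\big)$ on $H_{1/8}$, which is negative on the bottom time slice. Extending the comparison by zero data everywhere else needs care. I would therefore use $u_k+\|u_k\|_{L^\infty}$ as the comparison function against $F_k(0)(t+t_0)_+$-type subsolutions. The positive part is not smooth, so I would use the exact solution of the ODE in $t$ instead. This yields $u_k(0)\gtrsim F_k(0)t_0-2\|u_k\|_\infty$, which is impossible for large $k$. The main obstacle is the control of $w_k$: the term $\partial_{v_n}\varphi_i$ is singular at $\gamma_0$, so showing that it only produces a vanishing contribution requires the integrability in $L^{4+a}$ and the Morrey/Campanato iteration of \autoref{lem.point.sol}.
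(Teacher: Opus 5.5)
\textbf{There is a genuine gap in the decisive step.} That step is the passage from $\|u_k\|_{L^\infty}\to 0$ and $|F_k(0)|\to 1$ to a contradiction, and it is not established.

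Your subsolution $\tfrac{F_k(0)}{2}(t+t_0)$, or the exact solution of the ODE in $t$, depends only on $t$. So for $t>-t_0$ it is strictly positive on three parts of $\partial_{\mathrm{kin}}$: the lateral part $\{|v|=\text{const}\}$, the incoming part of the spatial boundary, and $\gamma_-$. On those parts you only know $u_k+\|u_k\|_{L^\infty}\ge 0$, and on $\gamma_-$ it equals $\|u_k\|_{L^\infty}\to 0$. Hence the comparison principle does not apply, and the bound $u_k(0)\gtrsim F_k(0)t_0-2\|u_k\|_{L^\infty}$ does not follow.

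A constant source can be absorbed by diffusion in $v$, so any barrier argument must be localized in $v$ and $x$. That requires non-affine dependence on $v$. For the divergence-form operator with $A$ only in $C^{\eps}$, $\ddiv(A\nabla_v |v|^2)$ is a distribution, not a bounded function of fixed sign, so the subsolution inequality cannot be verified. Note also that the lemma imposes no boundary condition on $f$; vanishing incoming data is not a hypothesis.

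\textbf{The missing idea is that the lemma is purely interior.} This is exactly what the paper uses: the $\varphi_i$ are smooth in $\{x_n>0\}$, after which the argument of \cite[Lemma 5.3]{KiWe26} applies.

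Concretely, fix a cylinder $Q_{2r}(z_1)\Subset H_1$ at positive distance from $\{x_n=0\}$. On it, $\varphi_i$ and $\partial_{v_n}\varphi_i$ are bounded. Fix also $0\le \phi\in C_c^\infty(Q_r(z_1))$ with $\int\phi=1$. Test the equation with $\phi$ and control $\nabla_v f$ in $L^2(Q_r(z_1))$ by the Caccioppoli inequality on $Q_{2r}(z_1)$. This gives
$|\int F\phi|\le C(\|f\|_{L^\infty}+\|G\|_{L^\infty}+\|g\|_{L^\infty})+C\|F\|_{L^\infty}^{1/2}\|f\|_{L^\infty}^{1/2}$.
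Combine this with $\|F\|_{L^\infty(H_1)}\le |\int F\phi|+C[F]_{C^{\eps}(H_1)}$ and Young's inequality, and the lemma follows directly.

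In your contradiction setup, the same computation shows that $\nabla_v f_k\to 0$ in $L^2(Q_r(z_1))$ and that the $G_k$- and $g_k$-terms vanish. The weak formulation then forces $F_k(0)\int\phi\to 0$, contradicting $|F_k(0)|\to 1$.

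Consequently, the splitting $f_k=w_k+u_k$ plays no role, and neither does the singularity of $\partial_{v_n}\varphi_i$ at $\gamma_0$, which you identified as the main obstacle. This matters, because your claimed $L^\infty$ control of $w_k$ is not justified as stated for large $n$. You invoke $L^2$--$L^\infty$ bounds from \cite{Sil22} for a source that is only in $L^{4+a}$ with $a<4$. De Giorgi-type bounds need integrability above half the homogeneous dimension $4n+2$, so you would need the Morrey-type structure exploited in \autoref{lem.point.sol} rather than plain integrability.
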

\begin{proof}
    Note that the functions $\varphi_i$ are smooth inside $\{x_n>0\}$. Therefore, by following the proof of \cite[Lemma 5.3]{KiWe26}, we immediately derive the desired estimate.
\end{proof}
Moreover, from \cite[Lemma 5.2, Lemma 5.4]{KiWe26}, we deduce the following.
\begin{lemma}\label{lem.bdry.hol.phi}
  Let $A\in C^\alpha(H_R)$ and $B\in L^\infty(H_R)$ for some $\alpha\in(0,1)$.  Let $f$ be a weak solution to 
    \begin{equation*}
\left\{
\begin{alignedat}{3}
(\partial_t+v\cdot\nabla_x)f-\ddiv(A\nabla_vf)&=B\cdot\nabla_vf+F-\ddiv(G)&&\qquad \mbox{in  ${H}_R$}, \\
f&=0&&\qquad  \mbox{in $\gamma_-\cap {Q}_R$},
\end{alignedat} \right.
\end{equation*} 
where $F\coloneqq \sum\limits_{i=0}^2F_i\partial^{(i)}_{v_n}{\varphi_0}+F_3$, $G\coloneqq \sum\limits_{j=0}^1G_j\partial^{(i)}_{v_n}{\varphi_0}+G_2$ with $F_i\in C^{\lambda_i,\alpha'}$, $G_j\in C^{\mu_j,\alpha'}$ for some $\alpha'\in[0,1)$ and $\lambda_i,\mu_j\in \N\cup\{0\}$ for each $i\in\{0,1,2,3\}$ and $j\in\{0,1,2\}$. Then there is a small constant $\beta=\beta(n,\Lambda,\alpha)$ such that
\begin{align*}
    [f]_{C^{\beta}(H_{R/2})}\leq c\left(\|f\|_{L^\infty(H_R)}+\sum_{i=0}^{3}[F_i]_{C^{\lambda_i,\alpha'}(H_R)}+\sum_{j=0}^{2}[G_j]_{C^{\mu_j,\alpha'}(H_R)}\right),
\end{align*}
where $c=c(n,\Lambda,\lambda_i,\mu_j,\alpha,\alpha',\|A\|_{C^\alpha(H_R)},\|B\|_{L^\infty(H_R)},R)$.
\end{lemma}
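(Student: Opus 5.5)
The plan is to reduce \autoref{lem.bdry.hol.phi} to the standard De Giorgi/Nash--Moser boundary H\"older estimate for equations with bounded right-hand side, or with right-hand side in a suitable $L^q$ space. The singular weights $\partial^{(i)}_{v_n}{\varphi_0}$ are integrable enough near $\gamma_0$ to be absorbed this way. I take this estimate as known; it is essentially \cite[Lemma 5.2, Lemma 5.4]{KiWe26}. After scaling to $R=1$, the source consists of the bounded pieces $F_3$, $G_2$ together with the weighted terms. The estimate will only need $\|F_i\|_{L^\infty}$ and $\|G_j\|_{L^\infty}$, and these are controlled by the H\"older seminorms plus $\|f\|_{L^\infty}$. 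Since the seminorms alone do not control the constants, I first subtract the Taylor polynomials of $F_i$ and $G_j$ at a base point. I handle the resulting polynomial parts with explicit correctors, and only the remainders are estimated by the norms.

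First I record the homogeneity bounds from \eqref{homogen} and \eqref{psi0.dd}: $|\partial^{(i)}_{v_n}{\varphi_0}(z)|\lesssim \max\{x_n^{1/3},|v_n|\}^{\frac12-i}$ outside $\mathcal{R}_-$. Inside $\mathcal{R}_-$ the decay is exponential, so these functions are even better there. For $i\le 2$ this gives $\partial^{(i)}_{v_n}{\varphi_0}\in L^{q}(H_1)$ for some $q$ exceeding the critical kinetic exponent. The integral in the variables $(x_n,v_n)$ behaves like $\int_0^1 s^{(\frac12-i)q}s^{3}\,ds$, which is finite when $q<4/(i-\frac12)$, that is, $q<8/3$ for $i=2$. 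The critical exponent for a non-divergence right-hand side is $q>(4n+2)/2$, and the one-dimensional singularity is far milder than that count suggests, because it lives only in $(x_n,v_n)$ while the other variables integrate trivially. I will verify that the mixed-norm version of the De Giorgi lemma (as used in \autoref{lem.point.sol}, via \eqref{G.int}) applies. For the divergence terms, $G_1\partial_{v_n}{\varphi_0}$ lies in $L^{q}$ for some $q<8$. This is likewise acceptable, since the paper already controlled $\max\{\cdot\}^{-1/2}$-type singularities in $L^{4+a}$ in the proof of \autoref{lem.point.sol}.

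Second, following exactly the Campanato scheme of \autoref{lem.point.sol}, I compare $f$ on $H_\rho(z_1)$ with the solution $h$ of the homogeneous problem with the same boundary data. For $h$ a $C^{\beta}$ decay holds up to $\gamma_0$ by \autoref{lem.hol12} with $\lambda=0$, and away from $\gamma_0$ by \cite{Sil22} together with \autoref{lem.bdrysch}. The energy error from the source is bounded by $\rho^{\gamma}$ times the $L^q$ norms computed above, for some $\gamma>0$. Iterating with \cite[Lemma 3.4]{HaLi11} yields Campanato decay of order $\beta$, for $\beta$ smaller than both $\gamma$ and the homogeneous exponent. This holds at every boundary point, and at interior points by the case split in \autoref{lem.point.sol}. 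That gives $[f]_{C^\beta(H_{1/2})}$, bounded by $\|f\|_{L^\infty}$ plus the $L^\infty$ norms of the coefficients $F_i$ and $G_j$.

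The main obstacle is the $i=2$ term $F_2\partial_{v_n}^{(2)}{\varphi_0}$, which scales like distance$^{-3/2}$. Its $L^q$ integrability is borderline, and it may be too singular for the energy comparison near $\gamma_0$. To handle it, I plan to use that $\partial^{(2)}_{v_n}{\varphi_0}=v_n\partial_{x_n}{\varphi_0}$ from \eqref{eq.phi0}, and then use \eqref{eq.vdphi0}. Writing $F_2\partial_{v_n}^{(2)}{\varphi_0}=-\tfrac13 F_2(\partial_t+v\cdot\nabla_x-\partial_{v_nv_n})(v_n\partial_{v_n}{\varphi_0})$, I absorb it through the corrector $-\tfrac13F_2(0)\varphi_1$ for the frozen value. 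The remainder $(F_2-F_2(0))\partial^{(2)}_{v_n}{\varphi_0}$ gains a factor distance$^{\alpha'}$ whenever $\lambda_2+\alpha'>0$. This is why the estimate is stated with seminorms, with the constant $F_2(0)$ controlled by $\|f\|_{L^\infty}$ as in \autoref{lem.rep.bdry}. The corrector $\varphi_1$ is itself $C^{1/2}$, which closes the argument.
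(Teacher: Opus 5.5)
You have a genuine gap: the step that is supposed to produce the seminorm-only right-hand side fails. Your energy comparison plus Campanato iteration, with the weights $\partial^{(i)}_{v_n}\varphi_0$ treated as $L^2$ sources of subcritical homogeneity, is workable. But you claim that $\|F_i\|_{L^\infty}$ and $\|G_j\|_{L^\infty}$ (and, in your last paragraph, the frozen value $F_2(0)$) are controlled by $\|f\|_{L^\infty}$ plus the seminorms. This is false, because the splitting of the source into the coefficients $F_i,G_j$ is not unique.

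Take $F_2\equiv M$, $G_1\equiv Me_n$ and all other coefficients zero. Then $F-\ddiv G=M\partial^{(2)}_{v_n}\varphi_0-M\partial^{(2)}_{v_n}\varphi_0=0$, so $f\equiv0$ is a solution and every seminorm vanishes, while $F_2(0)=M$ is arbitrary. The same happens for $F_1\equiv M$, $G_0\equiv Me_n$, and for any constant or divergence-free $G_2$. It even happens inside the non-divergence part, through $x_n\partial^{(2)}_{v_n}\varphi_0=\frac16 v_n\varphi_0-\frac13 v_n^2\partial_{v_n}\varphi_0$, which follows from $\partial^{(2)}_{v_n}\varphi_0=v_n\partial_{x_n}\varphi_0$ and the homogeneity of $\varphi_0$.

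\autoref{lem.rep.bdry} does not rescue this: it bounds only the regular part $F$, and it needs full norms of the singular coefficients on the right. For the same reason, your correctors, whose sizes are the individual Taylor coefficients, are not controlled.

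The missing idea is to control only the \emph{net} polynomial source $P=\sum_i T_i\partial^{(i)}_{v_n}\varphi_0+T_3-\ddiv\big(\sum_j S_j\partial^{(j)}_{v_n}\varphi_0+S_2\big)$, where $T_i,S_j$ are the Taylor polynomials of $F_i,G_j$. This $P$ lies in a finite-dimensional space $V$ of functions smooth in $\{x_n>0\}$. Hence finitely many test functions supported in a fixed compact set in the interior of $H_1$ separate $V$. Test the weak formulation with them, use the interior energy estimate, and absorb with Young's inequality. This gives $\|P\|_V\lesssim\|f\|_{L^\infty}+\sum_i[F_i]_{C^{\lambda_i,\alpha'}}+\sum_j[G_j]_{C^{\mu_j,\alpha'}}$ in any norm on $V$, for instance $\sup_{H_1}\max\{x_n^{1/3},|v_n|\}^{3/2}|P|$. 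Only after this step does your Campanato argument receive bounds in terms of $\|f\|_{L^\infty}$ and the seminorms.

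Separately, the obstacle you single out is not one. As you computed, $\partial^{(2)}_{v_n}\varphi_0\in L^q$ for all $q<8/3$, so in particular it lies in $L^2$, and it is homogeneous of degree $-\frac32>-2$. It therefore contributes $\rho^{2}\big(\dashint_{H_\rho}|F|^2\big)^{1/2}\lesssim\rho^{1/2}$ to the energy comparison, exactly like the sources in \autoref{lem.point.sol}, and no $\varphi_1$-corrector is needed. As written, that corrector is frozen at a single base point, so the gain of a factor $\mathrm{dist}^{\alpha'}$ would be lost near the other points of $\gamma_0$ that the iteration visits anyway. For comparison, the paper does not reprove this lemma; it simply cites \cite[Lemma 5.2, Lemma 5.4]{KiWe26}. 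If you take those results as known, as your first paragraph says, the lemma is immediate.
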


\begin{proof}
    The result follows immediately from \cite[Lemma 5.2, Lemma 5.4]{KiWe26}.
\end{proof}

Finally, we provide a Schauder type regularity result away from $\gamma_-\cup\gamma_0$. It follows directly from a combination of \autoref{lem.intsch} and \autoref{lem.bdrysch}.

\begin{lemma}\label{lem.schcomb}
    Let $k\in\{1,2\}$, $\eps\in(0,1)$. Let $A\in C^{k-1,\eps}(H_r(z_0))$, $B\in C^{k-2,\eps}(H_r(z_0))$, $F\in C^{k-2,\eps}(H_r(z_0))$, and $G\in C^{k-1,\eps}(H_r(z_0))$ with $z_0\notin \mathcal{R}_-$ and $r\leq \frac{\mathrm{dist}(z_0,\gamma_0)}{4}$. Assume that $f$ is a weak solution to 
 \begin{align*}
     \partial_tf+v\cdot\nabla_xf-\ddiv(A\nabla_vf)=B\cdot\nabla_vf+F-\ddiv(G)\quad\text{in }H_{r}(z_0).
 \end{align*}
 Then we have 
    \begin{align*}
        [f]_{C^{k,\eps}(H_{r/2}(z_0))}\leq cr^{-(k+\eps)}\left(\|f\|_{L^\infty(H_{r}(z_0))}+r^{k+\eps}[F]_{C^{k-2,\eps}(H_r(z_0))}+r^{k+\eps}[G]_{C^{k-1,\eps}(H_r(z_0))}\right)
    \end{align*}
    for some constant $c=c(n,\Lambda,\eps,\|A\|_{C^{k-1,\eps}(H_r(z_0))},\|B\|_{C^{k-2,\eps}(H_r(z_0))})$.
\end{lemma}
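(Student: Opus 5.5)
The plan is a case distinction on the position of $H_r(z_0)$ relative to $\{x_n=0\}$, followed by a covering argument that glues local estimates. By the scaling $z\mapsto z_0\circ S_r z$ (which preserves the structural assumptions and multiplies the norms of $F,G$ by the right powers of $r$), we may assume $r=1$. The constraint $1 \le \mathrm{dist}(z_0,\gamma_0)/4$ and $z_0\notin\mathcal{R}_-$ then place $z_0$ at kinetic distance at least $4$ from $\gamma_0$, on the non-incoming side.

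\textbf{Case 1:} $Q_{1}(z_0)\subset \R\times\{x_n>0\}\times\R^n$. Then $H_1(z_0)=Q_1(z_0)$, and the estimate is exactly \autoref{lem.intsch} with $k\in\{1,2\}$.

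\textbf{Case 2:} $Q_1(z_0)$ meets $\{x_n=0\}$. First we check the sign of $v_n$ there. Since $z_0\notin\mathcal{R}_-$, we have $x_{0,n}> v_{0,n}^3$. If $v_{0,n}\ge 0$, this together with $x_{0,n}\lesssim 1$ (the cylinder touches the boundary) would force $z_0$ to lie within kinetic distance $O(1)$ of $\gamma_0$. Choosing the absolute constants appropriately (the factor $4$ in the hypothesis is meant to absorb this), this contradicts $\mathrm{dist}(z_0,\gamma_0)\ge 4$. Hence $v_{0,n}\lesssim -1$, and every boundary point of $H_1(z_0)$ has $v_n<0$ with $|v_n|\gtrsim 1$; in particular $H_1(z_0)\cap\gamma_0=\emptyset$.

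Next, for each $z_1\in H_{1/2}(z_0)$ we choose a cylinder of fixed size $c_*$ adapted to its position:
\begin{itemize}
\item either the interior cylinder $Q_{c_*}(z_1)\subset H_1(z_0)\cap\{x_n>0\}$, to which \autoref{lem.intsch} applies;
\item or the boundary cylinder $H_{2c_*}(\bar z_1)\subset H_1(z_0)$, where $\bar z_1=(t_1,x_1',0,v_1)$ is the projection of $z_1$ onto $\{x_n=0\}$. It satisfies $v_{1,n}<0$ and is disjoint from $\gamma_0$, so \autoref{lem.bdrysch} applies on it.
\end{itemize}
Each choice gives a $C^{k,\eps}$ bound on a neighbourhood of $z_1$ of kinetic size $\gtrsim c_*$, with constants depending only on the data.

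Finally, we glue these into the seminorm on $H_{1/2}(z_0)$. For $z,z_1\in H_{1/2}(z_0)$ with $\mathrm{dist}_{\mathrm{kin}}(z,z_1)\le c_*/4$, both points lie in one of the chosen sets, and the local estimate controls $|f(z)-p_{z_1}(z)|$ with $p_{z_1}$ the Taylor polynomial of $f$ at $z_1$. For $\mathrm{dist}_{\mathrm{kin}}(z,z_1)\ge c_*/4$, we bound $|f(z)|+|p_{z_1}(z)|$ by $\|f\|_{L^\infty}$ plus the coefficients of $p_{z_1}$. These coefficients, namely derivatives of $f$ at $z_1$ of order at most $k$, are controlled by \autoref{lem.grabdd.hol} and \autoref{lem.interpolhol} in terms of the local $C^{k,\eps}$ norm and $\|f\|_{L^\infty}$. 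Dividing by $\mathrm{dist}_{\mathrm{kin}}^{k+\eps}\gtrsim c_*^{k+\eps}$ gives the claim.

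The main obstacle is the geometry in Case 2. One must verify that transport along $v$ inside kinetic cylinders does not bring boundary points with $v_n\ge0$ into $H_1(z_0)$, and that the boundary cylinders around the projections $\bar z_1$ remain inside $H_1(z_0)$ and away from $\gamma_0$. I would settle this by an explicit computation with $\mathrm{dist}_{\mathrm{kin}}$, using $x_{0,n}>v_{0,n}^3$ and the factor $4$, shrinking $c_*$ if necessary.
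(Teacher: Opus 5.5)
Your proposal is correct and follows the paper's proof, which just observes that $H_r(z_0)\cap(\gamma_0\cup\gamma_-)=\emptyset$ and combines \autoref{lem.intsch} and \autoref{lem.bdrysch} through a standard covering argument. Your case analysis establishes that disjointness explicitly (it shows $v_{0,n}\le -4$ after rescaling whenever the cylinder touches $\{x_n=0\}$), and your gluing step writes out the covering; since the cylinders are one-sided in time, make sure you always use the cylinder attached to the later of the two points.
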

\begin{proof}
Since $H_r(z_0)\cap (\gamma_0\cup \gamma_-)=\emptyset$, combining \autoref{lem.intsch} and \autoref{lem.bdrysch} together with a standard covering argument yields the desired estimate.
\end{proof}

\subsection{Expansions at the grazing set}

Now we are ready to prove expansion estimates in terms of two solutions $f_1,f_2$ of order less than three. Note that these expansions have a very specific form, and we collect all possible information about the prefactors within the expansion estimate. This will be crucial to deduce the boundary Harnack principle of optimal order in the sequel. A crucial observation is that the expansion does not involve the function $\varphi_0$.

\begin{lemma}\label{lem.exp.bdry.diff}
   Let $k\in\{0,1\}$ and  $\eps\in(0,1)\setminus\{\frac12\} $ with $k+\eps<\frac32$. Let $A\in C^{k+\eps}(H_1)$, $B\in C^{k+\eps-1}(H_1)$, $F_i\in C^{k+\eps-\frac12}(H_1)$ for each $i\in\{1,2\}$. Let $f_i$ be weak solutions to 
    \begin{equation*}
\left\{
\begin{alignedat}{3}
\partial_tf_i+v\cdot\nabla_{x}f_i-\ddiv(A\nabla_vf_i)&=B\cdot\nabla_vf_i+F_i&&\qquad \mbox{in  ${H}_1$}, \\
f_i&=0&&\qquad  \mbox{in $\gamma_-\cap H_1$},
\end{alignedat} \right.
\end{equation*} 
where $(A)_{n,n}(0)=1$. Suppose that $f_2\geq0$ and that there is $c_2\geq1$ such that 
\begin{equation}\label{ass0.const.c1}
    \frac{\varphi_0}{c_2}\leq f_2\quad\text{in }H_1\setminus \mathcal{R}_-.
\end{equation}
Then there exist $b_0,b_1\in\bbR$ and $p,q\in \cP_{1+k}$ with $q(0)=\nabla_vq(0)=0$, and 
\begin{align}\label{ass.sol.dp0}
    (A(0))_n\cdot\nabla_vp(0)=0,
\end{align} such that for any $z\in H_r$ with $r\leq\frac12$,
\begin{align}\label{goal.blowup lemma}
    \| f_1-pf_2-q\varphi_1-b_0{\varphi_2}-b_1{\varphi_3} \|_{L^\infty(H_r)}\leq cr^{\frac32+k+\eps}\left(\sum_{i=1}^{2}\|f_i\|_{L^\infty(H_1)}+[F_i]_{C^{k+\eps-\frac12}(H_1)}\right),
\end{align}
where $c=c(n,\Lambda,\eps,\|A\|_{C^{k+\eps}(H_1)},\|B\|_{C^{k+\eps-1}(H_1)},c_2)$. Moreover, if $F_i\equiv0$, then it holds $b_0=0$.
\end{lemma}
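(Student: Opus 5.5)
We will prove \eqref{goal.blowup lemma} by a compactness (blow-up) argument in the spirit of \cite{AbRo20}, using \autoref{lem.hol12} as the basic input. First, by \autoref{lem.hol12} with $\lambda+\eps$ replaced by $\frac32+k+\eps$ (admissible since $k+\eps<\frac32$ and $\eps\neq\frac12$), both $f_1$ and $f_2$ admit expansions $f_i=P_i+O(r^{\frac32+k+\eps})$ with $P_i=p_0^i\varphi_0+p_1^i\varphi_1+p_2^i\varphi_2+p_3^i\varphi_3\in\mathcal{T}_{\frac32+k,\eps}$. The Hopf-type assumption \eqref{ass0.const.c1} together with \eqref{eq.norm} forces $p_0^2(0)\gtrsim c_2^{-1}>0$. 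Formally, then, $f_1-pf_2$ with $p:=p_0^1/p_0^2$ truncated to $\cP_{1+k}$ removes all $\varphi_0$-contributions, while the leftover is a combination of $\varphi_1,\varphi_2,\varphi_3$ times polynomials plus error. However, $pf_2$ is not of the form handled directly (products $p\,p_1^2\varphi_1$ etc.\ appear), so rather than an algebraic identity we set up the contradiction argument on the quantity $\theta(r):=\sup_{\rho\ge r}\rho^{-(\frac32+k+\eps)}\|f_1-p_\rho f_2-q_\rho\varphi_1-b_{0,\rho}\varphi_2-b_{1,\rho}\varphi_3\|_{L^\infty(H_\rho)}$, where $(p_\rho,q_\rho,b_{0,\rho},b_{1,\rho})$ is the $L^2$- (or $L^\infty$-) best approximation in $H_\rho\setminus\mathcal{R}_-$ among admissible coefficients (with $q(0)=\nabla_vq(0)=0$ and \eqref{ass.sol.dp0}).

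Assuming the claim fails, we obtain sequences $f_1^m,f_2^m,A_m,B_m,F^m_i$ and radii $r_m\to0$ with $\theta_m(r_m)\to\infty$, and we consider the rescalings $w_m(z):=\big(f_1^m-p_{r_m}f_2^m-\dots\big)(S_{r_m}z)/(r_m^{\frac32+k+\eps}\theta_m(r_m))$. Standard arguments give $\|w_m\|_{L^\infty(H_R)}\le cR^{\frac32+k+\eps}$ for $R\ge1$, and \eqref{eq.norm2} (using $f_2\eqsim\varphi_0$ after blow-up, which follows from \eqref{ass0.const.c1} and the upper expansion) controls the variation of the coefficients across dyadic scales, yielding the usual growth bound. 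Each $w_m$ solves an equation whose right-hand side consists of $F_i$-terms, the commutator $-2(A\nabla_v p)\cdot\nabla_v f_2$ plus $f_2(\partial_t+v\cdot\nabla_x-\ddiv A\nabla_v)p$, and coefficient-oscillation terms. Expanding $f_2$ via \autoref{lem.hol12} these become combinations $G\varphi_i$, $g\partial_{v_n}\varphi_i$, controlled by \autoref{lem.rep.bdry}, and vanish in the limit after division by $\theta_m$; \autoref{lem.bdry.hol.phi} gives uniform H\"older bounds, so $w_m\to w$ locally uniformly, and $w$ solves a constant-coefficient Kolmogorov equation in $(-\infty,0]\times\{x_n>0\}\times\R^n$ with $w=0$ on $\gamma_-$ and polynomial growth of order $\frac32+k+\eps$, but with a residual source of the form $\varphi\in\mathrm{span}\{v_i\partial_{v_n}\varphi_0,\dots\}$ coming from the commutator with $p$. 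Using \autoref{lem.relphi0123} we absorb that source by an explicit $\phi$, and the Liouville theorems from \cite[Section 4]{KiWe26} show $w-\phi\in\mathcal{T}$ of degree $<\frac32+k+\eps$. Hence $w$ is of the admissible form, contradicting the optimality (orthogonality) normalisation of $w_m$ together with $\|w_m\|_{L^\infty(H_1)}\ge\frac12$.

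\textbf{Main obstacle.} The delicate point is that the limit profile must lie in the admissible class: a limit $p\varphi_0+a_0\varphi_3$ must be realised as $p'f_2$ plus admissible terms, and the constraint \eqref{ass.sol.dp0} must survive the limit as $A_m(0)\to A(0)$. Here we will use \autoref{lem.aux.direction}: its first part shows that any solution-limit of the form $p\varphi_0+a_0\varphi_3$ satisfies $(A)_n\cdot\nabla_vp(0)=0$, and its second part lets us approximate such a vector by vectors compatible with $A_m(0)$, so the limiting correction can be pulled back to an admissible competitor at step $m$. We must similarly show that $\varphi_1$-terms only appear with $q(0)=\nabla_vq(0)=0$: the $\varphi_1$ components of order $\frac12$ and $\frac32$ are exactly those generated by $v_n\partial_{v_n}$ of $p\varphi_0$, i.e.\ they are already accounted for by $pf_2$ via \eqref{eq.vdphi0}, so we track this carefully in the expansion of $f_2$. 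Finally, if $F_i\equiv0$, the term $\varphi_2$ (which solves the equation with source $1$) can only arise from a constant source; since all sources are now multiples of $\varphi_0$-type functions, the Liouville step produces no $\varphi_2$, giving $b_0=0$.
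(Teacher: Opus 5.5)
Your overall architecture is the paper's: contradiction--compactness, absorbing the residual source via \autoref{lem.relphi0123}, the Liouville theorem of \cite{KiWe26}, \autoref{lem.aux.direction} to carry \eqref{ass.sol.dp0} through the limit, and a minimality/orthogonality contradiction. However, your first step is false, and it hides the real difficulty.

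With only $A\in C^{k+\eps}$ and $B\in C^{k+\eps-1}$, \autoref{lem.hol12} expands $f_i$ only up to order $\frac12+k+\eps$, not $\frac32+k+\eps$; the latter would need $A\in C^{1+k+\eps}$ and $B\in C^{k+\eps}$. Gaining one full order for the quotient is precisely the content of the lemma, so it cannot be an input.

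This bites in your blow-up. Your competitor class contains $pf_2$ with a non-constant polynomial $p$, so the equation for $w_m$ contains the commutator of the operator with $p$ acting on $f_2$, in particular $\nabla_vp\cdot A\nabla_vf_2$. Use the expansion that is actually available, $f_2=a_0\varphi_0+(\mathbf a_1\cdot v)\varphi_0+(\mathbf a_2\cdot v)\varphi_1+g$ with merely $|g|\lesssim|z|^{\frac12+k+\eps}$. This commutator then produces $\nabla_vp\cdot A\nabla_vg$, which involves the gradient of the non-explicit remainder. That term is not of the explicit form ($G\varphi_i$, $g\partial_{v_n}\varphi_i$ with H\"older coefficients) to which \autoref{lem.rep.bdry} and \autoref{lem.bdry.hol.phi} apply. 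You would need uniform bounds on the rescaled remainder and its $v$-gradient, which you do not have. Likewise, \eqref{eq.norm2} is proved for $af_2$ with $a\in\R$, not for $pf_2$, so your dyadic control of the coefficients of $p$ is not ``standard'' either.

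The paper avoids all this through a different blow-up class. It first proves \eqref{ineq1.blowup}, approximating $f_1$ by $af_2+\sum_{i=0}^3p_i\varphi_i$ with a \emph{constant} $a$, $p_0(0)=0$, $(A(0))_n\cdot\nabla_vp_0(0)=0$ and $p_1(0)=\nabla_vp_1(0)=0$. Then $f_2$ enters the blown-up equation only through $aF_2$, every other source term is explicit in the $\varphi_i$, and \eqref{eq.norm2} applies verbatim. The limit is identified as you describe, using $r_m^{-1/2}f_{2,m}(S_{r_m}\cdot)\to c_{2,\infty}\varphi_0$ with $c_{2,\infty}>0$ to realize the $\varphi_0$-part of the limit by a multiple of $f_2$.

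Only afterwards is this converted algebraically into \eqref{goal.blowup lemma}. With $a_0>0$ from \eqref{ass0.const.c1}, one takes $p(0)=a$ and $\nabla_vp(0)=\nabla_vp_0(0)/a_0$, so \eqref{ass.sol.dp0} is inherited. One then adjusts the quadratic part of $p$ and sets $q=p_1-(\nabla_vp_0(0)\cdot v)(\mathbf a_2\cdot v)/a_0$. The leftover error is dominated by $(p-p(0))g=O(|z|^{\frac32+k+\eps})$. This is exactly where the extra order comes from, because $p-p(0)$ vanishes at $0$.

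Your route might be salvageable, since the bad terms carry a factor $\|p-p(0)\|/\theta(r_m)\to0$, but only after supplying the missing remainder estimates.

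Finally, $q(0)=\nabla_vq(0)=0$ is not explained by ``$v_n\partial_{v_n}$ of $p\varphi_0$''. The Liouville theorem yields no $\varphi_1$-terms at all, and \autoref{lem.relphi0123} produces only $v_iv_j\varphi_1$ and $t\varphi_1$. This requires two checks on the residual source, cf.\ \eqref{zero2.blowup}:
\begin{itemize}
\item its $\partial_{v_n}\varphi_0$-coefficient must vanish at $0$, which is where the constraint on $\nabla_vp(0)$ is used;
\item its $\partial_{v_n}^2\varphi_0$-coefficient must vanish to second order.
\end{itemize}
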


\begin{proof}
In order to establish \eqref{goal.blowup lemma}, we will prove that there exists $(a,p_0,p_1,p_2,p_3)\in\bbR\times \cP_{1+k}\times \cP_{1+k}\times \cP_{[k+\eps-\frac12]}\times \cP_{[k+\eps-1]}$ such that 
$p_0(0)=p_1(0)=\nabla_vp_1(0)=0$, $(A(0))_n\cdot \nabla_vp_0(0)=0$ and
\begin{equation}\label{ineq1.blowup}
\begin{aligned}
    \left\|f_1-af_2-\sum_{i=0}^3p_i{\varphi_i}\right\|_{L^\infty(H_r)}\leq cr^{\frac32+k+\eps}\sum_{i=1}^{2}\left(\|f_i\|_{L^\infty(H_1)}+[F_i]_{C^{k+\eps-\frac12}(H_1)}\right)
\end{aligned}
\end{equation}
for some constant $c=c(n,\Lambda,\eps,\|A\|_{C^{k+\eps}(H_1)},\|B\|_{C^{k+\eps-1}(H_1)},c_2)$. 

First, let us prove that \eqref{ineq1.blowup} implies the desired result \eqref{goal.blowup lemma}. To this end, note that by \autoref{lem.hol12},
\begin{align}\label{ineq0.blowup}
    f_2 =a_0{\varphi_0}+(\mathbf{a}_1\cdot v){\varphi_0}+(\mathbf{a}_2\cdot v){\varphi_1}+g,
\end{align}
where $a_0\in\bbR$, $\mathbf{a}_1,\mathbf{a}_2\in\bbR^n$ and 
\begin{align*}
    |g(z)|&\leq c|z|^{\frac12+k+\eps}\left(\sum_{i=1}^{2}\|f_i\|_{L^\infty(H_1)}+[F_i]_{C^{k+\eps-\frac32}(H_1)}\right)
\end{align*}
for some constant $c=c(n,\Lambda,\eps,\|A\|_{C^{k+\eps}(H_1)},\|B\|_{C^{k+\eps-1}(H_1)},c_2)$ with $k+\eps<\frac32$. Moreover, we have $\mathbf{a}_1=\mathbf{a}_2=0$ when $k=0$, and $a_0>0$ by \eqref{ass0.const.c1}. 

Next, recalling that $a,p_0,p_1,p_2,p_3$ are determined in \eqref{ineq1.blowup}, we define polynomials $p$ and $q$ by
\begin{align*}
    &p\coloneqq a+\frac{(\nabla_vp_0(0)\cdot v)}{a_0}+\frac{v^{\mathrm{t}}Qv}{a_0}+\frac{\partial_tp_0}{a_0}t,\\
    &q\coloneqq p_1-(\nabla_vp_0(0)\cdot v)\frac{(\mathbf{a}_2\cdot v)}{a_0},\quad b_0\coloneqq p_2,\quad b_1\coloneqq p_3,\\
    &\text{where} \quad v^{\mathrm{t}}Qv=v^{\mathrm{t}}\nabla_v^2p_0v-(\nabla_vp_0(0)\cdot v)\frac{(\mathbf{a}_1\cdot v)}{a_0}.
\end{align*}
Then, using \eqref{ineq0.blowup} we observe
\begin{align}
\label{f1f2}
\begin{split}
   f_1-pf_2 &= f_1 -\big(p(0) + q_0\big) \big(a_0{\varphi_0} + (\mathbf{a}_1\cdot v){\varphi_0} + (\mathbf{a}_2\cdot v){\varphi_1} + g \big) \\
&= f_1 -p(0)f_2 - a_0q_0{\varphi_0} + q_0 \big((\mathbf{a}_1\cdot v){\varphi_0} + (\mathbf{a}_2\cdot v){\varphi_1}\big) + q_0 g\\
&= f_1-p(0)f_2-a_0q_0{\varphi_0}-q_1((\mathbf{a}_1\cdot v){\varphi_0}+(\mathbf{a}_2\cdot v){\varphi_1})-g_1.
\end{split}
\end{align}
where
\begin{align*}
    q_0(z) &\coloneqq p(z)-p(0), \qquad q_1(z)\coloneqq \nabla_vp(0)\cdot v, \\ g_1(z)&\coloneqq (q_0(z)-q_1(z))((\mathbf{a}_1\cdot v){\varphi_0}+(\mathbf{a}_2\cdot v){\varphi_1})+q_0(z)g(z).
\end{align*}
Thus we have
\begin{align}\label{ineq.g1}
    |g_1(z)|&\leq c|z|^{\frac32+k+\eps}\left(\sum_{i=1}^{2}\|f_i\|_{L^\infty(H_1)}+[F_i]_{C^{k+\eps-\frac12}(H_1)}\right),
\end{align}
i.e. $g_1$ satisfies an error estimate of one order higher than $g$. To prove it, we have also used \cite[Lemma 5.3]{KiWe26}, which is closely related to \autoref{lem.rep.bdry} and yields after H\"older interpolation (see \autoref{lem.interpolhol})
\begin{align*}
    [F_i]_{C^{k+\eps-\frac32}(H_1)} \le c \Vert F_i \Vert_{L^{\infty}(H_1)} + c [F_i]_{C^{k+\eps-\frac12}(H_1)} \le  c \Vert f_i \Vert_{L^{\infty}(H_1)} + c [F_i]_{C^{k+\eps-\frac12}(H_1)}.
\end{align*}
Now, note that by construction,
\begin{align*}
    p(0) = a, \quad \nabla_v p(0) = \frac{\nabla_v p_0(0)}{a_0}, \quad q_0(z) = \frac{\nabla_vp_0(0)\cdot v}{a_0}+\frac{v^{\mathrm{t}}Qv}{a_0}+\frac{\partial_tp_0}{a_0}t, \quad q_1(z) = \frac{\nabla_v p_0(0) \cdot v}{a_0},
\end{align*}
and thus 
\begin{align*}
    q_1 (\mathbf{a}_2 \cdot v) + q &= p_1, \\
    a_0 q_0 + q_1 (\mathbf{a}_1 \cdot v) &= \nabla_vp_0(0)\cdot v + v^{\mathrm{t}}Qv \\
    &\quad + \partial_tp_0 t + (\nabla_v p_0(0) \cdot v) \frac{\mathbf{a}_1 \cdot v}{a_0} = \nabla_vp_0(0)\cdot v + v^{\mathrm{t}}\nabla_v^2p_0v + \partial_tp_0 t = p_0,
\end{align*}
where we used in the last step that $p_0 \in \cP_{1+k}$ is equal to its Taylor expansion around zero and satisfies $p_0(0) = 0$.
Hence, by \eqref{f1f2}, we deduce
\begin{align}
\label{eq:f1f2-rewrite}
\begin{split}
    f_1 & -pf_2-q{\varphi_1}-b_0{\varphi_2}-b_1{\varphi_3} \\
    &= f_1 - a f_2 - (a_0 q_0 + q_1 (\mathbf{a}_1 \cdot v) )\varphi_0 - (q_1 (\mathbf{a}_2 \cdot v) + q) \varphi_1 - b_0 \varphi_2 - b_1 \varphi_3 - g_1 \\
    &=f_1-af_2-\sum_{i=0}^3p_i{\varphi_i} -g_1.
\end{split}
\end{align}
Thus, combining \eqref{ineq1.blowup} and \eqref{ineq.g1} leads to the desired estimate \eqref{goal.blowup lemma}.

Hence, it remains to show \eqref{ineq1.blowup}. We are going to prove it by a contradiction-compactness argument. The proof somewhat follows the scheme of \cite[Proposition 5.1]{KiWe26}, but is more complicated since the blow-up sequence involves two solutions at the same time. We will divide the proof into three steps.

\textbf{Step 1. Constructing the blow-up sequence.} We assume that there are sequences $(A_{l})_l$, $(B_{l})_l$, $(F_{i,l})_l$, and $(f_{i,l})_l$ such that
 \begin{equation*}
\left\{
\begin{alignedat}{3}
\partial_tf_{i,l}+v\cdot\nabla_{x}f_{i,l}-\ddiv(A_{l}\nabla_vf_{i,l})&=B_{l}\cdot\nabla_vf_{i,l}+F_{i,l}&&\qquad \mbox{in  ${H}_1$}, \\
f_{i,l}&=0&&\qquad  \mbox{in $\gamma_-\cap H_1$}
\end{alignedat} \right.
\end{equation*} 
with 
\begin{equation}\label{ass.blow}
\begin{aligned}
    \|A_{l}\|_{C^{k+\eps}(H_1)}+\|B_{l}\|_{C^{k+\eps-1}(H_1)}&\leq \Lambda,\\
    \|f_{i,l}\|_{L^\infty(H_1)}+[F_{i,l}]_{C^{k+\eps-\frac12}(H_1)}&\leq 1,\\
    \frac{\varphi_0}{c_2}&\leq f_{2,l}\quad\text{in }H_1\setminus \mathcal{R}_-
\end{aligned}
\end{equation}
for some $\Lambda\geq1$, $c_2\geq1$, but
\begin{align}\label{ass.cont}
    \sup_{l\in\N}\sup_{(a,p_0,p_1,p_2,p_3)\in\mathcal{T}_l}\sup_{\rho\in[0,\frac12]}\rho^{-(\frac32+k+\eps)}\left\|f_{1,l}-af_{2,l}-\sum_{i=0}^3p_i{\varphi_0}\right\|_{L^\infty(H_\rho)}=\infty,
\end{align}
where the vector spaces $\mathcal{T}_l$ are defined by
\begin{align*}
    \mathcal{T}_l\coloneqq\left\{(a,p_0,p_1,p_2,p_3)\in\bbR\times \cP_{k+1}^2 \times\cP_{[k+\eps-\frac12]}\times \cP_{[k+\eps-1]}\,\bigg|\begin{array}{l} p_0(0)=p_1(0)=Dp_1(0)=0,\\(A_l(0))_n\cdot \nabla_vp_0(0)=0
    \end{array}\right\}.
\end{align*}

Note that $p_2,p_3 \in \R$, and if $k + \eps < \frac{1}{2}$ and $k + \eps < 1$, they are zero, respectively.

For any $\rho>0$ and $l\in\N$, there is a minimizer $(a_{l,\rho},p_{0,l,\rho},p_{1,l,\rho},p_{2,l,\rho},p_{3,l,\rho})\in \mathcal{T}_l$ such that
\begin{equation}\label{l2.proj.blow}
\begin{aligned}
&\left\|f_{1,l}-a_{l,\rho}f_{2,l}-\sum_{i=0}^3p_{i,l,\rho}{\varphi_i}\right\|_{L^2(H_\rho)}\leq \left\|f_{1,l}-af_{2,l}-\sum_{i=0}^3p_{i}{\varphi_i}\right\|_{L^2(H_\rho)},\\
    &\int_{H_\rho}\left(f_{1,l}-a_{l,\rho}f_{2,l}-\sum_{i=0}^3p_{i,l,\rho}{\varphi_i}\right)\left(af_{2,l}+\sum_{i=0}^3p_i{\varphi_i}\right)\,dz=0,
\end{aligned}
\end{equation}
for any $(a,p_0,p_1,p_2,p_3)\in\mathcal{T}_l$.
Now we define
\begin{align}\label{defn.theta.blowup}
    \theta(r)\coloneqq \sup_{l\in\N}\max_{r\leq \rho\leq\frac12}\rho^{-(\frac32+k+\eps)}\left\|f_{1,l}-a_{l,\rho}f_{2,l}-\sum_{i=0}^3p_{i,l,\rho}{\varphi_i}\right\|_{L^\infty(H_\rho)}.
\end{align}
First, we prove $\theta(r)\to\infty$ as $r\to0$. Note from \eqref{defn.theta.blowup} that
\begin{align*}
    \left\|(a_{l,\rho}-a_{l,2\rho})f_{2,l}+\sum_{i=0}^3(p_{i,l,\rho}-p_{i,l,2\rho}){\varphi_i}\right\|_{L^\infty(H_\rho)}\leq c\theta(\rho)\rho^{\frac32+k+\eps}.
\end{align*}
On the other hand, we observe from \autoref{lem.hol12}  that there is a constant $c_{2,m}$ such that $|c_{2,m}|\leq c(n,\Lambda,\eps)$ is uniformly bounded in $m$, and
\begin{align}\label{ineq2.blow}
    \|f_{2,m}-c_{2,m}{\varphi_0}\|_{L^\infty(H_\rho)}\leq c\rho^{\frac12+\eps},
\end{align}
which implies $f_{2,m}\leq L{\varphi_0}$ in $H_{1}\setminus \mathcal{R}_-$ for some constant $L=L(n,\Lambda,\eps)$.
Then, using \eqref{eq.norm2}, we can deduce 
\begin{equation}\label{ineq.const.seq}
\begin{aligned}
&|a_{l,\rho}-a_{l,2\rho}|\leq c\theta(\rho)\rho^{1+k+\eps},\quad
  |p^{\beta}_{i,l,\rho}-p^{\beta}_{i,l,2\rho}|\leq c\theta(\rho)\rho^{1+k+\eps-\beta}\quad\text{for each }i\in\{0,1\},\\
  &|p_{2,l,\rho}-p_{2,l,2\rho}|\leq c\theta(\rho)\rho^{k+\eps-\frac12},\quad |p_{3,l,\rho}-p_{3,l,2\rho}|\leq c\theta(\rho)\rho^{k+\eps-1},
\end{aligned}
\end{equation}
where we write  $p_{i,l,\rho}=\sum\limits_{\beta}p^{\beta}_{i,l,\rho}$ for some monomial $p^{\beta}_{i,l,\rho}$ with kinetic degree $\beta$. 
Therefore, if $\theta(r)\leq N$ for some constant $N$, then as in \cite[Lemma 2.14]{RoWe25}, we have $a_{l,0}\coloneqq \lim\limits_{\rho\to0}a_{l,\rho}$ and ${p}_{i,l,0}\coloneqq \lim\limits_{\rho\to0}p_{i,l,\rho}$ such that
\begin{align*}
    \left\|f_{1,l}-a_{l,0}f_{2,l}-\sum_{i=0}^3p_{i,l,0}{\varphi_i}\right\|_{L^\infty(H_\rho)}\leq c\rho^{\frac32+k+\eps}.
\end{align*}
Since clearly also $(A_l(0))_n\cdot\nabla_vp_{0,l,0}(0)=\lim\limits_{\rho\to0}(A_l(0))_n\cdot\nabla_vp_{0,l,\rho}(0)=0$, this contradicts \eqref{ass.cont}. Thus, we have proved that $\theta(r)\to\infty$, as $r\to0$.

Next, since $\theta(r)$ is non-increasing, there are sequences $l_m$ and $r_m$ such that $r_{l_m}\to0$ as $m\to\infty$ and 
\begin{align}\label{nonzer.tildef}
    \frac{\theta(r_{l_m})}2\leq (r_{l_m})^{-(\frac32+k+\eps)}\left\|f_{1,l_m}-a_{l_m,r_{l_m}}f_{2,l_m}-\sum_{i=0}^3p_{i,l_m,r_{l_m}}{\varphi_i}\right\|_{L^\infty(H_{r_{l_m}})}\leq \theta(r_{l_m}).
\end{align}
Henceforth, we may write $l_m=m$. Now, define the blow-up sequence $\widetilde{f}_m$ by
\begin{equation*}
    \widetilde{f}_m(z)\coloneqq \frac{f_{1,m}(S_{r_m}z)-a_{m,r_{m}}f_{2,m}(S_{r_m}z)-\sum\limits_{i=0}^3p_{i,m,r_{m}}(S_{r_m}z){\varphi_i}(S_{r_m}z)}{r_m^{\frac32+k+\eps}\theta(r_m)}
\end{equation*}
to see that 
\begin{equation*}
\left\{
\begin{alignedat}{3}
\partial_t\widetilde{f}_m+v\cdot\nabla_{x}\widetilde{f}_{m}-\ddiv(\widetilde{A}_{m}\nabla_v\widetilde{f}_{m})&=\widetilde{B}_{m}\cdot\nabla_v\widetilde{f}_{m}+\widetilde{F}_{m}+\widetilde{g}_m&&\qquad \mbox{in  ${H}_{1/r_m}$}, \\
\widetilde{f}_{m}&=0&&\qquad  \mbox{in $\gamma_-\cap H_{1/r_m}$},
\end{alignedat} \right.
\end{equation*} 
where we write
\begin{equation}\label{defn.tildeamg}
\begin{aligned}
    \widetilde{A}_m(z)&\coloneqq A_m(S_{r_m}z),\quad \widetilde{B}_m(z)\coloneqq r_mB_m(S_{r_m}z),\\
     \widetilde{F}_m(z)&\coloneqq \frac{r_m^{\frac12-k-\eps}}{\theta(r_m)}(F_{1,m}(S_{r_m}z)-a_{m,r_m}F_{2,m}(S_{r_m}z)),\\
    \widetilde{g}_m(z)&\coloneqq -\sum_{i=0}^1{\partial_t(\widetilde{p}_{i,m,r_m})}{\widetilde{\varphi_i}}-\sum_{i=0}^3\widetilde{p}_{i,m,r_m}(v\cdot\nabla_x)(\widetilde{\varphi_i})\\
    &\quad+\sum_{i=0}^{3}\left(\ddiv\left(\widetilde{A}_m\nabla_v(\widetilde{p}_{i,m,r_m}\widetilde{\varphi_i}) \right)+\widetilde{B}_m\cdot\nabla_v(\widetilde{p}_{i,m,r_m}\widetilde{\varphi_i})\right),\\
    \widetilde{{\varphi_i}}(x_n,v_n)&\coloneqq {{\varphi_i}(r_m^3x_n,r_mv_n)},\quad \widetilde{p}_{i,m,r_m}(z)\coloneqq \frac{p_{i,m,r_m}(S_{r_m}z)}{r_m^{\frac32+k+\eps}\theta(r_m)}.
\end{aligned}
\end{equation}
In particular, by \eqref{eq.psi0}, \eqref{eq.dphi1}, \eqref{eq.vdphi0} and the fact that 
\begin{align*}
    v\cdot\nabla_x([\nabla_v{\widetilde{p}}_{0,m,r_m}(0)\cdot v]{\widetilde{\varphi}_0})-\ddiv(\widetilde{A}_{m}(0)\nabla_v([\nabla_v{\widetilde{p}}_{0,m,r_m}(0)\cdot v]{\widetilde{\varphi}_0}))=0,
\end{align*}
which follows from $(A_m(0))_n\cdot\nabla_vp_{0,m,r_m}(0)=0$,
we rewrite the function $\widetilde{g}_m$ as 
\begin{equation}\label{rewrite.tildegm}
\begin{aligned}
    \widetilde{g}_m(z)&=-\sum_{i=0}^1{\partial_t(\widetilde{p}_{i,m,r_m})}{\widetilde{\varphi_i}}+\sum_{i=0}^3\left(\ddiv((\widetilde{A}_m-\widetilde{A}_m(0))\nabla_v (\widetilde{p}_{i,m,r_m}\widetilde{\varphi_i})+\widetilde{B}_m\cdot\nabla_v(\widetilde{p}_{i,m,r_m}\widetilde{\varphi_i})\right)\\
    &\quad+\sum_{i=0}^1\left[\ddiv(\widetilde{A}_m(0)\nabla_v(\widetilde{q}_{i,m,r_m})\widetilde{\varphi_i})+\widetilde{A}_m(0)\nabla_v(\widetilde{\varphi_i})\cdot\nabla_v(\widetilde{q}_{i,m,r_m})\right]\\
    &\quad-3r_m^2\widetilde{q}_{1,m,r_m}(\partial_{v_n,v_n}{\varphi_0})(S_{r_m}z)+r_m^2\widetilde{p}_{2,m,r_m}+r_m^{2}m_0\widetilde{p}_{3,m,r_m}\widetilde{\varphi_0},
\end{aligned}
\end{equation}
where we write $\widetilde{q}_{0,m,r_m}\coloneqq\widetilde{p}_{0,m,r_m}-\nabla_v\widetilde{p}_{0,m,r_m}(0)\cdot v$ and $\widetilde{q}_{1,m,r_m}\coloneqq\widetilde{p}_{1,m,r_m}$, which gives $\widetilde{q}_{i,m,r_m}(0)=D\widetilde{q}_{i,m,r_m}(0)=0$ for each $i\in\{0,1\}$.

\textbf{Step 2. Uniform estimates for the blow up sequence.} Now, we are going to prove that
\begin{equation}\label{ineq.goal.blow}
\begin{aligned}
   & \|\widetilde{f}_m\|_{L^\infty(H_{R})}\leq cR^{\frac32+k+\eps}\quad\text{for some constant }c=c(n,\Lambda,\eps,c_2),\\
    &[\widetilde{f}_m]_{C^{\alpha}(H_R)}\leq c\quad\text{for some constant }c=c(n,\Lambda,\eps,R,c_2) \quad \text{and some } \alpha = \alpha(n,\Lambda) > 0,\\
    & [\widetilde{f}_m]_{C^{1,\eps}(H_1(z_0))}\leq c\quad\text{for some constant }c=c(n,\Lambda,\eps,z_0,c_2)\text{ whenever }H_1(z_0)\Subset \{x_n>0\}. 
\end{aligned}
\end{equation}

First, following the same lines as in the proof of \cite[Lemma 2.15]{RoWe25} and using \eqref{ineq.const.seq}, we obtain
the first inequality in \eqref{ineq.goal.blow} and
\begin{align}\label{limit.coeff.blow}
    \frac{|a_{m,r_m}|}{\theta(r_m)}+\sum_{i=0}^3\frac{\|p_{i,m,r_m}\|_{L^\infty(H_1)}}{\theta(r_m)}\to 0\quad\text{as }m\to\infty.
\end{align}
Moreover, in order to establish \eqref{limit.coeff.blow}, we need to estimate the terms $\widetilde{F}_m$ and $\widetilde{g}_m$. 
To this end, we distinguish between two cases.
\begin{itemize}
\item $k=1$. First, using \eqref{homogen} and $A_m\in C^{1,\eps}$, we observe 
\begin{align*}
    \widetilde{g}_m= \sum_{i=0}^{1}F_{m,i,0}{\varphi_i} +\sum_{i=0}^3F_{m,i,1,1}\partial_{v_n}\varphi_i+\sum_{i=0}^{1}F_{m,i,1,2}\partial_{v_n}\varphi_i+\sum_{i=0}^1(F_{m,i,2,1}+F_{m,i,2,2})\partial_{v_n,v_n}\varphi_i+r_m^2\widetilde{p}_{2,m,r_m},
\end{align*}
where $F_{m,1,2,2}=0$. It holds 
\begin{equation}\label{blow.limit.20}
\begin{aligned}
    &\sum_{i=0}^1[F_{m,i,0}]_{C^\eps(H_R)}+\sum_{i=0}^3\|F_{m,i,1,1}\|_{L^{\infty}(H_R)}+\sum_{i=0}^{1}[F_{m,i,1,2}]_{C^{1,\eps}(H_R)}+\sum_{i=0}^1\|F_{m,i,2,1}\|_{L^\infty(H_R)}\\
    &\quad+[F_{m,0,2,2}]_{C^{2,\eps}(H_R)} \leq c\sum_{i=0}^3\frac{\|p_{i,m,r_m}\|_{L^\infty(H_1)}}{\theta(r_m)}
\end{aligned}
\end{equation}
for some constant $c=c(n,R,\Lambda,\eps)$, as well as
\begin{equation}\label{zero2.blowup}
\begin{aligned}
    F_{m,0,1,2}(0)=F_{m,1,1,2}(0)=0,\quad F_{m,0,2,2}(0)=DF_{m,0,2,2}(0)=0.
\end{aligned}
\end{equation}
This can easily be deduce from the following explicit formulas for the functions $F_{m,i,0}$, $F_{m,i,1,1}$, $ F_{m,i,1,2}$, $F_{m,i,2,1}$ and $F_{m,i,2,2}$, namely
\begin{align*}
    F_{m,i,0}&=\frac{-\partial_t\widetilde{p}_{i,m,r_m}+(\widetilde{A}_m-\widetilde{A}_m(0))\nabla_v^2\widetilde{p}_{i,m,r_m}+\nabla_v\widetilde{A}_m\nabla_v\widetilde{p}_{i,m,r_m}+B_m(S_{r_m}z)\cdot \nabla_v\widetilde{p}_{i,m,r_m}}{r_m^{\eps}\theta(r_m)}\\
    &\quad+\frac{A_{m}(0)\nabla_v^2\widetilde{q}_{i,m,r_m}}{r_m^{\eps}\theta(r_m)}+\frac{m_i\widetilde{p}_{3,m,r_m}}{{r_m^{\eps}\theta(r_m)}},\\
    F_{m,i,1,1}&=\frac{(\ddiv\widetilde{A}_m-\ddiv\widetilde{A}_m(0))\widetilde{p}_{i,m,r_m}+(\widetilde{A}_m-\widetilde{A}_m(0)-\nabla_v\widetilde{A}_m(0)\cdot (r_m v))\nabla_v\widetilde{p}_{i,m,r_m}}{{r_m^{1+\eps}\theta(r_m)}},\\
    &\quad+\frac{(B_m(S_{r_m}z)-B_m(0))\widetilde{p}_{i,m,r_m}}{{{r_m^{1+\eps}\theta(r_m)}}}\text{ for }i\in\{0,1\},\\
    F_{m,i,1,1}&=\frac{(\ddiv\widetilde{A}_m)\widetilde{p}_{i,m,r_m}}{{r_m^{\frac32+\eps-\alpha_i}\theta(r_m)}}+\frac{B_m(S_{r_m}z)\widetilde{p}_{i,m,r_m}}{{{r_m^{\frac32+\eps-\alpha_i}\theta(r_m)}}}\text{ for }i\in\{2,3\},\\
     F_{m,i,1,2}&=\frac{(\ddiv\widetilde{A}_m(0)+B_m(0))\widetilde{p}_{i,m,r_m}+\nabla_v\widetilde{A}_{m}(0)\cdot (r_mv)\nabla_v\widetilde{p}_{i,m,r_m}+2\widetilde{A}_m(0)\nabla_v\widetilde{q}_{i,m,r_m}}{r_m^{1+\eps}\theta(r_m)},\\
    F_{m,0,2,1}&=\frac{(\widetilde{A}_m-\widetilde{A}_m(0)-\nabla_v\widetilde{A}_m(0)\cdot (r_m v))\widetilde{p}_{0,m,r_m}}{r_m^{2+\eps}\theta(r_m)},\,
    F_{m,0,2,2}=\frac{\nabla_v\widetilde{A}_m(0)\cdot (r_m v)\widetilde{p}_{0,m,r_m}-3\widetilde{q}_{1,m,r_m}}{r_m^{2+\eps}\theta(r_m)},\\
    F_{m,1,2,1}&=\frac{(\widetilde{A}_m-\widetilde{A}_m(0))\widetilde{p}_{1,m,r_m}}{r_m^{2+\eps}\theta(r_m)},
\end{align*}
where $m_0$ is given in \eqref{rewrite.tildegm}, $m_1\coloneqq0$ and the constants $\alpha_i$ are determined in \eqref{homo.alphai}.
Moreover, by \eqref{defn.tildeamg}, we have 
\begin{align}\label{blow.limit.23}
    [\widetilde{F}_m]_{C^{1+\eps-\frac12}(H_R)}\leq c\frac{(1+|a_{m,r_m}|)}{\theta(r_m)}
\end{align}
for some constant $c=c(n,\Lambda,\eps,R)$. Now employing \autoref{lem.bdry.hol.phi} together with the first inequality in \eqref{ineq.goal.blow}, \eqref{blow.limit.20} and \eqref{blow.limit.23} yields the second inequality in \eqref{ineq.goal.blow}. Moreover, as in the proof of $C^{1,\eps}$ estimates given in \cite[Lemma 2.5]{KiWe26}, we have uniform $C^{1,\eps}$ estimates in \eqref{ineq.goal.blow}. 

    \item $k=0$. In this case, the coefficient $A$ is not differentiable, so we consider divergence type data on the right-hand side. Since the computations are similar to the case $k=1$, we will not give details. By using \eqref{homogen} and the fact that $\widetilde{p}_{0,m,r_m}\in\cP_1$ and $\widetilde{p}_{1,m,r_m}=\widetilde{p}_{3,m,r_m}=0$, we have
\begin{align*}
    \widetilde{g}_m(z)&=-\sum_{i=0}^1\left[\ddiv(G_{m,i,0}{\varphi_{2i}})+\ddiv(G_{m,i,1}\partial_{v_n}{\varphi_{2i}}))+F_{m,i,0}{\varphi_{2i}}+F_{m,i,1}\partial_{v_n}{\varphi_{2i}}\right]+r_m^2\widetilde{p}_{2,m,r_m},
\end{align*}
where 
\begin{equation}\label{zero1.blowup}
    G_{m,0,0}(0)=G_{m,0,1}(0)=G_{m,1,1}(0)=0,\quad G_{m,1,0}=F_{m,1,0}=0
\end{equation}
and for any $R\leq 1/r_m$
\begin{equation}\label{blow.limit11}
\begin{aligned}
    &\|G_{m,0,0}\|_{C^\eps(H_R)}+\sum_{i=0}^1(\|G_{m,i,1}\|_{C^\eps(H_R)}+\|F_{m,i,0}\|_{L^\infty(H_R)})+\|F_{m,0,1}\|_{L^\infty(H_R)}\leq c\frac{\|p_{0,m,r_m}\|_{L^\infty(H_1)}}{\theta(r_m)}
\end{aligned}
\end{equation}
for some constant $c=c(n,\Lambda,\eps,R)$. Recalling \eqref{defn.tildeamg}, we have
\begin{align}\label{blow.limit12}
    [\widetilde{F}_m]_{C^{\eps-\frac12}(H_R)}\leq c\frac{(1+|a_{m,r_m}|)}{\theta(r_m)}
\end{align}
for some constant $c=c(n,R,\Lambda,\eps)$. With these estimates at hand, together with the first inequality given in \eqref{ineq.goal.blow}, we apply \autoref{lem.rep.bdry} to see that
\begin{align*}
    \|\widetilde{F}_m+r_m^2\widetilde{p}_{2,m,r_m}\|_{L^{\infty}(H_R)}\leq c(n,\Lambda,\eps,R).
\end{align*}
Thus, using \autoref{lem.bdry.hol.phi}, the second inequality given in \eqref{ineq.goal.blow} follows. Furthermore, as in the case $k=1$, we obtain the third inequality in \eqref{ineq.goal.blow}.

\end{itemize}
Lastly, using the standard energy estimate and \autoref{lem.rep.bdry} together with \eqref{blow.limit.20}, \eqref{blow.limit.23}, \eqref{blow.limit11} and \eqref{blow.limit12}, we deduce
\begin{align}\label{blow.limit123}
    \int_{H_{R/2}}|\nabla_vf|^2\,dz\leq c(n,\Lambda,R,\eps,k),
\end{align}
which is used for the convergence of the drift term in the next step.

\textbf{Step 3. A limit of the blow up sequence.}
Now we are ready to apply [Lemma D.1, KW 26] together with \eqref{ineq.goal.blow} -\eqref{blow.limit123} to see that $\widetilde{f}_\infty=\lim\limits_{m\to\infty}\widetilde{f}_m$ solves
\begin{equation}\label{eq.limit.blowup}
\left\{
\begin{alignedat}{3}
\partial_t\widetilde{f}_\infty+v\cdot\nabla_{x}\widetilde{f}_{\infty}-\ddiv(\widetilde{A}_{\infty}\nabla_v\widetilde{f}_{\infty})&=m_1+\sum_{i=0}^2p_i\partial^{(i)}_{v_n}{\varphi_0}&&\qquad \mbox{in  $\{x_n>0\}$}, \\
\widetilde{f}_{\infty}&=0&&\qquad  \mbox{in $\{x_n=0\}\times \{v_n>0\}$},\\
\|\widetilde{f}_\infty\|_{L^\infty(H_R)}&\leq cR^{\frac32+k+\eps}&&\qquad  \mbox{for any $R\geq1$},
\end{alignedat} \right.
\end{equation} 
for some constants $m_1,c\in\bbR$ and some polynomials $p_i\in \cP_i$ with $p_1(0)=p_2(0)=Dp_2(0)=0$. In particular, when $k+\eps<\frac12$, then it holds $m_1=0$, and when $k+\eps<1$, then we have $p_0=p_1=p_2=0$.
To prove \eqref{eq.limit.blowup}, we have also used that $\widetilde{A}_\infty=\lim\limits_{m\to\infty}\widetilde{A}_m(0)$, as $\|A_m\|_{C^\eps(H_1)}\leq \Lambda$. For the convergence of the source terms, when $k=1$, we used that by \eqref{blow.limit.20} and \eqref{blow.limit.23}, it holds
\begin{align}\label{formula.limit}
    \widetilde{g}_m+\widetilde{F}_m\to \sum_{i=0}^1F_{\infty,i,0}{\varphi_i}+\sum_{i=0}^1F_{\infty,i,1,2}\partial_{v_n}{\varphi_i}+F_{\infty,0,2,2}\partial_{v_n,v_n}{\varphi_0}+c_\infty
\end{align}
for some constants $F_{\infty,i,0},c_\infty\in \bbR$, some polynomials $F_{\infty,i,1,2}\in \cP_1$, $F_{\infty,0,2,2}\in\cP_2$ for each $i\in\{0,1\}$. Moreover, using \eqref{zero2.blowup}, we derive
\begin{align*}
    F_{\infty,0,1,2}(0)=F_{\infty,1,1,2}(0)=F_{\infty,0,2,2}(0)=DF_{\infty,0,2,2}(0)=D^2F_{\infty,0,2,2}(0)=0.
\end{align*}
Thus, recalling \eqref{notationvarphi} and using that $\partial_{v_n} \varphi_1 = \partial_{v_n} {\varphi_0} + v_n \partial_{v_n,v_n}{\varphi_0}$, we can rewrite \eqref{formula.limit} as the right-hand side given in \eqref{eq.limit.blowup} by choosing $m_1$ and $p_0, p_1, p_2$ appropriately. Similarly, the case $k=0$ can be derived in a similar way using \eqref{blow.limit11}, \eqref{blow.limit12}, and \eqref{zero1.blowup}.

Now, from \eqref{eq.psi0} and \autoref{lem.relphi0123}, we deduce that there are polynomials $q_0,q_1\in \cP_2$ with $q_i(0)=Dq_i(0)=0$ and constants $q_2,q_3\in \bbR$ such that
\begin{align*}
    \partial_t\left(\sum\limits_{i=0}^4q_i{\varphi_i}\right)+v\cdot\nabla_x\left(\sum_{i=0}^4q_i{\varphi_i}\right)-\ddiv\left(\widetilde{A}_\infty\nabla_v\left(\sum_{i=0}^4q_i{\varphi_i}\right)\right)=m_0+\sum_{i=0}^2p_i\partial^{(i)}_{v_n}{\varphi_0}.
\end{align*}
Note that $q_2=0$ when $k+\eps<\frac12$, $q_0=q_1=q_4=0$ when $k+\eps<1$. 

Thus, altogether, we obtain that $\widetilde{f}_\infty-\sum\limits_{i=0}^4q_i{\varphi_i}$ solves \eqref{eq.limit.blowup} with zero right-hand side. Hence, by the Liouville theorem given in \cite[Theorem 4.1]{KiWe26} applied with $p=q=0$, we derive
\begin{align*}
    \widetilde{f}_\infty-\sum\limits_{i=0}^4q_i{\varphi_i}=p{\varphi_0}+p_0{\varphi_3} + \bar{p}_1 + \bar{p}_2 \varphi_2 
\end{align*}
for some $p\in \cP_{[1+k+\eps]}$, $p_0\in\R$, $\bar{p}_1 \in \cP_{[\frac{3}{2} + k+\eps]}$, and $\bar{p}_2 \in \R$.  However, since $\bar{p}_1$ violates the boundary condition and $\bar{p}_2 \varphi_2$ violates the homogeneity of the source term, we conclude that
\begin{align*}
    \widetilde{f}_\infty-\sum\limits_{i=0}^4q_i{\varphi_i}=p{\varphi_0}+p_0{\varphi_3}.
\end{align*}

Now using \autoref{lem.aux.direction}, we have $(\widetilde{A}_\infty)_n\cdot\nabla_vp(0)=0$. In addition, by $\widetilde{A}_m(0)\to \widetilde{A}_\infty$, there is a sequences of vectors $(\mathbf{Q}_m) \subset \bbR^n$ such that $\mathbf{Q}_m\to\nabla_vp(0)\quad\text{and}\quad (\widetilde{A}_m(0))_n\cdot\mathbf{Q}_m=0$. Thus, we have proved that
\begin{align}\label{widetildef.blowup}
    \widetilde{f}_\infty=a_0{\varphi_0}+\sum_{i=0}^{3}q_{\infty,i}{\varphi_i},
\end{align}
where $a_0\in\bbR$, $q_{\infty,0},q_{\infty,1}\in \cP_{k+1}$, $q_{\infty,2}\in \cP_{[k+\eps-\frac12]}$, $q_{\infty,3}\in\cP_{[k+\eps-1]}$ with
\begin{equation}\label{cmlimit.blow}
\begin{aligned}
 &q_{\infty,0}(0)=q_{\infty,1}(0)=Dq_{\infty,1}(0),\\
&\mathbf{Q}_m\to\nabla_vq_{\infty,0}(0)\quad\text{and}\quad (\widetilde{A}_m(0))_n\cdot\mathbf{Q}_m=0.
\end{aligned}
\end{equation}
Recall that by \eqref{ineq2.blow} and the last condition in \eqref{ass.blow}, the limit $c_{2,\infty}\coloneqq \lim\limits_{m\to\infty}c_{2,m}$ exists for a suitable subsequence. Note that for any $z\in H_1$, we have
\begin{equation}\label{ineq3.blow}
\begin{aligned}
    |r_m^{-\frac12}f_{2,m}(S_{r_m}z)-c_{2,\infty}{\varphi_0}(z)|&\leq |r_m^{-\frac12}f_{2,m}(S_{r_m}z)-c_{2,m}{\varphi_0}(z)|+|c_{2,m}-c_{2,\infty}|{\varphi_0}(z)\\
    &\leq r_m^{-\frac12}\|f_{2,m}-c_{2,m}{\varphi_0}\|_{L^\infty(H_{r_m|z|})}+|c_{2,m}-c_{2,\infty}|{\varphi_0}(z),
\end{aligned}
\end{equation}
where we have used the fact that ${\varphi_0}(S_rz)=r^{\frac12}{\varphi_0}(z)$. Therefore using \eqref{ineq2.blow}, \eqref{ineq3.blow} and the fact that $c_{2,m}\to c_{2,\infty}$, we have 
\begin{align}\label{f2m.phi0.blow}
    \lim_{m\to\infty}\|{f_{2,m}(S_{r_m}z)}r_m^{-\frac12}- c_{2,\infty}{\varphi_0}\|_{L^\infty(H_1)}=0.
\end{align}
In particular by the last condition given in \eqref{ass.blow}, we have $c_{2,\infty}>0$.

Therefore, by \eqref{l2.proj.blow}, \eqref{widetildef.blowup}, \eqref{cmlimit.blow}, and \eqref{f2m.phi0.blow}, we have 
\begin{align*}
    0=\lim_{m\to\infty}\int_{H_1}\widetilde{f}_m \left(\frac{a_0}{c_{2,\infty}}r_m^{-\frac12}f_2(S_{r_m}z)+\sum_{i=0}^3q_{m,i}{\varphi_i}(S_{r_m}z)\right)\,dz=\int_{H_1}(\widetilde{f}_\infty)^2,
\end{align*}
where we write 
\begin{align*}
    q_{m,0}(z)&\coloneqq r_m^{-\frac12}(\mathbf{Q}_{m}\cdot v+q_\infty(z)-(\nabla_vq_{\infty,0}(0)\cdot v)),\\
    q_{m,i}(z)&\coloneqq r_m^{-\alpha_i}q_{\infty,i}(z)\quad\text{for each }i\in\{1,2,3\}
\end{align*}
to see that $(\frac{a_0}{c_{2,\infty}}r_m^{-\frac12},q_{m,0},q_{m,1},q_{m,2},q_{m,3})\in \mathcal{T}_m$. Note that the constants $\alpha_i$ are determined in \eqref{homo.alphai}, which ensures $r_m^{-\alpha_i}{\varphi_i}(S_{r_m}z)={\varphi_i}(z)$.

Altogether, it must be $\widetilde{f}_\infty\equiv0$, which contradicts \eqref{nonzer.tildef}. This completes the proof.
\end{proof}

\subsection{Proof of the main result in flat domains}

In this subsection, we will give the proof of the higher order boundary Harnack principle and prove our main result \autoref{thm.mainhar.intro}. A slightly more general version of that result is contained in \autoref{thm.mainhar.gen}.

First, we recall the set $\mathcal{R}^2_-$ defined by 
\begin{align*}
    \mathcal{R}^2_-\coloneqq \{v_n^3\geq 2x_n\}.
\end{align*}

Now we combine the regularity away from $\gamma_0\cup\gamma_-$ and \autoref{lem.exp.bdry.diff} to obtain the following.
\begin{lemma}\label{lem.bdryhar}
    Let $k\in\{0,1\}$ and $\eps\in(0,1)\setminus\{\frac12\}$ with $k+\eps<\frac32$. Let $A\in C^{k+\eps}(H_2)$, $B\in C^{k+\eps-1}(H_2)$, and $F_i\in C^{k+\eps-\frac12}(H_2)$ for each $i\in\{1,2\}$. Let $f_i$ be weak solutions to 
    \begin{equation*}
\left\{
\begin{alignedat}{3}
\partial_tf_i+v\cdot\nabla_{x}f_i-\ddiv(A\nabla_vf_i)&=B\cdot\nabla_vf_i+F_i&&\qquad \mbox{in  ${H}_2$}, \\
f_i&=0&&\qquad  \mbox{in $\gamma_-\cap H_2$},
\end{alignedat} \right.
\end{equation*} 
where $(A)_{n,n}(0)=1$. Suppose that $f_2\geq0$ and that there is $c_2\geq1$ such that 
\begin{equation}\label{c_2.assf.f2}
    \frac{\varphi_0}{c_2}\leq f_2 \quad\text{in }H_2\setminus \mathcal{R}^{2}_-.
\end{equation}
Then for any $z_0\in H_{1}\setminus \mathcal{R}_-$ and $r\coloneqq\frac{\mathrm{dist}(z_0,\gamma_0)}{16}$, 
\begin{align}\label{first.res.bdryhol} 
    [f_1/f_2]_{C^{\min\{1+\eps,3/2\}}(H_{r}(z_0))}\leq c\sum_{i=1}^{2}\left(\|f_i\|_{L^\infty(H_2)}+[F_i]_{C^{\eps-\frac12}(H_2)}\right)
\end{align}
for some constant $c=c(n,\Lambda,\eps,\|A\|_{C^\eps(H_2)},\|B\|_{L^\infty(H_2)},c_2)$. 

Moreover, when $F_i\equiv0$ and $k+\eps>\frac12$, then 
\begin{align}\label{second.res.bdryhol} 
    [f_1/f_2]_{C^{1,\max\{\eps,k\}}(H_{r}(z_0))}\leq c\sum_{i=1}^{2}\|f_i\|_{L^\infty(H_2)},
\end{align}
where $c=c(n,\Lambda,\eps,\|A\|_{C^{k+\eps}(H_2)},\|B\|_{C^{k+\eps-1}(H_2)})$.
\end{lemma}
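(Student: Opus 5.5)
The plan is to localize at the nearest grazing point and combine the expansion of \autoref{lem.exp.bdry.diff} with the Schauder estimates away from $\gamma_0\cup\gamma_-$ (\autoref{lem.schcomb}). Fix $z_0\in H_1\setminus\mathcal{R}_-$ and let $\bar z_0=(t_0,x_0',0,v_0',0)\in\gamma_0$ be its projection, so that $\mathrm{dist}(z_0,\gamma_0)\eqsim\max\{x_{0,n}^{1/3},|v_{0,n}|\}=:d$ and $r=d/16$.

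First I apply \autoref{lem.exp.bdry.diff} in a cylinder of unit size centred at $\bar z_0$. This is legitimate because the equation is translation invariant under $\bar z_0\circ\cdot$. The assumption \eqref{c_2.assf.f2} on $H_2\setminus\mathcal{R}^2_-$ gives \eqref{ass0.const.c1} there. It yields
\[
f_1=pf_2+q\varphi_1+b_0\varphi_2+b_1\varphi_3+E, \qquad |E|\lesssim |z|^{\frac32+k+\eps}\ \text{(relative to }\bar z_0),
\]
with $q(0)=\nabla_vq(0)=0$, and with $b_0=0$ when $F_i\equiv0$. Since $p$ is a kinetic polynomial of degree at most $2$, it has zero $C^{1+\eps}$ (resp.\ $C^{1,1}$) seminorm. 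It therefore suffices to bound the seminorm on $H_r(z_0)$ of
\[
\frac{f_1-pf_2}{f_2}=\frac{q\varphi_1+b_0\varphi_2+b_1\varphi_3+E}{f_2}.
\]

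Second, I rescale by $S_d$ around $\bar z_0$ and set $g(z)=d^{-\frac12}f_2(\bar z_0\circ S_dz)$. By the Hopf-type lower bound \eqref{c_2.assf.f2} and the upper bound from \autoref{lem.hol12}, $g\eqsim 1$ on the rescaled cylinder $H_{1/16}(\tilde z_0)$, which stays at distance $\eqsim1$ from $\gamma_0\cup\gamma_-$ because $z_0\notin\mathcal{R}_-$ (the comparison with $\varphi_0$ in \eqref{property.phi_0} is used here). On this cylinder \autoref{lem.schcomb}, applied to $f_2$ with $k\in\{1,2\}$, gives $[g]_{C^{1+\eps}}\lesssim1$, and $[g]_{C^{2,\eps}}$ when $A\in C^{1+\eps}$. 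The numerator $N=f_1-pf_2$ solves the equation with right-hand side $F_1-pF_2$ plus commutator terms involving $\nabla_v p$, $\partial_tp$ and $\nabla_vf_2$. On the rescaled cylinder it satisfies $|N|\lesssim d^{\min\{2,\frac32+k+\eps\}}$, using $q(0)=\nabla_vq(0)=0$ so that $|q\varphi_1|\lesssim d^{2+\frac12}$, together with $|\varphi_2|\lesssim d^2$ and $|\varphi_3|\lesssim d^{5/2}$. Its higher Schauder norms follow from \autoref{lem.schcomb} at the same scale. The quotient rule in kinetic Hölder spaces, with $g$ bounded below, then gives
\[
[N/f_2]_{C^{\alpha}(H_r(z_0))}\lesssim d^{-\alpha}\, d^{-\frac12}\, d^{\min\{2,\frac32+k+\eps\}}.
\]
For $\alpha=\min\{1+\eps,\frac32\}$ this exponent is nonnegative, and since $d\le1$ the estimate \eqref{first.res.bdryhol} follows. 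When $F_i\equiv0$ we have $b_0=0$, so the numerator is $O(d^{\min\{5/2,\frac32+k+\eps\}})$, which allows $\alpha=1+\max\{\eps,k\}\le2$ and gives \eqref{second.res.bdryhol}.

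The main obstacle is the bookkeeping in the second step. The cylinder $H_r(z_0)$ may touch $\gamma_+$ (when $z_0\in\mathcal{R}_+$), so I must use the boundary version of \autoref{lem.schcomb} rather than interior estimates. I also have to check that the source term of $N$, namely $F_1-pF_2$ plus terms like $\nabla_vp\cdot A\nabla_vf_2$, has the Hölder regularity \autoref{lem.schcomb} requires. In particular $F_i\in C^{k+\eps-\frac12}$ only permits Schauder order $k+\eps+\frac32$, which is exactly what caps the result at $C^{\min\{1+\eps,3/2\}}$ in the general case.
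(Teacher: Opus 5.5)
Your exponent count is fine, but the step ``its higher Schauder norms follow from \autoref{lem.schcomb} at the same scale'' fails as stated, and this is where the real work of the lemma lies.

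Put $\mathbf{b}=\nabla_vp(0)$. In general $\mathbf{b}\neq 0$; only $(A(0))_n\cdot\mathbf{b}=0$ is known, from \eqref{ass.sol.dp0}. The equation for $N=f_1-pf_2$ contains the commutator terms $\ddiv(f_2A\mathbf{b})$ and $A\nabla_vf_2\cdot\mathbf{b}$. On $H_{d}(z_0)$ the available bounds are $\|\nabla_vf_2\|_{L^\infty}\lesssim d^{-1/2}$ and $[f_2A\mathbf{b}]_{C^{\eps}}\lesssim d^{\frac12-\eps}$. Both are sharp for these functions themselves.

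Estimated this way, the commutators contribute $O(d^{3/2})$ to the scaled Schauder estimate, not $O(d^{\min\{2,\frac32+k+\eps\}})$. You would therefore only get $[N]_{C^{1+\eps}(H_{d/2}(z_0))}\lesssim d^{\frac12-\eps}$. Hence $[f_1/f_2]_{C^{1+\eps}(H_r(z_0))}\lesssim d^{-\eps}$, which is not uniform as $z_0\to\gamma_0$. For \eqref{second.res.bdryhol} with $k=1$ the loss is a full factor $d^{-1}$.

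So the obstacle is the \emph{size} of the source, not its H\"older regularity, which is what you flag. A smaller point: the cap $\frac32$ comes from $b_0\varphi_2/f_2\asymp d^{3/2}$, not from the regularity of $F_i$.

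The missing idea is a cancellation. Combine \eqref{ass.sol.dp0} with the expansion from \autoref{lem.hol12}:
$f_2=c_0\varphi_0+(\mathbf{C}_0\cdot v)\varphi_0+(\mathbf{C}_1\cdot v)\varphi_1+g$, with $|g|\lesssim|z|^{\frac12+k+\eps}$.
Since $\ddiv(\varphi_0A(0)\mathbf{b})=\partial_{v_n}\varphi_0\,(A(0))_n\cdot\mathbf{b}=0$, the function $(\mathbf{b}\cdot v)\varphi_0$ solves the frozen-coefficient equation. Its contribution to the source therefore enters only through $(A-A(0))\nabla_v((\mathbf{b}\cdot v)\varphi_0)$, which gains the factor $d^{\min\{k+\eps,1\}}$.

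The remaining piece $(p-p(0))g$ needs its own Schauder bounds for $g$, as in \eqref{gradg.est}--\eqref{cepsg.est}. These come from the equation that $g$ satisfies.

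This is exactly Steps 1--2 of the paper's proof. Before any Schauder estimate, the paper rewrites $f_1-pf_2-q\varphi_1-b_0\varphi_2-b_1\varphi_3$ in terms of four pieces:
$f_1-p(0)f_2$, $\;c_0(\nabla_vp(0)\cdot v)\varphi_0$, $\;(p-p(0))g$, and explicit terms $\sum_i q_i\varphi_i$.
It then checks that the resulting source $\widetilde F-\ddiv\widetilde G$ has the right scaled size. Once you insert this decomposition, the rest of your scheme goes through: the quotient rule with $f_2\gtrsim d^{1/2}$, and the homogeneity of $q\varphi_1,\varphi_2,\varphi_3$.
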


Note that the condition $k = 1$ is only relevant in case $F_0 \equiv 0$. For the first claim \eqref{first.res.bdryhol}, it suffices to consider $k = 0$ and $\eps \in (0,\frac{1}{2}+\eps')$ for some $\eps' > 0$.

\begin{proof}
Let $z_0\in H_{1}\setminus\mathcal{R}_-$ and set $\rho_0\coloneqq \frac{\mathrm{dist}(z_0,\gamma_0)}{8}$. Then, we have
\begin{align}\label{rel.cylinder.bdryharnack} 
    H_{4\rho_0}(z_0)\cap \gamma_0=\emptyset\quad\text{and}\quad H_{4\rho_0}(z_0)\subset H_{16\rho_0}(\overline{z_0}),
\end{align}
where $\overline{z_0}\coloneqq (t_0,x_0',0,v_0',0)$. Moreover, by \eqref{c_2.assf.f2} and the fact that $H_{\rho_0}(z_0)\subset H_2\setminus \mathcal{R}_-^2$, we have
\begin{align}\label{lowerbdd.f2}
    f_2\geq \frac{\rho_0^{\frac12}}{c}\quad\text{in }H_{\rho_0}(z_0)
\end{align}
for some constant $c=c(c_2)$. Next, we assume 
\begin{align*}
    \sum_{i=1}^{2}\left(\|f_i\|_{L^\infty(H_2)}+[F_i]_{C^{\eps-\frac12}(H_2)}\right)\leq1,
\end{align*}
and we omit the dependency of the constant $c$ whenever it depends on $n,\Lambda,\eps,\|A\|_{C^{k+\eps}(H_1)}$,  $\|B\|_{C^{k+\eps-1}(H_1)}$ and $c_2$. Moreover, by considering $f_0(z)\coloneqq f(\overline{z}_0\circ z)$, we may assume $\overline{z}_0=0$.

By \autoref{lem.exp.bdry.diff},  we have
    \begin{align}\label{ineq1.bdryhar}
        \|f_1-pf_2-q{\varphi_1}-b_0{\varphi_2}-b_1{\varphi_3}\|_{L^\infty(H_r)}\leq cr^{\frac32+k+\eps}
    \end{align}
    for some $p,q\in \cP_{k+1}$ and $b_0,b_1\in\bbR$, 
    where 
    \begin{equation}\label{p.sol.exp}
        (A(0))_n\cdot\nabla_vp(0)=0,
    \end{equation}
 $b_0=0$ when $\eps<\frac12$ and $q=b_1=0$ when $\eps+k<1$. We now split the proof into several steps.

\textbf{Step 1.} We identify the equation that is satisfied by $\widetilde{f}\coloneqq f_1-pf_2-q{\varphi_1}-b_0{\varphi_2}-b_1{\varphi_3}$. 

To this end, we observe from \autoref{lem.hol12} that 
\begin{align*}
    f_2=c_0{\varphi_0}+(\mathbf{C}_0\cdot v){\varphi_0}+(\mathbf{C}_1\cdot v){\varphi_1}+g
\end{align*}
for some $c_0\in\bbR$, $\mathbf{C}_0,\mathbf{C}_1\in\bbR^n$, where $\mathbf{C}_0=\mathbf{C}_1=0$ when $k=0$, and such that
\begin{align*}
    |c_0|+|\mathbf{C}_0|+|\mathbf{C}_1|\leq c, \quad |g(z)|\leq c|z|^{\frac12+k+\eps}.
\end{align*}
Thus, we have 
\begin{align*}
    \widetilde{f}=f_1-p(0)f_2-(\nabla_vp\cdot v)c_0{\varphi_0}-(p(z)-p(0))g(z)-\sum\limits_{i=0}^3q_i{\varphi_i}
\end{align*}
for some polynomials $q_i\in\cP_3$ with $q_0(0)=q_1(0)=Dq_0(0)=Dq_1(0)=0$ and 
\begin{align*}
    \sum_{i=0}^3\|q_i\|_{L^\infty(H_1)}\leq c.
\end{align*}
The proof of this fact follows in the same way as the proof of \eqref{eq:f1f2-rewrite}.

Next, we observe that for $\widetilde{p}(z)\coloneqq p(z)-p(0)$, $\widetilde{p}_{0}(z)\coloneqq c_0+\mathbf{C}_0\cdot v$ and $\widetilde{p}_{1}(z)\coloneqq \mathbf{C}_0\cdot v$,
     \begin{equation}\label{ineq.widetildepg}
     \begin{aligned}
         &\partial_t(\widetilde{p}g)+v\cdot\nabla_x(\widetilde{p}g)-\ddiv(A\nabla_v(\widetilde{p}g))-B\cdot\nabla_v(\widetilde{p}g)\\
         &=\partial_t\widetilde{p}g+\widetilde{p}(\partial_tg+v\cdot\nabla_xg-\ddiv(A\nabla_vg)-B\cdot\nabla_vg)-2A\nabla_vg\nabla_v\widetilde{p}-\ddiv(A\nabla_v\widetilde{p})g-(B\cdot\nabla_v\widetilde{p})g\\
         &=\partial_t\widetilde{p}g+\widetilde{p}\left(F_2-\sum_{i=0}^1\ddiv((A-A(0))\nabla_v(\widetilde{p}_i{\varphi_i}))-B\cdot\nabla_v(\widetilde{p}_i{\varphi_i})+m_0\partial_{v_n}{\varphi_0}+\widetilde{p}_2\partial_{v_n,v_n}{\varphi_0}\right)\\
         &\quad-2A\nabla_vg\nabla_v\widetilde{p}-\ddiv(A\nabla_v\widetilde{p})g-(B\cdot\nabla_v\widetilde{p})g
     \end{aligned}
     \end{equation}
     for some $\widetilde{p}_2\in\cP_1$, $m_0\in\bbR$ with $\widetilde{p}_2(0)=0$, where we have used \eqref{eq.vdphi0}.
     We further estimate the right hand side as 
     \begin{align*}
         &\partial_t\widetilde{p}g+\widetilde{p}F_2+\sum_{i=0}^1\left[(\nabla_v\widetilde{p})^{\mathrm{t}}(A-A(0))\nabla_v(\widetilde{p}_i{\varphi_i})-\ddiv(\widetilde{p}(A-A(0))\nabla_v(\widetilde{p}_i{\varphi_i}))\right]\\
         &\quad-\widetilde{p}\left(\sum_{i=0}^1B\cdot\nabla_v(\widetilde{p}_i{\varphi_i}) - m_0\partial_{v_n}{\varphi_0} - \widetilde{p}_2\partial_{v_n,v_n}{\varphi_0}\right)-A\nabla_vg\nabla_v\widetilde{p}-\ddiv(A\nabla_v\widetilde{p}g )-(B\cdot\nabla_v\widetilde{p})g\\
         &\eqqcolon \widetilde{F}_g-\ddiv(\widetilde{G}_g),
     \end{align*}
     where we have used $\ddiv(Fg)=F\cdot\nabla_vg+\ddiv(F)g$ and we write
     \begin{align*}
         \widetilde{F}_g&\coloneqq \partial_t\widetilde{p}g+\widetilde{p}F_2+\sum_{i=0}^1(\nabla_v\widetilde{p})^{\mathrm{t}}(A-A(0))\nabla_v(\widetilde{p}_i{\varphi_i})-\sum_{i=0}^1\widetilde{p}B\cdot\nabla_v(\widetilde{p}_i{\varphi_i})+\widetilde{p}(m_0\partial_{v_n}{\varphi_0}+\widetilde{p}_2\partial_{v_n,v_n}{\varphi_0})\\
         &\quad-A\nabla_vg\nabla_v\widetilde{p}-(B\cdot\nabla_v\widetilde{p})g,\\
         \widetilde{G}_g&\coloneqq\sum_{i=0}^1\widetilde{p}(A-A(0))\nabla_v(\widetilde{p}_i{\varphi_i})+A\nabla_v\widetilde{p}g.
     \end{align*}
    Now using these definitions, as well as \eqref{eq.psi0} and \eqref{p.sol.exp}, we get 
\begin{align}\label{eq.widetildef}
\partial_t\widetilde{f}+v\cdot\nabla_x\widetilde{f}-\ddiv(A\nabla_v\widetilde{f})=B\cdot\nabla_v\widetilde{f}+\widetilde{F}-\ddiv(\widetilde{G})\quad\text{in }H_{2\rho_0}(z_0),
\end{align}
where we write 
\begin{align*}
    \widetilde{F}\coloneqq F_1-p(0)F_2-q_2-\widetilde{F}_g-h,\quad \widetilde{G}\coloneqq -\widetilde{G}_g-(A-A(0))\nabla_v((\nabla_vp\cdot v)c_0{\varphi_0}+q_2{\varphi_2})
\end{align*}
with $h$ given by
\begin{align*}
    h\coloneqq\partial_t\left(\sum_{i\neq2}q_i{\varphi_i}\right)+v\cdot\nabla_x\left(\sum_{i\neq2}q_i{\varphi_i}\right)-\ddiv\left(A\nabla_v\left(\sum_{i\neq2}q_i{\varphi_i}\right)\right)-B\cdot\nabla_v\left(\sum_{i\neq2}q_i{\varphi_i}\right).
\end{align*}

Note that this yields
\begin{equation}\label{est.h}
    [h]_{C^{\eps}(H_{4\rho_0}(z_0))}\leq [h]_{C^{\eps}(H_{16\rho_0})}\leq c\rho_0^{\frac12-\eps}.
\end{equation}
Indeed, the functions $q_0,q_1,q_3$ are nonzero and $A\in C^{1,\eps}$, when $k=1$. Therefore, using the fact that $q_0{\varphi_0},q_1{\varphi_1}$, and ${\varphi_2}$ have degree $\frac52$, and \eqref{rel.cylinder.bdryharnack} with $\overline{z}_0=0$, we derive \eqref{est.h}.

\textbf{Step 2. }Now, we estimate the $L^\infty$ norms and H\"older semi-norms of $\widetilde{F}_g$ and $\widetilde{G}_g$. First, we will show 
\begin{equation}\label{step2.goal.bdryhar}
\begin{aligned}
    &\rho_0^{2}\|\widetilde{F}_g\|_{L^{\infty}(H_{\rho_0}(z_0))}+\rho_0^{1+\eps}[\widetilde{G}]_{C^{\eps}(H_{\rho_0}(z_0))}\leq c\rho_0^{\frac32+\eps}\quad\text{if }k=0,\\
    &\rho_0^{2+\eps}[\widetilde{F}_g]_{C^\eps(H_{\rho_0}(z_0))}+\rho^{2+\eps}[\widetilde{G}]_{C^{1,\eps}(H_{\rho_0}(z_0))}\leq c\rho_0^{\frac52} \quad\text{if }k=1.
\end{aligned}
\end{equation}
To do so, we need the regularity of $g$ and $\nabla_vg$, so we investigate the following equation
\begin{align*}
         \partial_tg+v\cdot\nabla_xg-\ddiv(A\nabla_vg)&=B\cdot\nabla_vg+F_2-\sum_{i=0}^1\left[\ddiv((A-A(0))\nabla_v(\widetilde{p}_i{\varphi_i}))-B\cdot\nabla_v(\widetilde{p}_i{\varphi_i})\right]\\
         &\quad+m_0\partial_{v_n}{\varphi_0}+\widetilde{p}_{2}\partial_{v_n,v_n}{\varphi_0},
     \end{align*}
     which follows from the third line of \eqref{ineq.widetildepg}. By \autoref{lem.schcomb}, we have
     \begin{equation}\label{gradg.est}
     \begin{aligned}
         &\|\nabla_vg\|_{L^\infty(H_{\rho_0}(z_0))}+\rho_0^{\eps}[\nabla_vg]_{C^{\eps}(H_{\rho_0}(z_0))}\\
         &\leq \frac{c}{\rho_0}\left(\|g\|_{L^\infty(H_{2\rho_0}(z_0))}+\rho_0^{1+\eps}\sum_{i=0}^1[(A-A(0))\nabla_v(\widetilde{p}_i{\varphi_i})]_{C^\eps(H_{2\rho_0}(z_0))}\right)+\rho_0(\|F_2\|_{L^\infty(H_{2\rho_0}(z_0))}+\rho_0^{-\frac12})\\
         &\leq c\rho_0^{\min\{-\frac12+k+\eps,\frac12\}},
     \end{aligned}
     \end{equation}
     where we have used the fact that $H_{2\rho_0}(z_0)\subset H_{16\rho_0}$ by \eqref{rel.cylinder.bdryharnack} with $\overline{z}_0=0$, \eqref{psi0.dd} and 
     \begin{equation}\label{est.ggraphi_0A}
     \begin{aligned}
         &\|g\|_{L^\infty(H_{2\rho_0}(z_0))}\leq \|g\|_{L^\infty(H_{16\rho_0})}\leq c\rho_0^{k+\frac12+\eps},\\
         &\|A-A(0)\|_{L^\infty(H_{2\rho_0}(z_0))}\leq \|A-A(0)\|_{L^\infty(H_{16\rho_0})}\leq c\rho_0^{\min\{k+\eps,1\}}.
     \end{aligned}
     \end{equation}
     Similarly, we have 
     \begin{align}\label{cepsg.est}
         [g]_{C^\eps(H_{\rho_0}(z_0))}\leq c\rho_0^{\frac12}.
     \end{align}
Now using \eqref{gradg.est}, \eqref{est.ggraphi_0A}, and \eqref{cepsg.est} together with the fact that 
\begin{equation*}
    \|\widetilde{p}\|_{C^\delta(H_{2\rho_0}(z_0))}= \|{p}(z)-p(0)\|_{C^\delta(H_{2\rho_0}(z_0))}\leq \|{p}(z)-p(0)\|_{C^\delta(H_{16\rho_0})}\leq c\rho_0^{1-\delta}\quad\text{for any }\delta\in[0,1],
\end{equation*}
we derive \eqref{step2.goal.bdryhar}. In particular, for $C^{1,\eps}$ estimates of the function $\widetilde{G}$, we have used \autoref{lem.holspace} and the product rule for the H\"older space.

\textbf{Step 3.} Now we are ready to apply \autoref{lem.schcomb} to the solution of \eqref{eq.widetildef}. To this end, we divide the proof into four cases according to the range of $\eps$ and the value of $k$.
\begin{itemize}
    \item Let $\eps\in(0,\frac12)$ and $k=0$. Then we have $q=b_0=b_1=0$. Thus, by \autoref{lem.schcomb} together with \eqref{ineq1.bdryhar} and \eqref{step2.goal.bdryhar}, we have 
    \begin{align*}
        [\widetilde{f}]_{C^{1+\eps}(H_{\rho_0/2}(z_0))}\leq \frac{c}{\rho_0^{1+\eps}}\left(\|\widetilde{f}\|_{L^\infty(H_{\rho_0}(z_0))}+\rho_0^2\|\widetilde{F}\|_{L^\infty(H_{\rho_0(z_0))}}+\rho_0^{1+\eps}[\widetilde{G}]_{C^{\eps}(H_{\rho_0}(z_0))}\right)\leq c\rho_0^{\frac12}.
    \end{align*}
    Moreover, by \eqref{rel.cylinder.bdryharnack} with $\overline{z}_0=0$ and \eqref{ineq1.bdryhar}, we have $\|\widetilde{f}\|_{L^\infty(H_{\rho_0}(z_0))}\leq \rho_0^{\frac32+\eps}$. Thus, by \autoref{lem.interpolhol} we get for any $l\in\{0,1\}$ and $\delta\in(0,1)$ with $l+\delta\leq 1+\eps$,
    \begin{align}\label{interpo.1}
        [\widetilde{f}]_{C^{l+\delta}(H_{\rho_0/2}(z_0))}\leq c\rho_0^{\frac32+\eps-(l+\delta)}.
    \end{align}
\item Let $\eps\in(\frac12,1)$ and $k=0$. Similarly, we use \autoref{lem.schcomb} along with $\delta=\frac32$ and \eqref{ineq1.bdryhar}, \eqref{step2.goal.bdryhar}, to see that
 \begin{align*}
        [\widetilde{f}]_{C^{\frac32}(H_{\rho_0/2}(z_0))}&\leq \frac{c}{\rho_0^{1+\frac12}}\left(\|\widetilde{f}\|_{L^\infty(H_{\rho_0}(z_0))}+\rho_0^2\|\widetilde{F}\|_{L^\infty(H_{\rho_0(z_0))}}+\rho_0^{1+\frac12}[\widetilde{G}]_{C^{\frac12}(H_{\rho_0}(z_0))}\right)\\
        &\leq \frac{c}{\rho_0^{1+\frac12}}\left(\rho_0^{\frac32+\eps}+\rho_0^{2}+\rho_0^{1+\eps}[\widetilde{G}]_{C^{\eps}(H_{\rho_0}(z_0))}\right),
    \end{align*}
    where we have used the fact that 
    \begin{align}\label{ineq.gra.psi}
    [(A-A(0))\nabla_v{\varphi_2}]_{C^{\eps}(H_{\rho_0}(z_0))}\leq c\rho_0,\quad
        [\widetilde{G}]_{C^{\frac12}(H_{\rho_0}(z_0))}\leq c\rho_0^{\eps-\frac12}[\widetilde{G}]_{C^{\eps}(H_{\rho_0}(z_0))}.
    \end{align}
    In particular, for the first inequality given in \eqref{ineq.gra.psi}, we have used \eqref{psi0.dd}.
    Therefore, we have $[\widetilde{f}]_{C^{\frac32}(H_{\rho_0}(z_0))}\leq c\rho_0^{\frac12}$. As in the previous case, we also derive \eqref{interpo.1} with $\eps=\frac12$ and $l+\delta\leq \frac32$.
    \item Let $\eps\in(\frac12,1)$ and assume $F_i\equiv0$. In that case we have $q_2=0$. Hence, as in the proof of \eqref{interpo.1}, we also deduce
    \begin{align*}
        [\widetilde{f}]_{C^{1+\eps}(H_{\rho_0/2}(z_0))}\leq \frac{c}{\rho_0^{1+\eps}}\left(\|\widetilde{f}\|_{L^\infty(H_{\rho_0}(z_0))}+\rho_0^2\|\widetilde{F}\|_{L^\infty(H_{\rho_0(z_0))}}+\rho_0^{1+\eps}[\widetilde{G}]_{C^{\eps}(H_{\rho_0}(z_0))}\right)\leq c\rho_0^{\frac12}
    \end{align*}
    and, whenever $l+\delta<2$,
    \begin{align}\label{ineq.inter2}
        [\widetilde{f}]_{C^{l+\delta}(H_{\rho_0/2}(z_0))}\leq c\rho_0^{\frac32+\eps-(l+\delta)}.
    \end{align}

    \item Let $k=1$ and $\eps\in(0,\frac12)$. Since $q_2=0$, we deduce from \autoref{lem.schcomb}, \eqref{ineq1.bdryhar}, and \eqref{step2.goal.bdryhar} that
    \begin{align*}
        [\widetilde{f}]_{C^{2+\eps}(H_{\rho_0/2}(z_0))}\leq \frac{c}{\rho_0^{2+\eps}}(\|\widetilde{f}\|_{L^\infty(H_{\rho_0}(z_0))}+\rho_0^{2+\eps}[\widetilde{F}]_{C^\eps(H_{\rho_0}(z_0))}+\rho_0^{2+\eps}[\widetilde{G}]_{C^{1+\eps}(H_{\rho_0}(z_0))})\leq c\rho_0^{\frac12-\eps}.
    \end{align*}
    Moreover, as $\|\widetilde{f}\|_{L^\infty(H_{\rho_0/2}(z_0))}\leq c\rho_0^{\frac52+\eps}$ by \eqref{ineq1.bdryhar}, using \autoref{lem.interpolhol}, we deduce that  for any $l+\delta\leq 2+\eps$,
    \begin{align}\label{interpol2}
        [\widetilde{f}]_{C^{l+\delta}(H_{\rho_0}(z_0))}\leq c\rho_0^{\frac52-(l+\delta)}.
    \end{align}
    \end{itemize}
  \textbf{Step 4.} We are ready to prove the main estimates. 
First, note that 
\begin{align}\label{ineq6.bdryhol}
    [f_2]_{C^{l,\delta}(H_{\rho_0}(z_0))}\leq c\rho_0^{-(l+\delta)}
\end{align}
  for any $l\in\N\cup\{0\}$ and $\delta\in[0,1]$ with $l+\delta\leq 1+\eps$ when $k=0$, and $l+\delta<2+\eps$ when $k=1$ by \autoref{lem.schcomb}.
  
  Thus, using this together with \autoref{lem.holspace}, the product rule and \eqref{c_2.assf.f2}, we have 
\begin{align}\label{ineq7.bdryhol}
    [1/f_2]_{C^{l,\delta}(H_{\rho_0}(z_0))}\leq c\rho_0^{-(l+\delta)-\frac12},
\end{align}
by \autoref{lem.hol12} and due to the lower bound \eqref{lowerbdd.f2}. Now we write $r\coloneqq \frac{\mathrm{dist}(z_0,\gamma_0)}{16} =\frac{\rho_0}{2}$.

For $k=0$, we observe 
    \begin{align*}
        [f_1/f_2]_{C^{1,\min\{\eps,\frac32\}}(H_{r}(z_0))}=[(\widetilde{f}+q{\varphi_1}+b_0{\varphi_2}+b_1{\varphi_3})/f_2]_{C^{1,\min\{\eps,\frac32\}}(H_{r}(z_0))}.
    \end{align*}
    Using \autoref{lem.holspace}, \autoref{lem.grabdd.hol}, the product rule, \eqref{interpo.1}, \eqref{ineq6.bdryhol}, and \eqref{ineq7.bdryhol}, we get
    \begin{equation}\label{ineq8.bdryhol}
    \begin{aligned}
        [\widetilde{f}/f_2]_{C^{1,\min\{\eps,\frac32\}}(H_{r}(z_0))}&\leq c [\nabla_v(\widetilde{f}/f_2)]_{C^{\min\{\eps,\frac32\}}(H_{r}(z_0))}+c [\widetilde{f}/f_2]_{C_{\mathcal{T}}^{{1+\min\{\eps,\frac32\}}}(H_{r}(z_0))}\\
        &\quad+c[\widetilde{f}/f_2]_{C_{x}^{{1+\min\{\eps,\frac32\}}}(H_{r}(z_0))}
        \leq c,
    \end{aligned}
    \end{equation}
    where we have used $[f]_{{C_{\mathcal{T}}^{{1+\min\{\eps,\frac32\}}}(H_{r}(z_0))}} + [f]_{C_{x}^{{1+\min\{\eps,\frac32\}}}(H_{r}(z_0))}\leq c[f]_{C^{1+\min\{\eps,\frac32\}}(H_{r}(z_0))}$ by the definition of the kinetic H\"older space given in \eqref{defn.hol.xt}. In addition, by \eqref{property.phi_0}, \eqref{psi0.dd}, and \eqref{psi0.dd}, we have 
    \begin{align*}
        [(q{\varphi_1}+b_0{\varphi_2}+b_1{\varphi_3})/f_2]_{C^{1,\min\{\eps,\frac32\}}(H_{r}(z_0))}\leq c.
    \end{align*}
    Altogether, this proves \eqref{first.res.bdryhol}. 
    
    For \eqref{second.res.bdryhol}, we assume $F_i\equiv 0$, $\eps\in(\frac12,1)$, and $k=0$. Then we have $q\in\cP_2$ with $q(0)=Dq(0)=0$. Thus we have
    \begin{align*}
        [(q{\varphi_1}+b_1{\varphi_3})/f_2]_{C^{1,\eps}(H_{r}(z_0))}\leq c.
    \end{align*}
    Moreover, as in \eqref{ineq8.bdryhol} together with \eqref{ineq.inter2}, we have $[\widetilde{f}/f_2]_{C^{1,\eps}(H_r(z_0))}\leq c$. Thus, we get \eqref{second.res.bdryhol} when $\eps\in(\frac12,1)$.

    Now we assume $F_i\equiv 0$, $k=0$, and $\eps\in(0,\frac12)$. We will prove $[f_1/f_2]_{C^{2,\eps}(H_r(z_0))}\leq cr^{-\eps}$, which gives $[f_1/f_2]_{C^{1,1}(H_r(z_0))}\leq c$ using \autoref{lem.interpolhol} together with $\|f_1/f_2\|_{L^\infty(H_r(z_0))}\leq cr^{2}$. Note that the $L^\infty$-estimate follows from \eqref{ineq1.bdryhar}, \eqref{c_2.assf.f2}, and the fact that $q{\varphi_1}$ and ${\varphi_3}$ have homogeneity $\frac52$, by \eqref{psi0.dd} and the fact that $q(0)=Dq(0)=0$.

    As in \eqref{ineq8.bdryhol} together with \eqref{interpol2} instead of \eqref{interpo.1}, we derive $[\widetilde{f}/f_2]_{C^{2,\eps}(H_r(z_0))}\leq cr^{-\eps}$. Moreover using the homogeneity of $q{\varphi_1}$ and ${\varphi_3}$, we deduce $[\widetilde{f}/f_2]_{C^{2,\eps}(H_r(z_0))}\leq cr^{-\eps}$. Thus, as in the above argument, we derive the desired estimate \eqref{second.res.bdryhol} when $k=1$, which completes the proof.
\end{proof}

Now we combine \autoref{lem.hop.gen}, \autoref{lem.bdryhar} and the covering argument given in \cite[Lemma 2.9]{RoWe25} to prove the main result of this article in the half-space.

\begin{proof}[Proof of \autoref{thm.mainhar}]
First, we claim that the Hopf lemma \autoref{lem.hop.gen} remains true in larger domains $H_{\frac{\rho}{8}}\setminus\mathcal{R}^M_-$ instead of $H_{\frac{\rho}{8}}\setminus\mathcal{R}_-$, where $\mathcal{R}_-^M\coloneqq \{Mx_n\leq v_n^3\}$ for any $M\geq1$. Indeed, we get \eqref{using.r1-} with $\mathcal{R}_-$ replaced by $\mathcal{R}^M_-$, as $\frac1c|z|^{\frac12}\leq {\varphi_0}(z)\leq c |z|^{\frac12}$ in the complement of $\mathcal{R}^M_-$ for some constant $c=c(M)$, and then we just follow the lines of the remaining proof given in \autoref{lem.hop.gen}.
 Therefore, we have in particular,
    \begin{align*}
        \frac{c_0}{c}\leq \inf_{H_{\frac{\rho}{8}}\setminus\mathcal{R}^2_-}\frac{f}{{\varphi_0}},
    \end{align*}
    where $c=c(n,\Lambda,\eps,\|A\|_{C^{k+\eps}(H_2)},\|B\|_{C^{k+\eps-1}(H_2)})$. By applying a re-scaled version of \autoref{lem.bdryhar}, for any $z_0\in H_{\frac{\rho}8}\setminus \mathcal{R}^2_-$ and $r\coloneqq \frac{\mathrm{dist}(z_0,\gamma_0)}{16}$, we deduce 
\begin{align*}
    [f_1/f_2]_{C^{1,\min\{\eps,\frac12\}}(H_r(z_0))}\leq c\sum_{i=1}^{2}\left(\|f_i\|_{L^\infty(H_2)}+[F_i]_{C^{\eps-\frac12}(H_2)}\right)\leq c
\end{align*}
for some constant $c=c(n,\Lambda,\eps,c_2,\|A\|_{C^\eps(H_2)},\|B\|_{L^\infty(H_2)})$, as $\|f_i\|_{L^\infty(H_2)}+\|F_i\|_{C^{\eps-\frac12}(H_2)}\leq 1$. Next, by the covering argument given in \cite[Lemma 2.9]{RoWe25}, we derive 
\begin{align*}
    [f_1/f_2]_{C^{1,\min\{\eps,\frac12\}}(H_{\frac{\rho}8}\setminus \mathcal{R}_-)}\leq c.
\end{align*}
When $F_i\equiv0$, the arguments are exactly the same so that we also obtain \eqref{ineq.main2.mainhar}. 
\end{proof}

\subsection{Boundary Harnack in non-flat domains}

We conclude this subsection by providing the boundary Harnack principle in general domains. Since the same flattening argument is already explained in the proof of \autoref{lem.hopf.curve}, we do not give all details here. 
\begin{theorem}\label{thm.mainhar.gen}
   Let $\Omega$ be a bounded domain of class $C^{\frac{k+4+\eps}2}$ for some $k\in\{0,1\}$, $\eps\in(0,1)\setminus \{\frac12\}$ with $k+\eps<\frac32$ and $z_0\in \gamma_0$. Let $A\in C^{k+\eps}(\mathbf{H}_2(z_0))$, $B\in C^{k+\eps-1}(\mathbf{H}_2(z_0))$, $F_i\in C^{k+\eps-\frac12}(\mathbf{H}_2(z_0))$ for each $i\in\{1,2\}$, where $\mathbf{H}_2(z_0)\coloneqq H_2(z_0)\cap \{x\in\Omega\}$. Let $f_i$ be weak solutions to 
    \begin{equation*}
\left\{
\begin{alignedat}{3}
\partial_tf_i+v\cdot\nabla_{x}f_i-\ddiv(A\nabla_vf_i)&=B\cdot\nabla_vf_i+F_i&&\qquad \mbox{in  $\mathbf{H}_2(z_0)$}, \\
f_i&=0&&\qquad  \mbox{in $\gamma_-\cap \mathbf{H}_2(z_0)$}.
\end{alignedat} \right.
\end{equation*} 
with $f_2,F_2\geq0$. Suppose $\|f_i\|_{L^\infty(\mathbf{H}_2(z_0))},\|F_i\|_{C^{\eps-\frac12}(\mathbf{H}_2(z_0))}\leq 1$. Then, there exist a small constant $\rho_1=\rho_1(\Lambda,\Omega)$ and a large constant $\mathcal{M}=\mathcal{M}(n,\Lambda,\Omega)$ such that if $f_2(z_1)=c_0>0$ for some $c_0 > 0$ with $z_1\coloneqq z_0\circ (-1,-\rho_1^3n_{x_0},0)$ and 
\begin{align*}
    \|F_2\|_{L^\infty(\mathbf{H}_2(z_0))}\leq \frac{c_0}{\mathcal{M}},
\end{align*}
then
\begin{align*}
    \left[\frac{f_1}{f_2}\right]_{C^{\min\{1+\eps,3/2\}}(\mathbf{H}_{\frac{\rho_1}8}(z_0)\setminus\mathcal{R}_-)}\leq c,
\end{align*}
where $c=c(n,\Lambda,\eps,c_0,\|A\|_{C^{\eps}(\mathbf{H}_2(z_0))},\|B\|_{C^{\eps-1}(\mathbf{H}_2(z_0))},|z_0|,\Omega)$ and $\mathcal{R}_-\coloneqq \{d_\Omega(x)\leq (n_x\cdot v)^3\}$. 

Moreover, when $F_i\equiv 0$, we have 
\begin{align*}
     \left[\frac{f_1}{f_2}\right]_{C^{1,\max\{\eps,k\}}(\mathbf{H}_{\frac{\rho_1}8}(z_0)\setminus\mathcal{R}_-)}\leq c
\end{align*}
for some constant $c=c(n,\Lambda,\eps,c_0,\|A\|_{C^{k+\eps}(\mathbf{H}_2(z_0))},\|B\|_{C^{k+\eps-1}(\mathbf{H}_2(z_0))},|z_0|,\Omega)$.
\end{theorem}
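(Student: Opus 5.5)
The plan is to reduce \autoref{thm.mainhar.gen} to the half-space result \autoref{thm.mainhar} by the same flattening procedure used in the proof of \autoref{lem.hopf.curve}.

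\textbf{Flattening.} After a kinetic translation $z\mapsto z_0\circ z$ and a rotation we may assume $z_0=0$ and $n_{x_0}=-e_n$. By \cite[Section 8]{KiWe26} there is a diffeomorphism $\varphi^{-1}$ of class $C^{\frac{k+4+\eps}{2}}$ such that $\Psi(t,y,w)=(t,\varphi^{-1}(y),\nabla\varphi^{-1}(y)w)$ maps $I_2\times(\Omega\cap B_{R_0^3})\times B_{R_0}$ into a half-space cylinder containing $I_2\times \mathrm{H}_{R_1}$. The functions $\widetilde f_i=f_i\circ\Psi^{-1}$ then solve equations of the same form with transformed coefficients $\widetilde A,\widetilde B,\widetilde F_i$ as in \cite[(8.3)]{KiWe26}, and $\gamma_-$ is mapped to $\{x_n=0,v_n>0\}$.

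\textbf{Regularity of the transformed data.} The key point is that the transformed data keep the regularity required by \autoref{thm.mainhar}. The Jacobian entering $\widetilde A$ involves $\nabla\varphi^{-1}$. In the transport part, $\widetilde B$ acquires terms like $w^{\mathrm t}\nabla^2\varphi^{-1}(y)w$. With $\Omega\in C^{\frac{k+4+\eps}{2}}$, we have $\nabla^2\varphi^{-1}\in C^{\frac{k+\eps}{2}}$ in $x$, which is kinetic order $\frac{3(k+\eps)}{2}\ge k+\eps$. By \cite[Lemma 2.17]{RoWe25} this should give $\widetilde A\in C^{k+\eps}$, $\widetilde B\in C^{k+\eps-1}$ and $\widetilde F_i\in C^{k+\eps-\frac12}$, with norms controlled by $\Omega$. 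I expect this bookkeeping of the kinetic Hölder exponents under the non-translation-invariant change of variables to be the main obstacle. This is precisely why the exponent $\frac{k+4+\eps}{2}$ is imposed, and I would verify it by computing each term of \cite[(8.3)]{KiWe26} separately. The smallness $\|\widetilde F_2\|_{L^\infty}\le c_0/\mathcal M$ is preserved up to a constant depending on $\Omega$.

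\textbf{Normalization.} First rescale $\widetilde f_i(t,ax,av)$ with $a=\sqrt{(\widetilde A(0))_{n,n}}$ so that $(\widetilde A(0))_{n,n}=1$; this makes $\rho_1$ depend on $\Lambda$ as well. Then perform a kinetic scaling $S_{R_1}$ to work in $H_2$. Choose $\rho_1$ so that the point $z_1$ is mapped to $(-1,0,(\rho R_1)^3,0)$, with $\rho$ from \autoref{thm.mainhar}. If the time and base points do not match exactly, transfer the positivity $\widetilde f_2\ge c_0/c$ to the required point by the Harnack chain \eqref{harnack.ineq10f} with $v_1=0$, as in the proof of \autoref{lem.hopf.curve}. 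This requires $\mathcal M$ large, depending on $\Omega$.

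\textbf{Applying the half-space result and transferring back.} Apply \autoref{thm.mainhar} to $\widetilde f_1,\widetilde f_2$; this bounds $[\widetilde f_1/\widetilde f_2]$ in $C^{\min\{1+\eps,3/2\}}$, or in $C^{1,\max\{\eps,k\}}$ when $F_i\equiv0$, on $H_{\rho R_1/8}\setminus\{x_n\le v_n^3\}$. Pull back via $f_1/f_2=(\widetilde f_1/\widetilde f_2)\circ\Psi$. Since $\Psi$ is smooth enough, composing with it preserves kinetic Hölder seminorms of these orders, again by \cite[Lemma 2.17]{RoWe25}. Finally, $\Psi^{-1}(\{x_n\le v_n^3\})\subset\mathcal R_-$ by \cite[(8.4)]{KiWe26} and $\mathbf H_{\rho_1/8}(z_0)\subset\Psi^{-1}(H_{\rho R_1/8})$ after possibly shrinking $\rho_1$. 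Together these give both claimed estimates, with constants depending additionally on $\Omega$ and $|z_0|$; the latter enters through the kinetic translation.
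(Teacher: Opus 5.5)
Your proposal is correct and follows the paper's proof essentially step by step. It flattens by the map $\Psi$ of \cite[Section 8]{KiWe26} and obtains the regularity of $\widetilde A,\widetilde B,\widetilde F_i$ from \cite[Lemma 2.17]{RoWe25} using $\partial\Omega\in C^{\frac{k+4+\eps}{2}}$, normalizes $(\widetilde A(0))_{n,n}=1$ (which is why $\rho_1$ depends on $\Lambda$), relocates the positivity point by a Harnack chain, applies \autoref{thm.mainhar} after rescaling, and pulls back using $\Psi^{-1}(\{x_n\le v_n^3\})\subset\mathcal R_-$.

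One point that both you and the paper pass over quickly: for $v_0\neq0$, a Galilean shift $z\mapsto z_0\circ z$ turns a curved $\Omega$ into the moving domain $\{x: x_0+x+tv_0\in\Omega\}$. The reduction to $z_0=0$ should therefore be done by translating only in $(t,x)$, flattening at $x_0$, and then shifting by the flattened velocity $\nabla\varphi^{-1}(x_0)v_0$. That velocity is tangential, so $\{x_n>0\}$ and the incoming set are preserved, at the cost of constants depending on $|v_0|$.
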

\begin{proof}
   We may assume $z_0=0$ as $z_0\in\gamma_0$. As in the proof of \autoref{lem.hopf.curve}, $\widetilde{f}_i\coloneqq f_i\circ \Psi^{-1}$ is a weak solution to 
    \begin{equation*}
\left\{
\begin{alignedat}{3}
\partial_t\widetilde{f}_i+v\cdot\nabla_{x}\widetilde{f}_i-\ddiv(\widetilde{A}\nabla_v\widetilde{f}_i)&=\widetilde{B}\cdot\nabla_v\widetilde{f}_i+\widetilde{F}_i&&\qquad \mbox{in  $I_2\times \mathrm{H}_{2R_1}$}, \\
\widetilde{f}_i&=0&&\qquad  \mbox{in $\{x_n=0\}\times \{v_n>0\}$},
\end{alignedat} \right.
\end{equation*} 
where $\widetilde{A}$, $\widetilde{B}$ and $\widetilde{F}$ are defined as in \cite[(8.3)]{KiWe26}. Similarly, as in the proof of \autoref{lem.hopf.curve}, we may assume $(\widetilde{A}(0))_{n,n}=1$ and we choose $\rho_1=\rho_1(\Omega)$ such that $\Psi(-1,x_0-\rho_1^3n_{x_0},0)=(-1,0,(\rho R_1)^3,0)\in \bbR\times \bbR^{n-1}\times \bbR\times \bbR^n$, where $\rho$ is the universal constant determined in \autoref{thm.mainhar} to have $f_2(-1,0,(\rho R_1)^3,0)=c_0$. Next, selecting a small constant $\mathcal{M}=\mathcal{M}(n,\Lambda,\Omega)$, we have $\widetilde{f}_2(-(R_1)^3,0,(\rho R_1)^3,0)=\frac{c_0}{c}$ for some constant $c=c(n,\Lambda,\Omega)$. From \cite[Lemma 2.17]{RoWe25}, $\widetilde{A}\in C^{k+\eps}, \widetilde{B}\in C^{k+\eps-1}$, and $\widetilde{F}\in C^{k+\eps-\frac12}$ as $\partial\Omega\in C^{\frac{k+4+\eps}2}$. Thus, by applying a rescaled version of \autoref{thm.mainhar}, we deduce H\"older estimates of $\widetilde{f}_1/\widetilde{f}_2$. Finally, scaling back and using the fact that $\Psi^{-1}(\{x_n\leq (v_n)^3\})\subset \mathcal{R}_-$, we obtain the desired result. Lastly, since we have assumed $(\widetilde{A}(0))_{n,n}=1$ by considering $\widetilde{f}_a(z)(=\widetilde{f}(t,ax,av))$ as in the proof of \autoref{lem.hopf.curve}, the constant $\rho_1$ eventually depends on $\Omega,\Lambda$. This completes the proof.
\end{proof}

\begin{proof}[Proof of \autoref{thm.mainhar.intro}]
   \autoref{thm.mainhar.intro} is a special case of \autoref{thm.mainhar.gen}.
\end{proof}

\subsection{Sharpness of our main result}

We discuss the sharpness of our main result. First, we give a remark on the possibility to enlarge the domain in which the boundary Harnack estimate is valid.

\begin{remark}\label{rmk.bdryhar}
 We note that our result also holds for a larger domain, whenever the domain is away from $\gamma_-$. More precisely, we can obtain the H\"older estimates of the quotient in $H_{\rho/8}\setminus \mathcal{R}_-^M$ for any $M>1$, where 
    \begin{align*}
        \mathcal{R}_-^M=\{Mx_{n}\leq v_n^3\}.
    \end{align*}
    To do this, as we pointed out in the proof of \autoref{thm.mainhar}, we can derive
    \begin{align*}
        \frac{c_0}{c}\leq \inf_{H_{\frac{\rho}8}\setminus \mathcal{R}_-^M}\frac{f}{{\varphi_0}}
    \end{align*}
    for any $M\geq 1$, where $c=c(n,\Lambda,\eps,\|A\|_{C^{k+\eps}(H_2)},\|B\|_{C^{k+\eps-1}(H_2)},M)$. Thus, by the same argument as in the proof of \autoref{lem.bdryhar}, we have \eqref{first.res.bdryhol} and \eqref{second.res.bdryhol} whenever $z_0\in H_{\rho/8}\setminus \mathcal{R}^M_-$. Therefore, we can derive the H\"older estimates given in \autoref{thm.mainhar} with $\mathcal{R}_-$ replaced by $\mathcal{R}_-^M$. In that case, the constant $c$ depends on  $n,\Lambda,\eps,c_0, \|A\|_{C^{k+\eps}(H_2)},\|B\|_{C^{k+\eps-1}(H_2)}$, and on $M$.
\end{remark}

Now, we  discuss the optimality of the regularity exponents in \autoref{thm.mainhar.gen}.

\begin{example}
\label{ex:sharpness}
First note that $\phi_0=\phi_0(x_n,v_n)$ solves \eqref{mainhar:eq} with $A=I$, $B=0$, and $F_2=0$. Moreover, we have $\phi_0(z_0)=c_0 > 0$ for some universal constant $c_0$.

Now we construct suitable solutions to verify the sharpness of the result given in \autoref{thm.mainhar.gen}.

\begin{itemize}
    \item Let $F_1\equiv 1$. We use the function $\psi_0(x_n,v_n)$ that solves the equation \eqref{mainhar:eq} with $A=I$, $B=0$, and $F_1=1$ (see \eqref{eq.psi0}). However, by \cite[Lemma C.1, (C.5)]{KiWe26} and  \cite[Lemma C.6]{KiWe26}, we deduce $\phi_0(0,x_n)=c_0x_n^{\frac16}$ and $\psi_0(0,x_n)=c_1x_n^{\frac23}$, which implies 
    $\psi_0/\phi_0\notin C^{\frac32+\eps}(H_1\setminus \mathcal{R}_-)$ for any $\eps>0$. 
\item Let $F_1\equiv 0$. We observe from \eqref{eq.dphi1} that $f_1\coloneqq \frac{v_1^2}{2m_0}\phi_0(x_n,v_n)-\partial_{v_n}\phi_1(x_n,v_n)$ solves \eqref{mainhar:eq} with $A=I$, $B=0$, and $F_1=0$. Moreover, from \cite[Lemma C.1 and (C.1)]{KiWe26}, we have 
    \begin{align*}
        \frac{f_1}{\phi_0}=\frac{v_1^2}{2m_0}+c_2v_n^{2}\frac{U(-\frac16,\frac53,-\frac{v_n^3}{9x_n})}{U(-\frac16,\frac23,-\frac{v_n^3}{9x_n})}
    \end{align*}
    for some constant $c_2\neq0$ when $v_n\leq0$. For any $c>0$, we can find sequences $(a_i)_i$, $(b_i)_i$ such that $-\frac{(a_{i})^3}{b_{i}}\to c$ with  $a_{i},b_{i}\to0$ as $i\to\infty$. In particular, by choosing $|a_{i}|^3>b_{i}$ and $z_i=(t_i,x_i',x_{i,n},v_i',v_{i,n})\coloneqq (0,0,b_i,0,a_i)$, we have 
    \begin{align*}
        \lim_{i\to\infty}\frac{f_1(z_i)}{\phi_0(z_i)}\frac{1}{|v_n|^2}=c_2\frac{U(-\frac16,\frac53,c)}{U(-\frac16,\frac23,c)},
    \end{align*}
    which implies that $\lim\limits_{z\to0}\partial_{v_n,v_n}(f_1/\phi_0)(z)$ is not unique. Thus, $f_1/\phi_0\notin C^{2}(H_1\setminus \mathcal{R}_-)$.
\end{itemize}

\end{example}

\subsection{Behavior away from the grazing set}

Finally, we discuss the boundary behavior of $f_1/f_2$ away from $\gamma_0$.
The following proposition shows that $f_1/f_2$ is smooth near $\gamma_+$ but away from $\gamma_0$. 

\begin{proposition}\label{lem.bdry+}
    Let $\Omega$ be a bounded domain of class $C^{\frac{k+4+\eps}2}$ for some $k\in \N\cup\{0\}$ and $\eps\in(0,1)$, $z_0\in\gamma_+$ and $\mathbf{H}_2(z_0)\cap\gamma_0=\emptyset$. Let $A\in C^{k-1,\eps}(H_2(z_0))$, $B\in C^{k-2,\eps}(H_2(z_0))$ and $F\in C^{k-2,\eps}(H_2(z_0))$. Let $f_i$ be a weak solution to 
     \begin{equation*}
\partial_tf_i+v\cdot\nabla_{x}f_i-\ddiv(A\nabla_vf_i)=B\cdot\nabla_vf_i+F_i\quad \mbox{in  $\mathbf{H}_2(z_0)$}
\end{equation*} 
with $f_2,F_2\geq0$. Suppose $\|f_i\|_{L^\infty(\mathbf{H}_2(z_0))},\|F_i\|_{C^{k-2,\eps}(\mathbf{H}_2(z_0))}\leq 1$. Then, there exist a small constant $\rho_1=\rho_1(\Omega)$ and a large constant $\mathcal{M}=\mathcal{M}(n,\Lambda,\Omega)$ such that if $f_2(z_1)=c_0>0$ for some $c_0 > 0$ with $z_1\coloneqq z_0\circ (-1,0,0)$ and 
\begin{align}\label{bdry+:ass.F2}
    \|F_2\|_{L^\infty(\mathbf{H}_2(z_0))}\leq \frac{c_0}{\mathcal{M}},
\end{align}
then
\begin{align*}
    \left[\frac{f_1}{f_2}\right]_{C^{k,\eps}(\mathbf{H}_{\rho_1}(z_0))}\leq c,
\end{align*}
where $c=c(n,\Lambda,\eps,c_0,\|A\|_{C^{k-1,\eps}(\mathbf{H}_2(z_0))},\|B\|_{C^{k-2,\eps}(\mathbf{H}_2(z_0))},|z_0|,\Omega)$.
\end{proposition}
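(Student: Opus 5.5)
The argument has three parts: flatten the boundary, get a uniform lower bound $f_2\ge c_0/c$ on $\mathbf{H}_{\rho_1}(z_0)$, and apply Schauder estimates up to $\gamma_+$ together with the quotient rule.

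\textbf{Reduction.} First I will flatten the boundary with the diffeomorphism $\Psi$ from the proof of \autoref{lem.hopf.curve}, and translate so that $z_0=0$ with $x_{0,n}=0$ and $v_{0,n}<0$. By \cite[Lemma 2.17]{RoWe25} the transformed coefficients keep the regularity $\widetilde A\in C^{k-1,\eps}$, $\widetilde B\in C^{k-2,\eps}$, $\widetilde F_i\in C^{k-2,\eps}$, since $\partial\Omega\in C^{\frac{k+4+\eps}{2}}$. The hypothesis $\mathbf{H}_2(z_0)\cap\gamma_0=\emptyset$ then says $|v_n|\ge c>0$ throughout the relevant cylinder near $\{x_n=0\}$. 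Hence the whole cylinder lies in the region where \autoref{lem.schcomb} (that is, \autoref{lem.intsch} combined with \autoref{lem.bdrysch}) applies. No boundary condition is imposed there, so $f_i$ does not vanish. Applying \autoref{lem.schcomb} at unit scale gives
\[
[f_i]_{C^{k,\eps}(\mathbf{H}_{2\rho_1}(z_0))}\le c\big(\|f_i\|_{L^\infty}+\|F_i\|_{C^{k-2,\eps}}\big)\le c
\]
for a small $\rho_1=\rho_1(\Omega)$. For $k\ge3$ I would iterate this, or use the higher order version of \cite[Lemma 4.1]{RoWe25} mentioned in the proof of \autoref{lem.bdrysch}.

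\textbf{Positivity of $f_2$.} The main step is a uniform lower bound $f_2\ge c_0/c$ on $\mathbf{H}_{\rho_1}(z_0)$, starting from $f_2(z_1)=c_0$ at $z_1=z_0\circ(-1,0,0)$. I will argue as in the proof of \autoref{lem.hargamma+}: extend $f_2$ across $\gamma_+$ to a solution $f_0$ on a full cylinder in $x_n$, with velocities restricted to $v_n<-c$. This extension is possible because $\{x_n=0\}\cap\partial_{\mathrm{kin}}$ of the restricted domain is empty, and $f_0=f_2$ for $x_n>0$ by uniqueness. On this extended domain the point $z_1$ has no boundary nearby at a scale comparable to $1$. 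I then run the Harnack chain of \autoref{lem.har} in the form of \autoref{lem.exp.posright} and \eqref{harnack.ineq10f}, with \autoref{lem.auxihar} to control the kinetic shifts. This propagates positivity from $z_1$ forward in time by roughly one unit, and over a small set in $(x,v)$ around $z_0$, which fixes $\rho_1$. Each Harnack step costs an error $\rho^2\|F_2\|_{L^\infty}\le c_0/\mathcal{M}$, and there are finitely many steps depending only on $n,\Lambda,\Omega$. Choosing $\mathcal{M}$ large, as in \eqref{choi.mathcalM}, absorbs these errors and gives $f_2\ge c_0/c$ on $\mathbf{H}_{2\rho_1}(z_0)$.

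\textbf{Quotient.} With $f_2\ge c_0/c$ and $f_2\in C^{k,\eps}$, the reciprocal $1/f_2$ is in $C^{k,\eps}$ with norm bounded in terms of $c_0$. This follows from the chain rule in kinetic H\"older spaces, using \autoref{lem.holspace} and \autoref{lem.grabdd.hol} to express the seminorm through $\nabla_v^j$, $(\partial_t+v\cdot\nabla_x)$ and $x$-increments. The product rule then gives $[f_1/f_2]_{C^{k,\eps}(\mathbf{H}_{\rho_1}(z_0))}\le c$, and scaling back through $\Psi$ finishes the proof.

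\textbf{Main obstacle.} I expect the positivity propagation to be the hardest part. The chain must reach points arbitrarily close to $\gamma_+$ while respecting the kinetic transport and the fixed forward time lag $\eps_0\rho^2$ in the Harnack inequality. The extension across $\gamma_+$ from \autoref{lem.hargamma+}, which only needs $v_n\le -c$ near $z_0$, is what removes the boundary from the chain. Without that extension, the radii $\rho$ of the Harnack cylinders would degenerate as the chain approaches $\gamma_+$.
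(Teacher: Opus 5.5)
Your proposal is correct and follows essentially the same route as the paper. It uses Schauder estimates up to $\gamma_+$ (\autoref{lem.bdrysch} plus interpolation) for $f_i$, extends $f_2$ across $\gamma_+$ as in \autoref{lem.hargamma+}, runs a Harnack chain as in \eqref{harnack.ineq10f} followed by one application of \autoref{lem.har} to get $f_2\ge c_0/c$ near $z_0$, and finishes with the quotient and product rule. The paper's chain is simpler than you expect: $z_1=z_0\circ(-1,0,0)$ lies on the backward characteristic through $z_0$, so one just steps along that characteristic and never needs \autoref{lem.exp.posright} or \autoref{lem.auxihar}.
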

\begin{proof}
   Since the flattening argument given in the proof of \autoref{lem.hopf.curve} preserves the regularity of the coefficients $A,B$ and the right-hand side $F$,  by \autoref{lem.bdrysch} together with the assumption $\|f_i\|_{L^\infty(\mathbf{H}_2(z_0))},\|F_i\|_{C^{k-2,\eps}(\mathbf{H}_2(z_0))}\leq 1$ and the interpolation in \autoref{lem.interpolhol}, we have $\|f_i\|_{C^{k,\eps}(\mathbf{H}_1(z_0))}\leq c$, where $c=c(n,\Lambda,\eps,\|A\|_{C^{k-1,\eps}(\mathbf{H}_2(z_0))},\|B\|_{C^{k-2,\eps}(\mathbf{H}_2(z_0))},|z_0|,\Omega)$. 
   
   We are now going to prove $f_2(z)\geq \frac{c_0}{c}$ in $\mathbf{H}_{\rho_1}(z_0)$ for some constant $c=c(n,\Lambda,c_0,\Omega)$ provided that $f_2(z_1)=c_0$ with $z_1=z_0\circ (-1,0,0)$ and \eqref{bdry+:ass.F2} for some suitable constants $\rho_1\leq1$ and $\mathcal{M}\geq1$.
   
   As the flattening procedure is well explained in \autoref{lem.hopf.curve}, we may assume $\mathbf{H}_2(z_0)=H_2(z_0)$, $f_2(z_1)=c_0$ and $\|F_2\|_{L^\infty(H_2(z_0))}\leq \frac{c_0}{\mathcal{M}}$. Thus, it suffices to show that $f_2(z)\geq\frac{c_0}{c}$ in $H_{\rho}(z_0)$ for some universal constant $\rho$. By following the extension argument as in \eqref{f00.eq}, we also assume that $f_2$ is actually a weak solution to 
   \begin{align*}
       \partial_tf_2+v\cdot\nabla_xf_2-\ddiv(A\nabla_vf_2)=B\cdot \nabla_vf_2+F_2,\quad\text{in }Q_2(z_0),
   \end{align*}
   with ${f}_2(z_0\circ (-1,0,0))=c_0$ and $\|F_2\|_{L^\infty(Q_2(z_0))}\leq \frac{c_0}{\mathcal{M}}$. Now, taking $\mathcal{M}=\mathcal{M}(n,\Lambda)$ sufficiently small and following \eqref{harnack.ineq10f} with $z_1=z_0\circ (-1,0,0)$, we deduce  ${f}_2(z_0\circ (-\eps_0^2,0,0))\geq \frac{c_0}{c}$. Next, by applying \autoref{lem.har} with $z_1=z_0\circ (-\eps_0^2,0,0)$ and $\rho=1$, we have 
   $ \inf\limits_{Q_{r_0}(z_0)}{f}_2(z)\geq \frac{c_0}{c}$
   for some constant $c=c(n,\Lambda)$ and universal constant $r_0$, whenever $\mathcal{M}=\mathcal{M}(n,\Lambda)$ is sufficiently large. By selecting $\rho=r_0$, we have the desired lower bound of $f_2$ in $H_{\rho}(z_0)$, which completes the proof.
\end{proof}

The following example shows that our main result \autoref{thm.mainhar} together with \autoref{rmk.bdryhar} is sharp, in the sense that there is a solution $f$ such that the $L^\infty$-norm of $f/\phi_0$ in $H_1\setminus \mathcal{R}_-^M$ explodes when $M\to\infty$. Moreover, we observe that the boundary Harnack principle in $\mathcal{R}_-$ fails for any reference point in $\mathcal{R}_-$.
\begin{example}\label{ex.r-}
    Let $f\coloneqq {\phi}_{-1}\xi$ with $\xi=\xi(v)\in C^\infty$ such that $\xi(v)\equiv 0$ for $v\leq \frac{1}{32}$ and $\xi(v)\equiv1$ for $v>\frac{1}{16}$, where $\phi_{-1}$ is given in \cite[Lemma C.1]{KiWe26}. Moreover, by \cite[Lemma C.1]{KiWe26}, we have that $\phi_{-1}$ is a weak solution to $(v\partial_x-\partial_{vv})\phi_{-1}=0$ in $\{ x > 0 \} \times \{ v>\frac{1}{16} \}$. Thus, $f$ is a weak solution to 
     \begin{equation*}
\left\{
\begin{alignedat}{3}
\partial_tf+v\partial_xf-\partial_{vv}f&=F&&\qquad \mbox{in $H_{1}$}, \\
f&=0&&\qquad  \mbox{on $\{x_n=0\}\times \{v_n>0\}$},
\end{alignedat} \right.
\end{equation*} 
where $F=-\partial_{vv}\xi \phi_{-1}-2\partial_v\xi\partial_v\phi_{-1}$. In addition, from the computations given in \cite[Lemma C.1]{KiWe26}, we deduce $F\in C^\infty(H_1)$, as the support of $F$ is away from the origin. Moreover, we have $\phi_0(-1,\rho^3,0)=\phi_0(\rho^3,0)=c\rho^{\frac12}>0$, where $\rho$ is the universal constant determined in \autoref{thm.mainhar}. However, we observe from \cite[Lemma C.5]{KiWe26} that
\begin{align*}
    \|f/\phi_0\|_{L^\infty(H_1\cap \mathcal{R}_-)}\geq \|(x^{-1}v^{\frac12})/(xv^{-\frac52})\|_{L^\infty(H_1\cap \mathcal{R}_-)}=\infty.
\end{align*}
Moreover, we have 
\begin{align*}
    \|f/\phi_0\|_{L^\infty(H_1\setminus \mathcal{R}^M_-)}\eqsim M^2.
\end{align*}
In addition, for any $C > 0$, we can find $(x_1,v_1) , (x_2,v_2) \in \mathcal{R}_-$ such that  
    \begin{align*}
       \frac{\phi_0(x_1,v_1)}{\phi_{-1}(x_1,v_1)} \frac{\phi_{-1}(x_2,v_2)}{\phi_0(x_2,v_2)} = \frac{x_1^2 v_1^{-3}}{x_2^2 v_2^{-3}} \ge C.
    \end{align*}
    For instance, this can be achieved by choosing $(x_1,v_1) = (1/10,1/2)$ and $(x_2,v_2) = (1/(10\sqrt{C}),1/2)$. 
This indicates that we cannot chose any reference point so that the boundary Harnack principle holds in $H_1 \cap \mathcal{R}_-^M$.
\end{example}

\section{Appendix}
\label{sec:Appendix}

This section is motivated by \cite[Theorem 1.7]{KiWe26} (see also \cite[Theorem 8.6]{KiWe26}), where we constructed a function $\Phi$, which behaves like the square-root of the kinetic distance function to $\gamma_0$ (see \eqref{defn.Phi} for a precise definition of $\Phi$), such that $f/\Phi \in C^{0,1}$ in $H_1 \setminus \mathcal{R}_-$. Here, we construct an explicit example indicating that the Lipschitz regularity obtained in \cite{KiWe26} was optimal. Thereby, it becomes apparent that the higher regularity of order $C^{3/2}$ and $C^{1,1}$ obtained in this article is a specific feature of the quotient of two solutions.

\begin{example}\label{ex:Phi}
    Let us fix a domain $H_{\frac14}=(0,1/4^3)\times(-1/4,1/4)$. Define
    \begin{align*}
        a(v)\coloneqq 1+v,\,\varphi(x,v)\coloneqq v\phi_0-v^2\partial_v\phi_0+5\phi_0
    \end{align*}
    to see that 
    \begin{equation*}
\left\{
\begin{alignedat}{3}
v\partial_x\varphi-\partial_{v}(a(v)\partial_v\varphi)&=b\partial_v\varphi+F&&\qquad \mbox{in  $H_{\frac14}$}, \\
{\varphi}_1&=0&&\qquad  \mbox{in $\{x_n=0\}\times \{v_n>0\}$}
\end{alignedat} \right.
\end{equation*} 
for some $F\in L^\infty(H_{1/4})$, where $b\coloneqq (-\partial_v(a(v))$.
To this end, we note that
\begin{align*}
    &v\partial_x\phi_0-\partial^2_{v}\phi_0=0,\, v\partial_x(v\phi_0)-\partial^2_{v}(v\phi_0)=-2\partial_{v}\phi_0,\\
    &v\partial_x(v^2\partial_v\phi_0)-\partial^2_{v}(v^2\partial_v\phi_0)=-2\partial_{v}\phi_0-5v\partial_v^2\phi_0,\, \partial^2_{v}(v\phi_0)=2\partial_v\phi_0+v\partial^2_v\phi_0,\\
    &\partial^2_{v}(v^2\partial_v\phi_0)=2\partial_v\phi_0+4v\partial^2_v\phi_0+v^2\partial_v^3\phi_0.
\end{align*}
Thus, we have 
\begin{align*}
    v\partial_x\varphi-\partial_{v}(a(v)\partial_v\varphi)-b\partial_v=v\partial_x\varphi-a(v)\partial^2_v\varphi&=v\partial_x\varphi-\partial^2_v\varphi+(1-a(v))\partial_v^2\varphi\\
    &=5v\partial_v^2\phi_0+(-v)(-3v\partial_v^2\phi_0-v^2\partial_v^3\phi_0+5\partial_v^2\phi_0)\\
    &=3v^2\partial_v^2\phi_0+v^3\partial_v^3\phi_0\eqqcolon F(x,v)
\end{align*}
and the function $F$ is bounded. Now we claim that for 
\begin{align}\label{defn.Phi}
    \Phi(x,v)\coloneqq \phi_0(x/{\mathbf{a}(v)},v/\mathbf{a}(v))
\end{align}
with $\mathbf{a}(v)\coloneqq \sqrt{a(v)}$, 
\begin{align}\label{goal.ex}
    \frac{\varphi}{\Phi}\notin C^1(\overline{H_{1/4}}\setminus \mathcal{R}_-),
\end{align}
where $\mathcal{R}_-=\{x\leq v^3\}$.
To this end, we fix two sequences $(x_i)$ and $(v_i)$ such that $x_i\to 0$ and $v_i\to0$ with $(v_i)^3\leq x_i$,  $-\frac{(v_i)^3}{9x_i}\to z$ for some constant $z$. 

First, we prove 
\begin{align}\label{limit.Phiandphi}
    \lim_{i\to\infty}\frac{\Phi(x_i,v_i)}{|\rho_i|^{\frac12}}=\lim_{i\to\infty}\frac{\phi_0(x_i,v_i)}{|\rho_i|^{\frac12}},
\end{align}
where we write $\rho_i\coloneqq\max\{|x_i|^{\frac13},|v_i|\}$. To this end, note that by the fundamental theorem of calculus
\begin{align*}
     \phi_0(x_i/\mathbf{a}(v_i),v_i/\mathbf{a}(v_i))-\phi_0(x_i,v_i)&=\int_{0}^{1}\partial_x\phi_0(x_i(\xi/\mathbf{a}(v_i)+(1-\xi)),v_i/\mathbf{a}(v_i))\times x_i(1/\mathbf{a}(v_i)-1)\,d\xi\\
     &\quad+\int_{0}^1\partial_v\phi_0(x_i,v_i(\xi/\mathbf{a}(v_i)+1-\xi))\times v_i(1/\mathbf{a}(v_i)-1)\,d\xi.
\end{align*}
Now using this together the fact that $|x\partial_x\phi_0(x,v)|,|v\partial_v\phi_0(x,v)|\leq c(|x|^{\frac16}+|v|^{\frac12})$ and $|1/\mathbf{a}(v)-1|\leq c|v|$ for some constant $c$, we deduce
\begin{align*}
   \frac{ |\Phi(x_i^3,v_i)-\phi(x_i^3,v_i)|}{|\rho_i|^{\frac12}}\leq c|v_i|.
\end{align*}
With this estimate, we can derive \eqref{limit.Phiandphi}. Now using the explicit formula of $\phi_0$ given in \cite[Appendix C]{KiWe26}, we have when $-\frac{(v_i)^3}{9x_i}\to z\geq0$ with $v_i\leq0$,
\begin{align}\label{limit.Phi}
   0<g_0(z)\coloneqq \lim_{i\to\infty}\frac{\Phi(x_i,v_i)}{|\rho_i|^{\frac12}}=\lim_{i\to\infty}\frac{\phi_0(x_i,v_i)}{|\rho_i|^{\frac12}}=\begin{cases}
        c_0U\left(-\frac16,\frac23,z\right)&\quad\text{if }z\leq \frac19,\\
        c_0\left(\frac{1}{9z}\right)^{\frac16}U\left(-\frac16,\frac23,z\right)&\quad\text{if }z\geq\frac19.
    \end{cases}
\end{align}

Next, we prove 
\begin{equation}\label{limits3}
\begin{aligned}
    &\lim_{i\to\infty}\partial_v\left(\frac{v\phi_0}{\Phi}\right)(x_i,v_i)=1,\\
    &\lim_{i\to\infty}\partial_v\left(\frac{v^2\partial_v\phi_0}{\Phi}\right)(x_i,v_i)=\lim_{i\to\infty}\left[2\left(\frac{v\partial_v\phi_0}{\phi_0}\right)(x_i,v_i)+\left(\frac{v^2\partial^2_v\phi_0}{\phi_0}\right)(x_i,v_i)-\left(\frac{v\partial_v\phi_0}{\phi_0}\right)^2(x_i,v_i)\right],\\
    &\lim_{i\to\infty}\partial_v\left(\frac{\phi_0}{\Phi}\right)(x_i,v_i)=\lim_{i\to\infty}\left[\frac23\left(\frac{v^2\partial^2_v\phi_0}{\phi_0}\right)(x_i,v_i)+\frac23\left(\frac{v\partial_v\phi_0}{\phi_0}\right)^2(x_i,v_i)\right].
\end{aligned}
\end{equation}
To do this, we observe 
\begin{align*}
    \partial_v\left(\frac{v\phi_0}{\Phi}\right)=(\phi_0+v\partial_v\phi_0)\Phi^{-1}-\phi_0(v\partial_v\Phi)\Phi^{-2}.
\end{align*}
As in \eqref{limit.Phiandphi}, we get
\begin{align*}
    \lim_{i\to\infty}\frac{(v\partial_v\phi_0)(x_i,v_i)}{|v_i|^{\frac12}}=\lim_{i\to\infty}\frac{(v\partial_v\Phi_0)(x_i,v_i)}{|v_i|^{\frac12}}.
\end{align*}
Using this and \eqref{limit.Phi}, we have 
\begin{align*}
    \lim_{i\to\infty}\partial_v\left(\frac{v\phi_0}{\Phi}\right)(x_i,v_i)=1.
\end{align*}

Next, we observe 
\begin{align*}
      \partial_v\left(\frac{v^2\partial_v\phi_0}{\Phi}\right)=(2v\partial_v\phi_0+v^2\partial^2_v\phi_0)\Phi^{-1}-(v\partial_v\phi_0)(v\partial_v\Phi)\Phi^{-2},
\end{align*}
which implies
\begin{align*}
    \lim_{i\to\infty}\partial_v\left(\frac{v^2\partial_v\phi_0}{\Phi}\right)(x_i,v_i)=\lim_{i\to\infty}\left[2\left(\frac{v\partial_v\phi_0}{\phi_0}\right)(x_i,v_i)+\left(\frac{v^2\partial^2_v\phi_0}{\phi_0}\right)(x_i,v_i)-\left(\frac{v\partial_v\phi_0}{\phi_0}\right)^2(x_i,v_i)\right].
\end{align*}

Now we are going to estimate 
\begin{align*}
    \partial_v\left(\frac{\phi_0}{\Phi}\right)=(\partial_v\phi_0)\Phi^{-1}-\phi_0(\partial_v\Phi)\Phi^{-2}=\frac{(\Phi-\phi_0)(\partial_v\phi_0)}{\Phi^2}-\frac{\phi_0\partial_v(\Phi-\phi_0)}{\Phi^2}\eqqcolon J_1+J_2.
\end{align*}
To show the limit of $J_1+J_2$, we observe
\begin{equation}\label{est.j1+j2}
\begin{aligned}
    \phi_0(x,v)-\Phi(x,v)&= \phi_0(x,v)-\phi_0(x/\mathbf{a},v)+\phi_0(x/\mathbf{a},v)-\phi_0(x/\mathbf{a},v/\mathbf{a})\\
    &=\int_0^1\partial_x\phi_0(\xi x+(1-\xi)x/\mathbf{a},v)\times x(1-1/\mathbf{a})\,d\xi\\
    &\quad\qquad+\int_0^1\partial_v\phi_0(x/\mathbf{a},\xi v+(1-\xi)v/\mathbf{a})\times v(1-1/\mathbf{a})\,d\xi
\end{aligned}
\end{equation}
and
\begin{equation*}
    \lim_{v\to0}\frac{1-1/\mathbf{a}(v)}{v}=1.
\end{equation*}
As in \eqref{limit.Phiandphi}, we deduce
\begin{align*}
    \lim_{i\to\infty}\frac{\left[(\phi_0-\Phi)(\partial_v\phi_0)\right](x_i,v_i)}{|\rho_i|}=\lim_{i\to\infty}\frac{\left[(x\partial_x\phi_0+v\partial_v\phi_0)(v\partial_v\phi_0)\right](x_i,v_i)}{|\rho_i|^{}}.
\end{align*}
Similarly, as in \eqref{est.j1+j2} with $\phi_0$ and $\Phi$ replaced by $\partial_v\phi_0$ and $\partial_v\Phi$, we have 
\begin{align*}
    \lim_{i\to\infty}\frac{\left[\phi_0\partial_v(\phi_0-\Phi)\right](x_i,v_i)}{|\rho_i|}=\lim_{i\to\infty}\frac{\left[(vx\partial_x\partial_v\phi_0+v^2\partial^2_v\phi_0)\phi_0\right](x_i,v_i)}{|\rho_i|^{}}.
\end{align*}
Now, we combine the previous two limits and the fact that 
\begin{align*}
    3x\partial_x\phi_0+v\partial_v\phi_0=\frac12\phi_0
\end{align*}
by the homogeneity of $\phi_0$, in order to obtain
\begin{align*}
    \lim_{i\to\infty}\frac{\left(\Phi\partial_v\phi_0-\phi_0\partial_v\Phi\right)(x_i,v_i)}{|\rho_i|}=\lim_{i\to\infty}\frac{\left[(\frac16\phi_0+\frac23v\partial_v\phi_0)(v\partial_v\phi_0)+\left(-\frac16v\partial_v\phi_0+\frac23v^2\partial_v^2\phi_0\right)\phi_0\right](x_i,v_i)}{|\rho_i|^{}}.
\end{align*}
So far, we have verified \eqref{limits3}. 

Furthermore, we note from the computations given in \cite[Proof of Lemma C.1]{KiWe26} and (C.1) in \cite[Appendix C]{KiWe26} that if $-\frac{v_i^3}{x_i}\to z\geq0$ with $v_i\to0$ and $x_i\to0$, then 
\begin{align*}
    g_1(z)&\coloneqq\lim_{i\to\infty}\frac{(v\partial_v\phi_0)(x_i,v_i)}{|\rho_i|^{\frac12}}=\begin{cases}
        c_0{\frac{z}{2}U\left(\frac56,\frac53,z\right)}&\quad\text{if }z\leq \frac19,\\
        c_0\left(\frac{1}{9z}\right)^{\frac16}\frac{z}{2}U\left(\frac56,\frac53,z\right)&\quad\text{if }z\geq \frac19
    \end{cases} \\
   g_2(z)&\coloneqq\lim_{i\to\infty}\frac{(v^2\partial^2_v\phi_0)(x_i,v_i)}{|\rho_i|^{\frac12}}=\begin{cases}
        c_0\left({zU\left(\frac56,\frac53,z\right)}-z^2\frac54U\left(\frac{11}6,\frac83,z\right)\right)&\quad\text{if }z\leq \frac19,\\
        c_0\left(\frac{1}{9z}\right)^{\frac16}\left({zU\left(\frac56,\frac53,z\right)}-z^2\frac54U\left(\frac{11}6,\frac83,z\right)\right)&\quad\text{if }z\geq \frac19.
    \end{cases} 
\end{align*}
Therefore, applying these limits to \eqref{limits3} leads to 
\begin{align*}
\lim_{i\to\infty}\partial_v\left(\frac{\varphi}{\Phi}    \right)(x_{i},v_{i})=1-2\frac{g_1(z)}{g_0(z)}    -\frac13\frac{g_2(z)}{g_0(z)}+\frac53\left(\frac{g_1(z)}{g_0(z)}\right)^2.
\end{align*}
whenever $-\frac{v_i}{9x_i}\to z$ with $v_i\leq0$, $v_i,x_i\to0$.
Now, note from \cite[(C.5)-(C.8) in Appendix C]{KiWe26} that
\begin{align*}
    \frac{g_1}{g_0}(0)=\frac{g_2}{g_0}(0)=0\quad\lim_{z\to\infty}\frac{g_1}{g_0}(z)=\frac12\quad\text{and}\quad \lim_{z\to\infty}\frac{g_2}{g_0}(z)=-\frac14.
\end{align*}
Thus, using this and the continuity of the function $g_i(z)$ together with $g_0(z)>0$, there are sequences $\{(x_{1,i},0)\}_i$ and $\{(x_{2,i},v_{2,i})\}_i$ such that
\begin{align*}
\lim_{i\to\infty}\partial_v\left(\frac{\varphi}{\Phi}    \right)(x_{1,i},0)=1>\lim_{i\to\infty}\partial_v\left(\frac{\varphi}{\Phi}    \right)(x_{2,i},v_{2,i}),
\end{align*}
where $x_{1,i},x_{2,i},v_{2,i}\to0$ with $-\frac{(v_{2,i})^3}{x_{2,i}}\to c$ for large constant $c>1$. This verifies \eqref{goal.ex}.
\end{example}

\bibliographystyle{alpha}
\bibliography{literature}

\end{document}